\documentclass[a4paper,11pt]{article}
\textwidth380pt
\hoffset-40pt
\voffset+0pt
\headsep-20pt
\textheight510pt

\usepackage{amsmath, amsfonts, amscd, amssymb, amsthm, enumerate, xypic, float}

\newcommand{\nfix}{\operatorname{nfix}}
\newcommand{\car}{\chi}

\newcommand{\Mat}{\operatorname{M}}

\newcommand{\Pgros}{\operatorname{\mathbb{P}}}
\newcommand{\id}{\operatorname{id}}
\newcommand{\GL}{\operatorname{GL}}

\newcommand{\Diag}{\operatorname{Diag}}
\newcommand{\Vect}{\operatorname{span}}
\newcommand{\im}{\operatorname{Im}}

\newcommand{\tr}{\operatorname{tr}}

\newcommand{\Aut}{\operatorname{Aut}}

\newcommand{\per}{\operatorname{per}}
\newcommand{\sgn}{\operatorname{sgn}}
\newcommand{\codim}{\operatorname{codim}}
\renewcommand{\setminus}{\smallsetminus}


\def\F{\mathbb{F}}

\renewcommand{\L}{\mathbb{L}}


\def\calC{\mathcal{C}}

\def\calF{\mathcal{F}}
\def\calG{\mathcal{G}}

\def\calO{\mathcal{O}}

\def\calS{\mathcal{S}}

\def\calV{\mathcal{V}}
\def\calW{\mathcal{W}}


\def\lcro{\mathopen{[\![}}
\def\rcro{\mathclose{]\!]}}

\theoremstyle{definition}
\newtheorem{Def}{Definition}[section]
\newtheorem{Not}[Def]{Notation}

\theoremstyle{plain}
\newtheorem{theo}{Theorem}[section]
\newtheorem{prop}[theo]{Proposition}
\newtheorem{cor}[theo]{Corollary}
\newtheorem{lemma}[theo]{Lemma}
\newtheorem{claim}{Claim}

\theoremstyle{plain}
\newtheorem{conj}{Conjecture}

\theoremstyle{remark}
\newtheorem{Rems}{Remarks}
\newtheorem{Rem}[Rems]{Remark}
\newtheorem{ex}{Example}

\title{On the linear preservers of Schur matrix functionals}
\author{Cl\'ement de Seguins Pazzis\footnote{Universit\'e de Versailles Saint-Quentin-en-Yvelines, Laboratoire de Math\'ematiques
de Versailles, 45 avenue des Etats-Unis, 78035 Versailles cedex, France}
\footnote{e-mail address: dsp.prof@gmail.com}}

\begin{document}

\thispagestyle{plain}


\maketitle

\begin{abstract}
Let $\F$ be a field and $f : \mathfrak{S}_n \rightarrow \F \setminus \{0\}$ be an arbitrary map.
The Schur matrix functional associated to $f$
is defined as $M \in \Mat_n(\F) \mapsto \widetilde{f}(M):=\sum_{\sigma \in \mathfrak{S}_n} f(\sigma) \prod_{j=1}^n m_{\sigma(j),j}$.
Typical examples of such functionals are the determinant (where $f$ is the signature morphism)
and the permanent (where $f$ is constant with value $1$).
Given two such maps $f$ and $g$, we study the endomorphisms $U$ of the vector space $\Mat_n(\F)$
that satisfy $\widetilde{g}(U(M))=\widetilde{f}(M)$ for all $M \in \Mat_n(\F)$.
In particular, we give a closed form for the linear preservers of the functional $\widetilde{f}$
when $f$ is central, and as a special case we extend to an arbitrary field Botta's characterization of the linear preservers of the permanent.
\end{abstract}

\vskip 2mm
\noindent
\emph{AMS Classification:} 15A86; 20B30.

\vskip 2mm
\noindent
\emph{Keywords:} Permanent, Determinant, Schur functionals, Linear preservers, Symmetric group.


\section{Introduction}

\subsection{Notation}

Throughout, we fix an arbitrary field $\F$ whose characteristic we denote by $\car(\F)$, and whose
group of non-zero elements we denote by $\F^*$.
Let $n$ and $p$ be non-negative integers.
We denote by $\Mat_{n,p}(\F)$ the vector space of all $n$ by $p$ matrices with entries in $\F$.
In particular, we denote by $\Mat_n(\F):=\Mat_{n,n}(\F)$ the algebra of all square $n$ by $n$ matrices with entries in $\F$, and by $\GL_n(\F)$ its group of invertible elements. The rows of a matrix $M \in \Mat_n(\F)$ are denoted by $R_1(M),\dots,R_n(M)$, and its columns by
$C_1(M),\dots,C_n(M)$.
We denote by $E_{i,j}$ the matrix unit of $\Mat_{n,p}(\F)$ with zero entries everywhere except at the $(i,j)$-spot where the entry equals $1$.
Given matrices $A=(a_{i,j})$ and $B=(b_{i,j})$ in $\Mat_n(\F)$, we denote their Hadamard product by
$$A \star B:=(a_{i,j}\,b_{i,j})_{1 \leq i,j \leq n.}$$
We set
$$E:=(1)_{1 \leq i,j\leq n},$$
so that $A \star E=E \star A$ for all $A \in \Mat_n(\F)$.
Given $A \in \Mat_n(\F^*)$, we set
$$A^{[-1]}:=(1/a_{i,j})_{1 \leq i,j \leq n},$$
so that $A \star A^{[-1]}=A^{[-1]} \star A=E$.

Denote by $\mathfrak{S}_n$ the group of all permutations of $\lcro 1,n\rcro$ and by $\mathfrak{A}_n$
the corresponding alternating group. Given distinct elements $i_1,\dots,i_p$ of $\lcro 1,n\rcro$ (with $p>1$),
we denote by $(i_1\,i_2\,\cdots\, i_p)$ the $p$-cycle that fixes every element of $\lcro 1,n\rcro \setminus \{i_1,\dots,i_p\}$,
maps $i_k$ to $i_{k+1}$ for all $k \in \lcro 1,p-1\rcro$, and maps $i_p$ to $i_1$.
Given distinct elements $i,j$ of $\lcro 1,n\rcro$, the transposition $(i\; j)$ is also denoted by $\tau_{i,j}$.

Given a permutation $\sigma$ of $\lcro 1,n\rcro$, the associated \textbf{permutation matrix} in $\Mat_n(\F)$
is denoted by
$$P_{\sigma}:=(\delta_{i,\sigma(j)})_{1 \leq i,j \leq n.}$$

\subsection{The problem}

Linear preservers are a standard topic in modern linear algebra.
One of the first results of this kind was Frobenius's determination of the linear bijections from $\Mat_n(\F)$
to itself that preserve the determinant \cite{Frobenius} (when $\F$ is an infinite field).
In the present work, we are concerned with a generalization of Frobenius's result to a wider class of matrix functionals.

To any scalar-valued mapping $f : \mathfrak{S}_n \rightarrow \F$,
we associate its Schur functional on square matrices, defined as follows:
$$\widetilde{f} : M=(m_{i,j}) \in \Mat_n(\F) \longmapsto \underset{\sigma \in \mathfrak{S}_n}{\sum}
f(\sigma)\,\prod_{j=1}^n m_{\sigma(j),j.}$$
Throughout, we will assume that $f$ vanishes \emph{nowhere}. This discards a lot of interesting functions,
but it appears that this assumption is key to the success of our methods.

Note that $f(\sigma)=\widetilde{f}(P_\sigma)$ for all $\sigma \in \mathfrak{S}_n$, and hence $\widetilde{f}$ determines $f$.
Two standard examples of such maps are the determinant (where $f$ is the signature morphism, denoted by $\sgn$)
and the permanent, denoted by $\per$ (where $f$ is constant with value $1$).
Here, we wish to find a closed form for the linear maps
$$U : \Mat_n(\F) \rightarrow \Mat_n(\F)$$
such that
\begin{equation}\label{preserve}
\forall M \in \Mat_n(\F), \; \widetilde{f}\bigl(U(M)\bigr)=\widetilde{f}(M).
\end{equation}
More generally, we are interested, given potentially different mappings $f : \mathfrak{S}_n \rightarrow \F^*$
and $g : \mathfrak{S}_n \rightarrow \F^*$, in the endomorphisms $U$ of the vector space $\Mat_n(\F)$ that satisfy
$$\forall M \in \Mat_n(\F), \; \widetilde{g}(U(M))=\widetilde{f}(M).$$
Such endomorphisms will be called \textbf{$(f,g)$-transformations}. We want to know if such transformations exist, and in the affirmative
we want to find a closed form for them.

Similar questions have been studied in the past, see \cite{Botta1,Botta2,Duffner}.
The situation is slightly different here: we do not allow our functions $f$ and $g$ to take the value $0$
(this will prove crucial in some key lemmas), whereas in the works we have just cited
the functions under hand tend to vanish at many permutations. On the other hand, in the known works
on the topic the function $f$ is much more specific than what we have in mind: in \cite{Botta1}
the support of $f$ is a transitive cyclic subgroup of $\mathfrak{S}_n$; in \cite{Botta2},
the support of $f$ is a doubly-transitive and regular proper subgroup $G$ of $\mathfrak{S}_n$, and the restriction of
$f$ to this subgroup is a group homomorphism to the multiplicative group $\F^*$;
in \cite{Duffner} the field $\F$ is the one of complex numbers and $f$ is an irreducible character of degree greater than $1$ on
$\mathfrak{S}_n$.

Let us recall some known results:

\begin{theo}[Frobenius \cite{Frobenius}, Dieudonn\'e \cite{Dieudonne}]\label{detpreservers}
If $f=\sgn$, then the linear maps $U : \Mat_n(\F) \rightarrow \Mat_n(\F)$
that satisfy \eqref{preserve} are the maps of the form
$$M \mapsto PMQ \quad \text{or} \quad M \mapsto PM^T Q$$
for some pair $(P,Q)\in \GL_n(\F)^2$ such that $\det(P)\det(Q)=1$.
\end{theo}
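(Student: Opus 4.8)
The plan is to characterize the determinant preservers via the action on matrices of a fixed rank, since the determinant is intimately tied to the rank stratification of $\Mat_n(\F)$. First I would show that any $U$ satisfying \eqref{preserve} must be a linear bijection: if $U(M_0)=0$ for some $M_0\neq 0$, then I would produce a matrix $N$ with $\det(M_0+N)\neq\det(N)$ while $U(M_0+N)=U(N)$, contradicting \eqref{preserve}; the nonvanishing of $\det$ on a Zariski-dense set makes this feasible. Next, the key structural step is to prove that $U$ preserves the set of rank-one matrices, or equivalently the set of singular matrices $\{M:\det M=0\}$. The standard route is to observe that $\det$ is a polynomial of degree $n$ in the entries, and that along the line $t\mapsto A+tB$ the preservation condition forces $\det(U(A)+tU(B))=\det(A+tB)$ as polynomials in $t$; comparing the structure of these polynomials controls how $U$ moves the rank strata.

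Once I know $U$ preserves rank-one matrices, I would invoke the fundamental theorem on rank-one preservers: a bijective linear map on $\Mat_n(\F)$ sending rank-one matrices to rank-one matrices is of the form $M\mapsto PMQ$ or $M\mapsto PM^TQ$ for invertible $P,Q$ (with the transpose case possibly absent for $n=1$). Concretely, a rank-one matrix is $xy^T$, and I would track how $U$ acts on the two families $\{xy_0^T\}$ and $\{x_0y^T\}$ for fixed $y_0,x_0$, showing that $U$ either preserves or swaps the notions of row-space and column-space, which yields the linear substitutions $x\mapsto Px$, $y\mapsto Qy$ (or the transposed analogue). This is the classical argument and I would cite or reprove it in the nonvanishing-$f$ framework.

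Finally, having reduced $U$ to the form $M\mapsto PMQ$ or $M\mapsto PM^TQ$, I would plug back into \eqref{preserve} to pin down the scalar constraint. Since $\det(PMQ)=\det(P)\det(Q)\det(M)$ and $\det(PM^TQ)=\det(P)\det(Q)\det(M)$, the condition $\det(U(M))=\det(M)$ for all $M$ (equivalently at $M=I$ and by taking $M$ of full rank) forces exactly $\det(P)\det(Q)=1$, giving the stated closed form. Conversely every such map is easily checked to satisfy \eqref{preserve}, completing the equivalence.

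The main obstacle I anticipate is the rank-one preservation step in full generality over an arbitrary field. Over $\C$ or an infinite field one can lean on Zariski-density and the irreducibility of the determinant hypersurface, but over a finite or otherwise small field these geometric tools are unavailable, so the argument must be made purely algebraic. The delicate point is ruling out exotic linear maps that permute singular matrices without respecting the rank-one stratum; handling this robustly, independent of $\card\F$ and $\car(\F)$, is where the real work lies.
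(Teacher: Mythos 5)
Your proposal has a genuine gap at its central step, and you concede as much in your final paragraph. The passage from ``$U$ preserves the set of singular matrices'' (which is indeed immediate from \eqref{preserve}; no polynomial argument is needed for that) to ``$U$ preserves the set of rank-one matrices'' is not an equivalence one may simply observe: asserting that the two conditions are ``equivalent'' begs the question, since deriving the second from the first is essentially the entire content of the theorem. The tool you propose for this step --- comparing $\det(A+tB)$ and $\det\bigl(U(A)+tU(B)\bigr)$ as polynomials in $t$ --- is only legitimate when $\card \F > n$, because over a small finite field two distinct polynomials of degree at most $n$ can agree at every point of $\F$; Zariski-density arguments are likewise unavailable there. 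Since the statement is claimed for an arbitrary field (which is precisely the point of Dieudonn\'e's contribution and of this paper, whose results carry no cardinality hypothesis), the proposal as written does not prove it: the key step is sketched with tools that fail exactly in the generality required, and your closing admission that handling it robustly ``is where the real work lies'' confirms that this work is absent. (A secondary, fixable point: your injectivity argument also invokes Zariski density, but there an explicit matrix construction, as in Lemma \ref{injectlemma}, repairs it over any field.)

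For comparison, the paper closes this gap by a different route, the one going back to Dieudonn\'e: instead of rank-one matrices, it classifies the linear subspaces of maximal dimension contained in the null cone $\calC(\sgn)=\{M \in \Mat_n(\F) : \det M=0\}$. Theorem \ref{DimensionInequalityTheorem} shows that any affine subspace inside the null cone has codimension at least $n$, and Theorem \ref{DimensionEqualityTheorem} shows that the linear subspaces of codimension exactly $n$ are precisely the spaces $\calV_X$ and $\calV^X$; both are proved by induction on $n$ through explicit matrix constructions, with no assumption on $\card\F$ or $\car(\F)$. A determinant preserver, being bijective, must permute these maximal subspaces, and tracking that permutation yields the forms $M \mapsto PMQ$ and $M \mapsto PM^TQ$; this is carried out in the general setting of Theorem \ref{fgtransfotheo} and then specialized to $f=\sgn$ in Section \ref{detperpressection}. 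If you wish to salvage your rank-one route, you would first have to prove, over an arbitrary field, that a bijective linear singularity preserver maps rank-one matrices to rank-one matrices --- and the standard proofs of that fact pass through the same maximal-singular-subspace analysis anyway, so nothing is gained over the paper's approach.
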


For the permanent, the preservers have a much more rigid form:

\begin{theo}[Botta \cite{Botta3}, Marcus and May \cite{MarcusMay}]\label{perpreservers}
Assume that $|\F| \geq n \geq 3$ and $\car(\F) \neq 2$.
If $f$ is constant with value $1$, then the linear maps $U : \Mat_n(\F) \rightarrow \Mat_n(\F)$
that satisfy \eqref{preserve}
are the maps of the form
$$M \mapsto R \star (P_\sigma MP_\tau) \quad \text{or} \quad M \mapsto R \star (P_\sigma M^T P_\tau)$$
where $(\sigma,\tau)\in (\mathfrak{S}_n)^2$ and $R \in \Mat_n(\F)$ is a rank $1$ matrix
the product of whose diagonal entries equals $1$.
\end{theo}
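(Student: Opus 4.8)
The plan is to prove the two inclusions separately, the substantial one being that every preserver has the announced shape. For the easy inclusion, I would check directly that each listed map preserves the permanent. Writing a rank-$1$ matrix as $r_{i,j}=u_iv_j$, one computes $\per(R\star N)=\bigl(\prod_i u_i\bigr)\bigl(\prod_j v_j\bigr)\per(N)=\bigl(\prod_i r_{i,i}\bigr)\per(N)$, which equals $\per(N)$ precisely when the diagonal product of $R$ is $1$; since $\per(P_\sigma N P_\tau)=\per(N)$ and $\per(N^T)=\per(N)$, both families are preservers.

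For the converse, let $U$ satisfy \eqref{preserve}, so that $\widetilde{f}=\per$. I would first show that $U$ is bijective: if $M_0\in\Ker U$ were nonzero, then $\per(M+tM_0)=\per\bigl(U(M+tM_0)\bigr)=\per\bigl(U(M)\bigr)=\per(M)$ for all $M$ and all $t$, so the directional derivative $\sum_{i,j}(M_0)_{i,j}\,\per\bigl(M(i\mid j)\bigr)$ would vanish identically, where $M(i\mid j)$ denotes the matrix obtained by deleting row $i$ and column $j$. The permanental cofactors $\per\bigl(M(i\mid j)\bigr)$ have pairwise disjoint monomial supports, hence are linearly independent as functions, forcing $M_0=0$. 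Consequently $U^{-1}$ is a preserver as well, and $U$ maps the hypersurface $\{\per=0\}$ bijectively onto itself.

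The heart of the argument is then to classify the maximal linear subspaces of $\Mat_n(\F)$ on which $\per$ vanishes identically, and to prove that \emph{for the permanent} these are exactly the $2n$ ``line-killers'' $\calR_i:=\{M:\ R_i(M)=0\}$ and $\calC_j:=\{M:\ C_j(M)=0\}$. This is where $\car(\F)\neq 2$ is decisive: in characteristic $2$ one has $\per=\det$, whose zero locus is $\GL_n(\F)$-invariant and therefore contains the much larger Dieudonn\'e family of maximal singular subspaces, while outside characteristic $2$ a space such as $\{M:\ R_1(M)=R_2(M)\}$ already fails to lie in $\{\per=0\}$, which is what pins the list down to the coordinate subspaces; the hypotheses $|\F|\geq n$ and $n\geq 3$ serve to supply enough test matrices and to exclude the low-dimensional coincidences. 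Since $U$ is a bijective preserver of $\{\per=0\}$, it permutes the finite family $\{\calR_i\}\cup\{\calC_j\}$, and this permutation either respects the partition into rows and columns or swaps the two blocks; after composing $U$ with a suitable map $M\mapsto P_\sigma M P_\tau$ and, if needed, with $M\mapsto M^T$, I may assume that $U$ stabilizes each $\calR_i$ and each $\calC_j$.

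Under that normalization, $U$ stabilizes every intersection $\bigcap_{k\neq i}\calR_k\cap\bigcap_{l\neq j}\calC_l=\F\,E_{i,j}$, so $U(E_{i,j})=r_{i,j}E_{i,j}$ with $r_{i,j}\in\F^*$; that is, $U(M)=R\star M$ for $R=(r_{i,j})$. Expanding $\per(R\star M)=\sum_{\sigma}\bigl(\prod_j r_{\sigma(j),j}\bigr)\prod_j m_{\sigma(j),j}$ and matching it monomial by monomial with $\per(M)$ forces $\prod_j r_{\sigma(j),j}=1$ for every $\sigma\in\mathfrak{S}_n$: taking $\sigma=\id$ yields the diagonal-product condition, while comparing $\sigma$ with $\sigma\circ\tau_{a,b}$ yields $r_{i,j}r_{k,l}=r_{i,l}r_{k,j}$ for all indices, i.e.\ $\rk R=1$; tracing the normalization back gives the announced forms. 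The step I expect to be the main obstacle is the geometric rigidity: proving that the coordinate line-killers are the \emph{only} maximal subspaces annihilated by $\per$, and controlling the induced permutation of lines. Everything else is linear algebra and bookkeeping.
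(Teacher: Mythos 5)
Your strategy is, in substance, the paper's own proof specialized to the permanent: prove bijectivity, classify the maximal linear subspaces of the null cone, deduce that $U$ permutes them, normalize by $M\mapsto P_\sigma MP_\tau$ and transposition so that $U$ fixes each coordinate subspace, conclude that $U(M)=R\star M$, and finish with the monomial-matching argument (this last step is exactly the paper's Lemma \ref{HadProductLemma}, and your version of it is correct). The genuine gap is the step you yourself flag as ``the main obstacle'': you assert, but nowhere prove, that the only linear subspaces of dimension $n^2-n$ contained in $\{\per=0\}$ are the $2n$ coordinate subspaces $\calR_i$ and $\calC_j$. That assertion is where the whole difficulty of the theorem lies; in the paper it is the content of the entire Section \ref{conesection}: Theorem \ref{DimensionInequalityTheorem} (every affine subspace of the null cone has codimension at least $n$, proved by induction on $n$) and Theorem \ref{DimensionEqualityTheorem} (classification of the codimension-$n$ ones, which needs Lemmas \ref{Lemma1} and \ref{Lemma2} and a delicate analysis of the map expressing the last column of a matrix of the subspace in terms of the other columns). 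Your supporting remark---that $\{M :\ R_1(M)=R_2(M)\}$ is not contained in the null cone when $\car(\F)\neq 2$---only rules out one special family of candidates; an arbitrary subspace of dimension $n^2-n$ inside $\{\per=0\}$ carries no a priori coordinate description, and nothing in your text excludes it. Since every subsequent step quotes this classification, the proposal as written does not prove the theorem.

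Two smaller points. First, your injectivity argument is insufficient in the admissible case $|\F|=n$: the polynomial $t\mapsto\per(M+tM_0)-\per(M)$ has degree up to $n$, and its vanishing at the $n$ points of $\F$ does not force its coefficients to vanish (when $|\F|=n$ the polynomial $t^n-t$ vanishes identically on $\F$). The repair is to drop the parameter $t$: $\per(M+M_0)-\per(M)$ has degree at most $1$ in each entry of $M$, so its vanishing as a function on $\Mat_n(\F)$ forces it to vanish as a formal polynomial, after which your disjoint-support argument for the cofactors applies; the paper's Lemma \ref{injectlemma} avoids polynomials altogether by exhibiting an explicit matrix. Second, the claim that the permutation of $\{\calR_i\}\cup\{\calC_j\}$ induced by $U$ either preserves the two blocks or swaps them requires justification: it follows from codimension counting, since $\calR_i\cap\calC_j$ has codimension $2n-1$ while $\calR_i\cap\calR_j$ and $\calC_i\cap\calC_j$ (for $i\neq j$) have codimension $2n$, and a linear bijection preserves codimensions of intersections (this is how the paper argues in Claim \ref{compatrowclaim}). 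That gap is easy to fill; the classification of maximal null subspaces is not.
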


If $\F$ has characteristic $2$, then the permanent is just the determinant.
For $n=2$, the permanent is deduced from the determinant through a linear bijection:
in that case indeed, we have
$$\forall M \in \Mat_2(\F), \; \per(M)=\det(K \star M) \quad \text{where}\;
K:=\begin{bmatrix}
1 & -1 \\
1 & 1
\end{bmatrix}.$$
Hence, for $n=2$ permanent preservers are easily deduced from determinant preservers.

\begin{Rem}\label{n=2remark}
More generally, we note that if $n=2$ then by setting $\alpha:=\frac{f(\tau)}{f(\id)}$ where $\tau$ is the transposition of $\mathfrak{S}_2$,
we find
$$\widetilde{f} : M \mapsto f(\id) \det(A \star M) \quad \text{for} \quad
A:=\begin{bmatrix}
1 & -\alpha \\
1 & 1
\end{bmatrix}.$$
In particular, the linear maps that satisfy \eqref{preserve} are easily deduced from the
linear preservers of the determinant.
\end{Rem}

In this article, our aim is to generalize the above results to an arbitrary
mapping $f$ with no restriction on the cardinality of the underlying field.
In particular, we will generalize Theorem \ref{perpreservers}
to an arbitrary field of characteristic different from $2$.

The following result, which we shall prove right away, is relevant to Theorem \ref{perpreservers}:

\begin{lemma}\label{HadProductLemma}
Let $R \in \Mat_n(\F)$. Set $U : M \in \Mat_n(\F) \mapsto R \star M$.
Then, the following conditions are equivalent:
\begin{enumerate}[(i)]
\item The mapping $U$ is an $(f,f)$-transformation.
\item The matrix $R$ has rank $1$ and the product of its diagonal entries equals $1$.
\item There are column matrices $X=(x_k)$ and $Y=(y_k)$ in $\F^n$ such that $R=XY^T$ and $\underset{k=1}{\overset{n}{\prod}} x_ky_k=1$.
\end{enumerate}
A rank $1$ matrix that satisfies condition (ii) will be called \textbf{normalized}.
\end{lemma}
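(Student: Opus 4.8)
The plan is to reduce condition (i) to a purely multiplicative statement about the entries of $R$, and then to recognize that statement as the rank-one condition of (ii).

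First I would compute the effect of $U$ on the functional. Since the $(i,j)$ entry of $R \star M$ is $r_{i,j}\,m_{i,j}$, expanding the Schur functional gives
$$\widetilde{f}(R \star M)=\sum_{\sigma \in \mathfrak{S}_n} f(\sigma)\Bigl(\prod_{j=1}^n r_{\sigma(j),j}\Bigr)\Bigl(\prod_{j=1}^n m_{\sigma(j),j}\Bigr)=\widetilde{g}(M),\quad\text{where } g(\sigma):=f(\sigma)\prod_{j=1}^n r_{\sigma(j),j}.$$
Because $g(\sigma)=\widetilde{g}(P_\sigma)$ for every $\sigma$, the identity $\widetilde{g}=\widetilde{f}$ is equivalent to $g=f$; and since $f$ never vanishes, this reduces (i) to the condition that $\prod_{j=1}^n r_{\sigma(j),j}=1$ for every $\sigma \in \mathfrak{S}_n$. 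Call this condition $(\ast)$.

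Next I would extract the structural consequences of $(\ast)$. Applying it to $\sigma=\id$ already yields that the product of the diagonal entries of $R$ equals $1$. Moreover, every position $(i,j)$ occurs as $(\sigma(j),j)$ for a suitable $\sigma$, so $(\ast)$ forces all entries of $R$ to lie in $\F^*$. The crucial step is to compare the products attached to $\sigma$ and to $\sigma \circ \tau_{k,l}$: the two permutations agree outside columns $k$ and $l$ and merely swap the values there, so dividing the two instances of $(\ast)$ (legitimate since all entries are nonzero) yields $r_{\sigma(k),k}\,r_{\sigma(l),l}=r_{\sigma(l),k}\,r_{\sigma(k),l}$. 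Letting $\sigma$ range over all permutations with $k \neq l$ fixed, the pair $(\sigma(k),\sigma(l))$ runs over all ordered pairs of distinct indices, so this says precisely that every $2\times 2$ minor $r_{i,k}r_{j,l}-r_{i,l}r_{j,k}$ of $R$ vanishes. Hence $\rk R \leq 1$, and as $R$ has nonzero entries, $\rk R=1$; this proves (i) $\Rightarrow$ (ii). I expect this minor-extraction argument to be the one genuine point of the proof, the rest being bookkeeping.

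It remains to close the loop. The equivalence (ii) $\Leftrightarrow$ (iii) is immediate: a rank one matrix is exactly one of the form $XY^T$ with $X,Y$ nonzero column vectors, its diagonal entries are the products $x_k y_k$, and the product of those diagonal entries is $\prod_{k=1}^n x_k y_k$. Finally, to obtain (iii) $\Rightarrow$ (i) I would substitute $r_{\sigma(j),j}=x_{\sigma(j)}\,y_j$ into the product and use that $\sigma$ is a bijection, getting $\prod_{j=1}^n r_{\sigma(j),j}=\bigl(\prod_{i} x_i\bigr)\bigl(\prod_{j} y_j\bigr)=\prod_{k=1}^n x_k y_k=1$, which recovers $(\ast)$ and hence (i). This completes the cycle (i) $\Rightarrow$ (ii) $\Leftrightarrow$ (iii) $\Rightarrow$ (i).
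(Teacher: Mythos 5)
Your proposal is correct and follows essentially the same route as the paper: both reduce condition (i), via evaluation at permutation matrices and the nonvanishing of $f$, to the multiplicative condition $\prod_{j=1}^n r_{\sigma(j),j}=1$ for all $\sigma$, and both extract the vanishing of the $2\times 2$ minors by comparing the products attached to two permutations differing by a transposition (the paper composes on the left by a row transposition, you compose on the right by a column transposition, which is the same computation). The remaining implications, (ii) $\Leftrightarrow$ (iii) and (iii) $\Rightarrow$ (i), are handled identically in both arguments.
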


\begin{proof}
Throughout the proof, we write $R=(r_{i,j})_{1 \leq i,j \leq n.}$
Classically, a matrix of $\Mat_n(\F)$ has rank $1$ if and only if it reads $XY^T$ for some nonzero matrices $X,Y$ in $\F^n$,
and it easily follows that conditions (ii) and (iii) are equivalent.

Next, note that
$$\forall M \in \Mat_n(\F), \; \widetilde{f}(U(M))=\sum_{\sigma \in \mathfrak{S}_n} f(\sigma) \prod_{j=1}^n r_{\sigma(j),j}\,m_{\sigma(j),j}.$$
It follows that $U$ is an $(f,f)$-transformation if and only if
\begin{equation}\label{Crank1}
\forall \sigma \in \mathfrak{S}_n, \; \prod_{j=1}^n r_{\sigma(j),j}=1
\end{equation}
(for the direct implication, take for $M$ any permutation matrix).
From there, it is obvious that condition (iii) implies condition (i).

Conversely, assume that condition (i) holds, so that \eqref{Crank1} also holds. Note in particular that all the entries of
$R$ are nonzero.
The case $\sigma=\id$ shows that the product of the diagonal entries of $R$ equals $1$.
It remains to prove that $R$ has rank $1$, i.e.\ that all the $2$ by $2$ minors of $R$ vanish.
To this end, let $i_1,i_2,j_1,j_2$ be indices in $\lcro 1,n\rcro$ such that $i_1<i_2$ and $j_1<j_2$.
Choose $\sigma \in \mathfrak{S}_n$ such that $\sigma(j_k)=i_k$ for all $k \in \{1,2\}$. Denote finally by $\tau$ the transposition of $\lcro 1,n\rcro$
that exchanges $i_1$ and $i_2$. Applying \eqref{Crank1} to $\tau \sigma$ and $\sigma$ yields
$$r_{i_2,j_1}r_{i_1,j_2} \prod_{k \in \lcro 1,n\rcro \setminus \{j_1,j_2\}}r_{\sigma(k),k}=r_{i_2,j_2}r_{i_1,j_1}\prod_{k \in \lcro 1,n\rcro \setminus \{j_1,j_2\}}r_{\sigma(k),k},$$
whence $r_{i_2,j_1}r_{i_1,j_2} =r_{i_2,j_2}r_{i_1,j_1}$.
Hence, all the $2$ by $2$ minors of $R$ vanish, and we conclude that $R$ has rank $1$. Thus, we have shown that condition (i) implies condition (ii), which completes the proof.
\end{proof}

\subsection{The key equivalence relations on matrix functionals}

We define a right-action of the group $(\Mat_n(\F^*),\star)$
on the set of all maps from  $\mathfrak{S}_n$ to $\F^*$ as follows:
given $A \in \Mat_n(\F^*)$ and $f  : \mathfrak{S}_n \rightarrow \F^*$, we define
$f.A$ as
$$g : \sigma \mapsto f(\sigma)\prod_{k=1}^n a_{\sigma(k),k}$$
or, alternatively, as the map from $\mathfrak{S}_n$ to $\F^*$ whose associated Schur functional is
$$M \mapsto \widetilde{f}(A\star M).$$

Two maps from $\mathfrak{S}_n$ to $\F^*$ are called \textbf{H-equivalent}\footnote{The letter ``H" stands for ``Hadamard product".}
when they belong to the same orbit under the above action of $(\Mat_n(\F^*),\star)$.

\begin{Rem}\label{HequivalentScalarProduct}
Let $\alpha \in \F^*$. Then, $\alpha f$ is H-equivalent to $f$. Indeed,
by taking $A=(a_{i,j}) \in \Mat_n(\F)$ as the matrix in which all the entries in the first column
equal $\alpha$, and all the other ones equal $1$, we have
$\widetilde{f}(A \star M)=\widetilde{\alpha f}(M)$ for all $M \in \Mat_n(\F)$.

Combining this with Remark \ref{n=2remark} yields that every mapping from $\mathfrak{S}_2$ to $\F^*$
is H-equivalent to the signature.
\end{Rem}

Next, we consider the semi-direct product $\Mat_n(\F^*) \rtimes (\mathfrak{S}_n)^2$ associated with the group homomorphism
$$(\tau, \tau') \in (\mathfrak{S}_n)^2 \mapsto (A \mapsto P_\tau A P_{\tau'}^{-1}) \in \Aut(\Mat_n(\F^*),\star),$$
and we define a right-action of this semi-direct product on the function set $\calF(\mathfrak{S}_n,\F^*)$ as follows:
given a triple $(A,\tau,\tau') \in \Mat_n(\F^*) \rtimes (\mathfrak{S}_n)^2$ and a map
$f : \mathfrak{S}_n \rightarrow \F^*$, we define
$f.(A,\tau,\tau')$ as the map
$$\sigma \mapsto f(\tau \sigma \tau'^{-1})\prod_{k=1}^n a_{(\tau \sigma \tau'^{-1})(k),k}$$
or, alternatively, as the map whose associated functional
is
$$M \mapsto \widetilde{f}\bigl(A\star (P_\tau M P_{\tau'}^{-1})\bigr).$$

Two maps from $\mathfrak{S}_n$ to $\F^*$ are called \textbf{PH-equivalent}\footnote{The letter ``P" stands for ``permutation".}
whenever they belong to the same orbit under the above action of $\Mat_n(\F^*)\rtimes (\mathfrak{S}_n)^2$.

\vskip 3mm
Here is another important construction: to any $f : \mathfrak{S}_n \rightarrow \F^*$, we associate its \textbf{transpose}
$$f^T: \sigma \in \mathfrak{S}_n \mapsto f(\sigma^{-1}).$$
One checks that the functional $\widetilde{f^T}$ is no other than $M \mapsto \widetilde{f}(M^T)$.

\subsection{Main results, and structure of the article}

We can now state some of the main results of this article.

\begin{theo}\label{automtheo}
Let $f$ and $g$ be maps from $\mathfrak{S}_n$ to $\F^*$.
Every $(f,g)$-transformation is an automorphism of the vector space $\Mat_n(\F)$.
\end{theo}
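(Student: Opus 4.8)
The plan is to reduce the statement to the injectivity of $U$: since $\Mat_n(\F)$ is finite-dimensional, an injective endomorphism is automatically bijective, hence an automorphism. So I assume that $U$ is an $(f,g)$-transformation and aim to prove $\Ker U = \{0\}$. First I would convert a hypothetical kernel element into an invariance property of $\widetilde{f}$ alone. If $N \in \Ker U$, then $U(M+N) = U(M)$ for every $M$, and applying $\widetilde{g}$ together with the defining relation $\widetilde{g}(U(\cdot)) = \widetilde{f}(\cdot)$ gives
$$\widetilde{f}(M+N) = \widetilde{f}(M) \quad \text{for all } M \in \Mat_n(\F).$$
Thus the function $g$ drops out entirely, and everything reduces to showing that $\widetilde{f}$ admits no nonzero additive period.

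The delicate point, and the step I expect to be the main obstacle, is that over a finite field this pointwise identity does not by itself yield an identity of \emph{polynomials}, which is what I need in order to compare coefficients. I would overcome this by exploiting the special shape of $\widetilde{f}$: each monomial $\prod_{j} m_{\sigma(j),j}$ is multiaffine, i.e.\ every entry $m_{i,j}$ occurs with degree at most $1$, and this feature is preserved under the translation $M \mapsto M+N$. Consequently $P(M) := \widetilde{f}(M+N) - \widetilde{f}(M)$ is a multiaffine polynomial in the $n^2$ entries of $M$ vanishing at every point of $\F^{n^2}$. A short induction on the number of variables shows that, over any field with at least two elements, a multiaffine polynomial vanishing on all of $\F^{n^2}$ must be the zero polynomial (fixing all but one variable leaves a degree-$\le 1$ polynomial in the last variable that vanishes at $\ge 2$ points, forcing both its coefficients to be $0$). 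This upgrades the pointwise identity to the genuine polynomial identity $\widetilde{f}(M+N) = \widetilde{f}(M)$, now amenable to coefficient comparison.

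Finally I would recover the entries of $N$ by isolating a single monomial. Fix indices $r,s$ and a permutation $\sigma$ with $\sigma(s)=r$, and consider the degree-$(n-1)$ monomial $\prod_{j\neq s} m_{\sigma(j),j}$. It does not appear in the homogeneous degree-$n$ polynomial $\widetilde{f}(M)$, whereas in the expansion $\widetilde{f}(M+N)=\sum_{\pi} f(\pi)\prod_{j}(m_{\pi(j),j}+N_{\pi(j),j})$ it arises exactly once: the column pattern forces the variable factors to cover precisely the columns $\neq s$ with $\pi(j)=\sigma(j)$ there, hence $\pi=\sigma$, and the remaining factor contributes the constant $N_{\sigma(s),s}=N_{r,s}$, giving coefficient $f(\sigma)\,N_{r,s}$. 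The polynomial identity therefore forces $f(\sigma)\,N_{r,s}=0$, and since $f$ vanishes nowhere we conclude $N_{r,s}=0$.

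Since $r,s$ were arbitrary, this gives $N=0$, so $\Ker U=\{0\}$ and $U$ is an automorphism. I should stress that the hypothesis that $f$ takes no zero value is exactly what makes the last step work: it is what lets a single nonzero coefficient $f(\sigma)$ detect each entry of $N$. The multiaffine lemma is the only place where the finite-field case genuinely requires care, and isolating it there keeps the argument uniform over all fields.
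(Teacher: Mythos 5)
Your proof is correct, but it takes a genuinely different route from the paper's. The paper proves a stronger lemma (Lemma \ref{injectlemma}): it assumes only that $U$ preserves the null cone in both directions, i.e.\ $\widetilde{g}(U(M))=0 \Leftrightarrow \widetilde{f}(M)=0$, which is weaker than the full identity $\widetilde{g}\circ U=\widetilde{f}$. There, a kernel element $N$ only yields that $\calC(f)$ is stable under the translation $M \mapsto M+N$, and the contradiction is obtained by an explicit construction: after a PH-equivalence one arranges $n_{n,n}\neq 0$, then the matrix $B$ whose first $n-1$ columns are those of $I_n-N$ and whose last column is zero satisfies $\widetilde{f}(B)=0$ while $\widetilde{f}(N+B)=f(\id)\,n_{n,n}\neq 0$. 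That extra generality is not gratuitous: the paper needs injectivity under the weak hypothesis later (Theorem \ref{maintheo} and Theorem \ref{fgtransfotheo} concern maps satisfying only the null-cone condition). Your argument, by contrast, exploits the full strength of the $(f,g)$-transformation hypothesis to get the pointwise identity $\widetilde{f}(M+N)=\widetilde{f}(M)$, upgrades it to a formal polynomial identity via the multiaffine vanishing lemma (correctly handling finite fields, since degree $\leq 1$ in each variable plus $|\F|\geq 2$ forces the zero polynomial), and then reads off $f(\sigma)\,n_{r,s}=0$ from the coefficient of the degree-$(n-1)$ monomial $\prod_{j\neq s}m_{\sigma(j),j}$ — a coefficient which is indeed zero in $\widetilde{f}(M)$ since that polynomial is homogeneous of degree $n$ with distinct monomials. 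All steps check out: the isolation of $\pi=\sigma$ from the column pattern is valid because two permutations agreeing on $n-1$ points are equal. Interestingly, your multiaffine-comparison device is exactly the one the paper itself deploys later, in the proof of Theorem \ref{DimensionEqualityTheorem}; so your proof is in the spirit of the paper's toolkit, just applied at a different spot. What you lose relative to the paper is reusability under the weaker null-cone hypothesis; what you gain is a shorter, computation-free argument requiring no PH-equivalence reduction and no case preparation.
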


As a corollary, every $(f,g)$-transformation $U$ is invertible and its inverse is a $(g,f)$-transformation.

Consider the category $\calC$ whose objects are the maps from $\mathfrak{S}_n$ to $\F^*$
and in which, given two such objects $f$ and $g$, the morphisms from $f$ to $g$ are the $(f,g)$-transformations
(with the composition of morphisms defined as the composition of endomorphisms of $\Mat_n(\F)$).
If follows from the above result that $\calC$ is actually a \emph{groupoid}. Hence,
in order to determine the morphisms in $\calC$, it suffices to answer the following questions:
\begin{itemize}
\item Given two objects $f$ and $g$, when does there exist an $(f,g)$-transformation?
\item Given an object $f$, what are the $(f,f)$-transformations?
\end{itemize}
Moreover, to answer the second question for a specific $f$, it suffices to answer it for a well-chosen
$g$ for which there exists an $(f,g)$-transformation.

The next theorem yields a full answer to the first question:

\begin{theo}
Let $f$ and $g$ be maps from $\mathfrak{S}_n$ to $\F^*$.
The following conditions are equivalent:
\begin{enumerate}[(i)]
\item An $(f,g)$-transformation exists.
\item The mapping $g$ is PH-equivalent to $f$ or to $f^T$.
\end{enumerate}
\end{theo}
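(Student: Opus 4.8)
The plan is to observe first that the two relations in (ii) are exactly the ones realized by the most transparent transformations, so that (ii) $\Rightarrow$ (i) is a matter of writing down explicit maps, and then to attack the substantive implication (i) $\Rightarrow$ (ii) by extracting from an arbitrary transformation the PH-invariant combinatorial data of $f$. For (ii) $\Rightarrow$ (i), suppose first that $g$ is PH-equivalent to $f$, say $\widetilde{g}(M)=\widetilde{f}\bigl(A\star(P_\tau M P_{\tau'}^{-1})\bigr)$ for all $M$, with $A\in\Mat_n(\F^*)$ and $\tau,\tau'\in\mathfrak{S}_n$. I would set $U(M):=P_\tau^{-1}(A^{[-1]}\star M)P_{\tau'}$; since then $P_\tau U(M)P_{\tau'}^{-1}=A^{[-1]}\star M$ and $A\star A^{[-1]}=E$, one gets $\widetilde{g}(U(M))=\widetilde{f}(E\star M)=\widetilde{f}(M)$, so $U$ is an $(f,g)$-transformation. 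If instead $g$ is PH-equivalent to $f^T$, the identity $\widetilde{f^T}(N)=\widetilde{f}(N^T)$, together with $P_\sigma^T=P_{\sigma^{-1}}$ and $(A\star B)^T=A^T\star B^T$, turns the same computation into $U(M):=P_\tau^{-1}(A^{[-1]}\star M^T)P_{\tau'}$, which one checks in the same way to be an $(f,g)$-transformation. This settles the easy implication.

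The content is (i) $\Rightarrow$ (ii). I would first note that the two explicit families above are precisely the transformations of Hadamard-times-permutation type, with or without a transpose, and that, conversely, any transformation of that shape forces $g$ to be PH-equivalent to $f$ or to $f^T$. Thus the theorem is equivalent to the assertion that the mere existence of \emph{some} $(f,g)$-transformation already implies the existence of one of this special shape. By Theorem \ref{automtheo} any such $U$ is invertible, and the defining identity $\widetilde{g}(U(M))=\widetilde{f}(M)$ is an equality of homogeneous degree-$n$ polynomials in the entries of $M$. The right-hand side is multilinear in the columns of $M$ and its monomials are exactly the transversals $\prod_j m_{\sigma(j),j}$ attached to permutations; forcing $\widetilde{g}\circ U$ to have this very rigid monomial support is the lever that constrains $U$.

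Concretely, I would probe the identity at permutation matrices and at perturbations of them. Note that on rank-one matrices the functional degenerates, $\widetilde{f}(XY^T)=\bigl(\sum_\sigma f(\sigma)\bigr)\prod_i x_i\prod_j y_j$, so rank-one inputs see only $\sum_\sigma f(\sigma)$ and carry no finer information; the useful probes are therefore the $P_\sigma$ and the perturbations $P_\sigma+tE_{i,j}$, whose images under $U$ feed controlled transversal monomials into $\widetilde{g}$. Comparing coefficients, I expect to recover the second-order ratios $f(\tau_{a,b}\sigma)/f(\sigma)$ and to match them, up to the relabeling induced by $U$, with the corresponding ratios for $g$. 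Since the Hadamard action multiplies $f$ by coboundary-type factors $\prod_k a_{\sigma(k),k}$, these ratios are not themselves invariant, but suitable products of them around rectangles and cycles are PH-invariant and are merely inverted and relabeled under $f\mapsto f^T$; the aim is to show that $U$ preserves this cocycle-type invariant, whence $g$ carries the same invariant as $f$ or as $f^T$ and can be reached from one of them by an obvious transformation.

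The main obstacle is that $U$ itself need not be of Hadamard-times-permutation type. Theorem \ref{detpreservers} shows that for $f=\sgn$ the transformations are all the maps $M\mapsto PMQ$ and $M\mapsto PM^TQ$ with $\det(P)\det(Q)=1$, which are as flexible as $\GL_n(\F)\times\GL_n(\F)$ and bear no resemblance to a monomial map. Hence one cannot read the $f$–$g$ relationship directly off the matrix of $U$; the argument must extract only the coarse PH-class, up to transpose, while leaving $U$ otherwise undetermined. Making this extraction rigorous -- isolating the precise invariant attached to $f$ that every transformation must respect, and then realizing it by a transformation of the special shape -- is where I expect the detailed combinatorial analysis of the paper to be required.
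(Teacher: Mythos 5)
Your implication (ii) $\Rightarrow$ (i) is correct and coincides with what the paper gets essentially for free from the definition of the PH-action: $U(M):=P_\tau^{-1}(A^{[-1]}\star M)P_{\tau'}$, with a transpose inserted when $g$ is PH-equivalent to $f^T$, is indeed an $(f,g)$-transformation, and your verification is sound. The problem is the converse, which is the entire content of the theorem, and there you have not given a proof. Your reduction --- ``it suffices to show that the existence of some $(f,g)$-transformation implies the existence of one of Hadamard-times-permutation type'' --- is a correct but empty reformulation of (i) $\Rightarrow$ (ii), and the plan that follows (probe the polynomial identity $\widetilde{g}(U(M))=\widetilde{f}(M)$ at matrices $P_\sigma+tE_{i,j}$, recover the ratios $f(\tau_{a,b}\sigma)/f(\sigma)$, organize them into cocycle-type PH-invariants, and show that $U$ preserves them) is never carried out: you do not define the invariant, you do not prove that an arbitrary transformation preserves it, and you do not prove that equality of such invariants forces PH-equivalence up to transpose. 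Indeed, you yourself identify the obstruction that defeats naive coefficient comparison --- $U$ can be as wild as $M\mapsto PMQ$ with $(P,Q)\in\GL_n(\F)^2$ (Theorem \ref{detpreservers}), so $\widetilde{g}(U(M))$ expands into a sum over all permutations of products of arbitrary linear forms in the entries of $M$, and no transversal monomial of $\widetilde{f}$ can be matched against data of $g$ without first taming $U$ --- and you explicitly defer its resolution to ``the detailed combinatorial analysis of the paper.'' That deferred analysis is precisely the missing proof.

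For comparison, the taming of $U$ is what the paper's machinery accomplishes, by geometric rather than coefficient-comparison means: an $(f,g)$-transformation satisfies condition $(\calC)$ of Section \ref{transfosection}, hence is bijective (Theorem \ref{automtheo}); after replacing $f$ and $g$ by PH-equivalent fully-normalized mappings (Proposition \ref{fullnormalizedprop}), the linear subspaces of codimension $n$ contained in the null cones are classified as the spaces $\calV_X$ and $\calV^X$ with $X$ adapted (Theorems \ref{DimensionInequalityTheorem} and \ref{DimensionEqualityTheorem}); since $U$ and $U^{-1}$ must carry such subspaces to such subspaces, $U$ is forced into the shape $M\mapsto K\star(P_\sigma V(M)P_\tau)$, possibly with a transpose (Theorem \ref{fgtransfotheo}); and only then does a null-cone identification (Proposition \ref{coneprop}: $\calC(h)=\calC(f')$ forces $h=\beta f'$) yield that $g$ is PH-equivalent to $f$ or to $f^T$ (Corollary \ref{fgtransfocor}). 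Any complete proof of (i) $\Rightarrow$ (ii) must supply an argument of comparable strength; your proposal stops exactly where that work begins.
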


As far as the second question is concerned, we will give a partial answer to it in Section \ref{transfosection}.
This answer cannot be stated at this point of the article because it involves important objects that are attached to $f$
and that are studied in Section \ref{normalizeSection}, namely the column and row partitions of $f$.
The answer is only partial because we do not reach a completely closed form for the $(f,f)$-transformations: we will
prove that the $(f,f)$-transformations have a certain form, but not all maps of the given form are $(f,f)$-transformations in general.
However, in the special case when $f$ is central, i.e.\ constant on every conjugacy class of the group $\mathfrak{S}_n$,
 we will give a closed form for the $(f,f)$-transformations (Section \ref{centralsection}).
In particular, we will generalize Theorem \ref{perpreservers} to an arbitrary field with characteristic not $2$
(with no restriction of cardinality).

Our approach to the study of $(f,g)$-transformations is a traditional one that dates back to Dieudonn\'e:
first, one determines the linear subspaces of $\Mat_n(\F)$ with the maximal dimension among those
that are included in the null cone of $\widetilde{f}$ (Section \ref{conesection}); those spaces are deeply connected to the row and column partitions associated with $f$; then, given an $(f,g)$-transformation $U$, one considers the inverse image under $U$ of such a subspace attached to $\widetilde{g}$, which yields precious information on $U$ (see Section \ref{transfosection}).

\section{Normalized mappings}\label{normalizeSection}

In this section, we introduce the row and column partitions of a mapping $f : \mathfrak{S}_n \rightarrow \F^*$.
Then, we show that $f$ is always PH-equivalent to a specific type of mapping called \emph{fully-normalized}.
Fully-normalized mappings are important because their linear preservers are much more easily expressed than in the general case.

\subsection{The column and row partitions attached to $f$}

\begin{lemma}\label{BasicEquivalenceLemma}
Let $i,j$ be distinct elements of $\lcro 1,n\rcro$, and $Z=(z_k)_{1 \leq k \leq n} \in (\F^*)^n$.
The following conditions are equivalent:
\begin{itemize}
\item[(i)] The map $\widetilde{f}$ vanishes at every $M \in \Mat_n(\F)$ such that $C_j(M)=Z \star C_i(M)$.
\item[(ii)] One has $f(\sigma   \tau_{i,j})=-\frac{z_{\sigma(j)}}{z_{\sigma(i)}} f(\sigma)$ for all $\sigma \in \mathfrak{S}_n$.
\end{itemize}
The following conditions are also equivalent:
\begin{itemize}
\item[(iii)] The map $\widetilde{f}$ vanishes at every $M \in \Mat_n(\F)$ such that $R_j(M)=Z^T \star R_i(M)$.
\item[(iv)] One has $f(\tau_{i,j}   \sigma)=-\frac{z_{\sigma^{-1}(j)}}{z_{\sigma^{-1}(i)}} f(\sigma)$ for all $\sigma \in \mathfrak{S}_n$.
\end{itemize}
\end{lemma}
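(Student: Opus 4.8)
The plan is to establish the equivalence (i)$\,\Leftrightarrow\,$(ii) by a direct computation with the defining sum, and then to deduce (iii)$\,\Leftrightarrow\,$(iv) formally by transposition, thereby avoiding a second computation. For the implication (i)$\,\Rightarrow\,$(ii), I would fix $\sigma \in \mathfrak{S}_n$ and test condition (i) on a single well-chosen matrix. Writing $e_1,\dots,e_n$ for the standard basis of $\F^n$ (seen as columns), define $M \in \Mat_n(\F)$ by $C_k(M)=e_{\sigma(k)}$ for every $k \in \lcro 1,n\rcro \setminus \{i,j\}$, together with $C_i(M)=e_{\sigma(i)}+e_{\sigma(j)}$ and $C_j(M)=Z \star C_i(M)=z_{\sigma(i)}e_{\sigma(i)}+z_{\sigma(j)}e_{\sigma(j)}$. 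This $M$ lies in the subspace defined by $C_j(M)=Z \star C_i(M)$, so (i) forces $\widetilde{f}(M)=0$. In the sum $\widetilde{f}(M)=\sum_\pi f(\pi)\prod_k m_{\pi(k),k}$, a permutation $\pi$ contributes a nonzero monomial only if $\pi(k)=\sigma(k)$ for all $k \neq i,j$ and $\{\pi(i),\pi(j)\}=\{\sigma(i),\sigma(j)\}$, which leaves exactly $\pi=\sigma$ and $\pi=\sigma\tau_{i,j}$. Evaluating the two surviving monomials gives $\widetilde{f}(M)=z_{\sigma(j)}\,f(\sigma)+z_{\sigma(i)}\,f(\sigma\tau_{i,j})$, and equating this to $0$ is precisely the identity in (ii); since $\sigma$ is arbitrary, (ii) follows.

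For the converse (ii)$\,\Rightarrow\,$(i), I would group the terms of $\widetilde{f}(M)$ according to the pairs $\{\sigma,\sigma\tau_{i,j}\}$, which partition $\mathfrak{S}_n$ since $\tau_{i,j}$ is a fixed-point-free involution acting by right translation. For a matrix $M$ satisfying $m_{k,j}=z_k\,m_{k,i}$ for all $k$, I would factor the common product $\prod_{k\neq i,j}m_{\sigma(k),k}$ out of the two terms of a pair and substitute $m_{\sigma(j),j}=z_{\sigma(j)}m_{\sigma(j),i}$ and $m_{\sigma(i),j}=z_{\sigma(i)}m_{\sigma(i),i}$. Each pair then contributes that common product times $m_{\sigma(i),i}\,m_{\sigma(j),i}\bigl(z_{\sigma(j)}f(\sigma)+z_{\sigma(i)}f(\sigma\tau_{i,j})\bigr)$, and the bracketed factor vanishes by (ii). One checks that this bracket is invariant under $\sigma \leftrightarrow \sigma\tau_{i,j}$, so the grouping is well defined; summing over all pairs yields $\widetilde{f}(M)=0$.

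Finally, I would read off (iii)$\,\Leftrightarrow\,$(iv) from the transpose identity $\widetilde{f}(M)=\widetilde{f^T}(M^T)$ recorded just before the lemma. The row relation $R_j(M)=Z^T \star R_i(M)$ says $m_{j,k}=z_k\,m_{i,k}$ for all $k$, which is exactly the column relation $C_j(M^T)=Z \star C_i(M^T)$; hence, as $M$ runs over the matrices appearing in (iii), $N:=M^T$ runs over all matrices appearing in the column hypothesis of (i) for the map $f^T$. Thus (iii) for $f$ is literally (i) for $f^T$, so by the equivalence already proved it amounts to (ii) for $f^T$, namely $f^T(\sigma\tau_{i,j})=-\frac{z_{\sigma(j)}}{z_{\sigma(i)}}f^T(\sigma)$ for all $\sigma$. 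Unwinding $f^T(\rho)=f(\rho^{-1})$ and using that $\tau_{i,j}$ is an involution rewrites this as $f(\tau_{i,j}\sigma^{-1})=-\frac{z_{\sigma(j)}}{z_{\sigma(i)}}f(\sigma^{-1})$, and the substitution $\sigma \mapsto \sigma^{-1}$ then produces condition (iv) verbatim.

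The main obstacle is the bookkeeping in the two computations underlying (i)$\,\Leftrightarrow\,$(ii): choosing the test matrix so that precisely $\sigma$ and $\sigma\tau_{i,j}$ survive in the first direction, and correctly tracking which factor $z_{\sigma(i)}$ or $z_{\sigma(j)}$ attaches to which term after the substitution $m_{k,j}=z_k m_{k,i}$ in the second. Once that first equivalence is secured, the passage to (iii)$\,\Leftrightarrow\,$(iv) via transposition is purely formal.
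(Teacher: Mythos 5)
Your proposal is correct and follows essentially the same route as the paper: the same test matrix (columns $e_{\sigma(k)}$ away from $i,j$, with $C_i(M)=e_{\sigma(i)}+e_{\sigma(j)}$ and $C_j(M)=Z\star C_i(M)$) isolating exactly $\sigma$ and $\sigma\tau_{i,j}$ for (i)$\Rightarrow$(ii), the same pairing of $\mathfrak{S}_n$ into cosets $\{\sigma,\sigma\tau_{i,j}\}$ for (ii)$\Rightarrow$(i) (the paper merely fixes representatives in $\mathfrak{A}_n$ where you instead check invariance of the bracket), and the same transposition argument via $\widetilde{f^T}(M)=\widetilde{f}(M^T)$ for (iii)$\Leftrightarrow$(iv), which you spell out in slightly more detail than the paper does.
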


\begin{proof}
Assume that condition (ii) holds. Let $M=(m_{k,l})_{1 \leq k,l\leq n} \in \Mat_n(\F)$ be such that $C_j(M)=Z \star C_i(M)$.
Then,
\begin{align*}
\widetilde{f}(M) & =\sum_{\sigma \in \mathfrak{A}_n} \Bigl(f(\sigma)\prod_{k=1}^n m_{\sigma(k),k}+f(\sigma   \tau_{i,j})
\prod_{k=1}^n m_{(\sigma   \tau_{i,j})(k),k}\Bigr) \\
& = \sum_{\sigma \in \mathfrak{A}_n} f(\sigma)\biggl(\prod_{k=1}^n m_{\sigma(k),k}-\frac{z_{\sigma(j)}}{z_{\sigma(i)}}
\prod_{k=1}^n m_{\sigma(\tau_{i,j}(k)),k}\biggr) \\
& = \sum_{\sigma \in \mathfrak{A}_n} f(\sigma) \biggl(\prod_{k \in \lcro 1,n\rcro \setminus \{i,j\}} m_{\sigma(k),k}\biggr)\,
\biggl(m_{\sigma(i),i}\, m_{\sigma(j),j}-\frac{z_{\sigma(j)}}{z_{\sigma(i)}}\,m_{\sigma(j),i}\,m_{\sigma(i),j}\biggr) \\
& = \sum_{\sigma \in \mathfrak{A}_n} f(\sigma) \biggl(\prod_{k \in \lcro 1,n\rcro \setminus \{i,j\} }m_{\sigma(k),k}\biggr)\,
\biggl(m_{\sigma(i),i}\, z_{\sigma(j)}\, m_{\sigma(j),i}-\frac{z_{\sigma(j)}}{z_{\sigma(i)}}\,m_{\sigma(j),i}\, z_{\sigma(i)}\, m_{\sigma(i),i}\biggr) \\
& =0.
\end{align*}

Conversely, assume that condition (i) holds.
Let $\sigma \in \mathfrak{S}_n$. Consider the matrix
$M=(m_{k,l})_{1 \leq k,l \leq n}$ of $\Mat_n(\F)$ defined as follows:
$$m_{k,l}=\begin{cases}
1 & \text{if $k=\sigma(l)$ and $l \not\in \{i,j\}$} \\
1 & \text{if $k\in \{\sigma(i),\sigma(j)\}$ and $l=i$} \\
z_{\sigma(i)} & \text{if $(k,l)=(\sigma(i),j)$} \\
z_{\sigma(j)} & \text{if $(k,l)=(\sigma(j),j)$} \\
0 & \text{otherwise.}
\end{cases}$$
On the one hand, we have $C_j(M)=Z \star C_i(M)$, whence $\widetilde{f}(M)=0$.
On the other hand, one sees that for all $\sigma' \in \mathfrak{S}_n$, one has
$\underset{k=1}{\overset{n}{\prod}} m_{\sigma'(l),l}=0$ whenever $\sigma' \neq \sigma$ and $\sigma' \neq \sigma   \tau_{i,j}$.
Hence,
$$f(\sigma) \prod_{l=1}^n m_{\sigma(l),l}+f(\sigma   \tau_{i,j}) \prod_{l=1}^n m_{(\sigma   \tau_{i,j})(l),l}=0,$$
which reads
$$z_{\sigma(j)} f(\sigma)+z_{\sigma(i)} f(\sigma  \tau_{i,j})=0.$$
Therefore, condition (ii) is satisfied.

To obtain that conditions (iii) and (iv) are equivalent, we note that
$M \mapsto \widetilde{f}(M^T)$ is the matrix functional associated with $f^T$ and we apply
the equivalence between conditions (i) and (ii) to this functional.
\end{proof}

\begin{lemma}[Transitivity lemma]\label{Transitivitylemma}
Let $i,j,k$ be distinct indices in $\lcro 1,n\rcro$, and $Z$ and $Z'$ be vectors of $(\F^*)^n$.
Assume that:
\begin{enumerate}[(i)]
\item $\widetilde{f}$ vanishes at every matrix $M \in \Mat_n(\F)$ such that $C_j(M)=Z \star C_i(M)$;
\item $\widetilde{f}$ vanishes at every matrix $M \in \Mat_n(\F)$ such that $C_k(M)=Z' \star C_j(M)$.
\end{enumerate}
Then, $\widetilde{f}$ vanishes at every matrix $M \in \Mat_n(\F)$ such that $C_k(M)=(Z \star Z') \star C_i(M)$.
\end{lemma}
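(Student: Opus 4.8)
The plan is to reduce everything to the algebraic characterizations furnished by Lemma \ref{BasicEquivalenceLemma}. Writing $Z=(z_k)$ and $Z'=(z'_k)$, hypotheses (i) and (ii) are respectively equivalent (by the equivalence of conditions (i) and (ii) in that lemma, applied to the index pairs $(i,j)$ and $(j,k)$) to the two multiplicative relations
\begin{align}
f(\sigma\,\tau_{i,j}) &= -\frac{z_{\sigma(j)}}{z_{\sigma(i)}}\,f(\sigma), \tag{A}\\
f(\sigma\,\tau_{j,k}) &= -\frac{z'_{\sigma(k)}}{z'_{\sigma(j)}}\,f(\sigma), \tag{B}
\end{align}
valid for every $\sigma \in \mathfrak{S}_n$. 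Setting $W:=Z \star Z'$, i.e.\ $w_m=z_m z'_m$, the desired conclusion is in turn equivalent, again by Lemma \ref{BasicEquivalenceLemma} applied to the pair $(i,k)$, to the single relation $f(\sigma\,\tau_{i,k})=-\frac{w_{\sigma(k)}}{w_{\sigma(i)}}\,f(\sigma)$ for all $\sigma$. So the whole problem becomes purely group-theoretic: I must derive this last identity from (A) and (B).

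The engine is the factorization $\tau_{i,k}=\tau_{i,j}\,\tau_{j,k}\,\tau_{i,j}$, a direct check on the three moved points. I would then compute $f(\sigma\,\tau_{i,k})=f\bigl(\sigma\,\tau_{i,j}\,\tau_{j,k}\,\tau_{i,j}\bigr)$ by applying (A), then (B), then (A), each time to the permutation obtained so far. The only thing to watch is how the relevant images at $i,j,k$ get shuffled at each stage: right-multiplying by $\tau_{i,j}$ swaps the values at $i$ and $j$ while leaving the value at $k$ fixed (since $k\notin\{i,j\}$), and similarly for $\tau_{j,k}$. Tracking this, the first application of (A) to $\sigma$ contributes the factor $-z_{\sigma(j)}/z_{\sigma(i)}$; applying (B) to $\sigma\tau_{i,j}$, whose images at $k$ and $j$ are $\sigma(k)$ and $\sigma(i)$, contributes $-z'_{\sigma(k)}/z'_{\sigma(i)}$; and the final application of (A) to $\sigma\tau_{i,j}\tau_{j,k}$, whose images at $j$ and $i$ are $\sigma(k)$ and $\sigma(j)$, contributes $-z_{\sigma(k)}/z_{\sigma(j)}$.

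Multiplying the three factors, the copies of $z_{\sigma(j)}$ coming from the first and third terms cancel, and the three minus signs combine into a single overall minus sign, yielding exactly $f(\sigma\,\tau_{i,k})=-\frac{z_{\sigma(k)}z'_{\sigma(k)}}{z_{\sigma(i)}z'_{\sigma(i)}}\,f(\sigma)=-\frac{w_{\sigma(k)}}{w_{\sigma(i)}}\,f(\sigma)$, the required relation. Invoking Lemma \ref{BasicEquivalenceLemma} once more (the converse direction, pair $(i,k)$, vector $W$) then shows that $\widetilde{f}$ vanishes at every $M$ with $C_k(M)=(Z\star Z')\star C_i(M)$. I do not expect a genuine obstacle here: once the reduction to (A) and (B) is in place, the computation is short and essentially forced, and the only place to slip is the bookkeeping of the images $\sigma(i),\sigma(j),\sigma(k)$ through the intermediate products — which is precisely why I would record the factorization $\tau_{i,k}=\tau_{i,j}\,\tau_{j,k}\,\tau_{i,j}$ at the outset.
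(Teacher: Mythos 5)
Your proof is correct and follows essentially the same route as the paper's: reduce both hypotheses and the conclusion to the multiplicative identities of Lemma \ref{BasicEquivalenceLemma}, then chain three applications of those identities along a conjugation-type factorization of $\tau_{i,k}$. The only difference is cosmetic --- you use $\tau_{i,k}=\tau_{i,j}\,\tau_{j,k}\,\tau_{i,j}$ where the paper uses $\tau_{i,k}=\tau_{j,k}\,\tau_{i,j}\,\tau_{j,k}$ --- and your bookkeeping of the images $\sigma(i),\sigma(j),\sigma(k)$ through the intermediate products checks out exactly.
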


\begin{proof}
By Lemma \ref{BasicEquivalenceLemma}, we have $f(\sigma   \tau_{i,j})=-\frac{z_{\sigma(j)}}{z_{\sigma(i)}} f(\sigma)$ and
$f(\sigma   \tau_{j,k})=-\frac{z'_{\sigma(k)}}{z'_{\sigma(j)}} f(\sigma)$ for all $\sigma$ in $\mathfrak{S}_n$.
Let $\sigma \in \mathfrak{S}_n$. Then,
\begin{align*}
f(\sigma \tau_{i,k}) & = f(\sigma \tau_{j,k} \tau_{i,j} \tau_{j,k}) \\
& = -f(\sigma \tau_{j,k} \tau_{i,j}) \frac{z'_{\sigma(j)}}{z'_{\sigma(i)}} \\
& =  f(\sigma \tau_{j,k}) \frac{z_{\sigma(k)}}{z_{\sigma(i)}} \frac{z'_{\sigma(j)}}{z'_{\sigma(i)}} \\
& = -f(\sigma) \frac{z'_{\sigma(k)}}{z'_{\sigma(j)}} \frac{z_{\sigma(k)}}{z_{\sigma(i)}} \frac{z'_{\sigma(j)}}{z'_{\sigma(i)}} \\
& = -f(\sigma) \frac{z_{\sigma(k)} z'_{\sigma(k)}}{z_{\sigma(i)}z'_{\sigma(i)}}\cdot
\end{align*}
The conclusion ensues, by Lemma \ref{BasicEquivalenceLemma}.
\end{proof}

\begin{Def}
Let $i,j$ be elements of $\lcro 1,n\rcro$.

We say that $i$ is \textbf{column-$f$-equivalent} to $j$, and we write $i \underset{C,f}{\sim} j$, when either
$i=j$, or $i \neq j$ and there exists a vector $Z \in (\F^*)^n$ such that
$\widetilde{f}(M)=0$ for all $M \in \Mat_n(\F)$ satisfying $C_j(M)=Z \star C_i(M)$.

We say that $i$ is \textbf{row-$f$-equivalent} to $j$, and we write $i \underset{R,f}{\sim} j$, when either
$i=j$, or $i \neq j$ and there exists a vector $Z \in (\F^*)^n$ such that
$\widetilde{f}(M)=0$ for all $M \in \Mat_n(\F)$ satisfying $R_j(M)=Z^T \star R_i(M)$.
\end{Def}

Using the transpose of $f$, one sees that $i$ is row-$f$-equivalent to $j$ if and only if it is column-$f^T$-equivalent to $j$.

\begin{prop}
The relations of column-$f$-equivalence and row-$f$-equivalence are equivalence relations on $\lcro 1,n\rcro$.
\end{prop}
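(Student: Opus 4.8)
The plan is to prove the statement for column-$f$-equivalence first, and then deduce the row version by transposition, using the observation recorded just before the proposition that $i \underset{R,f}{\sim} j$ if and only if $i$ is column-$f^T$-equivalent to $j$. Since $f^T$ is again a map from $\mathfrak{S}_n$ to $\F^*$, anything I establish for an arbitrary column-equivalence applies verbatim to $f^T$, and the fact that column-$f^T$-equivalence is an equivalence relation immediately gives that row-$f$-equivalence is one too. So the whole proposition reduces to the column case.

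For column-$f$-equivalence, reflexivity is built into the definition, so only symmetry and transitivity need attention. For symmetry, suppose $i \neq j$ and $i \underset{C,f}{\sim} j$, witnessed by some $Z=(z_k) \in (\F^*)^n$. The key point is that, because every $z_k$ is nonzero, the condition $C_j(M)=Z \star C_i(M)$ is equivalent (entry by entry) to $C_i(M)=Z^{[-1]} \star C_j(M)$, where $Z^{[-1]}:=(z_k^{-1})_{1 \leq k \leq n}$. Hence the set of matrices on which $\widetilde{f}$ is required to vanish is literally the same, and $Z^{[-1]} \in (\F^*)^n$ witnesses $j \underset{C,f}{\sim} i$. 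This settles symmetry without any appeal to Lemma \ref{BasicEquivalenceLemma}.

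For transitivity, assume $i \underset{C,f}{\sim} j$ and $j \underset{C,f}{\sim} k$. If two of the indices $i,j,k$ coincide, the desired relation $i \underset{C,f}{\sim} k$ reduces either to reflexivity (when $i=k$) or to one of the two hypotheses (when $i=j$ or $j=k$), so there is nothing to prove. When $i,j,k$ are pairwise distinct, I pick witnesses $Z$ and $Z'$ in $(\F^*)^n$ for the two hypotheses and invoke the Transitivity lemma (Lemma \ref{Transitivitylemma}), which yields that $\widetilde{f}$ vanishes at every $M$ with $C_k(M)=(Z \star Z') \star C_i(M)$. Since $Z \star Z' \in (\F^*)^n$, this is exactly $i \underset{C,f}{\sim} k$.

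The substantive content of transitivity is thus packaged inside Lemma \ref{Transitivitylemma}, which I may assume; the only genuine obstacle is bookkeeping—making sure the degenerate cases where the indices are not distinct are disposed of \emph{before} the lemma is applied, since its hypotheses require $i,j,k$ to be distinct—and then checking that the transpose reduction transfers everything cleanly to the row case.
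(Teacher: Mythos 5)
Your proposal is correct and follows essentially the same route as the paper: reflexivity by definition, symmetry via the inverse vector $Z^{[-1]}$, transitivity by disposing of the degenerate cases and then invoking the Transitivity lemma, and the row case reduced to the column case through $f^T$. Nothing is missing; the handling of the degenerate cases (including $i=k$) is if anything slightly cleaner than the paper's own wording.
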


\begin{proof}
Let us consider column-$f$-equivalence.

First of all, column-$f$-equivalence is reflexive by definition.
Next, given distinct indices $i,j$ in $\lcro 1,n\rcro$ such that $i \underset{C,f}{\sim} j$, we have a vector
$Z=(z_k) \in (\F^*)^n$ such that $\widetilde{f}(M)=0$ for all $M \in \Mat_n(\F)$ satisfying $C_j(M)=Z \star C_i(M)$.
For all $M \in \Mat_n(\F)$ such that $C_i(M)=Z^{[-1]} \star C_j(M)$,
we have $C_j(M)=Z\star C_i(M)$ and hence $\widetilde{f}(M)=0$. Therefore, $j \underset{C,f}{\sim} i$.

Finally, let $i,j,k$ be elements of $\lcro 1,n\rcro$, and assume that $i \underset{C,f}{\sim} j$ and $j \underset{C,f}{\sim} k$.
If $i=j$ or $j=k$ or $i=j$ then it is obvious that $i$ is column-$f$-equivalent to $k$. If $i,j,k$ are pairwise distinct, then
Lemma \ref{Transitivitylemma} shows that $i \underset{C,f}{\sim} k$.

We conclude that column-$f$-equivalence is an equivalence relation on $\lcro 1,n\rcro$.
It follows that row-$f$-equivalence, which is simply column-$f^T$-equivalence, is also an equivalence relation on $\lcro 1,n\rcro$.
\end{proof}

\begin{Def}
A mapping $f : \mathfrak{S}_n \rightarrow \F^*$ is called \textbf{rigid} when
its column equivalence classes and its row equivalence classes are singletons.
\end{Def}

\begin{Rem}
If $i$ and $j$ are distinct column-$f$-equivalent indices, there is a vector $Z \in (\F^*)^n$ such that
$$\forall \sigma \in \mathfrak{S}_n, \; f(\sigma \tau_{i,j})=-\frac{z_{\sigma(j)}}{z_{\sigma (i)}} f(\sigma).$$
It is then obvious that this condition determines $Z$ up to multiplication by a non-zero scalar.
\end{Rem}

Now, we show that any two H-equivalent functionals determine the same column-equivalence and row-equivalence relations.

\begin{lemma}\label{Hequivalencelemma}
Let $A \in \Mat_n(\F^*)$.
Set $\widetilde{g} : M \in \Mat_n(\F) \mapsto \widetilde{f}(A \star M)$.
Let $i,j$ be distinct indices in $\lcro 1,n\rcro$, and $Z$ be a vector of $(\F^*)^n$ such that
$\widetilde{f}(M)=0$ for all $M \in \Mat_n(\F)$ satisfying $C_j(M)=Z \star C_i(M)$.
Set $Z':=Z \star C_i(A) \star C_j(A)^{[-1]}\in (\F^*)^n$. Then,
$\widetilde{g}(M)=0$ for all $M \in \Mat_n(\F)$ such that $C_j(M)=Z' \star C_i(M)$.
\end{lemma}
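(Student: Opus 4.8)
The plan is to exploit directly the definition $\widetilde{g}(M)=\widetilde{f}(A\star M)$ rather than passing through the characterization of Lemma \ref{BasicEquivalenceLemma}: I would reduce the vanishing of $\widetilde{g}$ at $M$ to the vanishing of $\widetilde{f}$ at the auxiliary matrix $N:=A\star M$, and then check that the prescribed relation between $C_j(M)$ and $C_i(M)$ forces exactly the relation between $C_j(N)$ and $C_i(N)$ that triggers the hypothesis on $f$. The point is that $\widetilde{g}(M)=\widetilde{f}(N)$ by definition, so it suffices to arrange that $N$ lies in the subspace on which $\widetilde{f}$ is assumed to vanish.

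First I would record the elementary fact that the Hadamard product acts columnwise, i.e.\ $C_l(A\star M)=C_l(A)\star C_l(M)$ for every index $l$, so that in particular $C_i(N)=C_i(A)\star C_i(M)$ and $C_j(N)=C_j(A)\star C_j(M)$. Now assume $C_j(M)=Z'\star C_i(M)$. Substituting and using the commutativity and associativity of $\star$, I would compute
$$C_j(N)=C_j(A)\star Z'\star C_i(M)=\bigl(C_j(A)\star Z\star C_i(A)\star C_j(A)^{[-1]}\bigr)\star C_i(M),$$
and since $C_j(A)\star C_j(A)^{[-1]}$ is the all-ones column, this collapses to $C_j(N)=Z\star C_i(A)\star C_i(M)=Z\star C_i(N)$. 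By the hypothesis imposed on $\widetilde{f}$, it then follows that $\widetilde{f}(N)=0$, that is $\widetilde{g}(M)=0$, which is the desired conclusion.

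The only thing to watch is that the vector $Z':=Z\star C_i(A)\star C_j(A)^{[-1]}$ has been defined precisely so that the factor $C_j(A)$ picked up when reading off the $j$-th column of $N$ is cancelled and replaced by the factor $C_i(A)$ coming from the $i$-th column; thus there is no genuine obstacle here, only a short entrywise verification. (Alternatively, one could translate both hypotheses into multiplicative relations on $f$ and on $g=f.A$ via the equivalence (i)$\Leftrightarrow$(ii) of Lemma \ref{BasicEquivalenceLemma} and match them term by term, but the columnwise argument above is shorter and avoids any permutation bookkeeping.)
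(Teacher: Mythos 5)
Your proof is correct and is essentially identical to the paper's: both verify, via the columnwise action of the Hadamard product, that $C_j(M)=Z'\star C_i(M)$ implies $C_j(A\star M)=Z\star C_i(A\star M)$, and then invoke the hypothesis on $\widetilde{f}$ applied to $A\star M$. The explicit cancellation of $C_j(A)\star C_j(A)^{[-1]}$ that you spell out is exactly the step the paper performs when it rewrites $C_j(A)\star Z'$ as $Z\star C_i(A)$.
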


\begin{proof}
Let $M \in \Mat_n(\F)$ be such that $C_j(M)=Z' \star C_i(M)$.
Then, $C_j(A \star M)=C_j(A) \star C_j(M)=(C_j(A) \star Z') \star C_i(M)=(Z \star C_i(A)) \star C_i(M)=Z \star C_i(A \star M)$.
Hence, $\widetilde{g}(M)=\widetilde{f}(A \star M)=0$.
\end{proof}

\begin{cor}\label{HequivalenceCorollary}
Let $g : \mathfrak{S}_n \rightarrow \F^*$ be a mapping that is H-equivalent to $f$.
Then, two indices $i$ and $j$ in $\lcro 1,n\rcro$ are column-$f$-equivalent (respectively, row-$f$-equivalent) if and only if they are column-$g$-equivalent (respectively, row-$g$-equivalent).
\end{cor}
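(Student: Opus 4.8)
The plan is to reduce everything to Lemma \ref{Hequivalencelemma}, which already does the heavy lifting. Since $g$ is H-equivalent to $f$, by definition there is a matrix $A \in \Mat_n(\F^*)$ such that $\widetilde{g}(M) = \widetilde{f}(A \star M)$ for all $M \in \Mat_n(\F)$. I would first treat the column-equivalence statement. Suppose $i$ and $j$ are distinct and column-$f$-equivalent, so that there is $Z \in (\F^*)^n$ with $\widetilde{f}$ vanishing on every $M$ satisfying $C_j(M) = Z \star C_i(M)$. Setting $Z' := Z \star C_i(A) \star C_j(A)^{[-1]}$ as in Lemma \ref{Hequivalencelemma}, that lemma immediately yields that $\widetilde{g}$ vanishes on every $M$ with $C_j(M) = Z' \star C_i(M)$, hence $i \underset{C,g}{\sim} j$.

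For the converse direction, I would invoke the symmetry of H-equivalence: since it is orbit-equivalence under a group action, $f$ is also H-equivalent to $g$ (concretely, $\widetilde{f}(M) = \widetilde{g}(A^{[-1]} \star M)$). Applying the same argument with the roles of $f$ and $g$ exchanged gives the reverse implication, so that $i \underset{C,f}{\sim} j$ is equivalent to $i \underset{C,g}{\sim} j$. The case $i = j$ is trivial on both sides.

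It remains to deduce the row-equivalence statement, and here I would use the transpose observation already recorded in the excerpt: row-$f$-equivalence coincides with column-$f^T$-equivalence. The one computation to check is that H-equivalence is compatible with transposition, i.e.\ that $g^T$ is H-equivalent to $f^T$. This follows from the identity $A \star M^T = (A^T \star M)^T$, which gives $\widetilde{g^T}(M) = \widetilde{g}(M^T) = \widetilde{f}(A \star M^T) = \widetilde{f^T}(A^T \star M)$, exhibiting $A^T$ as the matrix that realizes the H-equivalence between $f^T$ and $g^T$. Having established this, the already-proved column case applied to the pair $(f^T, g^T)$ shows that column-$f^T$-equivalence equals column-$g^T$-equivalence, which is exactly the assertion that row-$f$-equivalence equals row-$g$-equivalence.

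The argument is essentially routine once Lemma \ref{Hequivalencelemma} is in hand; the only genuinely new ingredient is the small verification that H-equivalence passes to transposes, and the mildly delicate point is simply to remember to run the column argument in both directions rather than only one, since the corollary is stated as an ``if and only if''.
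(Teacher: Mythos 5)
Your proposal is correct and matches the paper's intended argument: the paper states this corollary without a written proof, as an immediate consequence of Lemma \ref{Hequivalencelemma}, and the steps you supply (applying the lemma for one direction, invoking the symmetry of the group action $\widetilde{f}(M)=\widetilde{g}(A^{[-1]}\star M)$ for the converse, and using the identity $\widetilde{g^T}(M)=\widetilde{f^T}(A^T\star M)$ together with the fact that row-$f$-equivalence is column-$f^T$-equivalence for the row case) are exactly the checks the paper leaves implicit.
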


We now look at two basic examples:

\begin{ex}\label{detExample}
Assume that $\widetilde{f}=\det$. Here, $f$ is the signature morphism, and it follows that
any two indices in $\lcro 1,n\rcro$ are column-$f$-equivalent and row-$f$-equivalent.
\end{ex}

\begin{ex}\label{perExample}
Assume that $\widetilde{f}=\per$, that $\car(\F) \neq 2$ and that $n \geq 3$.
We claim that $f$ is rigid.

Here, $f$ is the constant map with value $1$.
Assume that there are distinct indices $i$ and $j$ that are column-$f$-equivalent, and let
$Z=(z_k)_{1 \leq k \leq n}$ be an associated vector of $(\F^*)^n$. Let
$k,l$ be distinct indices in $\lcro 1,n\rcro$. We can find a permutation $\sigma \in \mathfrak{S}_n$ such that $\sigma(i)=k$ and $\sigma(j)=l$.
Hence, $z_l=-z_k$. As $n \geq 3$ we can find an index $u \in \lcro 1,n\rcro \setminus \{k,l\}$, and hence
$z_k=-z_u=z_l$. Since $\car(\F) \neq 2$, it follows that $z_k=0$, contradicting our assumptions.

We conclude that the equivalence classes for column-$f$-equivalence are singletons. Likewise, the ones for
row-$f$-equivalence are singletons.
\end{ex}

\begin{ex}
Assume that $n=2$. Then, $1$ and $2$ are both row-$f$-equivalent and column-$f$-equivalent.
Indeed, we have seen in Remark \ref{n=2remark} that $f$ is H-equivalent to the signature morphism, and
by Example \ref{detExample} and Corollary \ref{HequivalenceCorollary}, it follows that any two indices in $\lcro 1,2\rcro$
are column-$f$-equivalent and row-$f$-equivalent.
\end{ex}

In Section \ref{possibleclassesSection}, we will examine more closely what the column-equivalence classes (or row-equivalence classes)
can be.

\subsection{Normalized and fully-normalized functionals}

\begin{Def}
We say that $f$ is \textbf{column-normalized} (respectively, \textbf{row-normalized})
when, for all column-$f$-equivalent distinct indices $i$ and $j$ in $\lcro 1,n\rcro$,
one has $\forall \sigma \in \mathfrak{S}_n, \; f(\sigma \tau_{i,j})=-f(\sigma)$
(respectively, $\forall \sigma \in \mathfrak{S}_n, \; f(\tau_{i,j} \sigma )=-f(\sigma)$).

We say that $f$ is \textbf{normalized} when it is both column-normalized and row-normalized.

We say that $f$ is \textbf{fully-normalized} when it is normalized and it satisfies the following additional conditions:
\begin{enumerate}[(a)]
\item The column-$f$-equivalence classes are intervals of integers, in non-increasing order of cardinality
(i.e.\ for all $(i,j)\in \lcro 1,n\rcro^2$ with $i<j$, the cardinality of the column-$f$-equivalence class of
$i$ is greater than or equal to the one of the column-$f$-equivalence class of $j$).
\item The row-$f$-equivalence classes are intervals of integers, in non-increasing order of cardinality.
\end{enumerate}
\end{Def}

\begin{Rem}\label{detremark}
Assume that $f$ is column-normalized and that column-$f$-equivalence is trivial (that is, any two indices are column-$f$-equivalent).
Then, by identifying a matrix with the list of its columns, we see $\widetilde{f}$ as an alternating $n$-linear form on $(\F^n)^n$,
and hence classically $\widetilde{f}=\lambda\,\det$ for some scalar $\lambda$ (with $\lambda \neq 0$ because $\widetilde{f} \neq 0$).
Conversely, if $\widetilde{f}$ is a scalar multiple of the determinant, then it is normalized, and column-$f$-equivalence and row-$f$-equivalence are trivial (i.e.\ they relate all the elements of $\lcro 1,n\rcro$).
\end{Rem}

Now, we turn to the most important result of the present section:

\begin{prop}\label{normalizationProp}
Every matrix functional is H-equivalent to a normalized one.
\end{prop}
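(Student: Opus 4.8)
The plan is to reach a normalized functional in two stages: first I would find a Hadamard factor that makes $f$ column-normalized, and then a second factor that makes the outcome row-normalized \emph{without} spoiling column-normalization. Since H-equivalence is transitive (the factors multiply under $\star$) and, by Corollary~\ref{HequivalenceCorollary}, preserves both the column- and row-equivalence relations, composing the two factors will yield the desired matrix. By Remark~\ref{n=2remark} the cases $n \le 2$ are immediate ($f$ is then H-equivalent to the signature, which is normalized by Remark~\ref{detremark}), so I may assume $n \ge 3$; this hypothesis will be used only once, in the crucial step below.

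First stage (column-normalization). For each column-$f$-equivalence class I pick its least element $c_0$ as a base point; for every $j$ in that class, Lemma~\ref{BasicEquivalenceLemma} provides a vector $V^{(j)} \in (\F^*)^n$, unique up to a scalar, with $f(\sigma\tau_{c_0,j}) = -\frac{V^{(j)}_{\sigma(j)}}{V^{(j)}_{\sigma(c_0)}}f(\sigma)$, and I set $V^{(c_0)} := (1,\dots,1)$. I would then define $A_1 \in \Mat_n(\F^*)$ by $C_j(A_1) := V^{(j)}$ (with the all-ones column whenever $j$ lies in a singleton class) and put $g^{(1)} := f.A_1$. By Lemma~\ref{Hequivalencelemma} the cocycle attached to $g^{(1)}$ and the pair $(c_0,j)$ is $V^{(j)}\star C_{c_0}(A_1)\star C_j(A_1)^{[-1]} = (1,\dots,1)$, so $g^{(1)}(\sigma\tau_{c_0,j}) = -g^{(1)}(\sigma)$ for every base pair. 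Writing $\tau_{i,j} = \tau_{c_0,i}\tau_{c_0,j}\tau_{c_0,i}$ and iterating this relation extends it to every pair $(i,j)$ inside the class; hence $g^{(1)}$ is column-normalized, with the same equivalence classes as $f$.

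The crucial step. I claim that for any column-normalized functional $h$, every row cocycle is constant on each column-equivalence class. Fix row-equivalent $i \ne i'$ with row cocycle $w = W^{(i,i')}$, so that $h(\tau_{i,i'}\sigma) = -\frac{w_{\sigma^{-1}(i')}}{w_{\sigma^{-1}(i)}}h(\sigma)$ by Lemma~\ref{BasicEquivalenceLemma}, and fix column-equivalent $j \ne j'$, for which $h(\sigma\tau_{j,j'}) = -h(\sigma)$. Evaluating $h(\tau_{i,i'}\sigma\tau_{j,j'})$ by grouping the factors in the two possible ways (row relation then column-normalization, versus column-normalization then row relation) and cancelling the nonzero value $h(\sigma)$ will yield
$$\frac{w_{\tau_{j,j'}\sigma^{-1}(i')}}{w_{\tau_{j,j'}\sigma^{-1}(i)}} = \frac{w_{\sigma^{-1}(i')}}{w_{\sigma^{-1}(i)}}\qquad (\sigma \in \mathfrak{S}_n),$$
that is, $\frac{w_{\tau_{j,j'}(q)}}{w_{\tau_{j,j'}(p)}} = \frac{w_q}{w_p}$ for all distinct $p,q$. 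Choosing (this is where $n \ge 3$ enters) an index $c \notin \{j,j'\}$ and taking $(p,q) = (c,j)$ gives $w_{j'} = w_j$. Thus $w$ is constant on the column-equivalence class of $j$; in particular the row cocycle base vectors $\widehat V^{(i)}$ of $g^{(1)}$ are each constant along every column-equivalence class.

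Second stage and conclusion. I would define $A_2 \in \Mat_n(\F^*)$ by prescribing its rows to be the row cocycle base vectors of $g^{(1)}$, i.e.\ $R_i(A_2) := \widehat V^{(i)}$. By the transpose of the first-stage construction (apply it to $(g^{(1)})^T$, whose functional is $M \mapsto \widetilde{g^{(1)}}(M^T)$), the functional $g^{(2)} := g^{(1)}.A_2$ is row-normalized. On the other hand, the crucial step shows that within each column-equivalence class the columns of $A_2$ are \emph{equal}, hence proportional; since the column cocycles of $g^{(1)}$ are already trivial, Lemma~\ref{Hequivalencelemma} then shows that multiplying by such an $A_2$ keeps every column cocycle constant, so $g^{(2)}$ stays column-normalized. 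Therefore $g^{(2)}$ is normalized and H-equivalent to $f$. The one genuinely delicate point is the crucial step: it is exactly the commutation identity between a row and a column transposition that prevents the row-normalizing factor $A_2$ from destroying the column-normalization already secured, and it is there that the hypothesis $n \ge 3$ is indispensable.
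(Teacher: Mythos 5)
Your proof is correct, and its architecture coincides with the paper's: a first Hadamard factor whose columns are the column cocycles (this is the paper's Lemma \ref{columnnormalizationLemma}), then a second factor whose rows are the row cocycles, with column-normalization surviving because those row cocycles are constant on each column-equivalence class; your ``crucial step'' is exactly the paper's Lemma \ref{forcingrowlemma}, and the preservation argument via Lemma \ref{Hequivalencelemma} is also the paper's. The one genuine divergence is how that crucial step is proved. You derive the identity $\frac{w_{\tau_{j,j'}(q)}}{w_{\tau_{j,j'}(p)}}=\frac{w_q}{w_p}$ for all distinct $p,q$ and then must pick a third index $c\notin\{j,j'\}$, which forces $n\geq 3$ (for $n=2$ the identity only yields $w_{j'}^2=w_j^2$) and obliges you to dispatch $n\leq 2$ separately through Remark \ref{n=2remark}. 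The paper instead fixes the single permutation $\sigma$ with $\sigma(k)=i$ and $\sigma(l)=j$, for which $\sigma\tau_{k,l}=\tau_{i,j}\sigma$; comparing the row relation with column-normalization at this one permutation gives $z_k=z_l$ directly, with no hypothesis on $n$ and hence no case split. Both computations are elementary, but the paper's choice of $\sigma$ is sharper and buys uniformity in $n$. (Your stage-one extension from the base pairs $(c_0,j)$ to arbitrary pairs via $\tau_{i,j}=\tau_{c_0,i}\tau_{c_0,j}\tau_{c_0,i}$ is likewise a harmless substitute for the paper's appeal to the Transitivity Lemma \ref{Transitivitylemma}.)
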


Proving this result requires an additional lemma:

\begin{lemma}\label{forcingrowlemma}
Assume that $f$ is column-normalized. Let $i$ and $j$ be distinct row-$f$-equivalent indices
in $\lcro 1,n\rcro$, and $Z=(z_k) \in (\F^*)^n$ be such that
$$\forall \sigma \in \mathfrak{S}_n, \; f(\tau_{i,j} \sigma)=-\frac{z_{\sigma^{-1}(j)}}{z_{\sigma^{-1}(i)}}\,f(\sigma).$$
Then, $z_k=z_l$ for all distinct column-$f$-equivalent indices $k$ and $l$.
\end{lemma}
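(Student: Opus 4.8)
The plan is to play the column-normalization hypothesis on $f$ against the functional equation that encodes the row-$f$-equivalence of $i$ and $j$. By Lemma~\ref{BasicEquivalenceLemma} (equivalence of (iii) and (iv)), the assumption on $Z$ is exactly that
\begin{equation}
f(\tau_{i,j}\,\sigma)=-\frac{z_{\sigma^{-1}(j)}}{z_{\sigma^{-1}(i)}}\,f(\sigma)\qquad(\sigma\in\mathfrak{S}_n),
\tag{R}
\end{equation}
while column-normalization, applied to the fixed column-$f$-equivalent pair $\{k,l\}$, says
\begin{equation}
f(\sigma\,\tau_{k,l})=-f(\sigma)\qquad(\sigma\in\mathfrak{S}_n).
\tag{C}
\end{equation}

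The crux is to evaluate $f(\tau_{i,j}\,\sigma\,\tau_{k,l})$ in two ways. On the one hand, applying (C) to $\tau_{i,j}\sigma$ and then (R) gives $f(\tau_{i,j}\,\sigma\,\tau_{k,l})=\frac{z_{\sigma^{-1}(j)}}{z_{\sigma^{-1}(i)}}\,f(\sigma)$. On the other hand, applying (R) to $\tau_{i,j}(\sigma\tau_{k,l})$, then (C) to $\sigma$, and using $(\sigma\tau_{k,l})^{-1}=\tau_{k,l}\sigma^{-1}$, gives $f(\tau_{i,j}\,\sigma\,\tau_{k,l})=\frac{z_{\tau_{k,l}(\sigma^{-1}(j))}}{z_{\tau_{k,l}(\sigma^{-1}(i))}}\,f(\sigma)$. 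Since $f$ vanishes nowhere, I may cancel $f(\sigma)$ and obtain
$$\frac{z_{\sigma^{-1}(j)}}{z_{\sigma^{-1}(i)}}=\frac{z_{\tau_{k,l}(\sigma^{-1}(j))}}{z_{\tau_{k,l}(\sigma^{-1}(i))}}.$$
As $\sigma$ ranges over $\mathfrak{S}_n$ and $i\neq j$, the pair $(\sigma^{-1}(i),\sigma^{-1}(j))$ ranges over all ordered pairs of distinct indices, so this reads: for all distinct $a,b\in\lcro 1,n\rcro$,
\begin{equation}
\frac{z_b}{z_a}=\frac{z_{\tau_{k,l}(b)}}{z_{\tau_{k,l}(a)}}.
\tag{$\star$}
\end{equation}

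It then remains to extract $z_k=z_l$ from $(\star)$. When $n\geq 3$ I would pick an index $a\notin\{k,l\}$, which $\tau_{k,l}$ fixes, and apply $(\star)$ with $b=k$: since $\tau_{k,l}(k)=l$, it collapses to $\frac{z_k}{z_a}=\frac{z_l}{z_a}$, whence $z_k=z_l$. The case $n=2$ must be handled separately, since there $(\star)$ (taken at $a=k,\,b=l$) yields only $z_k^2=z_l^2$; but when $n=2$ one has $\{i,j\}=\{k,l\}=\{1,2\}$, so $\tau_{i,j}=\tau_{k,l}$, and comparing (R) at $\sigma=\id$ with (C) at $\sigma=\id$ forces $-\frac{z_j}{z_i}\,f(\id)=-f(\id)$, hence $z_i=z_j$, i.e. $z_k=z_l$.

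I expect the genuine obstacle to be precisely this sign ambiguity. The multiplicative invariance $(\star)$ never constrains $z_k/z_l$ beyond its square on its own, so a naive manipulation stalls at $z_k=\pm z_l$. What breaks the symmetry for $n\geq 3$ is the availability of an index \emph{fixed} by $\tau_{k,l}$, turning $(\star)$ into an equality of un-shifted ratios; the degenerate case $n=2$, where no such fixed index exists, genuinely requires the separate direct evaluation at the identity.
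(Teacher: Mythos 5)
Your proof is correct, but it resolves the crux differently from the paper. Both arguments evaluate $f(\tau_{i,j}\,\sigma\,\tau_{k,l})$ in two ways; the difference lies in how the sign ambiguity $z_k=\pm z_l$ is broken. You work with \emph{all} $\sigma$, never exploiting any special relation between $\tau_{i,j}$ and $\tau_{k,l}$, so you obtain the universal invariance $(\star)$ and must then anchor it at an index fixed by $\tau_{k,l}$ (whence the hypothesis $n\geq 3$) and treat $n=2$ by a separate direct evaluation. The paper instead makes a single clever choice: it takes $\sigma$ with $\sigma(k)=i$ and $\sigma(l)=j$, so that $\sigma\tau_{k,l}\sigma^{-1}=\tau_{i,j}$, i.e.\ $\sigma\tau_{k,l}=\tau_{i,j}\sigma$ \emph{as permutations}. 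Then column-normalization gives $f(\tau_{i,j}\sigma\tau_{k,l})=-f(\tau_{i,j}\sigma)$, the hypothesis on $Z$ applied to $\sigma\tau_{k,l}$ gives $f(\tau_{i,j}\sigma\tau_{k,l})=-\frac{z_k}{z_l}f(\sigma\tau_{k,l})$, and the conjugation identity lets one replace $f(\sigma\tau_{k,l})$ by $f(\tau_{i,j}\sigma)$ and cancel, yielding $z_k=z_l$ in one stroke, uniformly for all $n\geq 2$. Note that your generic relation $(\star)$, specialized to that same $\sigma$, only yields $z_k^2=z_l^2$ --- precisely because your first chain applies the row hypothesis to $\sigma$ itself, whereas the paper's bridge is the equality of permutations, which costs nothing. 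So your route buys a stronger statement (the full invariance of the ratios $z_b/z_a$ under $\tau_{k,l}$) at the price of a case split, while the paper's choice of $\sigma$ makes the case split and the fixed-point anchor unnecessary.
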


\begin{proof}
Let $k$ and $l$ be distinct column-$f$-equivalent indices.
We can choose a permutation $\sigma \in \mathfrak{S}_n$ such that $\sigma(k)=i$ and $\sigma(l)=j$.
Since $f$ is column-normalized and the indices $k$ and $l$ are column-$f$-equivalent, we have
$$f(\tau_{i,j} \sigma \tau_{k,l})=-f(\tau_{i,j} \sigma).$$
Besides, our assumptions on $i$ and $j$ show that
$$f(\tau_{i,j} \sigma \tau_{k,l})=-\frac{z_k}{z_l} f(\sigma \tau_{k,l}).$$
Finally, the choice of $\sigma$ shows that $\sigma \tau_{k,l} \sigma^{-1}=\tau_{\sigma(k),\sigma(l)}=\tau_{i,j}$, whence
$\sigma \tau_{k,l}=\tau_{i,j} \sigma$. Since $f(\sigma \tau_{k,l}) \neq 0$, we conclude that $z_l=z_k$, as claimed.
\end{proof}

\begin{lemma}\label{columnnormalizationLemma}
Every matrix functional is H-equivalent to a column-normalized one.
\end{lemma}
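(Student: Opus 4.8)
The plan is to realize the desired column-normalization through a single Hadamard factor $A\in\Mat_n(\F^*)$, built column by column. Writing $g:=f.A$, i.e.\ $\widetilde g:M\mapsto\widetilde f(A\star M)$, Corollary~\ref{HequivalenceCorollary} tells us that the column-$g$-equivalence classes coincide with the column-$f$-equivalence classes, so it suffices to arrange that, for every pair of distinct column-$f$-equivalent indices $i,j$, the vector associated with $g$ is constant, that is, a scalar multiple of the all-ones vector $\mathbf 1\in(\F^*)^n$. Indeed, the equivalence (i)$\Leftrightarrow$(ii) of Lemma~\ref{BasicEquivalenceLemma}, applied to $g$, shows that this constancy is exactly the requirement $g(\sigma\tau_{i,j})=-g(\sigma)$ for all $\sigma\in\mathfrak{S}_n$. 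For each such pair $(i,j)$ I would fix a witnessing vector $Z_{i,j}\in(\F^*)^n$ as in Lemma~\ref{BasicEquivalenceLemma}, recalling that $Z_{i,j}$ is determined up to a multiplicative scalar. By Lemma~\ref{Hequivalencelemma}, the vector $Z_{i,j}\star C_i(A)\star C_j(A)^{[-1]}$ then witnesses column-$g$-equivalence of $i$ and $j$, so the whole task reduces to choosing the columns of $A$ so that each of these products is proportional to $\mathbf 1$.

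Next I would construct $A$ one equivalence class at a time; the classes can be treated independently, since the normalization condition for a pair $(i,j)$ only involves the columns $C_i(A)$ and $C_j(A)$, whose indices lie in the same class. Within a fixed column-$f$-equivalence class I would single out a base index $i_0$, set $C_{i_0}(A):=\mathbf 1$, and put $C_j(A):=Z_{i_0,j}$ for every other member $j$ of that class; for indices forming singleton classes I would set the corresponding column to $\mathbf 1$. Because each $Z_{i_0,j}$ has all entries nonzero, every column of $A$ lies in $(\F^*)^n$, so $A\in\Mat_n(\F^*)$ as needed.

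Finally I would verify the normalization. Writing $c_\ell:=C_\ell(A)$ and taking $(i,j)$ distinct in a class with $i,j\neq i_0$, the Transitivity Lemma~\ref{Transitivitylemma} applied to $i_0,i,j$ shows that $Z_{i_0,i}\star Z_{i,j}$ witnesses column-$f$-equivalence of $i_0$ and $j$, so $Z_{i_0,j}=\lambda\,(Z_{i_0,i}\star Z_{i,j})$ for some $\lambda\in\F^*$ by uniqueness up to a scalar. Substituting $c_i=Z_{i_0,i}$ and $c_j=Z_{i_0,j}$ collapses $Z_{i,j}\star c_i\star c_j^{[-1]}$ to $\lambda^{-1}\mathbf 1$, which is constant; the boundary cases $i=i_0$ and $j=i_0$ are immediate, the latter using that $(Z_{i_0,i})^{[-1]}$ witnesses column-$f$-equivalence of $i$ and $i_0$, as in the symmetry step of the proof that column-$f$-equivalence is an equivalence relation. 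With all the associated vectors for $g$ thus constant, Lemma~\ref{BasicEquivalenceLemma} yields $g(\sigma\tau_{i,j})=-g(\sigma)$ for every column-$g$-equivalent pair, so $g$ is column-normalized and H-equivalent to $f$. I expect the genuine obstacle to be precisely this compatibility step: a priori the vectors $Z_{i,j}$ attached to the various pairs of a class need not glue into consistent column data, and what makes the single choice above normalize all pairs at once is exactly that transitivity and symmetry hold only \emph{up to a scalar} — an ambiguity that is harmless here because column-normalization demands nothing more than constancy of the associated vectors.
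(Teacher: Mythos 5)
Your proposal is correct and is essentially the paper's own proof: you build the same Hadamard factor $A$ (all-ones column at a chosen representative of each class, witnessing vectors $Z_{i_0,j}$ at the other indices) and invoke the same three ingredients, namely Lemma~\ref{BasicEquivalenceLemma}, Lemma~\ref{Transitivitylemma} and Lemma~\ref{Hequivalencelemma}. The only cosmetic difference is that you verify compatibility by applying transitivity and uniqueness-up-to-scalar to the vectors attached to $f$, whereas the paper transfers everything to $g$ via Lemma~\ref{Hequivalencelemma} first and then applies the transitivity lemma to $g$ directly; both routes are sound.
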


\begin{proof}
For every column-$f$-equivalence class $\calO$, we denote by $n_\calO$ its least element.
We define $L$ as the set of all $n_\calO$ where $\calO$ ranges over the column-$f$-equivalence classes.
Let $k \in \lcro 1,n\rcro \setminus L$, whose column-$f$-equivalence class we denote by $\calO$.
Then, there is a vector $Z^{(k)}=(z_i^{(k)})_{1 \leq i \leq n}$ with entries in $\F^*$ such that $\widetilde{f}(M)=0$ for all $M \in \Mat_n(\F)$
such that $C_k(M)=Z^{(k)}\star C_{n_\calO}(M)$. We set
$$a_{i,j}:=\begin{cases}
1 & \text{if $j \in L$} \\
z_i^{(j)} & \text{otherwise,}
\end{cases}$$
thereby defining a matrix $A \in \Mat_n(\F^*)$.
Then, $M \in \Mat_n(\F) \mapsto \widetilde{f}(A \star M)$ is the Schur functional attached to some mapping $g : \mathfrak{S}_n \rightarrow \F^*$, and $g$ is H-equivalent to $f$.
Moreover, it follows from Lemma \ref{Hequivalencelemma} that for every column-$f$-equivalence class $\calO$ and every $j \in \calO \setminus \{n_\calO\}$,
we have $\widetilde{g}(M)=0$ for all $M \in \Mat_n(\F)$ such that $C_j(M)=C_{n_\calO}(M)$.
Hence, by Lemma \ref{Transitivitylemma}, for all distinct column-$f$-equivalent indices $i,j$, we have $\widetilde{g}(M)=0$ for all $M \in \Mat_n(\F)$ such that $C_j(M)=C_i(M)$. By Corollary \ref{HequivalenceCorollary}, we know that column-$g$-equivalence is column-$f$-equivalence, and we conclude that $g$ is column-normalized.
\end{proof}

\begin{proof}[Proof of Proposition \ref{normalizationProp}]
By Lemma \ref{columnnormalizationLemma}, we lose no generality in assuming that $f$ is already column-normalized.

Let $\calO$ be a row-$f$-equivalence class, the least element of which we denote by $n_\calO$.
For all $i \in \calO \setminus \{n_\calO \}$, we have a vector $Y^{(i)}=(y_{i,j})_{1 \leq j \leq n} \in (\F^*)^n$ such that $\widetilde{f}(M)=0$ for all $M \in \Mat_n(\F)$ satisfying $R_i(M)=(Y^{(i)})^T \star R_{n_\calO}(M)$.
Denote by $L$ the set of all integers $n_\calO$ where $\calO$ ranges over the set of all row-$f$-equivalence classes.
For $(i,j)\in \lcro 1,n\rcro^2$, set
$$b_{i,j}:=\begin{cases}
1 & \text{if $i \in L$} \\
y_{i,j} & \text{otherwise,}
\end{cases}$$
thereby defining a matrix $B \in \Mat_n(\F^*)$.
With the same line of reasoning as in the proof of Lemma \ref{columnnormalizationLemma} (applied to $f^T$), we obtain
that the Schur functional $\widetilde{g} : M \mapsto \widetilde{f}(B \star M)$ is row-normalized. To complete the proof, we will show that $g$ is also column-normalized.
Indeed, since $f$ is column-normalized, it follows from Lemma \ref{forcingrowlemma} that, for every row-$f$-equivalence
class $\calO$ and all $i \in \calO \setminus \{n_\calO\}$, we have $y_{i,j}=y_{i,k}$ for all
distinct column-$f$-equivalent indices $j$ and $k$. Hence, for all $i,j,k$ in $\lcro 1,n\rcro$, it follows that
$b_{i,j}=b_{i,k}$ whenever $j$ and $k$ are column-$f$-equivalent.
Since $f$ is column-normalized, it follows from Lemma \ref{Hequivalencelemma} that, for all distinct column-$f$-equivalent indices $j$ and $k$,
one has $\widetilde{g}(M)=0$ for all $M \in \Mat_n(\F)$ such that $C_j(M)=C_k(M)$.
As column-$g$-equivalence coincides with column-$f$-equivalence, we conclude that $g$ is column-normalized, and hence $g$
is normalized.
\end{proof}

\subsection{The reduction to fully-normalized functionals}\label{fullnormalizationSection}

Let $\tau$ and $\tau'$ be permutations of $\lcro 1,n\rcro$.
Let us consider the Schur functional $\widetilde{g} : M \mapsto \widetilde{f}(P_{\tau'} M P_\tau)$.
It is then easily checked that:
\begin{itemize}
\item Two indices $i,j$ in $\lcro 1,n\rcro$ are column-$f$-equivalent if and only if
$\tau(i)$ and $\tau(j)$ are column-$g$-equivalent.
\item Two indices $i,j$ in $\lcro 1,n\rcro$ are row-$f$-equivalent if and only if
$(\tau')^{-1}(i)$ and $(\tau')^{-1}(j)$ are row-$g$-equivalent.
\end{itemize}
Moreover, if $f$ is normalized then so is $g$.

\begin{Def}
Given a partition $\calF$ of a finite set $X$, we denote by $n(\calF)$ the list of
cardinalities of the elements of $\calF$, in non-increasing order. We say that $n(\calF)$
is the \textbf{cardinality list} of $\calF$.
\end{Def}

Classically, given two partitions $\calF$ and $\calG$ of the same finite set $X$, the following conditions are equivalent:
\begin{enumerate}[(i)]
\item There exists a permutation $\sigma$ of $X$ such that $\calG=\{\sigma(Y) \mid Y \in \calF\}$.
\item One has $n(\calF)=n(\calG)$.
\end{enumerate}

\begin{Not}
To the mapping $f$, we associate the cardinality list $c(f)$ of the set of all column-$f$-equivalence classes,
and the cardinality list $r(f)$ of the set of all row-$f$-equivalence classes.
\end{Not}

Write $c(f)=(p_1,\dots,p_b)$ and $r(f)=(n_1,\dots,n_a)$.
By the above remark, there are permutations $\tau$ and $\tau'$ of $\lcro 1,n\rcro$ such that
$$\Bigl\{\tau(\calO) \mid \calO \in \lcro 1,n\rcro/\underset{C,f}{\sim}\Bigr\}=
\Bigl\{\lcro 1,p_1\rcro, \; \lcro p_1+1,p_1+p_2\rcro, \dots,\lcro p_1+\cdots+p_{b-1}+1,p_1+\cdots+p_b\rcro\Bigr\}$$
and
$$\Bigl\{(\tau')^{-1}(\calO) \mid \calO \in \lcro 1,n\rcro/\underset{R,f}{\sim}\Bigr\}=
\Bigl\{\lcro 1,n_1\rcro, \; \lcro n_1+1,n_1+n_2\rcro, \dots,\lcro n_1+\cdots+n_{a-1}+1,n_1+\cdots+n_a\rcro\Bigr\}.$$
If $f$ is normalized, we deduce that the Schur functional $M \mapsto \widetilde{f}(P_{\tau'} M P_\tau)$ is fully-normalized.

Hence, by combining the previous study with Proposition \ref{normalizationProp}, we conclude:

\begin{prop}\label{fullnormalizedprop}
Every matrix functional is PH-equivalent to a fully-normalized one.
\end{prop}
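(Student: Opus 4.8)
The plan is to assemble this statement from the material developed immediately above, since the substantive work has already been done in Proposition \ref{normalizationProp} and in the transformation rules recorded in Section \ref{fullnormalizationSection}. The proof splits into two stages, each of which is a single move within the PH-action, and we exploit that PH-equivalence is transitive (being orbit equivalence). First I would observe that H-equivalence is a special case of PH-equivalence: taking $\tau=\tau'=\id$ in the action of $\Mat_n(\F^*)\rtimes(\mathfrak{S}_n)^2$ recovers precisely the map $M\mapsto\widetilde{f}(A\star M)$. Hence Proposition \ref{normalizationProp} lets us replace $f$, up to PH-equivalence, by a \emph{normalized} functional, and so we may assume from the outset that $f$ is normalized.

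The second stage uses only the permutation part of the action to sort the equivalence classes into the required shape. I would note that for any permutations $\tau,\tau'$ of $\lcro 1,n\rcro$, the functional $\widetilde{g}:M\mapsto\widetilde{f}(P_{\tau'}MP_\tau)$ is exactly the $A=E$ instance of the PH-action, since $P_{\tau'}MP_\tau=E\star\bigl(P_{\tau'}MP_{\tau^{-1}}^{-1}\bigr)$; thus $g$ is PH-equivalent to $f$. Writing $c(f)=(p_1,\dots,p_b)$ and $r(f)=(n_1,\dots,n_a)$, I would invoke the classical fact recalled just before the Notation box—two partitions of the same finite set are carried onto one another by a permutation if and only if they share the same cardinality list—to produce a permutation $\tau$ carrying the column-$f$-equivalence classes onto the consecutive intervals $\lcro 1,p_1\rcro,\lcro p_1+1,p_1+p_2\rcro,\dots$, and, independently, a permutation $\tau'$ carrying the row-$f$-equivalence classes onto the intervals $\lcro 1,n_1\rcro,\lcro n_1+1,n_1+n_2\rcro,\dots$. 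The key structural point, already isolated in Section \ref{fullnormalizationSection}, is that $\tau$ governs the column partition (via $i\mapsto\tau(i)$) while $\tau'$ governs the row partition (via $i\mapsto(\tau')^{-1}(i)$), so the two sorting tasks decouple completely and can be solved by separate, freely chosen permutations.

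It then remains to check that the resulting $g$ is fully-normalized. Its column- and row-equivalence classes are, by construction, intervals of integers arranged in non-increasing order of cardinality, which are conditions (a) and (b) in the definition; and normalization is preserved under the permutation action, exactly as recorded in Section \ref{fullnormalizationSection} (``if $f$ is normalized then so is $g$''). Composing the two PH-moves yields a fully-normalized functional PH-equivalent to the original $f$, which is the assertion. I do not expect any genuine obstacle here: the only points demanding care are purely bookkeeping ones, namely confirming that $M\mapsto\widetilde{f}(P_{\tau'}MP_\tau)$ is indeed an instance of the PH-action and that the row and column permutations may be selected independently because their effects on the two partitions are decoupled. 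All of the real content—producing a normalized representative—has already been absorbed into Proposition \ref{normalizationProp}.
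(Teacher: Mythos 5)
Your proof is correct and follows essentially the same route as the paper: first invoke Proposition \ref{normalizationProp} (noting that H-equivalence is the $\tau=\tau'=\id$ case of PH-equivalence) to reduce to a normalized $f$, then use the permutation part of the action together with the classical fact on cardinality lists to sort the column and row classes into intervals, exactly as in Section \ref{fullnormalizationSection}. The bookkeeping points you flag (that $M\mapsto\widetilde{f}(P_{\tau'}MP_\tau)$ is the $(E,\tau',\tau^{-1})$ instance of the action, and that the two sorting permutations decouple) are verified correctly and match the paper's argument.
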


\subsection{The case of central mappings}

A mapping $f : \mathfrak{S}_n \rightarrow \F^*$ is \textbf{central} whenever
it is constant on each conjugacy class in $\mathfrak{S}_n$.
Here, we shall establish the following result:

\begin{theo}\label{centralequivalencetheo}
Assume that $n \geq 3$, and let $f : \mathfrak{S}_n \rightarrow \F^*$ be a central mapping.
Then, exactly one of the following conditions holds:
\begin{enumerate}[(a)]
\item The mapping $f$ is rigid.
\item There are non-zero scalars $\alpha$ and $\beta$ such that
$f$ maps every $\sigma \in \mathfrak{S}_n$ to $\alpha \beta^{\nfix(\sigma)} \sgn(\sigma)$, where
$$\nfix(\sigma):=\big|\{i \in \lcro 1,n\rcro : \; \sigma(i)=i\}\big|$$
denotes the number of fixed points of $\sigma$.
\end{enumerate}
Moreover, in the second case $f$ is H-equivalent to the signature.
\end{theo}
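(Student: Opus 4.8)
The plan is to study column-$f$-equivalence alone and extract a sharp dichotomy. First I would observe that, since $f$ is central and every permutation of $\mathfrak{S}_n$ is conjugate to its inverse, we have $f^T=f$; hence row-$f$-equivalence coincides with column-$f$-equivalence, and $f$ is rigid precisely when column-$f$-equivalence is trivial. The key first step is to show that centrality forces column-$f$-equivalence to be either trivial or total. If distinct $i,j$ are column-$f$-equivalent through a vector $Z=(z_k)$ as in Lemma~\ref{BasicEquivalenceLemma}(ii), then for arbitrary distinct $k,l$ I pick $\rho$ with $\rho(i)=k$ and $\rho(j)=l$, write $\tau_{k,l}=\rho\,\tau_{i,j}\,\rho^{-1}$, and use centrality to rewrite $f(\sigma\tau_{k,l})=f(\rho^{-1}\sigma\rho\,\tau_{i,j})$. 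Applying the relation for the pair $(i,j)$ and centrality once more yields $f(\sigma\tau_{k,l})=-\frac{w_{\sigma(l)}}{w_{\sigma(k)}}\,f(\sigma)$ with $w_m:=z_{\rho^{-1}(m)}$, so by Lemma~\ref{BasicEquivalenceLemma} the indices $k$ and $l$ are column-$f$-equivalent. Thus either all classes are singletons, giving case (a), or every pair is column-$f$-equivalent.

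Assume now the total case. I would pin down the structure of the vector $Z$ attached to the pair $(1,2)$. Conjugating by permutations $\rho$ that fix $1$ and $2$ and using the uniqueness up to a non-zero scalar of the associated vector shows that $z_m$ is constant for $m\geq 3$; after rescaling I may take this constant equal to $1$. Comparing the relations obtained for the ordered pairs $(1,2)$ and $(2,1)$ — which define the same transposition — forces $z_m w_m$ to be independent of $m$, where $w$ is the vector for $(2,1)$ produced by the conjugation above; this gives $z_1 z_2=1$. Writing $a:=z_1$, the vector attached to an arbitrary pair $(i,j)$ is then $a$ at position $i$, $a^{-1}$ at position $j$, and $1$ elsewhere.

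With this in hand I set $\beta:=a$ and $\alpha:=f(\id)\,\beta^{-n}$, and consider $f_0:\sigma\mapsto \alpha\,\beta^{\nfix(\sigma)}\,\sgn(\sigma)$. A direct bookkeeping check shows that $f_0$ satisfies, for every transposition $\tau_{i,j}$, the very recursion of Lemma~\ref{BasicEquivalenceLemma}(ii) with the vector just described: the crux is the identity $\nfix(\sigma\tau_{i,j})-\nfix(\sigma)=[\sigma(j)=i]+[\sigma(i)=j]-[\sigma(i)=i]-[\sigma(j)=j]$, which matches the $a$-exponent of $\frac{w_{\sigma(j)}}{w_{\sigma(i)}}$. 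Since the transpositions generate $\mathfrak{S}_n$ and each relation expresses the value at $\sigma\tau_{i,j}$ in terms of the value at $\sigma$, and since $f$ and $f_0$ agree at $\id$, an induction on the length of a transposition word yields $f=f_0$, i.e.\ case (b). I expect this bookkeeping identity, together with the determination of $Z$ in the previous step — which is exactly what makes $-f(\id)/f(\tau)=a^2$ a square and thereby produces $\beta$ inside $\F^*$ — to be the main obstacle.

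Finally, the two cases are mutually exclusive: in case (a) every class is a singleton, whereas in case (b) the pair $\{1,2\}$ is column-$f$-equivalent (since $f_0$ satisfies the relation for $(1,2)$), and because $n\geq 3$ there is a single class of size $n>1$. For the last assertion, the diagonal matrix $D\in\Mat_n(\F^*)$ with diagonal entries $\beta$ and off-diagonal entries $1$ satisfies $\sgn.D=\bigl(\sigma\mapsto\beta^{\nfix(\sigma)}\sgn(\sigma)\bigr)$, so the map $\sigma\mapsto\beta^{\nfix(\sigma)}\sgn(\sigma)$ is H-equivalent to $\sgn$; absorbing the scalar $\alpha$ by Remark~\ref{HequivalentScalarProduct} shows that $f=f_0$ is H-equivalent to the signature.
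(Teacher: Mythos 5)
Your proof is correct, but it takes a genuinely different route from the paper's at the decisive step. Both arguments start the same way: centrality gives $f^T=f$, a conjugation computation shows column-$f$-equivalence is either discrete (case (a)) or relates all indices, and in the latter case conjugation by permutations fixing $1$ and $2$, together with uniqueness of $Z$ up to a scalar, pins the vector attached to $(1,2)$ down to $(a,a^{-1},1,\dots,1)$ after rescaling. Even your square-root extraction is a variant of the paper's: you compare the two presentations of the same transposition $\tau_{1,2}=\tau_{2,1}$ to force $z_1z_2=z_m^2$ for $m\geq 3$, where the paper instead computes $f(\tau_{2,3})=-\mu^{-2}$ and $f(\tau_{1,3})=-\lambda^{-1}$ and invokes conjugacy of transpositions to get $\lambda=\mu^2$; both hinge on centrality and both are what make $\beta$ exist inside $\F^*$. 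The genuine divergence is in identifying $f$: the paper feeds the vectors $Z^{(k)}$ into the column-normalization machinery (Lemma \ref{columnnormalizationLemma}), invokes Remark \ref{detremark} to recognize the normalized functional as a multiple of the determinant, and reads off $f(\sigma)=\det(A'\star P_\sigma)=\mu^{\nfix(\sigma)-n}\sgn(\sigma)$; you instead posit the candidate $f_0:\sigma\mapsto\alpha\beta^{\nfix(\sigma)}\sgn(\sigma)$, verify the fixed-point bookkeeping identity $\nfix(\sigma\tau_{i,j})-\nfix(\sigma)=[\sigma(j)=i]+[\sigma(i)=j]-[\sigma(i)=i]-[\sigma(j)=j]$ so that $f_0$ obeys the same transposition recursion as $f$, and conclude $f=f_0$ by induction on the length of a transposition word. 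Your route is more elementary and self-contained (nothing beyond Lemma \ref{BasicEquivalenceLemma} and the uniqueness remark is needed), at the price of the explicit bookkeeping check; the paper's route leans on its normalization apparatus and gets the identity $\widetilde{f}=\det(A'\star\cdot)$, hence the H-equivalence with the signature, as a byproduct of the same computation, whereas you establish that last assertion separately, and quite cleanly, via the matrix $D$ with diagonal entries $\beta$ and off-diagonal entries $1$ together with Remark \ref{HequivalentScalarProduct}.
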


The result fails when $n=2$ (in that case (a) does not hold, and (b) holds if and only if $-\frac{f(\tau_{1,2})}{f(\id)}$ is a square in $\F$, which might fail).

\begin{proof}[Proof of Theorem \ref{centralequivalencetheo}]
To start with, we prove that if condition (b) holds then $f$ is H-equivalent to the signature, and hence condition (a) fails.
Assume indeed that there are non-zero scalars $\alpha$ and $\beta$ such that $f : \sigma \mapsto \alpha \beta^{\nfix(\sigma)} \sgn(\sigma)$.
Define $A=(a_{i,j})\in \Mat_n(\F^*)$ by $a_{i,j}:=\beta^{-1}$ if $i \neq j$, and $a_{i,j}:=1$ otherwise.
Define $B=(b_{i,j})\in \Mat_n(\F^*)$ by $b_{i,j}=\alpha \beta^n$ if $j=1$, and $b_{i,j}=1$ otherwise.
One computes that
 $$\forall M \in \Mat_n(\F), \; \det(A \star M)=\sum_{\sigma \in \mathfrak{S}_n} \sgn(\sigma) \beta^{\nfix(\sigma)-n} \prod_{j=1}^n m_{\sigma(j),j}
 =\beta^{-n} \alpha^{-1} \widetilde{f}(M)$$
 and hence
 $$\forall M \in \Mat_n(\F), \; \det\bigl((B \star A) \star M\bigr)=\det\bigl(B\star (A \star M)\bigr)=\alpha \beta^n \det(A \star M)=\widetilde{f}(M).$$

Now, we seek to prove that (a) or (b) holds.

As the situation is unchanged in multiplying $f$ with a non-zero scalar, we lose no generality in assuming that $f(\id)=1$.

For all $\sigma \in \mathfrak{S}_n$, we know that $\sigma^{-1}$ is conjugated to $\sigma$ in $\mathfrak{S}_n$,
whence $f(\sigma^{-1})=f(\sigma)$. It follows that $f=f^T$, and we deduce that row-$f$-equivalence coincides with column-$f$-equivalence.

Assume now that condition (a) fails. Then, we can find distinct indices $i,j$ in $\lcro 1,n\rcro$ that are column-$f$-equivalent.
Let $\sigma \in \mathfrak{S}_n$.
Since $f$ is central, we see that $\widetilde{f}(P_\sigma^{-1} M P_\sigma)=\widetilde{f}(M)$ for all $M \in \Mat_n(\F)$.
It follows from the remarks at the start of Section \ref{fullnormalizationSection} that $\sigma(i)$ and $\sigma(j)$ are column-$f$-equivalent.
Hence, by varying $\sigma$ we deduce that any two (distinct) indices in $\lcro 1,n\rcro$ are column-$f$-equivalent.

Next, we can choose a vector $Z=(z_k) \in (\F^*)^n$ such that
$z_2 =1$ and
$$\forall \sigma \in \mathfrak{S}_n, \; f(\sigma \tau_{1,2})=-\frac{z_{\sigma(2)}}{z_{\sigma(1)}} f(\sigma).$$
Let $\sigma' \in \mathfrak{S}_n$ be such that $\sigma'(1)=1$.

For all $\sigma \in \mathfrak{S}_n$, we get from the centrality of $f$ that
$$f\bigl(\sigma'(\sigma \tau_{1,2})(\sigma')^{-1}\bigr)=f(\sigma  \tau_{1,2})=
-\frac{z_{\sigma(2)}}{z_{\sigma(1)}} f(\sigma)=-\frac{z_{\sigma(2)}}{z_{\sigma(1)}}f\bigl(\sigma' \sigma (\sigma')^{-1}\bigr),$$
that is
$$f\bigl((\sigma' \sigma (\sigma')^{-1}) \tau_{\sigma'(1),\sigma'(2)}\bigr)
=-\frac{z_{\sigma(2)}}{z_{\sigma(1)}}f\bigl(\sigma' \sigma (\sigma')^{-1}\bigr).$$
It follows that the vector
$Z':=\bigl(z_{(\sigma')^{-1}(i)}\bigr)_{1 \leq i \leq n}$ satisfies :
\begin{equation}\label{Zeq}
\forall \sigma \in \mathfrak{S}_n, \; f(\sigma \tau_{1,\sigma'(2)})=
-\frac{z'_{\sigma(\sigma'(2))}}{z'_{\sigma(1)}}\,f(\sigma).
\end{equation}

Assume now that $\sigma'(2)=2$. Then, we get that $Z'$ is collinear with $Z$.
As $z'_1=z_1$, we deduce that $Z'=Z$. Varying $\sigma'$, we deduce that
$i \mapsto z_i$ is constant on $\lcro 3,n\rcro$. In the remainder of the proof, we set
$\mu:=z_3$ and $\lambda:=z_1$.

Now, let $k \in \lcro 2,n\rcro$. Then, we define $Z^{(k)}=(z_i^{(k)})_{1 \leq i \leq n}$ by
$$z_i^{(k)}=\begin{cases}
\lambda & \text{if $i=1$} \\
1 & \text{if $i=k$} \\
\mu & \text{otherwise}
\end{cases}$$
and it follows from \eqref{Zeq} that $\widetilde{f}(M)=0$ for all $M \in \Mat_n(\F)$
such that $C_k(M)=Z^{(k)} \star C_1(M)$.
Define then $A$ as the matrix whose columns are $E,Z^{(2)},\dots,Z^{(n)}$,
where $E$ is the vector $(1)_{1 \leq i \leq n}$ of $\F^n$.
By coming back to the proof of Lemma \ref{columnnormalizationLemma}, we obtain that
the mapping $g : \mathfrak{S}_n \rightarrow \F^*$ that is associated with the
Schur functional $M \mapsto \widetilde{f}(A \star M)$ is column-normalized. By Remark \ref{detremark}, it follows that there exists a non-zero scalar $\nu$ such that $\widetilde{f}(M)=\nu \det(A^{[-1]} \star M)$ for all $M \in \Mat_n(\F)$. Since $f(\id)=1$ and all the diagonal entries of $A$
equal $1$, we actually have $\nu=1$, whence $\widetilde{f}(M)=\det(A^{[-1]} \star M)$ for all $M \in \Mat_n(\F)$.

By considering $\widetilde{f}(P_{\tau_{2,3}})$ and $\widetilde{f}(P_{\tau_{1,3}})$, we find $f(\tau_{2,3})=- \mu^{-2}$
and $f(\tau_{1,3})=-\lambda^{-1}$. Since $f$ is central, it follows that
$\lambda=\mu^2$.
Let then $X \in \F^n$ and $Y \in \F^n$ be defined by $x_1=y_1=1$ and $x_i=\mu^{-1}$ and $y_i=\mu$ for all $i \geq 2$. We know from Lemma \ref{HadProductLemma} that $\det((XY^T)\star M)=\det(M)$ for all $M \in \Mat_n(\F)$. Hence, with $A':=(XY^T) \star A^{[-1]}$, we deduce that
$$\forall M \in \Mat_n(\F), \; \widetilde{f}(M)=\det(A' \star M).$$
Finally, one sees that $A'$ is the matrix whose diagonal entries all equal $1$, and whose off-diagonal entries all equal $\mu^{-1}$.
It follows that
$$\forall \sigma \in \mathfrak{S}_n, \; f(\sigma)=\det(A' \star P_\sigma)=\mu^{\nfix(\sigma)-n}\sgn(\sigma),$$
which validates condition (b) for $\alpha:=\mu^{-n}$ and $\beta:=\mu$.
\end{proof}


\subsection{Further results on column and row equivalence}\label{possibleclassesSection}

In the previous paragraphs, we have seen that there are some restrictions on the possible column-$f$-equivalence classes, and ditto
for row-$f$-equivalence: if column-$f$-equivalence relates all the indices in $\lcro 1,n\rcro$ then so does row-$f$-equivalence
(see Remark \ref{detremark}); moreover, if $n=2$ then any two elements of $\{1,2\}$ are column-$f$-equivalent and row-$f$-equivalent.

First, we generalize the latter result as follows:

\begin{prop}\label{notn-1}
Let $f : \mathfrak{S}_n \rightarrow \F^*$. Then, no column-$f$-equivalence class has cardinality $n-1$, and no
row-$f$-equivalence class has cardinality $n-1$.
\end{prop}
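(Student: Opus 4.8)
The plan is to reduce everything to the column case and then, under the assumption that a column class has cardinality $n-1$, to manufacture one extra column-equivalence that contradicts the maximality of the class.

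Since row-$f$-equivalence is column-$f^T$-equivalence, it suffices to treat column classes. The cases $n \leq 2$ are immediate: $n=1$ is vacuous, and for $n=2$ we have already seen that $1$ and $2$ are column-$f$-equivalent, so the unique column class has cardinality $2 \neq 1 = n-1$. Assume now $n \geq 3$ and, for contradiction, that some column-$f$-equivalence class $\calO$ has cardinality $n-1$; denote by $m$ the unique index of $\lcro 1,n\rcro$ outside $\calO$. By Lemma \ref{columnnormalizationLemma} together with Corollary \ref{HequivalenceCorollary}, I may assume that $f$ is column-normalized, so that $f(\sigma \tau_{p,q})=-f(\sigma)$ for all $\sigma \in \mathfrak{S}_n$ and all distinct $p,q \in \calO$. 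Writing an arbitrary element $\pi$ of the subgroup $\mathfrak{S}_\calO \subseteq \mathfrak{S}_n$ fixing $m$ as a product of such transpositions, this gives $f(\sigma\pi)=\sgn(\pi)\,f(\sigma)$ for every such $\pi$.

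Fix $i \in \calO$. For distinct $a,b \in \lcro 1,n\rcro$ I would set $H(a,b):=f(\sigma\tau_{i,m})/f(\sigma)$ for any $\sigma$ with $\sigma(i)=a$ and $\sigma(m)=b$. The stabilizer of the ordered pair $(\sigma(i),\sigma(m))$ lies in $\mathfrak{S}_\calO$ and commutes with $\tau_{i,m}$, so the sign rule above shows that $H(a,b)$ is independent of the chosen $\sigma$; using $\tau_{k,m}=\tau_{i,k}\,\tau_{i,m}\,\tau_{i,k}$ and the fact that $\tau_{i,k}$ is a column-normalization transposition, a short computation gives moreover the companion formula $f(\sigma\tau_{k,m})/f(\sigma)=H(\sigma(k),\sigma(m))$ for every $k \in \calO$. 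One also checks directly that $H(a,b)H(b,a)=1$. The goal is then to prove that $-H$ is a multiplicative coboundary, i.e.\ that there is a vector $Z=(z_v)\in(\F^*)^n$ with $H(a,b)=-z_b/z_a$ for all distinct $a,b$. Granting this, Lemma \ref{BasicEquivalenceLemma} yields $i \underset{C,f}{\sim} m$, which contradicts $m \notin \calO$ and finishes the proof.

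The heart of the matter is the cocycle identity $H(x,y)H(y,z)H(z,x)=-1$ for all pairwise distinct $x,y,z$. To obtain it, I would choose $k \in \calO \setminus \{i\}$ (possible since $|\calO|=n-1\geq 2$) and exploit the relation $\tau_{i,m}\,\tau_{k,m}\,\tau_{i,m}=\tau_{i,k}$. Applying the two move formulas successively along $\rho \mapsto \rho\tau_{i,m} \mapsto \rho\tau_{i,m}\tau_{k,m} \mapsto \rho\tau_{i,m}\tau_{k,m}\tau_{i,m}=\rho\tau_{i,k}$ and comparing with $f(\rho\tau_{i,k})=-f(\rho)$, one reads off $H(a,c)H(b,a)H(c,b)=-1$ where $(a,b,c)=(\rho(i),\rho(k),\rho(m))$ is an arbitrary triple of distinct values; this is the asserted identity. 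Together with $H(a,b)H(b,a)=1$, the cocycle condition makes $-H$ a coboundary on the complete graph with vertex set $\lcro 1,n\rcro$, which produces the required $Z$. The main obstacle is precisely this cocycle computation, and before it the observation that the transpositions $\tau_{k,m}$ with $k \in \calO$ — which are not themselves column-normalization transpositions — are nonetheless governed by the same function $H$; this is exactly what allows the telescoping product to close up.
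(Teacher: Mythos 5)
Your proof is correct: $H$ is well defined thanks to the sign rule on the subgroup fixing $m$, the companion formula for $f(\sigma\tau_{k,m})$ follows from $\tau_{k,m}=\tau_{i,k}\tau_{i,m}\tau_{i,k}$ and column-normalization, the antisymmetry $H(a,b)H(b,a)=1$ and the cocycle identity $H(x,y)H(y,z)H(z,x)=-1$ do make $-H$ a coboundary (fix a base point $v_0$, set $z_{v_0}:=1$ and $z_v:=-H(v_0,v)$), and Lemma \ref{BasicEquivalenceLemma} then yields $i \underset{C,f}{\sim} m$, the desired contradiction. The paper's proof shares your skeleton --- reduction to columns via $f^T$, passage to a column-normalized representative, and the sign rule $f(\sigma\pi)=\sgn(\pi)\,f(\sigma)$ for $\pi$ fixing the missing index --- but its endgame is genuinely different and shorter: since two permutations agreeing at $n$ differ by an element of that subgroup, one gets the closed form $f(\sigma)=\sgn(\sigma)\,b_{\sigma(n)}$ with $b_k:=-f(\tau_{k,n})$ and $b_n:=f(\id)$, which exhibits $f$ as the signature twisted by a Hadamard factor carried by the last column; hence $f$ is H-equivalent to $\sgn$, and all indices are column-$f$-equivalent by Example \ref{detExample} and Corollary \ref{HequivalenceCorollary}. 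What the paper's route buys is brevity and a structural by-product (any mapping with a column class of cardinality $n-1$ is forced to be H-equivalent to the signature); what your route buys is a more local endgame that never identifies $f$ globally and needs only Lemma \ref{BasicEquivalenceLemma} rather than the invariance of the equivalence classes under H-equivalence, at the price of the cocycle bookkeeping. The two arguments in fact encode the same computation: in the paper's notation your ratio satisfies $H(a,b)=-b_a/b_b$, so your vector $Z$ is $(b_v^{-1})_{1\leq v\leq n}$ up to a nonzero scalar.
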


\begin{proof}
It suffices to consider column-$f$-equivalence. Without loss of generality,
we can assume that any two elements of $\lcro 1,n-1\rcro$ are column-$f$-equivalent, and we aim at proving that $1$ is column-$f$-equivalent to $n$. Without loss of generality, we can assume further that $f$ is column-normalized. Then, it suffices to prove that $f$ is H-equivalent to the signature.

To start with, we know that $f(\sigma \tau_{i,j})=-f(\sigma)$ for all distinct $i,j$ in $\lcro 1,n-1\rcro$ and all $\sigma \in \mathfrak{S}_n$.
Since $\mathfrak{S}_{n-1}$ is generated by transpositions, it follows that $f(\sigma  \tau)=\sgn(\tau) f(\sigma)$
for all $(\sigma,\tau)\in (\mathfrak{S}_n)^2$ such that $\tau(n)=n$.
Hence, $f(\sigma)=\sgn\bigl((\sigma')^{-1} \sigma\bigr)f(\sigma')$ for all $(\sigma,\sigma')\in (\mathfrak{S}_n)^2$ such that $\sigma(n)=\sigma'(n)$.

For $k \in \lcro 1,n-1\rcro$, set $b_k:=-f(\tau_{k,n})$. Set also $b_n:=f(\id)$.
It then follows from the above results that
$$\forall \sigma \in \mathfrak{S}_n, \; f(\sigma)=\sgn(\sigma)\, b_{\sigma(n).}$$
Defining $A:=(a_{i,j}) \in \Mat_n(\F)$ by $a_{i,j}:=1$ if $j<n$, and $a_{i,j}:=b_i$ if $j=n$,
we conclude that $f$ is H-equivalent to $\sgn$. Hence, any two indices in $\lcro 1,n\rcro$ are column-$f$-equivalent.
\end{proof}

Here is another phenomenon that is quite specific to the case $n=4$.

\begin{prop}\label{fourbyfourprop}
Let $f : \mathfrak{S}_4 \rightarrow \F^*$. Assume that the column-$f$-equivalence classes are $\lcro 1,2\rcro$ and $\lcro 3,4\rcro$ and that $1$ and $2$ are row-$f$-equivalent. Then, $3$ and $4$ are row-$f$-equivalent.
\end{prop}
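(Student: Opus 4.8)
The plan is to reduce to a column-normalized $f$ and then to encode the whole question as a single multiplicative identity among six distinguished values of $f$. First I would use H-equivalence: since H-equivalence preserves the column- and row-equivalence classes (Corollary~\ref{HequivalenceCorollary}) while the statement only involves these classes, Lemma~\ref{columnnormalizationLemma} lets me assume that $f$ is column-normalized. As $\lcro 1,2\rcro$ and $\lcro 3,4\rcro$ are the column classes, this reads $f(\sigma \tau_{1,2})=-f(\sigma)$ and $f(\sigma \tau_{3,4})=-f(\sigma)$ for all $\sigma \in \mathfrak{S}_4$; equivalently $\widetilde{f}$ is alternating in the column pairs $(1,2)$ and $(3,4)$.

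This double alternation means that $f$ transforms under right multiplication by the Klein four-group $H:=\{\id,\tau_{1,2},\tau_{3,4},\tau_{1,2}\tau_{3,4}\}$ through the restriction of $\sgn$ to $H$, so $f$ is determined by its values on the six right cosets $\sigma H$. These cosets are indexed by the unordered pair $\{\sigma(1),\sigma(2)\}$, i.e.\ by the three pairings of $\lcro 1,4\rcro$ into two complementary blocks: $12\mid 34$, $13\mid 24$ and $14\mid 23$. I would fix one representative per coset and label the six values $\alpha_1,\alpha_2$ (pairing $12\mid 34$), $\beta_1,\beta_2$ (pairing $13\mid 24$) and $\gamma_1,\gamma_2$ (pairing $14\mid 23$); everything that follows is an identity among these scalars.

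Now I would bring in the hypothesis $1 \underset{R,f}{\sim} 2$. By Lemma~\ref{BasicEquivalenceLemma} there is $Z=(z_k) \in (\F^*)^4$ with $f(\tau_{1,2}\sigma)=-\frac{z_{\sigma^{-1}(2)}}{z_{\sigma^{-1}(1)}}\,f(\sigma)$, and since $f$ is column-normalized with column classes $\lcro 1,2\rcro,\lcro 3,4\rcro$, Lemma~\ref{forcingrowlemma} forces $z_1=z_2=:a$ and $z_3=z_4=:b$. Left multiplication by $\tau_{1,2}$ fixes the two cosets of the pairing $12\mid 34$ and sets up a matching of the four remaining cosets that links $\beta_1$ with $\gamma_2$ and $\beta_2$ with $\gamma_1$. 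Reading off, for the two relevant representatives, which block ($\{1,2\}$ or $\{3,4\}$) the positions $\sigma^{-1}(1),\sigma^{-1}(2)$ lie in, the displayed relation yields $\gamma_2=-\tfrac{b}{a}\,\beta_1$ and $\gamma_1=-\tfrac{a}{b}\,\beta_2$. Multiplying these gives the key identity
$$\beta_1\beta_2=\gamma_1\gamma_2.$$

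Finally I would check that this single identity is exactly what is needed for $3 \underset{R,f}{\sim} 4$. Repeating the bookkeeping for left multiplication by $\tau_{3,4}$ (which now matches $\beta_1$ with $\gamma_1$ and $\beta_2$ with $\gamma_2$), the sought relation $f(\tau_{3,4}\sigma)=-\frac{z'_{\sigma^{-1}(4)}}{z'_{\sigma^{-1}(3)}}\,f(\sigma)$ holds on the two self-paired cosets exactly when $z'_1=z'_2$ and $z'_3=z'_4$, and on the four remaining cosets exactly when $z'_4/z'_2=-\gamma_1/\beta_1$ and $z'_4/z'_2=-\beta_2/\gamma_2$; these two demands are compatible precisely because $\beta_1\beta_2=\gamma_1\gamma_2$. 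Setting $z'_1=z'_2=1$ and $z'_3=z'_4=-\gamma_1/\beta_1$ defines an admissible $Z' \in (\F^*)^4$, and a short verification — using column-normalization together with $z'_1=z'_2$, $z'_3=z'_4$ — propagates the relation from the six representatives to all of $\mathfrak{S}_4$; Lemma~\ref{BasicEquivalenceLemma} then gives $3 \underset{R,f}{\sim} 4$. I expect the main obstacle to be the combinatorial bookkeeping of which block each position falls into when evaluating the ratios $z_{\sigma^{-1}(2)}/z_{\sigma^{-1}(1)}$; the conceptual reason the argument closes is that the obstruction to a row-equivalence inside either column pair is the \emph{symmetric} quantity $\beta_1\beta_2-\gamma_1\gamma_2$, so that $1\underset{R,f}{\sim}2$ and $3\underset{R,f}{\sim}4$ impose one and the same condition.
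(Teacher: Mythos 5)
Your proposal is correct and follows essentially the same route as the paper's proof: column-normalize via H-equivalence, decompose $\mathfrak{S}_4$ into the six right cosets of the group $\langle \tau_{1,2},\tau_{3,4}\rangle$ (on which $f$ transforms by $\sgn$), and then exhibit an explicit vector $Z'$, constant on $\{1,2\}$ and on $\{3,4\}$, witnessing $3 \underset{R,f}{\sim} 4$ via Lemma \ref{BasicEquivalenceLemma}. The only difference is cosmetic: the paper also row-normalizes, so its $1 \underset{R,f}{\sim} 2$ hypothesis reads $f(\tau_{1,2}\sigma)=-f(\sigma)$ and the six coset values collapse to four scalars $x,y,u,t$ with $Z'=(u,u,t,t)$ working directly, whereas you keep a general $Z=(a,a,b,b)$ and distill the hypothesis into the identity $\beta_1\beta_2=\gamma_1\gamma_2$ — the same compatibility condition (and one whose validity is independent of your sign conventions for the coset representatives, since the signs cancel in the products), reached with slightly more bookkeeping.
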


\begin{proof}
Without loss of generality, we can assume that $f$ is normalized.
Consider the subgroup $G$ of $\mathfrak{S}_4$ generated by $\tau_{1,2}$ and $\tau_{3,4}$.
The assumptions show that $f(\sigma \tau)=\sgn(\tau) f(\sigma)$ for all $\sigma \in \mathfrak{S}_4$
and $\tau \in G$. Clearly, two permutations in $\mathfrak{S}_4$ belong to the same orbit under
the right-action of $G$ by right-multiplication if and only if they map $\{1,2\}$ to the same subset.
Moreover, we have $f(\tau_{1,2} \sigma)=-f(\sigma)$ for all $\sigma \in \mathfrak{S}_4$.
Under the action of the subgroup $\{\id,\tau_{1,2}\}$, there are four orbits of subsets of $\{1,2,3,4\}$
with cardinality $2$: the singletons $\{\{1,2\}\}$ and $\{\{3,4\}\}$ and the pairs $\{\{1,3\},\{2,3\}\}$
and $\{\{1,4\},\{2,4\}\}$. Hence, there are nonzero scalars $x,y,u,t$ such that
$$\forall \sigma \in \mathfrak{S}_4, \; f(\sigma)=\begin{cases}
\sgn(\sigma)\,x & \text{if $\sigma(\{1,2\})=\{1,2\}$} \\
\sgn(\sigma)\,y & \text{if $\sigma(\{1,2\})=\{3,4\}$} \\
\sgn(\sigma)\,u & \text{if $\sigma(\{1,2\}) \cap \{3,4\}=\{3\}$} \\
\sgn(\sigma)\,t & \text{if $\sigma(\{1,2\}) \cap \{3,4\}=\{4\}$.}
\end{cases}$$
Set $Z:=\begin{bmatrix}
u & u & t & t
\end{bmatrix}^T=(z_k)_{1 \leq k \leq 4}$. For all $\sigma \in \mathfrak{S}_4$, if $\sigma(\{1,2\})=\{1,2\}$ or $\sigma(\{1,2\})=\{3,4\}$
then $(\tau_{3,4} \sigma)(\{1,2\})=\sigma(\{1,2\})$, and we see that $\sigma^{-1}(3)$ and $\sigma^{-1}(4)$
belong both to $\{1,2\}$ or both to $\{3,4\}$, whence $z_{\sigma^{-1}(3)}=z_{\sigma^{-1}(4)}$.

Let $\sigma \in \mathfrak{S}_4$ be such that $\sigma(\{1,2\}) \cap \{3,4\}=\{3\}$.
Then, $(\tau_{3,4} \sigma)(\{1,2\}) \cap \{3,4\}=\{4\}$.
Moreover, $\{1,2\} \cap \{\sigma^{-1}(3),\sigma^{-1}(4)\}=\{\sigma^{-1}(3)\}$, and hence
$\sigma^{-1}(3) \in \{1,2\}$ and $\sigma^{-1}(4) \in \{3,4\}$.
It follows that
$$f(\tau_{3,4} \sigma)=-\frac{t}{u} f(\sigma)=-\frac{z_{\sigma^{-1}(4)}}{z_{\sigma^{-1}(3)}}\, f(\sigma).$$
Likewise, one shows that for all $\sigma \in \mathfrak{S}_4$ such that $\sigma(\{1,2\}) \cap \{3,4\}=\{4\}$,
$$f(\tau_{3,4} \sigma)=-\frac{u}{t}\, f(\sigma)=-\frac{z_{\sigma^{-1}(4)}}{z_{\sigma^{-1}(3)}}\, f(\sigma).$$
Hence, we have shown that $f(\tau_{3,4} \sigma)=-\frac{z_{\sigma^{-1}(4)}}{z_{\sigma^{-1}(3)}}\, f(\sigma)$
for all $\sigma \in \mathfrak{S}_4$, and we conclude that $3$ is row-$f$-equivalent to $4$.
\end{proof}

Now, let us give examples of column and row equivalence classes. First, for $n=4$
we have an example that is related to the previous result.

\begin{ex}
Assume that the field $\F$ has more than $2$ elements. Then, we choose $x \in \F \setminus \{0,1\}$ and
we define a mapping $f : \mathfrak{S}_4 \rightarrow \F^*$ as follows:
$$f : \sigma \in \mathfrak{S}_4 \mapsto \begin{cases}
\sgn(\sigma)\,x & \text{if $\{\sigma(1),\sigma(2)\} = \{1,2\}$} \\
\sgn(\sigma) & \text{otherwise.}
\end{cases}$$
For all $\sigma \in \mathfrak{S}_4$, note that
$(\sigma \tau_{1,2})(\{1,2\})=\sigma(\{1,2\})=(\sigma \tau_{3,4})(\{1,2\})$, hence
$f(\sigma \tau_{1,2})=-f(\sigma)=f(\sigma\tau_{3,4})$
and it follows that $1 \underset{C,f}{\sim} 2$ and $3 \underset{C,f}{\sim} 4$. We claim however that $1$ is not column-$f$-equivalent to $4$.
Indeed, if the contrary held then the ratio $\frac{f(\sigma \tau_{1,4})}{f(\sigma)}$ would depend only on the pair $(\sigma(1),\sigma(4))$.
Yet, with $\sigma=\id$ this ratio equals $-x^{-1}$, whereas with $\sigma=\tau_{2,3}$ it equals $-1$.
Hence, the column-$f$-equivalence classes are $\{1,2\}$ and $\{3,4\}$.

Here, one sees that $f(\sigma^{-1})=f(\sigma)$ for all $\sigma \in \mathfrak{S}_4$, and hence the row-$f$-equivalence classes
are also $\{1,2\}$ and $\{3,4\}$.
\end{ex}

\begin{ex}
Let $n \geq 5$, and assume that $\F$ has at least three elements. Choose $x$ in $\F \setminus \{0,1\}$.
We define
$$g : \sigma\in \mathfrak{S}_4 \mapsto \begin{cases}
\sgn(\sigma)\,x & \text{if $\{\sigma(1),\sigma(2)\} = \{2,n\}$ or $\{\sigma(1),\sigma(2)\} = \{1,n\}$} \\
\sgn(\sigma) & \text{otherwise.}
\end{cases}$$
As in the previous example, one proves that $1 \underset{C,g}{\sim} 2$ and $i \underset{C,g}{\sim} j$
for all distinct $i,j$ in $\lcro 3,n\rcro$. Moreover, for $\sigma:=\tau_{1,n}$,
the ratio $\frac{g(\sigma \tau_{1,3})}{g(\sigma)}$ equals $-x^{-1}$, whereas for any $\sigma$ such that $\sigma(1)=n$, $\sigma(2)=n-1$
and $\sigma(3)=3$, this ratio equals $-1$. Hence, $1$ is not column-$g$-equivalent to $n$, and we deduce
from Proposition \ref{notn-1} that the column-$g$-equivalence classes are $\{1,2\}$ and $\lcro 3,n\rcro$. Note also that $g$ is column-normalized.

We remark that the condition that $\{\sigma(1),\sigma(2)\} = \{2,n\}$ or $\{\sigma(1),\sigma(2)\} = \{1,n\}$
is invariant in replacing $\sigma$ with $\tau \sigma$ for some permutation $\tau$ that fixes $n$
and leaves $\{2,1\}$ invariant. Hence, $1$ and $2$ are row-$g$-equivalent and $3,\dots,n-1$ are row-$g$-equivalent.
Let us prove that $1$ and $n$ are not row-$g$-equivalent. Assume the contrary.
Then, there is a vector $(z_k)\in (\F^*)^n$ such that $g(\tau_{1,n} \sigma)=-\frac{z_{\sigma^{-1}(n)}}{z_{\sigma^{-1}(1)}}
g(\sigma)$ for all $\sigma \in \mathfrak{S}_n$. By Lemma \ref{forcingrowlemma}, we have
$z_1=z_2$ and $z_3=\cdots=z_n$. Take $\sigma \in \mathfrak{S}_n$ such that $\sigma(1)=1$, $\sigma(2)=3$ and $\sigma(3)=n$.
Then, $g(\sigma)=\sgn(\sigma)$ and $g(\tau_{1,n}\sigma)=-\sgn(\sigma)$, leading to $z_1=z_3$. Hence
$g(\tau_{1,n}\sigma)=-g(\sigma)$ for all $\sigma \in \mathfrak{S}_n$. However with $\sigma=\id$
this leads to $x=1$, a contradiction.

Next, we prove that $3$ and $n$ are not row-$g$-equivalent. Assume otherwise, so that we have a vector
$Z'=(z'_k)\in (\F^*)^n$ such that $g(\tau_{3,n} \sigma)=-\frac{z'_{\sigma^{-1}(n)}}{z'_{\sigma^{-1}(3)}}
g(\sigma)$ for all $\sigma \in \mathfrak{S}_n$.
Again, $z'_1=z'_2$ and $z'_3=\cdots=z'_n$. As $n \geq 5$ we can choose $\sigma \in \mathfrak{S}_n$ such that
$\sigma(3)=n$, $\sigma(1)=3$ and $\sigma(2)=n-1$. Then, $g(\sigma)=\sgn(\sigma)$ and $g(\tau_{3,n}\sigma)=-\sgn(\sigma)$,
leading to $z'_3=z'_1$. Hence, $g(\tau_{3,n}\sigma)=-g(\sigma)$ for all $\sigma \in \mathfrak{S}_n$.
Taking $\sigma:=\tau_{1,3}$, we have $g(\sigma)=-1$ and $g(\tau_{3,n}\sigma)=x$, and hence $x=1$. Again, this is a contradiction.

Finally, if some element of $\{1,2\}$ were row-$g$-equivalent to some element of $\lcro 3,n-1\rcro$,
then Proposition \ref{notn-1} would yield that all the indices in $\lcro 1,n\rcro$ are row-$g$-equivalent,
which has just been disproved. We conclude that the row-$g$-equivalence classes are $\{1,2\},\lcro 3,n-1\rcro$ and $\{n\}$.
\end{ex}

\begin{ex}
Let $n \geq 4$, and assume that $\F \setminus \{0\}$ contains at least $n-1$ elements $x_1,\dots,x_{n-1}$.
We define
$$h :\sigma \in \mathfrak{S}_n \mapsto \begin{cases}
\sgn(\sigma)\,x_i & \text{if $\{\sigma(1),\sigma(2)\} = \{i,n\}$ for some $i \in \lcro 1,n-1\rcro$} \\
\sgn(\sigma) & \text{otherwise.}
\end{cases}$$
One sees that $h(\sigma \tau_{1,2})=-h(\sigma)$ for all $\sigma \in \mathfrak{S}_n$, and
 $h(\sigma \tau_{i,j})=-h(\sigma)$ for all distinct indices $i,j$ in $\lcro 3,n\rcro$ and all $\sigma \in \mathfrak{S}_n$.
Hence, $1$ and $2$ are column-$h$-equivalent, and $3,\dots,n$ are all column-$h$-equivalent.
Again, let us prove that $1$ is not column-$h$-equivalent to $n$. If the contrary held,
then the ratio $\frac{h(\sigma \tau_{1,n})}{h(\sigma)}$ would depend only on the pair $(\sigma(1),\sigma(n))$,
which is contradicted by taking the permutations $\id$ (for which the ratio equals $-x_2$) and $\tau_{2,3}$
(for which the ratio equals $-x_3$). Hence, the column-$h$-equivalence classes are $\{1,2\}$ and $\lcro 3,n\rcro$.
Moreover, $h$ is column-normalized.

Let us prove that no two distinct indices in $\lcro 1,n\rcro$ are row-$h$-equivalent.
Let $i,j$ be distinct indices that are row-$h$-equivalent. Then,
there is a vector $(z_k) \in (\F^*)^n$ such that $h(\tau_{i,j}\sigma)=-\frac{z_{\sigma^{-1}(j)}}{z_{\sigma^{-1}(i)}}h(\sigma)$
for all $\sigma \in \mathfrak{S}_n$. By Lemma \ref{forcingrowlemma}, we already know that
$z_1=z_2$ and $z_3=\cdots=z_n$.
\begin{itemize}
\item Assume first that $i=1$ and $j=2$.
We can choose a permutation $\sigma$ such that $\sigma(1)=1$, $\sigma(2)=3$ and $\sigma(3)=2$.
Then, $\tau_{1,2} \sigma$ maps $\{1,2\}$ to $\{2,3\}$, which does not contain $n$.
It follows from the definition of $h$ that $h(\tau_{1,2} \sigma)=-\sgn(\sigma)=-h(\sigma)$, and we deduce that
$z_3=z_1$. Hence, $h(\tau_{1,2}\sigma)=-h(\sigma)$ for all $\sigma \in \mathfrak{S}_n$.
Taking $\sigma=\tau_{1,n}$, and we see that $h(\sigma)=-x_2$ while $h(\tau_{1,2} \sigma)=x_1$, a contradiction.

\item Assume that $i=1$ and $j=n$.
Choose $\sigma \in \mathfrak{S}_n$ such that $\sigma(1)=1$, $\sigma(2)=2$ and $\sigma(3)=n$. 
Then, $h(\sigma)=\sgn(\sigma)$ and $h(\tau_{1,n}\sigma)=-\sgn(\sigma)\, x_2$, and hence 
$z_3=x_2 z_1$. 

Choose $\sigma \in \mathfrak{S}_n$ such that $\sigma(1)=1$, $\sigma(2)=3$ and $\sigma(4)=n$.
Then, $h(\sigma)=\sgn(\sigma)$ and $h(\tau_{1,n}\sigma)=-\sgn(\sigma)\, x_3$, and hence
$z_4=x_3 z_1$. As $z_4=z_3$, we conclude that $x_2=x_3$, which contradicts our assumptions. 
\end{itemize}
We deduce that $1$ is neither row-$g$-equivalent to $2$ nor to $n$. Symmetrically, no two distinct elements of $\lcro 1,n-1\rcro$
are row-$h$-equivalent, and $n$ is row-$h$-equivalent to no element of $\lcro 1,n-1\rcro$. Hence, no
two distinct elements of $\lcro 1,n\rcro$ are row-$h$-equivalent.
\end{ex}

In this example, if in the definition of $h$ we replace the condition
$\{\sigma(1),\sigma(2)\}=\{i,n\}$ by $(\sigma(1),\sigma(2))=(n,i)$ then one can show that
the resulting mapping has its column partition equal to $\bigl\{\{1\},\{2\},\lcro 3,n\rcro\bigr\}$
and that its row partition is the set of all singletons of $\lcro 1,n\rcro$.

At this point, we have the following conjecture:

\begin{conj}
Let $\calF$ be a partition of $\lcro 1,n\rcro$, and $\F$ be an infinite field.
Assume that $\calF$ does not have exactly two elements, one of which is a singleton.
Then, there exists a mapping $f : \mathfrak{S}_n \rightarrow \F^*$
such that  $\calF$ is the quotient set for column-$f$-equivalence.
\end{conj}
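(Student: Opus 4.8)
The plan is to record first that the excluded configuration is exactly the one forbidden by Proposition \ref{notn-1}, and then to attack sufficiency by a genericity argument that exploits the infiniteness of $\F$. Since $i$ is row-$f$-equivalent to $j$ exactly when it is column-$f^T$-equivalent to $j$, the realizable column partitions and the realizable row partitions coincide, so there is no loss in working with column-equivalence. For the necessity of the hypothesis, observe that the only partition of $\lcro 1,n\rcro$ having a block of cardinality $n-1$ is the one consisting of that block and a singleton; by Proposition \ref{notn-1} no such partition can be a column partition, and this is precisely the case ruled out in the statement. Thus the assertion to be proved is that this is the \emph{sole} obstruction.

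For the construction I would fix the blocks $B_1,\dots,B_m$ of $\calF$ and seek $f$ only among the maps of the form $f=\sgn\cdot w$, where $w:\mathfrak{S}_n\to\F^*$ is right-invariant under the Young subgroup $H:=\prod_k \mathfrak{S}(B_k)$ of permutations preserving each block. Such a $w$ is constant on each left coset $\sigma H$ and hence depends only on the ordered tuple $\Phi(\sigma):=(\sigma(B_1),\dots,\sigma(B_m))$; writing $w=\omega\circ\Phi$, the free data is the family $\omega\in(\F^*)^N$ of values indexed by the $N$ ordered set partitions of $\lcro 1,n\rcro$ of shape $(|B_1|,\dots,|B_m|)$. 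If $i,j$ lie in the same block then $\tau_{i,j}\in H$, so $w(\sigma\tau_{i,j})=w(\sigma)$ and $f(\sigma\tau_{i,j})=-f(\sigma)$ for all $\sigma$; by Lemma \ref{BasicEquivalenceLemma} this makes $i$ column-$f$-equivalent to $j$. Consequently every column-$f$-equivalence class is automatically a union of blocks of $\calF$, and it remains only to prevent any two blocks from merging.

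By transitivity it suffices, for each pair of distinct blocks $B_k,B_l$, to break column-$f$-equivalence of a single cross pair $i\in B_k$, $j\in B_l$. For $\sigma$ with $\sigma(i)=a$ and $\sigma(j)=b$, the ratio $f(\sigma\tau_{i,j})/f(\sigma)$ equals $-\omega(P')/\omega(P)$, where $P:=\Phi(\sigma)$ and $P'$ is obtained from $P$ by moving $a$ from block-position $k$ to $l$ and $b$ from $l$ to $k$; this is a function $R(P,a,b)$ of $(P,a,b)$ alone. By Lemma \ref{BasicEquivalenceLemma}, $i\underset{C,f}{\sim}j$ forces a single vector $Z\in(\F^*)^n$ with $R(P,a,b)=z_b/z_a$ for all admissible triples; in particular $R$ would be independent of $P$ and would obey the cocycle identity $R(a,b)\,R(b,c)\,R(c,a)=1$. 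I would therefore, for each cross pair, produce either two configurations with common endpoints and distinct $R$-values, or a triple $a,b,c$ violating the cocycle identity --- in either case an explicit obstruction polynomial in the coordinates of $\omega$ that is not identically zero on $(\F^*)^N$. Multiplying these finitely many obstruction polynomials yields a single nonzero Laurent polynomial, which, $\F$ being infinite, admits a non-root $\omega\in(\F^*)^N$; such an $\omega$ gives an $f$ whose column partition is exactly $\calF$.

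The main obstacle is to construct a non-trivial obstruction polynomial for \emph{every} cross pair of \emph{every} admissible shape, and it is here that the hypothesis must enter. When $B_k$ or $B_l$ has cardinality $n-1$, the configuration $P$ is determined by a single endpoint, $R$ collapses to a ratio $\omega_a/\omega_b$, the cocycle identity holds identically, and equivalence is \emph{forced} --- this recovers Proposition \ref{notn-1}. Conversely one expects that, as soon as no block has cardinality $n-1$, the number of configurations sharing two prescribed endpoints (a multinomial coefficient) is large enough to create genuine two-endpoint dependence, and hence an obstruction; for instance, in shape $(n-2,1,1)$ the cross pair between the two singletons already yields the triangular obstruction $R(a,b)R(b,c)R(c,a)$, a ratio of six independent coordinates of $\omega$, which is not identically $1$. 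Turning this heuristic into a uniform combinatorial statement valid for all shapes and all characteristics, and checking that the candidate obstructions never degenerate, is the crux of the difficulty; this is precisely why the statement is recorded here only as a conjecture rather than a theorem.
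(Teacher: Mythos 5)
First, a point of order: the paper contains no proof of this statement at all --- it is posed there as an open conjecture, immediately followed by an ``even more challenging open problem'' --- so there is nothing to compare your argument against; it must stand on its own. It does not, and you say so yourself. The entire weight of your genericity strategy rests on the claim that for every admissible shape and every cross pair of blocks there is at least one obstruction polynomial that is not identically zero on $(\F^*)^N$, and this is precisely the step you leave as a heuristic (``turning this heuristic into a uniform combinatorial statement \dots is the crux''). As written, your proposal is therefore a \emph{reduction} of the conjecture to a combinatorial non-degeneracy claim, not a proof of it. The reduction itself is sound: the ansatz $f=\sgn\cdot(\omega\circ\Phi)$ does force each block of $\calF$ inside a single column class (Lemma \ref{BasicEquivalenceLemma} with $Z=(1,\dots,1)$); non-merging does reduce, by transitivity, to breaking one cross pair per pair of blocks; and for a fixed cross pair, the set of bad $\omega$ is cut out by the ``$R$ independent of $P$'' equations together with the coboundary equations, so that non-vanishing of a single well-chosen obstruction at $\omega$ rules out the existence of any vector $Z$ in Lemma \ref{BasicEquivalenceLemma}.

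Second, the gap is genuinely the crux, but it appears closable, and more cheaply than your final paragraph suggests; characteristic is a red herring. Fix $i \in B_k$, $j \in B_l$ and endpoints $a \neq b$. Given two \emph{distinct} configurations $P_1 \neq P_2$ with $a$ in slot $k$ and $b$ in slot $l$, the obstruction $\omega(P_1^{a\leftrightarrow b})\,\omega(P_2)-\omega(P_2^{a\leftrightarrow b})\,\omega(P_1)$ is a difference of two distinct monomials (the swap $P \mapsto P^{a\leftrightarrow b}$ is injective and fixes no configuration), hence a nonzero polynomial over any field. Such a pair $(P_1,P_2)$ exists exactly when the multinomial coefficient counting configurations with prescribed endpoints is at least $2$, and one checks that this coefficient equals $1$ only in two situations: the shape $(n-1,1)$, which is your excluded case, and the singleton--singleton cross pair in the shape $(n-2,1,1)$ (which for $n=3$ is $(1,1,1)$). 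In that residual situation $R(a,b)=\omega(P_{ba})/\omega(P_{ab})$ is forced to be endpoint-dependent, but the triangular obstruction $\omega(P_{ba})\omega(P_{cb})\omega(P_{ac})-\omega(P_{ab})\omega(P_{bc})\omega(P_{ca})$ involves six pairwise distinct configurations, hence is again a difference of two distinct monomials and is nonzero. Multiplying all these obstructions together with $\prod_P \omega(P)$ gives one nonzero polynomial, and infiniteness of $\F$ produces the desired $\omega$, exactly as in your third paragraph. So the missing step is a finite counting argument, not an open-ended combinatorial problem: carry it out and you will have proved the conjecture, i.e.\ obtained a result that the paper itself does not have.
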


Here is an even more challenging open problem: given an infinite field $\F$,
describe the pairs $(\calF,\calG)$ of partitions of $\lcro 1,n\rcro$ for which there exists a mapping
$f : \mathfrak{S}_n \rightarrow \F^*$ whose column-equivalence classes are the elements of $\calF$
and whose row-equivalence classes are the elements of $\calG$.

\section{Main results}

Now that we have defined the column and row equivalence relations attached to a mapping $f$,
we can state some of our results on the structure of $(f,g)$-transformations.
Remembering the definition of the row list $r(f)$ and the column list $c(f)$ of $f$,
we will prove:

\begin{theo}\label{maintheo}
Let $f$ and $g$ be mappings from $\mathfrak{S}_n$ to $\F^*$, and $U$ be an endomorphism of the vector space
$\Mat_n(\F)$ such that
$$\forall M \in \Mat_n(\F), \; \widetilde{g}(U(M))=0 \Leftrightarrow \widetilde{f}(M)=0.$$
Assume that $n \geq 2$.
Then:
\begin{enumerate}[(i)]
\item $U$ is bijective.
\item The mapping $g$ is PH-equivalent to $f$ or to $f^T$.
\item We have $(r(g),c(g))=(r(f),c(f))$ or $(r(g),c(g))=(c(f),r(f))$.
\item There exists a non-zero scalar $\alpha$ such that $U$ is an $(\alpha f,g)$-transformation.
\end{enumerate}
\end{theo}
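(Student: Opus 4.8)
The plan is to follow the Dieudonn\'e strategy: control the linear subspaces of maximal dimension inside the null cones $\calN_f:=\widetilde{f}^{-1}(\{0\})$ and $\calN_g:=\widetilde{g}^{-1}(\{0\})$, transport them through $U$, and finally upgrade the set-theoretic coincidence of the cones into a genuine identity of functionals. I would first dispatch (i). Let $N\in\Ker U$. For every $M$ and every $t\in\F$ we have $U(M+tN)=U(M)$, so $\widetilde{g}(U(M+tN))=\widetilde{g}(U(M))$, and the hypothesis gives $M\in\calN_f\Leftrightarrow M+tN\in\calN_f$; thus $\calN_f$ is invariant under translation in the direction $N$. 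Now for each $j$ the subspace $V_j:=\{M:C_j(M)=0\}$ lies in $\calN_f$, since every monomial of $\widetilde{f}$ carries a factor from the $j$-th column. If $N\neq 0$, I pick $j$ with $C_j(N)\neq 0$; then $V_j+\F N\subseteq\calN_f$ is a subspace of dimension $(n^2-n)+1$, which contradicts the bound (Section \ref{conesection}) that a subspace contained in $\calN_f$ has dimension at most $n^2-n$. Hence $\Ker U=0$ and, $\Mat_n(\F)$ being finite-dimensional, $U$ is bijective.

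Being bijective, $U$ maps every $(n^2-n)$-dimensional subspace inside $\calN_f$ onto a subspace of the same dimension inside $\calN_g$, hence onto a maximal one; and $U^{-1}$, which satisfies the symmetric hypothesis relative to $(g,f)$, does the same in reverse. So $U$ induces a bijection between the maximal subspaces of $\calN_f$ and those of $\calN_g$. By the classification of these subspaces (Section \ref{conesection}) they split into a ``column family'' and a ``row family'', and the dimensions of pairwise intersections distinguish the two families; consequently $U$ either carries the column family of $\calN_f$ to the column family of $\calN_g$ and likewise for rows, or it swaps the two families. In the swapping case I would replace $U$ by $U\circ(M\mapsto M^T)$ and $f$ by $f^T$ (using $\widetilde{f}(M^T)=\widetilde{f^T}(M)$), reducing to the non-swapping case at the cost of exchanging $c(f)$ and $r(f)$. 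In the non-swapping case, reading off from each family the number and the cardinalities of the classes it encodes yields $(r(g),c(g))=(r(f),c(f))$, which is (iii), while tracking the action of $U$ on the $Z$-vectors attached to the subspaces (Lemma \ref{BasicEquivalenceLemma}) produces a triple $(A,\tau,\tau')$ exhibiting $g$ as PH-equivalent to $f$, which is (ii).

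For (iv), by (ii) there is an invertible monomial map $V_0:M\mapsto A\star(P_\tau M P_{\tau'}^{-1})$ with $\widetilde{f}\circ V_0=\widetilde{g}$, so $V_0$ is a $(g,f)$-transformation. Setting $W:=V_0\circ U$ (bijective), we get $\widetilde{f}(W(M))=\widetilde{g}(U(M))$, whence $\widetilde{f}(W(M))=0\Leftrightarrow\widetilde{f}(M)=0$; it then suffices to prove $\widetilde{f}\circ W=\alpha\,\widetilde{f}$ for some $\alpha\in\F^*$, since that gives $\widetilde{g}(U(M))=\alpha\,\widetilde{f}(M)$ (the subcase where $g$ is PH-equivalent to $f^T$ is brought to this one by pre-composing with transposition). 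Now $h:=\widetilde{f}\circ W$ is homogeneous of degree $n$ and vanishes exactly on $\calN_f$, so in particular on every $V_j$ and every $\{R_i=0\}$. Over an infinite field this forces each monomial of $h$ to meet every column and every row, hence, $h$ being of degree $n$, to be a transversal product $\prod_j m_{\pi(j),j}$; thus $h=\widetilde{h_0}$ is itself a Schur functional, and $h_0$ is nowhere zero because $W$ preserves $\calN_f$ in both directions, so $W(P_\sigma)\notin\calN_f$. Finally I invoke the elementary fact that two Schur functionals with nowhere-vanishing coefficients and the same null cone are proportional: given $\sigma$ and a transposition $\tau_{i,j}$, the matrix built from a single free $2\times2$ block in rows $\sigma(i),\sigma(j)$ and columns $i,j$ (entries $a,b,c,d$) together with $1$'s along the rest of $\sigma$ reduces both functionals to the two-term pencils $f(\sigma)\,ad+f(\sigma\tau_{i,j})\,bc$ and $h_0(\sigma)\,ad+h_0(\sigma\tau_{i,j})\,bc$; coincidence of their zero loci in the $(ad,bc)$-plane forces $h_0(\sigma)/f(\sigma)=h_0(\sigma\tau_{i,j})/f(\sigma\tau_{i,j})$, and since transpositions generate $\mathfrak{S}_n$ the ratio $h_0/f$ is constant, equal to the desired $\alpha$.

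The main obstacle I anticipate is twofold. The combinatorial passage in the second paragraph, from the bare bijection of maximal subspaces to the explicit PH-equivalence $(A,\tau,\tau')$, is delicate precisely because the maximal subspaces are rigid only to the extent that the row and column partitions are nontrivial, and the determinant-type situation (both partitions total) must be treated separately. The other subtle point is hidden in the last paragraph: over a \emph{finite} field the step ``$h$ vanishes on $V_j$, hence every monomial of $h$ meets column $j$'' is not valid at the level of polynomial functions. There one must instead use the structure of $W$ provided by Section \ref{conesection} to show that $h=\widetilde{f}\circ W$ is genuinely multilinear, i.e.\ of degree at most $1$ in each entry; once this is secured, the multilinear vanishing lemma (valid as soon as $|\F|\geq 2$) replaces the ideal-membership argument and the remainder of the proof of (iv) goes through verbatim over an arbitrary field.
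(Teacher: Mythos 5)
Your overall architecture coincides with the paper's (it is the Dieudonn\'e strategy announced in the introduction): bound the dimension of subspaces inside the null cones, classify the maximal ones, transport them through $U$, and upgrade the equality of null cones to proportionality of functionals. Within that, your part (i) is correct and complete: the paper's Lemma \ref{injectlemma} instead derives a contradiction from an explicit matrix, but your argument (translation invariance of $\calC(f)$ plus the codimension bound of Theorem \ref{DimensionInequalityTheorem}) is equally valid. Your final proportionality lemma is exactly Proposition \ref{coneprop}, proved the same way ($2\times 2$ pencils along a permutation). And your infinite-field route to (iv) --- showing that $h=\widetilde{f}\circ W$ is a formal polynomial all of whose monomials are transversal products, hence itself a Schur functional with nowhere-zero coefficients --- is a genuine shortcut the paper does not use: the paper obtains (iv) only through the full structure Theorem \ref{fgtransfotheo}.

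The gap is in your second paragraph, and it matters because (ii), (iii) and the finite-field case of (iv) all lean on it. First, a repairable omission: Theorem \ref{DimensionEqualityTheorem} classifies the codimension-$n$ subspaces of $\calC(f)$ only for \emph{normalized} $f$; for a general $f$ the maximal subspaces are Hadamard twists of the spaces $\calV_X$ and $\calV^Y$, so you must first replace $f$ and $g$ by PH-equivalent fully-normalized mappings (Proposition \ref{fullnormalizedprop}), conjugate $U$ accordingly, and transfer the conclusions back; you never perform this reduction. Second, and more seriously: the step ``tracking the action of $U$ on the $Z$-vectors attached to the subspaces produces a triple $(A,\tau,\tau')$'' is not an argument but a restatement of what must be proved. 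This is precisely where the paper spends all of Section \ref{analysissection}: one needs preservation of linear independence of the lines $\F X$ (Claim \ref{independenceclaim}, via Lemma \ref{ranklemma}), the induced cardinality-preserving bijection $\Sigma$ between column (resp.\ row) classes --- which is what actually yields (iii) --- then several successive compositions with standard $f$-similarities normalizing the images of the $\calV_{e_i}$ and $\calV^{e_i}$, the collinearity of the column maps $\varphi_i$ on each class, and finally the monomial form $U(E_{i,j})=\lambda_{i,j}E_{\sigma^{-1}(i),\tau^{-1}(j)}$. That monomial form is also exactly what makes $\widetilde{f}\circ W$ multilinear, which your finite-field fix for (iv) requires; your pointer to ``the structure of $W$ provided by Section \ref{conesection}'' is misplaced, since Section \ref{conesection} only classifies subspaces and says nothing about $W$ --- the structure comes from Section \ref{transfosection}, i.e., from the part you have not proved. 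So, as written, (ii), (iii) and the finite-field case of (iv) are asserted rather than established, and this unproven passage is the hard core of the paper's proof.
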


Remember that the column and row partitions of the signature of $\mathfrak{S}_n$ consist of the sole set $\lcro 1,n\rcro$, whereas the constant mapping equal to $1$ is rigid if $n>2$ and $\chi(\F) \neq 2$. Hence, as a special case of the above result (using point (iii) only), we get the following corollary:

\begin{cor}
Assume that $n \geq 3$ and $\car(\F) \neq 2$. Then:
\begin{enumerate}[(a)]
\item No endomorphism $U$ of $\Mat_n(\F)$ satisfies
$$\forall M \in \Mat_n(\F), \; \det \bigl(U(M)\bigr)=0 \Leftrightarrow \per M=0.$$
\item No endomorphism $U$ of $\Mat_n(\F)$ satisfies
$$\forall M \in \Mat_n(\F), \; \per \bigl(U(M)\bigr)=0 \Leftrightarrow \det M=0.$$
\end{enumerate}
\end{cor}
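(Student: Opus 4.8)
The plan is to derive both statements as immediate contradictions from point (iii) of Theorem \ref{maintheo}, once I have recorded the row and column cardinality lists of the two functionals involved. The entire argument is numerical: I compare the pair of lists attached to the determinant with the pair attached to the permanent and observe that point (iii) demands an equality that cannot hold.

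First I would recall the relevant partition data. For the signature $f=\sgn$, Example \ref{detExample} (equivalently Remark \ref{detremark}) shows that column-$f$-equivalence and row-$f$-equivalence each relate all indices of $\lcro 1,n\rcro$, so both lists are the single-entry list $(n)$; that is, $r(\sgn)=c(\sgn)=(n)$. For the permanent, where $f$ is constant equal to $1$, the hypotheses $n\geq 3$ and $\car(\F)\neq 2$ place us in the setting of Example \ref{perExample}, where $f$ is rigid; hence both its lists equal the all-ones list $(1,1,\dots,1)$ with $n$ entries.

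For part (a) I would suppose, for contradiction, that such a $U$ exists and apply Theorem \ref{maintheo} with $\widetilde{g}=\det$ and $\widetilde{f}=\per$ (so $g=\sgn$ and $f$ is the constant $1$); the stated condition is precisely $\widetilde{g}(U(M))=0 \Leftrightarrow \widetilde{f}(M)=0$. Point (iii) then forces $(r(g),c(g))$ to equal $(r(f),c(f))$ or $(c(f),r(f))$. But $(r(g),c(g))=\bigl((n),(n)\bigr)$, whereas both $(r(f),c(f))$ and $(c(f),r(f))$ equal $\bigl((1,\dots,1),(1,\dots,1)\bigr)$; since $n\geq 2$ we have $(n)\neq(1,\dots,1)$, a contradiction. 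For part (b) I would run the symmetric argument with the roles of $f$ and $g$ interchanged, taking $\widetilde{g}=\per$ and $\widetilde{f}=\det$, and reach the same numerical incompatibility.

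There is essentially no obstacle here beyond bookkeeping: all the content sits in Theorem \ref{maintheo} and in the partition computations of Examples \ref{detExample} and \ref{perExample}. The only point requiring a moment's care is to confirm that \emph{both} alternatives permitted by point (iii) are excluded. This is automatic, because the permanent's two lists coincide and the signature's two lists coincide, so the order-swapping alternative in (iii) produces nothing new; the single inequality $(n)\neq(1,\dots,1)$, valid as soon as $n\geq 2$, closes both cases simultaneously.
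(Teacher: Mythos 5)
Your proposal is correct and follows exactly the paper's own route: the paper derives the corollary as an immediate special case of point (iii) of Theorem \ref{maintheo}, using that the signature has both cardinality lists equal to $(n)$ while the constant map $1$ is rigid (lists $(1,\dots,1)$) when $n\geq 3$ and $\car(\F)\neq 2$. Your spelling out of the symmetric argument for part (b) and the remark that the order-swapping alternative in (iii) gives nothing new matches the paper's (terser) reasoning.
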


In particular, point (b) generalizes an earlier result of Duffner and da Cruz \cite{DuffnerdaCruz},
which was known only for fields with cardinality greater than or equal to $n$.

Theorem \ref{maintheo} will be proved over the course of the next two sections: in Section \ref{conesection},
we study the null cone of a Schur functional, and in Section \ref{transfosection} we give a
partial description for all $(f,g)$-transformations that is sufficiently precise so as to yield Theorem \ref{maintheo}.

\section{Vector spaces of matrices in the null cone of a matrix functional}\label{conesection}

Throughout this section, we fix a mapping $f : \mathfrak{S}_n \rightarrow \F^*$.

\begin{Def}
We define
$$\calC(f):=\bigl\{M \in \Mat_n(\F) : \; \widetilde{f}(M)=0\bigr\}=\widetilde{f}^{-1} \{0\},$$
which we call the \textbf{null cone} of $f$.
\end{Def}

Assume now that $f$ is normalized. A nonzero vector $(x_i) \in \F^n \setminus \{0\}$ is called \textbf{column-$f$-adapted} whenever
its \textbf{support}, defined as $\{i \in \lcro 1,n\rcro : \; x_i \neq 0\}$, is included in a column-$f$-equivalence class.
A nonzero vector $X\in \F^n \setminus \{0\}$ is called \textbf{row-$f$-adapted} whenever
its support is included in a row-$f$-equivalence class.

Given a non-zero vector $X \in \F^n \setminus \{0\}$, we set
$$\calV_X:=\bigl\{M \in \Mat_n(\F) : MX=0\bigr\} \quad \text{and} \quad \calV^X:=(\calV_X)^T=\bigl\{M \in \Mat_n(\F) : X^TM=0\bigr\}.$$
Both are linear subspaces of $\Mat_n(\F)$ with codimension $n$.

\begin{lemma}
Let $f : \mathfrak{S}_n \rightarrow \F^*$ be a normalized mapping. Let $X \in \F^n\setminus \{0\}$.
\begin{itemize}
\item If $X$ is column-$f$-adapted, then $\calV_X \subset \calC(f)$.
\item If $X$ is row-$f$-adapted, then $\calV^X \subset \calC(f)$.
\end{itemize}
\end{lemma}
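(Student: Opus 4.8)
The plan is to prove the two bullet points symmetrically via the transpose, so I will focus on the first: if $X=(x_i)$ is column-$f$-adapted, then $\calV_X \subset \calC(f)$. The key observation is that $X$ being column-$f$-adapted means its support lies in a single column-$f$-equivalence class, so I can express the constraint $MX=0$ as a relation expressing one column of $M$ (corresponding to an index in the support) as a $\star$-combination of the others, and then exploit the vanishing of $\widetilde f$ on matrices with such column relations, which is exactly what the definition of column-$f$-equivalence together with Lemma \ref{BasicEquivalenceLemma} provides.

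More concretely, first I would reduce to the case where the support of $X$ has at least two elements, since if the support is a single index $\{i\}$, then $MX=0$ forces $C_i(M)=0$, so every product $\prod_j m_{\sigma(j),j}$ vanishes (the factor $m_{\sigma(i),i}$ is zero), giving $\widetilde f(M)=0$ trivially. Assuming the support is $\{i_1,\dots,i_r\}$ with $r\ge 2$, the equation $MX=0$ reads $\sum_{t=1}^r x_{i_t}\,C_{i_t}(M)=0$, so that $C_{i_r}(M)=\sum_{t=1}^{r-1}(-x_{i_t}/x_{i_r})\,C_{i_t}(M)$. The heart of the argument is that since all the $i_t$ lie in one column-$f$-equivalence class and $f$ is column-normalized, I can use Lemma \ref{BasicEquivalenceLemma} to conclude that $\widetilde f$ vanishes whenever one column is a $\star$-multiple of another column within the class; the normalization ensures the relevant vector $Z$ is the all-ones vector, so column equality (not merely $\star$-proportionality) is the clean condition available.

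The main technical step, which I expect to be the chief obstacle, is passing from the single-column-relation statements furnished by column-normalization to the vanishing of $\widetilde f$ on the whole space $\calV_X$, where $C_{i_r}(M)$ is a linear combination of \emph{several} other columns. The cleanest route is to argue by multilinearity of $\widetilde f$ in the columns: viewing $\widetilde f$ as a multilinear function of $C_1(M),\dots,C_n(M)$, I substitute $C_{i_r}(M)=\sum_{t<r}\lambda_t\,C_{i_t}(M)$ with $\lambda_t=-x_{i_t}/x_{i_r}$ and expand, obtaining a sum of terms each of which has the $i_r$-th column set equal to $\lambda_t\,C_{i_t}(M)$ for a single $t<r$. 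Each such term is, up to the scalar $\lambda_t$, the value of $\widetilde f$ on a matrix in which columns $i_r$ and $i_t$ are equal; since $i_t$ and $i_r$ are column-$f$-equivalent and $f$ is column-normalized, Lemma \ref{BasicEquivalenceLemma} (applied with $Z$ the all-ones vector) forces each such term to vanish. Summing, $\widetilde f(M)=0$.

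Finally, the second bullet follows by applying the first to $f^T$: recall from the excerpt that $i$ is row-$f$-equivalent to $j$ precisely when it is column-$f^T$-equivalent to $j$, that $\widetilde{f^T}$ is $M\mapsto \widetilde f(M^T)$, and that $f$ normalized implies $f^T$ normalized. A vector $X$ that is row-$f$-adapted is thus column-$f^T$-adapted, so the first bullet gives $\calV_X\subset\calC(f^T)$; transposing via $\calV^X=(\calV_X)^T$ and the identity $\widetilde f(M^T)=\widetilde{f^T}(M)$ yields $\calV^X\subset\calC(f)$, completing the proof.
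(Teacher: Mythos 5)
Your proof is correct and follows essentially the same route as the paper: both arguments expand $\widetilde{f}$ by linearity in one column of the support, reducing to matrices having two equal columns at column-$f$-equivalent indices, which are annihilated by $\widetilde{f}$ thanks to normalization (via Lemma \ref{BasicEquivalenceLemma}). The only cosmetic differences are that the paper pivots on the minimal index of the support rather than the last one, and proves the row statement by the symmetric argument rather than by your (equally valid) reduction to $f^T$.
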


\begin{proof}
Assume that $X$ is column-$f$-adapted.
Set $E:=\{i \in \lcro 1,n\rcro : x_i \neq 0\}$ and $a:=\min E$. Let $M \in \calV_X$. For
$i \in E \setminus \{a\}$, denote by $M_i$ the matrix whose columns are the same ones as for $M$, with the exception of the
$a$-th which equals $C_i(M)$; noting that $a \underset{C,f}{\sim} i$, we obtain that $\widetilde{f}(M_i)=0$ because $f$ is normalized.

Obviously, $\widetilde{f}$ is linear with respect to each column, and we have
$C_a(M)=-\underset{i \in E \setminus \{a\}}{\sum} \frac{x_i}{x_a}\,C_i(M)$
because $MX=0$. Hence,
$$\widetilde{f}(M)=-\underset{i \in E \setminus \{a\}}{\sum}\frac{x_i}{x_a}\,\widetilde{f}(M_i)=0.$$
This proves point (a). One proves point (b) in a similar way.
\end{proof}

In particular, by taking $X$ with exactly one non-zero entry, we find that $\calC(f)$ includes
linear subspaces with codimension $n$.
The main aim of the present section is to find a converse statement for the preceding lemma. This is done in two steps:

\begin{theo}\label{DimensionInequalityTheorem}
Let $\calV$ be an affine subspace of $\Mat_n(\F)$ that is included in $\calC(f)$.
Then, $\codim_{\Mat_n(\F)} \calV \geq n$.
\end{theo}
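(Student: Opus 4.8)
I would prove the inequality by induction on $n$, exploiting the two features that make $\widetilde{f}$ special: it is \emph{linear in each column separately}, and it is nondegenerate in the sense that $\widetilde{f}(P_\sigma)=f(\sigma)\neq 0$ for every $\sigma$. The exact column-linearity is crucial because it lets me linearize $\widetilde{f}$ in a single column without invoking any genericity or ``infinitely many scalars'' argument — this is what keeps the proof valid over an arbitrary (possibly small) field. The base case $n=1$ is trivial, since there $\calC(f)=\{0\}$ has codimension $1$. For the inductive step I would expand $\widetilde{f}$ along the last column:
$$\widetilde{f}(M)=\sum_{i=1}^n m_{i,n}\,\Delta_i(M'),\qquad \Delta_i(M'):=\sum_{\sigma\in\mathfrak{S}_n,\ \sigma(n)=i} f(\sigma)\prod_{j=1}^{n-1} m_{\sigma(j),j},$$
where $M'$ denotes the first $n-1$ columns of $M$. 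For each fixed $i$, the permutations with $\sigma(n)=i$ restrict to bijections $\lcro 1,n-1\rcro\to\lcro 1,n\rcro\setminus\{i\}$, so after deleting row $i$ the functional $\Delta_i$ is a genuine $(n-1)$-dimensional Schur functional attached to a nowhere-vanishing map $f_i:\mathfrak{S}_{n-1}\to\F^*$, to which the induction hypothesis applies.

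\textbf{The inductive step.} The plan is to slice $\calV$ by a suitable affine condition on the last column so as to land inside the null cone of one of these $\Delta_i$'s. Concretely, when the projection $p:\calV\to\F^n$, $M\mapsto C_n(M)$, hits a standard basis vector $e_i$, I would look at $\calV_*:=\{M\in\calV:\ C_n(M)=e_i\}$; on $\calV_*$ one has $\widetilde{f}(M)=\Delta_i(M')$, which depends only on the $(n-1)\times(n-1)$ block $\widehat{M}$ obtained by deleting row $i$ and column $n$, and $\Delta_i$ vanishes on the affine subspace swept out by these blocks. The induction hypothesis then bounds the dimension of that block-image by $(n-1)^2-(n-1)$, while the affine dimension formula $\dim(\calV\cap\{C_n=e_i\})\ge \dim\calV+(n^2-n)-n^2$ controls $\dim\calV_*$ from below and a fibre count controls the uncontrolled entries of row $i$ (columns $<n$). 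Combining these inequalities yields an upper bound for $\dim\calV$, hence a lower bound for $\codim\calV$. I would organize the argument as a case split according to whether $p(\calV)=\F^n$ (so such slices are available) or $p(\calV)$ is a proper affine subspace of $\F^n$ (in which case the last column is itself constrained, directly contributing to the codimension and allowing a reduction on the remaining columns).

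\textbf{Main obstacle.} The delicate point is obtaining the \emph{sharp} constant $n$ rather than $n-1$: a crude single-column slice as above over-constrains $\calV$ (fixing a full column costs $n$ dimensions) while leaving one ``free row'' uncontrolled, and a direct bookkeeping then loses exactly one unit of codimension. Recovering that last unit is where the real work lies, and I expect to need the finer structure: either a more economical choice of the affine slice (fixing fewer than $n$ coordinates of $C_n$, or using row- and column-expansions together), or a reduction to a normalized representative via Proposition \ref{fullnormalizedprop} so that the explicit extremal spaces $\calV_X$ and $\calV^X$ — which already achieve codimension exactly $n$ and are governed by the column and row partitions of $f$ — can be used to pin down the bound. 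A secondary technical nuisance, to be dispatched in the case analysis, is the possibility that a chosen slice is empty (i.e.\ $p(\calV)$ misses the vector one wishes to fix); this must be ruled out or circumvented whenever $\codim\calV$ is assumed to be less than $n$, and it must be done without any hypothesis on $|\F|$.
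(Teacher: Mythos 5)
Your induction framework coincides with the paper's (slice the last column at a standard basis vector, apply the induction hypothesis to the smaller Schur functional living on the block obtained by deleting the sliced row and the last column), and your accounting of where it stalls is accurate: assuming $\codim \calV \leq n-1$, the slice argument only forces a chain of equalities --- $\dim \calV_\ast = (n-1)^2$, the image of $\calV_\ast$ under $M \mapsto \widehat{M}$ has codimension exactly $n-1$, and the fibres of that block map have the maximal dimension $n-1$ --- with no contradiction. But the proposal stops precisely at the missing idea, and neither of the fixes you float is viable. The route through ``normalized representatives and the extremal spaces $\calV_X$, $\calV^X$'' is circular: knowing that these spaces have codimension exactly $n$ only bounds the minimal codimension from \emph{above}, and the statement that they are the \emph{only} codimension-$n$ subspaces of $\calC(f)$ is Theorem \ref{DimensionEqualityTheorem}, whose proof in the paper rests on Theorem \ref{DimensionInequalityTheorem} (via Lemmas \ref{Lemma1} and \ref{Lemma2}). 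The ``more economical slice'' idea (fixing fewer than $n$ coordinates of $C_n$) destroys the one feature that makes the induction work, namely that on the slice $\widetilde{f}$ collapses to a \emph{single} smaller Schur functional with nowhere-vanishing coefficient map.

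The idea that recovers the lost unit is the following, and it is the heart of the paper's proof. In the forced equality case, the fibres of the block map having dimension $n-1$ means that the translation vector space $V$ of $\calV$ contains the matrix units $E_{i,1},\dots,E_{i,n-1}$, where $i$ is the sliced row. Since $C_n(\calV)=\F^n$, the slice at $e_i$ is non-empty for \emph{every} $i$, so running the same argument over all rows yields $E_{i,j} \in V$ for all $i \in \lcro 1,n\rcro$ and all $j \in \lcro 1,n-1\rcro$. Now take any matrix in the slice at $e_n$ and add a suitable combination of these units to turn its upper-left $(n-1)\times(n-1)$ block into $I_{n-1}$: this produces an element of $\calV$ of the form $\begin{bmatrix} I_{n-1} & [0]_{(n-1)\times 1} \\ [?]_{1 \times (n-1)} & 1 \end{bmatrix}$. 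Since its last column is $e_n$, only permutations fixing $n$ contribute to $\widetilde{f}$, which therefore equals $f(\id) \neq 0$ on this matrix, contradicting $\calV \subset \calC(f)$. (Your secondary worry, the case where the column projection is a proper subspace, is dispatched in the paper without any reduction and without slicing: if \emph{no} column projection $C_i(\calV)$ equals $\F^n$, then $\dim \calV \leq \sum_{i=1}^n \dim C_i(\calV) \leq n(n-1)$, which is the desired bound outright; otherwise one permutes columns so that $C_n(\calV)=\F^n$, and then every slice needed above is non-empty.)
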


\begin{theo}\label{DimensionEqualityTheorem}
Let $f : \mathfrak{S}_n \rightarrow \F^*$ be a normalized function.
Let $V$ be a linear subspace of $\Mat_n(\F)$ that is included in $\calC(f)$ and has codimension $n$ in
$\Mat_n(\F)$.
Then, there exists a non-zero vector $X \in \F^n$ such that one of the following two situations holds:
\begin{enumerate}[(a)]
\item $V=\calV_X$ and $X$ is column-$f$-adapted;
\item $V=\calV^X$ and $X$ is row-$f$-adapted.
\end{enumerate}
\end{theo}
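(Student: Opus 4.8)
The plan is to show that the matrices in $V$ must possess a common kernel vector or a common left‑kernel vector; once that is known, everything else follows from a dimension count together with an explicit‑matrix argument in the spirit of Lemma \ref{BasicEquivalenceLemma}. First I would recast the goal. For a nonzero $X$, writing $K:=\bigcap_{M\in V}\Ker M$, one has $V\subseteq \calV_X$ exactly when $X\in K$, and $V\subseteq \calV^X$ exactly when $X^T$ annihilates every $M\in V$ on the left; since $\calV_X$ and $\calV^X$ both have codimension $n=\codim V$, any such inclusion is automatically an equality. Moreover $\{M:MW=0\}$ has codimension $n\dim W$, so $V\subseteq \calV_X\cap \calV_{X'}$ with $X,X'$ independent would force $\codim V\geq 2n$; hence $\dim K\leq 1$, and symmetrically for the left kernel, which also yields the uniqueness of $X$. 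Thus the whole theorem reduces to the dichotomy: \emph{the rows of the matrices in $V$ fail to span $\F^n$, or their columns fail to span $\F^n$}. (One may also assume $f$ fully‑normalized, by conjugating with a suitable pair of permutation matrices, which preserves $\calC(f)$ up to a rank‑preserving isomorphism as well as both adaptedness notions.)

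Granting the dichotomy, suppose $V=\calV_X$; I would then prove that $X$ is column‑$f$‑adapted, i.e.\ that any two indices $i,j$ in the support of $X$ satisfy $i\underset{C,f}{\sim}j$. Fix such $i,j$ and an arbitrary $\sigma\in\mathfrak S_n$, let $e_k$ denote the standard basis of $\F^n$, and build $M\in\calV_X$ as follows: set $C_k(M):=e_{\sigma(k)}$ for every $k\neq j$, except that $C_i(M):=e_{\sigma(i)}+e_{\sigma(j)}$, and define the $j$‑th column through the kernel relation $\sum_k x_k\,C_k(M)=0$. Expanding $\widetilde{f}(M)$ by multilinearity in the $j$‑th column, every contribution in which that column supplies a vector $e_{\sigma(k)}$ with $k\neq i,j$ forces a clash $\pi(j)=\pi(k)=\sigma(k)$ and dies, so the only surviving permutations are $\sigma$ and $\sigma\tau_{i,j}$, yielding $\widetilde{f}(M)=-\tfrac{x_i}{x_j}\bigl(f(\sigma)+f(\sigma\tau_{i,j})\bigr)$. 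As $M\in\calV_X\subseteq\calC(f)$ this vanishes, so $f(\sigma\tau_{i,j})=-f(\sigma)$ for all $\sigma$, which by Lemma \ref{BasicEquivalenceLemma} (with $Z=(1,\dots,1)$) gives $i\underset{C,f}{\sim}j$. The case $V=\calV^X$ is handled identically after transposition, using $\widetilde{f^T}(M)=\widetilde{f}(M^T)$ and the fact that row‑$f$‑equivalence is column‑$f^T$‑equivalence. This step is entirely field‑independent.

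The main obstacle is the dichotomy itself, which I would establish through the maximal rank $r$ of the matrices in $V$. A linear space of matrices of rank $\leq n-2$ has dimension at most $n(n-2)<n^2-n$, so $r\in\{n-1,n\}$. If $r=n-1$, every matrix of $V$ is singular, and the classical Dieudonn\'e--Flanders description of the maximal linear spaces of singular matrices gives at once that $V$ has a common kernel or a common left kernel, as desired. The whole difficulty is therefore concentrated in excluding $r=n$, i.e.\ in proving that a codimension‑$n$ subspace of $\calC(f)$ cannot contain an invertible matrix (the non‑normalized functionals for $n=2$ show that this can fail without normalization, so the hypothesis must be used here). For this I would aim to produce a member $M\in V$ whose support admits a \emph{unique} transversal $\sigma$, so that $\widetilde{f}(M)=f(\sigma)\prod_j m_{\sigma(j),j}\neq 0$, contradicting $V\subseteq\calC(f)$; the raw material is the abundance of matrices of $V$ forced by the presence of a full‑rank element, organized by a Hall‑type selection and the codimension bound of Theorem \ref{DimensionInequalityTheorem}, together with the explicit‑construction technology already used above. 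Arranging the selection so that exactly one transversal survives, and in particular so that the surviving terms do not cancel when $\car(\F)=2$, is the delicate point where normalization of $f$ is essential, and it is the step I expect to be hardest.
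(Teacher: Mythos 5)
Your reduction is sound as far as it goes: the observation that any inclusion $V \subseteq \calV_X$ or $V \subseteq \calV^X$ is automatically an equality (equal codimensions), the exclusion of maximal rank $\leq n-2$ via the Flanders--Meshulam dimension bound, the appeal to Dieudonn\'e's classification of $(n^2-n)$-dimensional singular subspaces when the maximal rank is $n-1$ (both results do hold over arbitrary fields), and the explicit matrix construction showing that $V=\calV_X$ forces $f(\sigma\tau_{i,j})=-f(\sigma)$ for all $\sigma$ and all $i,j$ in the support of $X$ --- hence column-adaptedness by Lemma \ref{BasicEquivalenceLemma} with $Z=(1,\dots,1)$ --- are all correct. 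But there is a genuine gap at exactly the point you flag yourself: you never prove that a codimension-$n$ linear subspace of $\calC(f)$ contains no invertible matrix, and this is not a finishing touch but the entire content of the theorem. Nothing soft can rule out maximal rank $n$: the null cone $\calC(f)$ itself contains invertible matrices in general (for instance an invertible matrix can have zero permanent), so the argument must genuinely exploit both the codimension hypothesis and the normalization of $f$, and your sketch (a ``Hall-type selection'' producing a matrix of $V$ whose support admits a unique transversal) does not explain how either hypothesis would enter. As it stands, the proposal proves the theorem only modulo a statement that is essentially equivalent in difficulty to the theorem itself.

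For comparison, the paper never factors through ``no invertible matrix,'' nor through Dieudonn\'e's theorem. It first proves (Lemma \ref{Lemma2}, by an induction whose substeps rest on Lemma \ref{Lemma1} and Theorem \ref{DimensionInequalityTheorem}) that some index $i$ satisfies $C'_i(V)=\{0\}$ or $R'_i(V)=\{0\}$, i.e.\ $V$ contains no nonzero matrix supported on a single column (or row). After reducing to $C'_n(V)=\{0\}$, the codimension count lets one write $V$ as the graph of a linear map $h : \Mat_{n,n-1}(\F) \rightarrow \F^n$ expressing the last column in terms of the first $n-1$; a further application of Theorem \ref{DimensionInequalityTheorem} shows each component $h_i$ depends only on the $i$-th row, and expanding $\widetilde{f}$ on the graph yields the identity $f(\sigma)\, b_{\sigma(n),k}+f(\sigma \tau_{k,n})\, b_{\sigma(k),k}=0$, from which normalization forces each column of the coefficient matrix $B$ to be constant; the common kernel vector $X$ is then read off directly. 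So the paper \emph{constructs} the kernel vector rather than obtaining it by excluding invertibility; if you wanted to complete your outline, filling your missing step would essentially amount to redoing this analysis.
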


\subsection{A lemma}

The following basic lemma will be helpful to perform inductive proofs.

\begin{lemma}\label{upperblocklemma}
Let $\calV$ be an affine subspace of $\Mat_n(\F)$ that is included in $\calC(f)$.
Consider the subset $\calV'$ consisting of all matrices of $\calV$ of the form
$$M=\begin{bmatrix}
P(M) & [0]_{(n-1) \times 1} \\
[?]_{1 \times (n-1)} & 1
\end{bmatrix}$$
with $P(M) \in \Mat_{n-1}(\F)$.
Then, $P(\calV') \subset \calC(g)$ for some mapping $g : \mathfrak{S}_{n-1} \rightarrow \F^*$.
\end{lemma}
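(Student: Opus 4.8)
The plan is to show that for a matrix $M \in \calV'$ of the displayed block form, the value $\widetilde{f}(M)$ can be rewritten as a Schur functional in the upper-left $(n-1) \times (n-1)$ block $P(M)$, and then to verify that the resulting functional on $\Mat_{n-1}(\F)$ comes from a nowhere-vanishing map $g$ on $\mathfrak{S}_{n-1}$. First I would expand $\widetilde{f}(M) = \sum_{\sigma \in \mathfrak{S}_n} f(\sigma) \prod_{j=1}^n m_{\sigma(j),j}$ and observe that, because the last column of $M$ is $E_{n,n}$ (i.e.\ $m_{i,n} = \delta_{i,n}$), every term with $\sigma(n) \neq n$ contributes a factor $m_{\sigma(n),n} = 0$. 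Hence only the permutations $\sigma$ fixing $n$ survive, and the map $\sigma \mapsto \sigma|_{\lcro 1,n-1\rcro}$ is a bijection from the stabilizer of $n$ in $\mathfrak{S}_n$ onto $\mathfrak{S}_{n-1}$. This lets me collapse the sum to one indexed by $\mathfrak{S}_{n-1}$.

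Concretely, I would define $g : \mathfrak{S}_{n-1} \rightarrow \F$ by $g(\rho) := f(\widehat{\rho})$, where $\widehat{\rho} \in \mathfrak{S}_n$ denotes the permutation extending $\rho$ by $\widehat{\rho}(n) = n$. For $M \in \calV'$, the surviving factor $m_{\sigma(n),n} = m_{n,n} = 1$ drops out, and the remaining product $\prod_{j=1}^{n-1} m_{\sigma(j),j}$ is exactly $\prod_{j=1}^{n-1} p_{\rho(j),j}$ where $p_{k,l}$ are the entries of $P(M)$ and $\rho = \sigma|_{\lcro 1,n-1\rcro}$. Thus I obtain the identity
$$\widetilde{f}(M) = \sum_{\rho \in \mathfrak{S}_{n-1}} g(\rho) \prod_{j=1}^{n-1} p_{\rho(j),j} = \widetilde{g}\bigl(P(M)\bigr)$$
for every $M \in \calV'$. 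Since $\calV \subset \calC(f)$ gives $\widetilde{f}(M) = 0$, this shows $\widetilde{g}(P(M)) = 0$, i.e.\ $P(M) \in \calC(g)$, which is precisely the desired inclusion $P(\calV') \subset \calC(g)$.

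It remains to check that $g$ is a genuine object of our theory, namely that it takes values in $\F^* = \F \setminus \{0\}$. This is immediate from the standing hypothesis that $f$ vanishes nowhere on $\mathfrak{S}_n$: for each $\rho$, we have $g(\rho) = f(\widehat{\rho}) \neq 0$. I do not anticipate a serious obstacle here; the only point requiring a little care is the bookkeeping that identifies the stabilizer of $n$ with $\mathfrak{S}_{n-1}$ and confirms that the product over $j \in \lcro 1,n-1\rcro$ in $\widetilde{f}(M)$ matches the Schur product for $\widetilde{g}$ applied to $P(M)$, with the $(n,n)$-entry equal to $1$ contributing trivially. The reason the entry is pinned to $1$ rather than left arbitrary is exactly to make the last factor disappear cleanly, so that $\widetilde{f}(M)$ reduces to $\widetilde{g}(P(M))$ without an extra scalar; this is the mild subtlety worth flagging, and it is what forces the normalization in the definition of $\calV'$.
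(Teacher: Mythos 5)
Your proof is correct and follows exactly the paper's argument: the paper also defines $g(\rho):=f(\overline{\rho})$ via the extension fixing $n$ and asserts the identity $\widetilde{f}(M)=\widetilde{g}(P(M))$ for $M \in \calV'$ (leaving as ``one checks'' the computation you spell out, namely that the zero upper part of the last column kills all terms with $\sigma(n)\neq n$ and the entry $m_{n,n}=1$ contributes trivially). Your additional remark that $g$ inherits nonvanishing from $f$ is the same point the paper relies on implicitly.
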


\begin{proof}
Every permutation $\sigma$ of $\lcro 1,n-1\rcro$ is naturally extended to a permutation $\overline{\sigma}$ of $\lcro 1,n\rcro$
such that $\overline{\sigma}(n)=n$. Then, with $g : \sigma \in \mathfrak{S}_{n-1} \mapsto f(\overline{\sigma})$,
one checks that
$$\forall M \in \calV', \; \widetilde{f}(M)=\widetilde{g}(P(M)),$$
and hence $P(\calV') \subset \calC(g)$.
\end{proof}

\subsection{Proof of Theorem \ref{DimensionInequalityTheorem}}

We prove the result by induction on $n$.
The case $n=1$ is obvious since the null cone of $f$ equals $\{0\}$ in that situation. \\
Assume now that $n \geq 2$. We perform a \emph{reductio ad absurdum} by assuming that
$\codim \calV<n$. Denote by $V$ the translation vector space of $\calV$.
If $\dim C_i(\calV) \leq n-1$ for all $i \in \lcro 1,n\rcro$, then
$$\dim \calV \leq \sum_{i=1}^n \dim C_i(\calV) \leq n(n-1),$$
contradicting our assumption that $\codim \calV<n$.

Hence, we can assume that $C_i(\calV)=\F^n$ for some $i \in \lcro 1,n\rcro$. By permuting columns
(which modifies the Schur functional we are working with), we see that no further generality
is lost in assuming that $C_n(\calV)=\F^n$.
Denote then by $\calV'$ the (non-empty) affine subspace of $\calV$ consisting of its matrices of the form
$$M=\begin{bmatrix}
P(M) & [0]_{(n-1) \times 1} \\
[?]_{1 \times (n-1)} & 1
\end{bmatrix} \quad \text{with $P(M) \in \Mat_{n-1}(\F)$.}$$
Note that $P(\calV')$ is an affine subspace of $\Mat_{n-1}(\F)$.
By Lemma \ref{upperblocklemma}, we obtain that $P(\calV')$ is included in the null cone of some mapping from $\mathfrak{S}_{n-1}$ to $\F^*$.
By induction, it follows that $\codim P(\calV') \geq n-1$.
On the other hand, by combining the rank theorem with the equality $C_n(\calV)=\F^n$, we find
$$n-1 \geq \codim \calV=\codim P(\calV')+((n-1)-\dim L),$$
where $L$ denotes the subspace of all matrices of $V$ in which the first $n-1$ rows and the last column equal zero.
It follows that $\dim L=n-1$, and hence $V$ contains $E_{n,j}$ for all $j \in \lcro 1,n-1\rcro$.
Using row permutations, we obtain likewise that $V$ contains $E_{i,j}$ for all $(i,j)\in \lcro 1,n\rcro \times \lcro 1,n-1\rcro$.
It follows that $P(\calV')$ contains $I_{n-1}$. Yet, for any $g : \mathfrak{S}_{n-1} \rightarrow \F^*$, we have
$\widetilde{g}(I_{n-1})=g(\id_{\lcro 1,n-1\rcro}) \neq 0$, which contradicts Lemma \ref{upperblocklemma}.

It follows that $\codim \calV \geq n$, and our inductive step is proved. Hence, Theorem \ref{DimensionInequalityTheorem}
is established.

\subsection{Two partial results on affine subspaces with the minimal codimension}

\begin{Not}
Given a subset $\calS$ of $\Mat_n(\F)$ and an index $i \in \lcro 1,n\rcro$, we denote:
\begin{itemize}
\item By $R'_i(\calS)$ the set of all matrices of $\calS$ in which all the rows are zero with the possible exception of the
$i$-th;
\item By $C'_i(\calS)$ the set of all matrices of $\calS$ in which all the columns are zero with the possible exception of the
$i$-th.
\end{itemize}
\end{Not}

Our starting point is the following lemma:

\begin{lemma}\label{Lemma1}
Assume that $n \geq 2$.
Let $\calV$ be an affine subspace of $\Mat_n(\F)$ with codimension $n$
that is included in the null cone of $f$.
Then, there exists an index $i \in \lcro 1,n\rcro$ such that $C_i(\calV)=\F^n$ or $R_i(\calV)=\Mat_{1,n}(\F)$.
\end{lemma}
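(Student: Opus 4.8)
The plan is to argue by contradiction: suppose that $C_i(\calV)\neq \F^n$ and $R_i(\calV)\neq \Mat_{1,n}(\F)$ for every $i\in\lcro 1,n\rcro$. Write $V$ for the translation vector space of $\calV$, so that $\dim V=n^2-n=n(n-1)$. I would first exploit the injective linear map $M\in V\mapsto (C_1(M),\dots,C_n(M))$, whose image lies in $\bigoplus_{i=1}^n C_i(V)$, to obtain $n(n-1)=\dim V\le \sum_{i=1}^n \dim C_i(V)\le \sum_{i=1}^n (n-1)=n(n-1)$. Equality must hold throughout, so each $C_i(\calV)$ is an affine hyperplane and $V$ is the full ``column product'' $\{M:C_i(M)\in C_i(V)\ \forall i\}$; hence $\calV$ is cut out by exactly one affine constraint $\alpha_i(C_i(M))=\beta_i$ per column. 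Applying the same reasoning to the transpose (i.e.\ to the row projections) shows simultaneously that $\calV$ is cut out by exactly one affine constraint per row.

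The next step is to combine the two descriptions. Let $W$ be the $n$-dimensional space of linear forms on $\Mat_n(\F)$ vanishing on $V$. The column constraints give a basis $\alpha_1,\dots,\alpha_n$ of $W$ whose coefficient matrices $A_i$ are supported on column $i$ (with that column a nonzero $a_i\in\F^n$), while the row constraints give another basis $\beta_1,\dots,\beta_n$ whose coefficient matrices $B_j$ are supported on row $j$. Expanding $B_j=\sum_i c_{j,i}A_i$ and comparing supports forces $c_{j,i}\,a_i\in\Vect(e_j)$ for all $i,j$. Since the matrix $(c_{j,i})$ is invertible, for each $i$ some $c_{j,i}$ is nonzero, whence $a_i$ is proportional to a single standard basis vector $e_{\sigma(i)}$; the column constraint thus reads $m_{\sigma(i),i}=d_i$ for a fixed scalar $d_i$. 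The hypothesis that every row projection is proper forces $\sigma$ to be surjective, hence a permutation. I therefore expect to reach $\calV=\{M\in\Mat_n(\F):m_{\sigma(l),l}=d_l\ \text{for all }l\in\lcro 1,n\rcro\}$ for some $\sigma\in\mathfrak{S}_n$: only the $n$ entries on the graph of $\sigma$ are constrained, each to a prescribed value.

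It then remains to contradict $\calV\subset\calC(f)$ by producing $M\in\calV$ with $\widetilde{f}(M)\neq 0$. Since $n\ge 2$, I would pick a fixed-point-free permutation $\rho$ and set $\pi:=\sigma\rho$, so that $\pi(j)\neq\sigma(j)$ for every $j$. Build $M$ having, in each column $j$, the prescribed value $d_j$ at row $\sigma(j)$ and a free parameter $s_j$ at row $\pi(j)$, with all other entries $0$; such an $M$ lies in $\calV$ for every choice of the $s_j$. In $\widetilde{f}(M)=\sum_{\tau}f(\tau)\prod_j m_{\tau(j),j}$ only permutations $\tau$ with $\tau(j)\in\{\sigma(j),\pi(j)\}$ contribute, so $\widetilde{f}(M)$ is a polynomial in $(s_1,\dots,s_n)$ of degree at most $1$ in each variable, and the coefficient of the monomial $s_1\cdots s_n$ is exactly $f(\pi)\neq 0$; thus this multilinear polynomial is not the zero polynomial. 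The main obstacle is that $\F$ may be finite and small, so I cannot simply invoke that a nonzero polynomial has a non-root. The point instead is that a polynomial of degree $<|\F|$ in each variable — here degree $\le 1<|\F|$, valid since $|\F|\ge 2$ — cannot vanish on all of $\F^n$. This provides values of the $s_j$ making $\widetilde{f}(M)\neq 0$, contradicting $\calV\subset\calC(f)$ and completing the argument.
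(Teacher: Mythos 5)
Your argument is correct, and it splits into a first half that is essentially the paper's reduction in dual clothing and an endgame that is genuinely different. For the reduction: the paper works with the orthogonal complement $V^\bot$ of the translation space $V$ under the trace pairing, where the hypotheses supply, for each index, a nonzero element supported on a single column and a nonzero element supported on a single row; it then shows, via the factorization $\sum_k a_k N_k=PD$ (which identifies the rank-one elements of the span of the column-supported elements), that the row-supported elements must be matrix units, so that $V^\bot=\Vect\bigl(E_{i,\sigma(i)}\bigr)_{1\leq i\leq n}$ for a permutation $\sigma$ --- which is your conclusion $\calV=\{M : m_{\sigma(l),l}=d_l \text{ for all } l\}$ up to relabeling. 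Your change-of-basis support comparison is the same mechanism seen through linear forms rather than through the trace pairing; note in passing that your opening dimension count is superfluous, since the hypothesis directly provides one nonzero annihilating form per column (any proper affine subspace of $\F^n$ lies in an affine hyperplane), and $n$ independent elements of the $n$-dimensional annihilator of $V$ are automatically a basis. The genuine divergence is the final contradiction. The paper normalizes $\sigma=\id$ by permuting columns, observes that the diagonal matrix $\Diag(d_1,\dots,d_n)$ itself lies in $\calV$ and hence forces some $d_k=0$, normalizes $k=1$, and then evaluates $\widetilde{f}$ on one explicit cyclic matrix, where the only two potentially contributing permutations are the identity (killed by $d_1=0$) and an $n$-cycle $c$, giving the nonzero value $f(c)$. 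You avoid both normalizations and the case analysis: placing free parameters $s_j$ at the positions $(\sigma\rho(j),j)$ for a fixed-point-free $\rho$ turns $\widetilde{f}(M)$ into a polynomial of degree at most $1$ in each $s_j$ whose coefficient of $s_1\cdots s_n$ is $f(\sigma\rho)\neq 0$, and a nonzero polynomial of per-variable degree at most $1<|\F|$ cannot vanish on all of $\F^n$. That polynomial principle is valid over every field, including $\F_2$, and is in fact one the paper itself invokes in the proof of Theorem \ref{DimensionEqualityTheorem}; so your endgame is more uniform, while the paper's is more elementary, requiring only a single explicit evaluation.
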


\begin{proof}
Assume that the contrary holds. Denote by $V$ the translation vector space
of $\calV$, by $V^T$ its orthogonal complement for the standard symmetric bilinear form $(M,N)\mapsto \tr(MN)$,
and by $(e_1,\dots,e_n)$ the standard basis of $\F^n$. Then,
we know that, for all $i \in \lcro 1,n\rcro$, the space $C'_i(V^\bot)$ contains
a non-zero matrix $N_i$ (because $R_i(V) \neq \F^n$). The $N_i$ matrices are then linearly independent,
and as $\dim V^\bot=n$ we deduce that $V^\bot=\Vect(N_1,\dots,N_n)$.

Likewise, we find that, for all $i \in \lcro 1,n\rcro$, the space $R'_i(V^\bot)$ contains
a non-zero matrix $M_i$. Hence, for all $i \in \lcro 1,n\rcro$, the matrix $M_i$ is a linear combination of $N_1,\dots,N_n$, leading to
$$\im M_i \subset \sum_{k=1}^n \im N_k.$$
In turn, this successively leads to
$$\sum_{k=1}^n \im N_k=\F^n$$
and, since $\im N_k$ has dimension $1$ for all $k \in \lcro 1,n\rcro$, to
$$\underset{k=1}{\overset{n}{\bigoplus}} \im N_k=\F^n.$$
Denoting by $(E_1,\dots,E_n)$ the standard basis of $\F^n$, this yields a basis
$(X_1,\dots,X_n)$ of $\F^n$ such that $N_k=X_k E_k^T$ for all $k \in \lcro 1,n\rcro$.
The matrix $P$ whose columns are $X_1,\dots,X_n$ is invertible. For all
$(a_1,\dots,a_n)\in \F^n$, $\underset{k=1}{\overset{n}{\sum}} a_k N_k=PD$ where $D$ denotes the diagonal matrix with diagonal entries
$a_1,\dots,a_n$, and the rank of $\underset{k=1}{\overset{n}{\sum}} a_k N_k$ equals the number of indices $k$ such that $a_k \neq 0$.
It follows that every rank $1$ matrix of $\Vect(N_1,\dots,N_n)$ is a scalar multiple of some $N_i$.

In turn, this yields, for all $i \in \lcro 1,n\rcro$, a unique index $\sigma(i) \in \lcro 1,n\rcro$ such that $M_i \in \Vect(N_{\sigma(i)})$.
The map $\sigma : \lcro 1,n\rcro \rightarrow \lcro 1,n\rcro$ is obviously injective since the $M_i$'s are linearly independent.
Hence, $\sigma$ is a permutation of $\lcro 1,n\rcro$ and $E_{i,\sigma(i)} \in V^\bot$
for all $i \in \lcro 1,n\rcro$. Permuting columns (which changes the mapping $f$ we are working with),
we see that no generality is lost in assuming that $\sigma$ is the identity of $\lcro 1,n\rcro$.
In that case $V^\bot$ includes $\Vect(E_{i,i})_{1 \leq i \leq n}$, and as both spaces have dimension $n$
it follows that $V^\bot=\Vect(E_{i,i})_{1 \leq i \leq n}$.

In that reduced situation, we obtain fixed scalars $a_1,\dots,a_n$ such that $\calV$
is the (affine) space of all matrices $M=(m_{i,j})$ in $\Mat_n(\F)$ such that
$$\forall k \in \lcro 1,n\rcro, \; m_{k,k}=a_k.$$
In particular, $\calV$ contains the diagonal matrix with diagonal entries $a_1,\dots,a_n$, and
as this matrix must be annihilated by $\widetilde{f}$ some $a_k$ equals zero.
Permuting rows and columns, we are further reduced to the situation where $a_1=0$.
Then, we consider the matrix $M=(m_{i,j})$ of $\calV$ defined by
$$m_{i,j}=\begin{cases}
a_i & \text{if $i=j$} \\
1 & \text{if $i=j+1$ mod. $n$} \\
0 & \text{otherwise.}
\end{cases}$$
Using $n \geq 2$, one checks that $\widetilde{f}(M)=f(\sigma) \neq 0$,
where $\sigma$ denotes the $n$-cycle that takes $i$ to $i+1$ for all $i \in \lcro 1,n-1\rcro$.
This contradicts the assumption that $\calV \subset \calC(f)$.
\end{proof}

\begin{lemma}\label{Lemma2}
Let $\calV$ be an affine subspace of $\Mat_n(\F)$ with codimension $n$
that is included in the null cone of $f$. Denote by $V$ its translation vector space.
Then, there exists an index $i \in \lcro 1,n\rcro$ such that $R'_i(V)=\{0\}$ or $C'_i(V)=\{0\}$.
\end{lemma}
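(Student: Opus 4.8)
The plan is to argue by induction on $n$, bootstrapping off Lemma \ref{Lemma1} and the block reduction of Lemma \ref{upperblocklemma}. The statement is invariant under transposition (replacing $\calV$ by $\calV^T$ and $f$ by $f^T$ exchanges $R'_i(V)$ with $C'_i(V)$) and under simultaneous row/column permutations (which only replace $f$ by another nowhere-vanishing map), so by Lemma \ref{Lemma1} I may assume that $\calV$ has a full last column, $C_n(\calV)=\F^n$, equivalently $C_n(V)=\F^n$. The base case $n=1$ is trivial, since then $V=\{0\}$ and $R'_1(V)=\{0\}$.

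For the inductive step I would run the block reduction exactly as in the proof of Theorem \ref{DimensionInequalityTheorem}: letting $\calV'$ be the set of matrices of $\calV$ whose last column is $(0,\dots,0,1)^T$ (nonempty because the last column spans $\F^n$) and $P$ the projection onto the top-left $(n-1)\times(n-1)$ block, Lemma \ref{upperblocklemma} supplies a map $g:\mathfrak{S}_{n-1}\to\F^*$ with $P(\calV')\subseteq\calC(g)$. Writing $K:=V\cap\Vect(E_{n,1},\dots,E_{n,n-1})$ for the kernel of $P$ restricted to the translation space of $\calV'$, a dimension count using $C_n(V)=\F^n$ shows that the translation space $V'$ of $P(\calV')$ has codimension $1+\dim K$ in $\Mat_{n-1}(\F)$; applying Theorem \ref{DimensionInequalityTheorem} to $P(\calV')\subseteq\calC(g)$ gives $\dim K\geq n-2$, while trivially $\dim K\leq n-1$.

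In the generic case $\dim K=n-2$ the space $V'$ has codimension exactly $n-1$, so the induction hypothesis yields an index $j\in\lcro 1,n-1\rcro$ with $R'_j(V')=\{0\}$ or $C'_j(V')=\{0\}$, and I would then lift this to $V$. A short computation shows that if $C'_j(V')=\{0\}$ then $C'_j(V)\subseteq\Vect(E_{n,j})$, while if $R'_j(V')=\{0\}$ then $R'_j(V)$ has dimension at most $1$; in either case the relevant slice of $V$ is at most one-dimensional, so the lift succeeds outright unless such a one-dimensional slice actually survives (a single matrix unit $E_{n,j}$ in the column case, or a rank-one row matrix $e_j\rho^T$ with $\rho_n\neq 0$ in the row case).

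The main obstacle is to eliminate such a surviving one-dimensional slice, and likewise to treat the boundary case $\dim K=n-1$, where the reduced codimension jumps to $n$ and the induction hypothesis no longer applies directly. Here I would reintroduce the cone condition through the multilinearity of $\widetilde f$ in the rows and columns: if a rank-one matrix $e_i\rho^T$ supported on row $i$ lies in $V$, then for every $M\in\calV$ one has $\widetilde f(M+t\,e_i\rho^T)=\widetilde f(M)+t\,\widetilde f(M')$, where $M'$ denotes $M$ with its $i$-th row replaced by $\rho^T$; since this must vanish for all $t$, the matrix $M'$ again lies in $\calC(f)$, and symmetrically for a surviving column. The plan is to feed these row/column replacement identities, together with the full column $C_n(V)=\F^n$, into the construction of a matrix of $\calV$ carrying a nonzero Schur term — in the spirit of the cyclic-permutation computation that closes the proof of Lemma \ref{Lemma1} — thereby contradicting $\calV\subseteq\calC(f)$. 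This final bookkeeping, rather than any single clever identity, is where I expect the real work to lie.
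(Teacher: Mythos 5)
Your base case, the reduction via Lemma \ref{Lemma1} and transposition to $C_n(\calV)=\F^n$, the block reduction through Lemma \ref{upperblocklemma}, the count $\codim_{\Mat_{n-1}(\F)} P(\calV')=1+\dim K$, and the lifting of the inductive conclusion to slices of dimension at most $1$ are all correct, and they reproduce what is essentially Step 1 of the paper's proof. The problem is that the two cases you defer are exactly where the proof lives, and your plan for them is not an argument. Consider first the boundary case $\dim K=n-1$. The paper never tries to contradict this for a fixed row: it observes instead that the boundary situation cannot hold for \emph{every} row simultaneously --- if $R'_i(V)\supseteq\Vect(E_{i,1},\dots,E_{i,n-1})$ for all $i$, then, using $C_n(\calV)=\F^n$, the space $\calV$ would contain a matrix of the form $\begin{bmatrix} I_{n-1} & 0 \\ ? & 1\end{bmatrix}$, whose Schur value $f(\id)$ is nonzero --- and it then permutes a good row into the last position \emph{before} running the block reduction, so that the boundary case simply never occurs. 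Your architecture, which fixes the last row once and for all, has no such pivot; whether a direct contradiction can be extracted from the boundary case for one prescribed row is far from clear (nothing as simple as the cyclic-matrix computation of Lemma \ref{Lemma1} does it), and your sketch does not produce one.

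Second, and more seriously, eliminating a surviving one-dimensional slice is the bulk of the paper's proof (its Step 2), not ``final bookkeeping''. Say $R'_n(V)=\Vect(N)$ with $N=\sum_{k=p}^n a_kE_{n,k}$ and all $a_k\neq 0$. Your multilinearity identity is indeed the correct opening move: it is exactly how the paper shows that the case $p=n$ is impossible, since then the projection of \emph{all} of $\calV$ (not merely $\calV'$) onto the top-left block lies in a null cone, whence $\codim\calV\geq (n-1)+(n-1)>n$. But the case $p<n$ requires a genuinely different mechanism: one deletes the last row, computes $\codim K(\calV)=1$ by the rank theorem, uses the one-dimensionality of the orthogonal complement of $K(\calV)$ to locate an index $j\in\lcro p,n\rcro$ whose column can be made zero, permutes it to the last position, and only then reruns the block reduction to reach $\codim\calV\geq 2(n-1)>n$. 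The contradiction in both substeps comes from codimension counts, not from exhibiting a matrix with a nonzero Schur term, so the Lemma \ref{Lemma1}-style construction you propose is not the tool the situation calls for. Note also that both counts need $n\geq 3$: the paper must give a separate ad hoc argument for $n=2$ (reducing $V$ to $\Vect(E_{1,\sigma(1)},E_{2,\sigma(2)})$ and contradicting as in Lemma \ref{Lemma1}), whereas your induction starts at $n=1$ and treats all steps uniformly, so it would already break when passing from $n=1$ to $n=2$. As it stands, your proposal establishes roughly the first half of the paper's argument and leaves the half that carries the actual difficulty unproved.
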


\begin{proof}
Here, the proof is done by induction on $n$, with a strategy that is globally similar to the one
of the proof of Theorem \ref{DimensionInequalityTheorem}. The case $n=1$ is trivial.
Assume that $n=2$ and that the result fails. Then, we note that
$$\dim V \geq \dim C'_1(V)+\dim C'_2(V) \quad  \text{and} \quad \dim V \geq \dim R'_1(V)+\dim R'_2(V)$$
and hence all the spaces $C'_1(V)$, $C'_2(V)$, $R'_1(V)$ and $R'_2(V)$ have dimension $1$.
With exactly the same line of reasoning as in the proof of Lemma \ref{Lemma1},
we deduce that $V=\Vect(E_{i,\sigma(i)})_{i \in \{1,2\}}$ for some permutation $\sigma$ of $\{1,2\}$. \\
Without loss of generality, we can assume that $\sigma=\id$. Then,
there are fixed scalars $\alpha_1$ and $\alpha_2$ such that $\calV$ is the set of all $2$ by $2$ matrices
with diagonal entries $\alpha_1$ and $\alpha_2$, and a contradiction is derived from there just like in the proof of
Lemma \ref{Lemma1}.

Assume from now on that $n \geq 3$ and that the result fails.

\vskip 2mm
\noindent \textbf{Step 1: There exists an index $i \in \lcro 1,n\rcro$ such that
$\dim R'_i(V) \leq 1$ or $\dim C'_i(V) \leq 1$.} \\
By Lemma \ref{Lemma1}, we lose no generality in assuming that
$C_n(\calV)=\F^n$ (as we can transpose our space and use row and column permutations).
Moreover, with the same line of reasoning as in the proof of Theorem \ref{DimensionInequalityTheorem}, we can find an index $i \in \lcro 1,n\rcro$
such that $R'_i(V)$ does not include $\Vect(E_{i,1},\dots,E_{i,n-1})$: indeed, otherwise
$\calV$ would contain a matrix of the form $\begin{bmatrix}
I_{n-1} & [0]_{(n-1) \times 1} \\
[?]_{1 \times (n-1)} & 1
\end{bmatrix}$, which is mapped to the non-zero scalar $f(\id_{\lcro 1,n\rcro})$ by $\widetilde{f}$.

Permuting rows, we see that no generality is lost in assuming that $R'_n(V)$
does not contain all the matrices $E_{n,1},\dots,E_{n,n-1}$.
Once more, we denote by $\calV'$ the affine subspace of $\calV$ consisting of its matrices with last column
$\begin{bmatrix}
[0]_{(n-1) \times 1} \\
1
\end{bmatrix}$ and we split every such matrix $M$ as
$$M=\begin{bmatrix}
P(M) & [0]_{(n-1) \times 1} \\
[?]_{1 \times (n-1)} & 1
\end{bmatrix} \quad \text{with $P(M) \in \Mat_{n-1}(\F)$.}$$
As $\Vect(E_{n,1},\dots,E_{n,n-1})\not\subset R'_n(V)$, we find that
$$\codim P(\calV') \leq \codim \calV-1=n-1,$$
and by Theorem \ref{DimensionInequalityTheorem} this yields $\codim P(\calV')=n-1$.
Denote by $W$ the translation vector space of $P(\calV')$. Then, by Lemma \ref{upperblocklemma}
the induction hypothesis applies to $P(\calV')$, which yields an index $i \in \lcro 1,n-1\rcro$ such that
$R'_i(W)=\{0\}$ or $C'_i(W)=\{0\}$.
From there, we see that $\dim R'_i(V) \leq 1$ or $\dim C'_i(V) \leq 1$:
indeed, let us assume that $R'_i(W)=\{0\}$, and let $\gamma$ map every $M \in R'_i(V)$ to its last entry; the kernel of $\gamma$ is included in the translation vector space of $\calV'$,
and for every matrix $M$ in this kernel we see from $R'_i(W)=\{0\}$ that the $i$-th row of $M$ must equal zero, whence $M=0$.
Hence, $\gamma$ is injective, which yields $\dim R'_i(V)\leq 1$. Likewise, $C'_i(W)=\{0\}$ implies $\dim C'_i(V) \leq 1$.

This completes our first step.

\vskip 2mm
\noindent \textbf{Step 2: There exists an index $i \in \lcro 1,n\rcro$ such that
$R'_i(V)=\{0\}$ or $C'_i(V)=\{0\}$.} \\
Assume that the contrary holds.
Then, by Step 1 and our starting assumptions, we find an index $i \in \lcro 1,n\rcro$ such that
$\dim R'_i(V)=1$ or $\dim C'_i(V)=1$.
Transposing if necessary, and using permutations of rows and columns,
we can reduce the situation to the one in which
$R'_n(V)$ is spanned by $N=\underset{k=p}{\overset{n}{\sum}} a_k E_{n,k}$ for some $p \in \lcro 1,n\rcro$
 and some list $(a_p,\dots,a_n)$ of non-zero scalars (note that here we entirely forget the intermediate reduced situation
 that was obtained in the proof of Step 1).
For $\tau \in  \mathfrak{S}_{n-1}$, we extend $\tau$ to an element $\overline{\tau}$ of $\mathfrak{S}_n$ such that $\overline{\tau}(n)=n$,
and we set $g : \tau \in \mathfrak{S}_{n-1} \mapsto f(\overline{\tau})$.

\vskip 2mm
\noindent \textbf{Substep 2.1: One has $p<n$.} \\
Assume on the contrary that $p=n$. \\
For any $M \in \calV$, we write
$$M=\begin{bmatrix}
P(M) & [?]_{(n-1) \times 1}\\
[?]_{1 \times (n-1)} & ?
\end{bmatrix} \quad \text{with $P(M) \in \Mat_{n-1}(\F)$.}$$
Then, one checks that
$$\forall M \in \calV, \; a_n\, \widetilde{g}(P(M))=\widetilde{f}(M+N)-\widetilde{f}(M)=0$$
Therefore, the affine space $P(\calV)$ is included in the null cone of $g$.
By Theorem \ref{DimensionInequalityTheorem}, this yields
$$\codim P(\calV) \geq n-1.$$
Yet, by the rank theorem
$$\codim \calV\geq \codim P(\calV)+(n-\dim R'_n(V)) \geq 2(n-1)>n,$$
contradicting our assumptions (note how we use the assumption that $n \geq 3$).

\vskip 2mm
\noindent \textbf{Substep 2.2: The final contradiction.} \\
Let us write every matrix $M \in \Mat_n(\F)$ as
$$M=\begin{bmatrix}
K(M) \\
[?]_{1 \times n}
\end{bmatrix} \quad \text{with $K(M) \in \Mat_{n-1,n}(\F)$.}$$
Then, by the rank theorem
$$\codim K(\calV)+\bigl(n-\dim R'_n(V)\bigr)=\codim \calV$$
and hence $\codim K(\calV)=1$.
Denote by $K(V)^\bot$ the right orthogonal complement of $K(\calV)$ for the bilinear form $(M,N)\in \Mat_{n-1,n}(\F) \times \Mat_{n,n-1}(\F)  \mapsto \tr(MN)$. Given $i \in \lcro 1,n\rcro$, if $C_i(K(\calV)) \neq \F^{n-1}$ then $K(V)^\bot$ contains
a nonzero matrix whose rows are all zero with the exception of the $i$-th. Since $\dim K(V)^\bot=1$ it follows that there is at most one such index $i$.
Since $p<n$, we recover an index $j \in \lcro p,n\rcro$
such that $K(\calV)$ contains a matrix whose $j$-th column equals zero.

Permuting columns, we see that no generality is lost in assuming that $j=n$. By combining this with the fact that
$V$ contains $N$, we deduce that $C_n(\calV)$ contains $\begin{bmatrix}
[0]_{(n-1) \times 1} \\
1
\end{bmatrix}$. Then, once more we consider the affine subspace $\calV'$ of all matrices of $\calV$ with last column
$\begin{bmatrix}
[0]_{(n-1) \times 1} \\
1
\end{bmatrix}$, and we write every matrix $M \in \calV'$ as
$$M=\begin{bmatrix}
P(M) & [0]_{(n-1) \times 1}\\
[?]_{1 \times (n-1)} & 1
\end{bmatrix} \quad \text{with $P(M) \in \Mat_{n-1}(\F)$.}$$
Since $R'_n(V)=\Vect(N)$, we now have $\dim P(\calV')=\dim \calV'$.
By combining Lemma \ref{upperblocklemma} with Theorem \ref{DimensionInequalityTheorem}, we find $\codim P(\calV') \geq n-1$.
We conclude that
$$\codim \calV \geq (n-1)+\codim P(\calV')=2(n-1)>n,$$
contradicting our basic assumptions.

Hence, we actually have an index $i \in \lcro 1,n\rcro$ such that
$R'_i(V)=\{0\}$ or $C'_i(V)=\{0\}$, as claimed. This completes the inductive step. Therefore, our proof by induction is complete.
\end{proof}

\subsection{Completing the proof of Theorem \ref{DimensionEqualityTheorem}}

Let $f : \mathfrak{S}_n \rightarrow \F^*$ be a normalized mapping, and
$V$ be a linear subspace of $\Mat_n(\F)$ with codimension $n$, that is included in $\calC(f)$.
If $n=1$ then $V=\{0\}$ and hence $V=\calV_{e_1}$ where $e_1$ denotes the first vector of the standard basis of $\F$. In the rest of the proof, we assume that $n \geq 2$.

By Lemma \ref{Lemma2}, there is an index $i \in \lcro 1,n\rcro$ such that $R'_i(V)=\{0\}$ or $C'_i(V)=\{0\}$.
First of all, we reduce the situation to the one where $C'_n(V)=\{0\}$.

Assume that $R'_i(V)=\{0\}$. Then, $V^T$ is included in $\calC(f^T)$
and it satisfies $C'_i(V^T)=\{0\}$. Moreover, $f^T$ is normalized. If there exists a column-$f^T$-adapted vector $X$ such that $\calV^T=\calV_X$, then $X$ is row-$f$-adapted and $\calV=(\calV_X)^T=\calV^X$.

Hence, it suffices to consider the case when $C'_i(V)=\{0\}$. Next, we reduce the situation to the one where
$i=n$. Choose a permutation $\tau$ of $\lcro 1,n\rcro$ such that $\tau(n)=i$.
Set $g : \sigma \in \mathfrak{S}_n \mapsto f(\sigma \tau^{-1})$, whose
associated Schur functional is $\widetilde{g} : M \mapsto \widetilde{f}(M P_{\tau}^{-1})$. Note that $g$ is normalized.
Set $V':=V P_\tau$.
For all $M \in V$, we have $\widetilde{g}(M P_\tau)=\widetilde{f}(M)=0$, whence $V' \subset \calC(g)$.
Given $M \in V'$ whose first $n-1$ columns equal zero, the columns of $M P_\tau^{-1}$ are zero with the possible exception of the $i$-th,
whence $M P_\tau^{-1}=0$ and finally $M=0$. Therefore, $C'_n(V')=\{0\}$.
Assume now that there is a column-$g$-adapted vector $Y \in \F^n$ such that $V'=\calV_Y$.
Then, $V=V' P_\tau^{-1}=\calV_X$ where $X:=P_\tau Y$. Let us write $X=(x_k)$ and $Y=(y_l)$.
For all indices $k,l$ such that $x_kx_l \neq 0$, we have $y_{\tau^{-1}(k)} y_{\tau^{-1}(l)} \neq 0$,
and hence $\tau^{-1}(k)$ is column-$g$-equivalent to $\tau^{-1}(l)$, and finally $k$ is column-$f$-equivalent to $l$.

Hence, in the remainder of the proof, it only remains to consider the case when $C'_n(V)=\{0\}$.
Since $\codim V=n$, this yields a linear mapping $h : \Mat_{n,n-1}(\F) \rightarrow \F^n$ such that
$$V=\Bigl\{\begin{bmatrix}
N & h(N)
\end{bmatrix} \mid N \in \Mat_{n,n-1}(\F)\Bigr\}.$$
Next, we analyse $h$. Let us write
$$h : N \in \Mat_{n,n-1}(\F) \mapsto \begin{bmatrix}
h_1(N) & \cdots & h_n(N)
\end{bmatrix}^T.$$
We shall prove that $h_i(N)$ is a function of the $i$-th row of $N$.

For $M \in \Mat_{n-1}(\F)$, denote by $\gamma(M)$ the image of
$\begin{bmatrix}
M \\
[0]_{1 \times (n-1)}
\end{bmatrix}$
under $h_n$. For $\tau \in \mathfrak{S}_{n-1}$, denote by $\overline{\tau}$ its extension as a permutation of $\lcro 1,n\rcro$,
and set $\overline{f}(\tau):=f(\overline{\tau})$.
Since $V$ is included in the null cone of $f$, we find that
$$\forall M \in \Mat_{n-1}(\F), \quad \gamma(M)\,\widetilde{\overline{f}}(M)=0.$$
Assume that $\gamma \neq 0$, and choose a nonzero element $a$ in the range of $\gamma$.
Set $\calW:=\gamma^{-1}\{a\}$, which is an affine hyperplane of $\Mat_{n-1}(\F)$. By the above,
$\calW$ is included in the null cone of $\overline{f}$.
Hence, by Lemma \ref{Lemma1}, $\dim \calW \leq (n-1)^2-(n-1)$, and it follows that $(n-1) \leq 1$, that is $n \leq 2$.
If $n=2$ then $\calW \subset \{0\}$, which is absurd.
We conclude that $\gamma=0$. In other words, $h_n$ vanishes at every matrix of
$\Mat_{n,n-1}(\F)$ whose $n$-th row equals zero. Likewise, one proves that, for all $i \in \lcro 1,n-1\rcro$,
the mapping $h_i$ vanishes at every matrix of $\Mat_{n,n-1}(\F)$ whose $i$-th row equals zero.
Hence, we have a matrix $B=(b_{i,j}) \in \Mat_{n,n-1}(\F)$ such that
$$\forall N \in \Mat_{n,n-1}(\F), \; \forall i \in \lcro 1,n\rcro, \; h_i(N)=\sum_{j=1}^{n-1} b_{i,j}\, n_{i,j.}$$
Hence, for all $M=(m_{i,j})$ in $V$, we find
$$0=\widetilde{f}(M)=\sum_{\sigma \in \mathfrak{S}_n} \Biggl(f(\sigma) \prod_{j=1}^{n-1} m_{\sigma(j),j}
\biggl(\sum_{k=1}^{n-1} b_{\sigma(n),k} \,m_{\sigma(n),k}\biggr)\Biggr),$$
and hence, for all $M=(m_{i,j})$ in $\Mat_{n,n-1}(\F)$,
$$0=\sum_{k=1}^{n-1} \sum_{\sigma \in \mathfrak{S}_n} \biggl(f(\sigma)\, b_{\sigma(n),k}\,m_{\sigma(n),k}\prod_{j=1}^{n-1} m_{\sigma(j),j}\biggr).$$
On the right hand-side of this equality, we see a polynomial function in the variables $m_{i,j}$, in which all the monomials have degree
at most $1$ in each of those variables. Hence, the corresponding formal polynomial equals zero, which yields the following result:
for all $k \in \lcro 1,n-1\rcro$ and all $\sigma \in \mathfrak{S}_n$,
\begin{equation}\label{subspaceidentity}
f(\sigma)\, b_{\sigma(n),k}+f(\sigma \tau_{k,n})\, b_{\sigma(k),k}=0.
\end{equation}
Now, fix $k \in \lcro 1,n-1\rcro$.
Assume first that $b_{i,k}=0$ for some $i \in \lcro 1,n\rcro$. For all $j \in \lcro 1,n\rcro \setminus \{i\}$, we can choose
$\sigma \in \mathfrak{S}_n$ such that $\sigma(k)=i$ and $\sigma(n)=j$, and hence \eqref{subspaceidentity} yields $b_{j,k}=0$
since $f$ vanishes nowhere. Therefore, either $b_{i,k}\neq 0$ for all $i \in \lcro 1,n\rcro$ or $b_{i,k}= 0$ for all $i \in \lcro 1,n\rcro$.
Assume now that the first case holds. Then,
$$\forall \sigma \in \mathfrak{S}_n, \; f(\sigma \tau_{k,n})=-\frac{b_{\sigma(n),k}}{b_{\sigma(k),k}}\,f(\sigma)$$
and hence $k$ and $n$ are column-$f$-equivalent. Since $f$ is normalized, it follows that $b_{\sigma(n),k}=b_{\sigma(k),k}$
for all $\sigma \in \mathfrak{S}_n$. Varying $\sigma$ yields $b_{i,k}=b_{1,k}$ for all $i \in \lcro 1,n\rcro$.
Hence, we have shown that, in any case $b_{i,k}=b_{1,k}$ for all $i \in \lcro 1,n\rcro$, and if $b_{1,k}$ is non-zero then
$k$ is column-$f$-equivalent to $n$.

To conclude, we define $X=(x_j)_{1 \leq j \leq n} \in \F^n$ by $x_j:=b_{1,j}$ for all $j \in \lcro 1,n-1\rcro$, and $x_n:=-1$.
It follows from the above that $V=\calV_X$ and that $X$ is column-$f$-adapted, which concludes the proof of Theorem \ref{DimensionEqualityTheorem}.


\section{A partial description of the $(f,g)$-transformations}\label{transfosection}

\subsection{Basic examples, main results}

Throughout the section, we let $f$ and $g$ be mappings from $\mathfrak{S}_n$ to $\mathbb{F}^*$.
For an endomorphism $U$ of the vector space $\Mat_n(\F)$, we define the condition
\begin{center}
$(\calC)$ : $\forall M \in \Mat_n(\F), \; \widetilde{g}(U(M))=0 \Leftrightarrow f(M)=0$.
\end{center}
In other words, $(\calC)$ is satisfied if and only if $U$ maps $\calC(f)$ into $\calC(g)$ and $\Mat_n(\F) \setminus \calC(f)$
into $\Mat_n(\F) \setminus \calC(g)$.
Note that this condition is satisfied whenever $U$ is an $(\alpha f,g)$-transformation for some $\alpha \in \F^*$.

Next, we give a basic example of such a map:

\begin{prop}
Let $f : \mathfrak{S}_n \rightarrow \F^*$ be a fully-normalized mapping, and
write $r(f)=(n_1,\dots,n_a)$ and $c(f)=(p_1,\dots,p_b)$.
Let $(P_1,\dots,P_a) \in \GL_{n_1}(\F)\times \cdots \times \GL_{n_a}(\F)$ and
$(Q_1,\dots,Q_b) \in \GL_{p_1}(\F)\times \cdots \times \GL_{p_b}(\F)$, and set
$P:=P_1 \oplus \cdots \oplus P_a \in \GL_n(\F)$, $Q:=Q_1 \oplus \cdots \oplus Q_b \in \GL_n(\F)$
and $\alpha:=\det P\det Q$.
Then, $M \in \Mat_n(\F) \mapsto PMQ$ is an $(\alpha f, f)$-transformation.
We say that it is a \textbf{standard $f$-similarity.}
\end{prop}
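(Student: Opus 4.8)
The plan is to prove the two stronger identities $\widetilde{f}(PM)=\det(P)\,\widetilde{f}(M)$ and $\widetilde{f}(MQ)=\det(Q)\,\widetilde{f}(M)$ for every $M\in\Mat_n(\F)$, and then to compose them. Applying the first identity to $MQ$ in place of $M$ will yield
$$\widetilde{f}(PMQ)=\det(P)\,\widetilde{f}(MQ)=\det(P)\det(Q)\,\widetilde{f}(M)=\alpha\,\widetilde{f}(M)=\widetilde{\alpha f}(M),$$
which is exactly the assertion that the (obviously linear) map $M\mapsto PMQ$ is an $(\alpha f,f)$-transformation.

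The conceptual core, which I would establish first, is that $\widetilde{f}$ is a multilinear and \emph{alternating} function of the columns lying inside any single column-$f$-equivalence class, the remaining columns being frozen. Multilinearity in the columns is immediate from the defining formula, since each factor $m_{\sigma(j),j}$ involves only column $j$. For the alternating property I would invoke Lemma \ref{BasicEquivalenceLemma} with $Z=E$: as $f$ is column-normalized, for distinct column-$f$-equivalent indices $i,j$ we have $f(\sigma\tau_{i,j})=-f(\sigma)$ for all $\sigma$, and the lemma then gives $\widetilde{f}(M)=0$ whenever $C_i(M)=C_j(M)$. This is the genuine vanishing-on-repeats form of "alternating", which is what makes the argument work in every characteristic (rather than mere antisymmetry). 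The mirror statement for rows within a row-$f$-equivalence class follows from row-normalization, i.e.\ by applying the same reasoning to $f^T$.

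With this in place, the right-multiplication identity is a direct application of the standard fact that an alternating multilinear form in $p$ arguments is multiplied by $\det A$ when its arguments $(v_1,\dots,v_p)$ are replaced by $\bigl(\sum_i a_{ij}v_i\bigr)_j$. Here full-normalization is essential: because $f$ is fully-normalized, the column-$f$-equivalence classes are precisely the consecutive intervals of lengths $p_1,\dots,p_b$, and these are exactly the index blocks on which $Q=Q_1\oplus\cdots\oplus Q_b$ acts, so right multiplication by $Q$ only recombines columns lying within one class. Treating the classes one at a time and using $\det Q=\prod_s\det Q_s$ then gives $\widetilde{f}(MQ)=\det(Q)\,\widetilde{f}(M)$. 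The left-multiplication identity $\widetilde{f}(PM)=\det(P)\,\widetilde{f}(M)$ is obtained by the symmetric argument on rows, using that the row-$f$-equivalence classes are the intervals of lengths $n_1,\dots,n_a$ on which $P=P_1\oplus\cdots\oplus P_a$ acts.

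I do not expect a serious obstacle once the alternating-multilinear reduction is secured; the rest is bookkeeping. The point that demands genuine care is the characteristic-$2$ case, where I must use the honest vanishing of $\widetilde{f}$ on matrices having two equal same-class columns (as furnished by Lemma \ref{BasicEquivalenceLemma}) rather than a mere sign-change under transpositions, so that the determinant expansion over non-injective index maps still collapses correctly. I would also make explicit the small but indispensable remark that the block shapes of $P$ and $Q$ are forced to match the row and column partitions precisely because $f$ is fully-normalized; without this alignment the multiplications would mix columns (or rows) across distinct classes and the alternating argument would fail.
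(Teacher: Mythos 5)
Your proof is correct, but it reaches the conclusion by a genuinely different route from the paper's. Both arguments rest on the same foundational input: by Lemma \ref{BasicEquivalenceLemma} applied with $Z=E$, normalization forces $\widetilde{f}$ to vanish on any matrix having two equal columns (respectively, rows) indexed within a single equivalence class, and full normalization aligns the classes with the diagonal blocks of $P$ and $Q$. Where the two proofs diverge is in how they convert this vanishing into the determinant factor. The paper factors each block $P_i$, $Q_j$ into elementary generators of the general linear group (transvections and dilations) and checks the claim generator by generator: a transvection adds a scalar multiple of a same-class column, which leaves $\widetilde{f}$ unchanged by multilinearity plus the vanishing property, while a dilation scales $\widetilde{f}$ by the dilation factor; the determinant bookkeeping is then trivial in each case. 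You instead prove the stronger identities $\widetilde{f}(PM)=\det(P)\,\widetilde{f}(M)$ and $\widetilde{f}(MQ)=\det(Q)\,\widetilde{f}(M)$ in one stroke, by viewing $\widetilde{f}$ as an alternating multilinear function of the columns (respectively, rows) lying in each class and invoking the standard transformation law for alternating multilinear forms under linear recombination of their arguments. The paper's route buys one-line verifications at the price of quoting the generation theorem for $\GL_k(\F)$; your route avoids that theorem and yields the cleaner intermediate identities directly, at the price of the expansion-over-index-maps argument hidden in your ``standard fact'' --- and your insistence on the vanishing-on-repeats form of alternation (rather than mere antisymmetry) is precisely what makes that expansion collapse correctly in characteristic $2$, a subtlety the paper's transvection computation sidesteps automatically.
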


\begin{proof}
Remember that, for every positive integer $k>0$, the group $\GL_k(\F)$ is generated by the set consisting of the
dilation matrices, i.e.\ the diagonal matrices with exactly one non-zero entry, and the transvection matrices, i.e.\ the triangular matrices with diagonal entries all equal to $1$ and exactly one non-zero off-diagonal entry.
Using this, the situation is easily reduced to the one where all but one of $P_1,\dots,P_a,Q_1,\dots,Q_b$
are identity matrices, and the remaining one is a dilation matrix or a transvection matrix. Assume that this is the case,
and consider the situation where $P_1,\dots,P_a$ are identity matrices and there is a sole index $i \in \lcro 1,b\rcro$ for which
$Q_i$ is not an identity matrix.
\begin{itemize}
\item Assume first that $Q_i$ is a transvection matrix. Then, $Q=I_n+ \lambda E_{k,l}$ for some distinct column-$f$-equivalent indices
$k,l$ and some $\lambda \in \F$, whereas $P=I_n$. Let $M \in \Mat_n(\F)$. The matrix $PMQ=MQ$ is deduced from
$M$ by replacing the $l$-th column with $C_l(M)+\lambda C_k(M)$. Since $\widetilde{f}$ is $n$-linear with respect to the columns,
we deduce that $\widetilde{f}(PMQ)=\widetilde{f}(M)+\lambda \widetilde{f}(N)$, where $N$ is deduced from $M$
by replacing the $l$-th column by $C_k(M)$. Since $f$ is normalized, we find that $\widetilde{f}(N)=0$ and hence
$\widetilde{f}(PMQ)=\widetilde{f}(M)$. On the other hand, $\det P\det Q=1$.
\item Assume next that $Q_i$ is a dilation matrix. Then, $P=I_n$ and $Q$ is a dilation matrix whose factor we denote by $\alpha$.
Since $\widetilde{f}$ is linear with respect to each column, we readily find
$\forall M \in \Mat_n(\F), \; \widetilde{f}(PMQ)=\alpha \widetilde{f}(M)$, whereas $\det P\det Q=\alpha$.
\end{itemize}
With exactly the same line of reasoning, one deals with the case when all the $Q_j$'s are identity matrices and exactly one of the $P_i$'s is not an identity matrix, but a transvection matrix or a dilation matrix. This completes the proof.
\end{proof}

\begin{Def}
Let $f : \mathfrak{S}_n \rightarrow \F^*$ be a normalized mapping.
A permutation $\sigma$ of $\lcro 1,n\rcro$ is called \textbf{column-$f$-adapted} (respectively, \textbf{row-$f$-adapted})
whenever it maps any two column-$f$-equivalent indices (respectively, row-$f$-equivalent indices) to two column-$f$-equivalent indices (respectively, to two row-$f$-equivalent indices) and it is increasing on every column-$f$-equivalence class (respectively, on every row-$f$-equivalence class).
\end{Def}

The datum of a column-$f$-adapted permutation is equivalent to the one, for each integer $k>0$,
of a permutation of the set of all column-$f$-equivalence classes with cardinality $k$.
For example, if no distinct indices are column-$f$-equivalent, then every permutation of $\lcro 1,n\rcro$ is column-$f$-adapted, whereas
if all indices are column-$f$-equivalent then the sole column-$f$-adapted permutation is the identity.

\begin{Def}
Let $R \in \Mat_n(\F)$. We say that $R$ is \textbf{$f$-adapted} when $r_{i,j}=r_{i',j'}$ for all row-$f$-adapted indices $i$ and $i'$ and all column-$f$-adapted indices $j$ and $j'$.
We say that $R$ is \textbf{super-$f$-adapted} when it is $f$-adapted and all its entries in its first row and column equal $1$.
\end{Def}

In particular, if all indices in $\lcro 1,n\rcro$ are column-$f$-equivalent and all indices
in $\lcro 1,n\rcro$ are row-$f$-equivalent, then a matrix is $f$-adapted if and only if all its entries are equal.

Now, we are poised to state a partial result on the mappings that satisfy condition $(\calC)$.

\begin{theo}\label{fgtransfotheo}
Assume that $n \geq 2$.
Let $f : \mathfrak{S}_n \rightarrow \F^*$ and
$g : \mathfrak{S}_n \rightarrow \F^*$ be fully-normalized mappings, and let
$U : \Mat_n(\F) \rightarrow \Mat_n(\F)$ be a linear mapping such that
$$\forall M \in \Mat_n(\F), \; \widetilde{g}(U(M))=0 \Leftrightarrow \widetilde{f}(M)=0.$$
Then, exactly one of the following holds:
\begin{enumerate}[(a)]
\item There exists a unique quadruple $(K,\sigma,\tau,V)$ in which
$K \in \Mat_n(\F^*)$ is a super-$g$-adapted matrix, $\sigma$ is a row-$f$-adapted permutation of $\lcro 1,n\rcro$,
$\tau$ is a column-$f$-adapted permutation of $\lcro 1,n\rcro$, and $V$ is a standard $f$-similarity,
such that
$$\forall M \in \Mat_n(\F), \; U(M)=K \star (P_\sigma V(M) P_\tau).$$
\item There exists a unique quadruple $(K,\sigma,\tau,V)$ in which
$K \in \Mat_n(\F^*)$ is a super-$g$-adapted matrix, $\sigma$ is a column-$f$-adapted permutation of $\lcro 1,n\rcro$,
$\tau$ is a row-$f$-adapted permutation of $\lcro 1,n\rcro$, and $V$ is a standard $f^T$-similarity, such that
 $$\forall M \in \Mat_n(\F), \; U(M)=K \star (P_\sigma V(M^T) P_\tau).$$
\end{enumerate}
\end{theo}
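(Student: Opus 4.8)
The plan is to run the Dieudonné-style strategy built on the classification of maximal singular subspaces. Write $\calM_h$ for the set of linear subspaces of codimension $n$ contained in $\calC(h)$; by Theorem \ref{DimensionInequalityTheorem} these are of minimal codimension inside the null cone, and by Theorem \ref{DimensionEqualityTheorem} (valid since $f,g$ are normalized) one has, for $h\in\{f,g\}$,
\[
\calM_h=\{\calV_X : X \text{ column-}h\text{-adapted}\}\cup\{\calV^X : X \text{ row-}h\text{-adapted}\}.
\]
I call the first kind \emph{column-type} and the second \emph{row-type}. The first task is bijectivity of $U$, which is point (i). If $U(N)=0$ with $N\neq 0$, then $U(M+tN)=U(M)$ for all $M$ and all $t$, so the hypothesis forces $\widetilde f(M+tN)=0\Leftrightarrow\widetilde f(M)=0$, whence $\calC(f)+\F N=\calC(f)$. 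For any $\calV_X\in\calM_f$ this gives a linear subspace $\calV_X+\F N\subseteq\calC(f)$ of codimension $<n$ unless $N\in\calV_X$, so Theorem \ref{DimensionInequalityTheorem} forces $N\in\calV_X$ for \emph{every} $\calV_X\in\calM_f$; since $\bigcap_i\calV_{e_i}=\{0\}$ this yields $N=0$. As $\Mat_n(\F)$ is finite-dimensional, $U$ is bijective.

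With $U$ bijective, $W\mapsto U^{-1}(W)$ is an inclusion- and intersection-preserving bijection between subspaces. For $W\in\calM_g$ one has $U^{-1}(W)\subseteq\calC(f)$ with $\codim U^{-1}(W)=n$, so $U^{-1}(W)\in\calM_f$; applying the same to $U^{-1}$ (which satisfies the null-cone condition with $f$ and $g$ swapped) shows $W\mapsto U^{-1}(W)$ is a bijection $\calM_g\to\calM_f$. I then separate the two types intrinsically via intersection codimensions: two distinct column-type (or two distinct row-type) members of $\calM_h$ meet in codimension $2n$, while a column-type $\calV_X$ and a row-type $\calV^Y$ always meet in codimension $2n-1$, because $M\mapsto(MX,M^{T}Y)$ carries the single relation $Y^{T}MX$ and so has rank $2n-1$. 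These codimensions being preserved, the bijection either preserves the column/row dichotomy everywhere or reverses it everywhere. In the reversing case I replace $U$ by $M\mapsto U(M^{T})$ and work with $f^{T}$ in place of $f$ (using $\widetilde{f^{T}}:M\mapsto\widetilde f(M^{T})$ and $\calV_X=(\calV^X)^{T}$), which reduces case (b) to case (a); so from now on I assume the dichotomy is preserved.

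Now I recover the algebraic data. The preserved dichotomy gives bijections $\F X\mapsto\F\phi(X)$ with $U^{-1}(\calV_X)=\calV_{\phi(X)}$ and $\F Y\mapsto\F\psi(Y)$ with $U^{-1}(\calV^Y)=\calV^{\psi(Y)}$. Since $\calV_X\cap\calV_{X'}=\bigcap_{Z\in\Vect(X,X')}\calV_Z$, the map $\phi$ preserves collinearity of column-adapted lines and carries each column-$g$-class bijectively onto a column-$f$-class of equal cardinality; invoking the fundamental theorem of projective geometry on the classes of cardinality $\geq 3$ (and handling the smaller classes by hand), $\phi$ is induced by an honest $\F$-linear block-diagonal map, the $\F$-linearity of $U$ forcing the accompanying field automorphism to be trivial, and likewise for $\psi$. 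Assembling the induced permutations of equal-size classes into a column-$f$-adapted permutation $\tau$ and a row-$f$-adapted permutation $\sigma$, and the intra-class $\GL$-factors into a standard $f$-similarity $V$, I obtain $V_0:M\mapsto P_\sigma V(M)P_\tau$ whose $\calM_g\leftrightarrow\calM_f$ bijection coincides with that of $U$. Hence $S:=U\circ V_0^{-1}$ fixes every member of $\calM_g$; in particular $S^{-1}(\calV_{e_i})=\calV_{e_i}$ and $S^{-1}(\calV^{e_j})=\calV^{e_j}$, so $S(E_{i,j})\in\bigcap_{k\neq j}\calV_{e_k}\cap\bigcap_{l\neq i}\calV^{e_l}=\F E_{i,j}$, which means $S(M)=K\star M$ for some $K\in\Mat_n(\F^{*})$. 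Therefore $U(M)=K\star\bigl(P_\sigma V(M)P_\tau\bigr)$.

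It remains to pin down $K$ and establish uniqueness. Absorbing $V_0$, the null-cone condition for $U$ becomes the statement that $K\star-$ preserves $\calC(g)$ in both directions, i.e. $g.K$ and $g$ have the same null cone. Analysing this through the equivalence criterion of Lemma \ref{BasicEquivalenceLemma}, together with the full-normalization of $g$ (which is exactly what turns the rank-one Hadamard freedom of Lemma \ref{HadProductLemma} into the ambiguity already carried by $V$), forces $K$ to be $g$-adapted; normalizing the residual scalar and rank-one freedom by demanding $1$'s along the first row and column makes $K$ super-$g$-adapted and renders the quadruple $(K,\sigma,\tau,V)$ unique, which is verified by evaluating $U$ on permutation matrices and on rank-one configurations. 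I expect this final passage—extracting the rigid form (the two adapted permutations, the block-diagonal similarity, and especially the $\F$-linearity ruling out a nontrivial field automorphism) purely from the incidence data of the subspace bijection, and then showing the leftover Hadamard factor must be super-$g$-adapted with the stated uniqueness—to be the main obstacle; the bijectivity and the codimension/transversality bookkeeping are comparatively routine once Theorems \ref{DimensionInequalityTheorem} and \ref{DimensionEqualityTheorem} are in hand.
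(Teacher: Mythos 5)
Your overall architecture coincides with the paper's (Dieudonn\'e-style transport of the codimension-$n$ subspaces of the null cone, classified by Theorems \ref{DimensionInequalityTheorem} and \ref{DimensionEqualityTheorem}), and the first half of your argument is sound: the injectivity proof via translation-stability and Theorem \ref{DimensionInequalityTheorem} is in fact cleaner than the paper's ad hoc construction, and the column/row dichotomy via the intersection codimensions $2n$ versus $2n-1$ is exactly the paper's mechanism. The first genuine gap is the appeal to the fundamental theorem of projective geometry. That theorem says nothing about column-equivalence classes of cardinality $2$: a collinearity-preserving bijection of the projective line $\Pgros(\F^2)$ is just an arbitrary bijection of it, and need not be induced by any semilinear map, so ``handling the smaller classes by hand'' is precisely the missing argument. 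Moreover, even on classes of cardinality at least $3$, FTPG only produces a \emph{semilinear} inducing map, and your one-clause claim that the $\F$-linearity of $U$ kills the field automorphism is a real step: to prove it one must evaluate $U$ on explicit pencils of matrices, which is exactly the kind of direct computation the paper performs instead of invoking FTPG. The paper's route shows the invocation is unnecessary: since the conclusion $S(E_{i,j})\in\F E_{i,j}$ only requires $S$ to fix the $2n$ standard subspaces $\calV_{e_k}$ and $\calV^{e_l}$, it suffices to match $U^{-1}$ on the standard lines, which is done with one change-of-basis matrix per class; the linearity of $U$ then directly furnishes linear maps $\varphi_i$ with $C_{\tau^{-1}(i)}(U(M))=\varphi_i(C_i(M))$, with no projective geometry at all.

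The second, more serious gap is in your last step. After writing $U=S\circ V_0$ with $S:M\mapsto K\star M$, the hypothesis does \emph{not} become ``$g.K$ and $g$ have the same null cone'': that reduction presupposes that $V_0$ exchanges $\calC(f)$ and $\calC(g)$, which is not known at this stage---it is essentially the PH-equivalence of $f$ and $g$, i.e.\ part of what is being proved. What the hypothesis actually yields is that the mapping $h$ defined by $\widetilde{h}:N\mapsto\widetilde{g}\bigl(K\star(P_\sigma NP_\tau)\bigr)$ has the same null cone as $\widetilde{f}$. To exploit this one needs a rigidity statement (equal null cones force proportional mappings, which is Proposition \ref{coneprop}; its proof is independent of the theorem and could legitimately be quoted here, or one can argue via Lemma \ref{BasicEquivalenceLemma} as you suggest), and then an explicit computation with the resulting identity showing that the columns of $K$ within each column class, and the rows of $K$ within each row class, are \emph{proportional}; only after re-normalizing by a rank-one Hadamard factor absorbed into $V$ do they become equal, giving super-$g$-adaptedness. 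None of this computation appears in your text, and neither does the uniqueness of $(K,\sigma,\tau,V)$: in the paper this requires a dedicated lemma showing that an identity $P_{\sigma''}W(M)P_{\tau''}=L\star M$, with $W$ a standard $f$-similarity, forces $\sigma''=\tau''=\id$, $W=\id_{\Mat_n(\F)}$ and $L=E$; ``evaluating $U$ on permutation matrices and on rank-one configurations'' is not a proof. Your closing sentence concedes that this passage is the main obstacle, and it is: it is where the substance of the paper's Sections \ref{analysissection} and \ref{uniquenesssection} lies.
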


\begin{cor}\label{fgtransfocor}
Assume that $n \geq 2$.
Let $f : \mathfrak{S}_n \rightarrow \F^*$ and
$g : \mathfrak{S}_n \rightarrow \F^*$, and let
$U : \Mat_n(\F) \mapsto \Mat_n(\F)$ be a linear mapping such that
$$\forall M \in \Mat_n(\F), \; \widetilde{g}(U(M))=0 \Leftrightarrow \widetilde{f}(M)=0.$$
Then:
\begin{enumerate}[(a)]
\item For some $\alpha \in \F^*$, the mapping $U$ is an $(\alpha f, g)$-transformation.
\item The mapping $g$ is PH-equivalent to $f$ or to $f^T$.
\end{enumerate}
\end{cor}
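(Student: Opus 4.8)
The plan is to bootstrap the corollary from Theorem \ref{fgtransfotheo}, which settles the fully-normalized case, by transporting the general situation to a fully-normalized one through PH-equivalence. First I would invoke Proposition \ref{fullnormalizedprop} to choose fully-normalized mappings $f_0$ and $g_0$ that are PH-equivalent to $f$ and to $g$ respectively: concretely, there are triples $(A,\tau,\tau')$ and $(B,\mu,\mu')$ with $\widetilde{f_0}(M)=\widetilde{f}\bigl(A\star(P_\tau M P_{\tau'}^{-1})\bigr)$ and $\widetilde{g_0}(M)=\widetilde{g}\bigl(B\star(P_\mu M P_{\mu'}^{-1})\bigr)$. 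Writing $T_f : M\mapsto A\star(P_\tau M P_{\tau'}^{-1})$ and $T_g : M\mapsto B\star(P_\mu M P_{\mu'}^{-1})$, both are linear automorphisms of $\Mat_n(\F)$ (the Hadamard factors are invertible since $A,B\in\Mat_n(\F^*)$), and $\widetilde{f_0}=\widetilde{f}\circ T_f$, $\widetilde{g_0}=\widetilde{g}\circ T_g$.

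Next I would set $U_0:=T_g^{-1}\circ U\circ T_f$, a linear endomorphism of $\Mat_n(\F)$. Substituting $M=T_f(N)$ into the hypothesis and using $\widetilde{f}=\widetilde{f_0}\circ T_f^{-1}$ and $\widetilde{g}=\widetilde{g_0}\circ T_g^{-1}$ yields, for all $N$,
$$\widetilde{g_0}\bigl(U_0(N)\bigr)=0 \Leftrightarrow \widetilde{f_0}(N)=0,$$
so that $U_0$ satisfies condition $(\calC)$ for the fully-normalized pair $(f_0,g_0)$ (and $n\geq 2$ is inherited). Theorem \ref{fgtransfotheo} then applies and presents $U_0$ in one of its two explicit forms, say in case (a) $U_0(M)=K\star\bigl(P_\sigma V(M)P_\pi\bigr)$ with $K$ super-$g_0$-adapted, $\sigma$ row-$f_0$-adapted, $\pi$ column-$f_0$-adapted and $V$ a standard $f_0$-similarity.

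The crux is to read off from this form that $\widetilde{g_0}\circ U_0$ is a nonzero scalar multiple of $\widetilde{f_0}$. I would compute $\widetilde{g_0}\bigl(K\star(P_\sigma N P_\pi)\bigr)$ directly: by definition of the PH-action (with Hadamard part $K$ and permutations $\sigma,\pi^{-1}$) it equals $\widetilde{g_0.(K,\sigma,\pi^{-1})}(N)$. The heart of the matter is that, because $g_0$ and $f_0$ are fully-normalized and $K$ is super-$g_0$-adapted with $\sigma,\pi$ adapted, this relabeled mapping is exactly $\alpha_0' f_0$ for some $\alpha_0'\in\F^*$: full normalization forces the row and column block-partitions of $f_0$ and $g_0$ into the same canonical shape, and the adapted-ness of $K,\sigma,\pi$ is precisely the condition making the block-constant Hadamard scalars together with the class-permutations collapse the relabeling to a single scalar rather than to a genuinely different normalized mapping. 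Feeding in the standard similarity $V : M\mapsto PMQ$, which multiplies $\widetilde{f_0}$ by $\det P\det Q$ by the proposition on standard $f$-similarities, then gives $\widetilde{g_0}(U_0(M))=\alpha_0\,\widetilde{f_0}(M)$ with $\alpha_0:=\alpha_0'\det P\det Q\in\F^*$. Moreover the identity $\widetilde{g_0.(K,\sigma,\pi^{-1})}=\widetilde{\alpha_0' f_0}$ exhibits $\alpha_0' f_0$ as PH-equivalent to $g_0$, whence $f_0$ is PH-equivalent to $g_0$ by Remark \ref{HequivalentScalarProduct}; case (b) is identical after replacing $f_0$ by $f_0^T$.

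Finally I would transport both conclusions back. From $\widetilde{g}\circ T_g=\widetilde{g_0}$, $\widetilde{f}\circ T_f=\widetilde{f_0}$ and $U=T_g\circ U_0\circ T_f^{-1}$, the identity $\widetilde{g_0}\circ U_0=\alpha_0\widetilde{f_0}$ rearranges into $\widetilde{g}(U(M))=\widetilde{g_0}(U_0(T_f^{-1}(M)))=\alpha_0\widetilde{f_0}(T_f^{-1}(M))=\alpha_0\widetilde{f}(M)$, which is exactly the assertion that $U$ is an $(\alpha_0 f,g)$-transformation; this is part (a). For part (b), PH-equivalence is an equivalence relation, and transposition is compatible with it ($f_0$ PH-equivalent to $f$ forces $f_0^T$ PH-equivalent to $f^T$, by transposing the defining identity); hence $g$ is PH-equivalent to $f$ in case (a), and $g$ is PH-equivalent to $f^T$ in case (b). The main obstacle is the scalar-collapse computation of the third paragraph: turning the set-theoretic information that the two block-partitions have matching cardinality lists (delivered by full normalization and the adapted-ness constraints) into the algebraic statement that the relabeled functional is literally $\alpha_0' f_0$, with all the block-constant Hadamard scalars multiplying out to a single constant.
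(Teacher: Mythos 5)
Your overall architecture (transport to fully-normalized mappings via Proposition \ref{fullnormalizedprop}, conjugate $U$ into a map $U_0$ satisfying condition $(\calC)$ for the fully-normalized pair, apply Theorem \ref{fgtransfotheo}, transport back) is the same as the paper's. But the step you yourself call the crux --- that $\widetilde{g_0}\bigl(K\star(P_\sigma N P_\pi)\bigr)=\alpha_0'\,\widetilde{f_0}(N)$ \emph{because} $K$ is super-$g_0$-adapted and $\sigma,\pi$ are adapted --- is a genuine gap, and the justification you offer for it is false. Adapted-ness of the data does not force the relabeled functional to collapse to a scalar multiple of $f_0$; the paper warns about exactly this when it says that not all maps of the given form are $(f,f)$-transformations in general. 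Concretely, take $f_0=g_0$ constant equal to $1$ (the permanent), $n=3$, $\car(\F)\neq 2$. All equivalence classes are singletons, so every $K\in\Mat_3(\F^*)$ whose first row and first column consist of $1$'s is super-$g_0$-adapted, and every permutation is adapted; yet $\per(K\star N)$ is a scalar multiple of $\per(N)$ for all $N$ only when $K$ has rank $1$ (the argument of Lemma \ref{HadProductLemma}), which fails, e.g., for $K$ with rows $(1,1,1)$, $(1,1,1)$, $(1,-1,1)$. So the scalar collapse cannot be read off the structural form of $U_0$: it has to be extracted from the hypothesis $(\calC)$ itself, which your third paragraph never uses.

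The missing ingredient is the paper's Proposition \ref{coneprop}: two Schur functionals attached to nowhere-vanishing maps have the same null cone if and only if the maps are proportional. The paper defines $h$ by $\widetilde{h}(M)=\widetilde{g_0}\bigl(K\star(P_\sigma M P_\pi)\bigr)$, uses condition $(\calC)$ for $U_0$ together with the fact that the standard similarity $V$ merely rescales $\widetilde{f_0}$ to obtain $\calC(h)=\calC(f_0)$, and only then invokes Proposition \ref{coneprop} to conclude $h=\beta f_0$; both the scalar in part (a) and the PH-equivalence in part (b) then fall out of this identity via Remark \ref{HequivalentScalarProduct} and transitivity, exactly as in your transport-back paragraph. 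Note that Proposition \ref{coneprop} is itself a substantive lemma (the paper proves it with an explicit family of test matrices and an induction over products of transpositions), so what your proposal is missing is not a routine verification but a key intermediate result.
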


\begin{cor}\label{fgtransfocor2}
Assume that $n \geq 2$, and let $f : \mathfrak{S}_n \rightarrow \F^*$ be a rigid map.
Let $U$ be an $(f,f)$-transformation. Then, there exist a matrix $K \in \Mat_n(\F^*)$
and permutations $\sigma,\tau$ of $\lcro 1,n\rcro$ such that
$$U : M \mapsto K \star (P_\sigma MP_\tau) \quad \text{or} \quad
U : M \mapsto K \star (P_\sigma M^T P_\tau).$$
\end{cor}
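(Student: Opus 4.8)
The plan is to obtain the statement as a direct specialization of Theorem \ref{fgtransfotheo}, after first observing that a rigid mapping is automatically fully-normalized. Indeed, if $f$ is rigid then its column- and row-$f$-equivalence classes are the singletons $\{1\},\dots,\{n\}$, so the normalization conditions hold vacuously (there are no distinct column- or row-$f$-equivalent indices), while conditions (a) and (b) in the definition of fully-normalized hold trivially because each class is a one-point interval and all classes have cardinality $1$. Moreover, an $(f,f)$-transformation $U$ satisfies $\widetilde{f}(U(M))=\widetilde{f}(M)$ for all $M$, hence in particular condition $(\calC)$ with $g=f$. I may therefore apply Theorem \ref{fgtransfotheo} to $U$ with $g:=f$, which is itself fully-normalized.

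Next I would specialize the output of Theorem \ref{fgtransfotheo} to the rigid case. Since $r(f)=c(f)=(1,\dots,1)$, every block factor $\GL_{n_i}(\F)$ and $\GL_{p_j}(\F)$ is just $\GL_1(\F)=\F^*$, so every standard $f$-similarity, as well as every standard $f^T$-similarity, has the form $M \mapsto PMQ$ with $P=\Diag(p_1,\dots,p_n)$ and $Q=\Diag(q_1,\dots,q_n)$ invertible diagonal matrices. Likewise, because all equivalence classes are singletons, every permutation of $\lcro 1,n\rcro$ is both row-$f$-adapted and column-$f$-adapted, and every matrix of $\Mat_n(\F^*)$ is $f$-adapted. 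Thus in alternative (a), Theorem \ref{fgtransfotheo} yields $U(M)=K \star \bigl(P_\sigma (PMQ) P_\tau\bigr)$ for some $K \in \Mat_n(\F^*)$, permutations $\sigma,\tau$ of $\lcro 1,n\rcro$, and invertible diagonal $P,Q$.

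The remaining step is to absorb the diagonal factors into the Hadamard multiplier. Conjugating a diagonal matrix by a permutation matrix stays diagonal, so I can write $P_\sigma P = P' P_\sigma$ and $Q P_\tau = P_\tau Q'$ with $P':=P_\sigma P P_\sigma^{-1}$ and $Q':=P_\tau^{-1} Q P_\tau$ invertible and diagonal, whence $U(M)=K \star \bigl(P'(P_\sigma M P_\tau)Q'\bigr)$. Since left multiplication by a diagonal matrix scales rows and right multiplication scales columns, one has $P' N Q' = D \star N$ where $D:=(p'_i q'_j)_{1 \leq i,j \leq n}$ is a rank-$1$ matrix of $\Mat_n(\F^*)$; consequently $K \star (P' N Q')=(K \star D) \star N$ by associativity of the Hadamard product. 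Setting $K':=K \star D \in \Mat_n(\F^*)$ then gives $U(M)=K' \star (P_\sigma M P_\tau)$, the first alternative of the corollary. Alternative (b) is treated identically (using that $f^T$ is rigid as well), producing $U(M)=K' \star (P_\sigma M^T P_\tau)$.

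Because the statement is essentially a corollary of Theorem \ref{fgtransfotheo}, I do not expect a genuine obstacle: the only points demanding care are the verification that rigidity forces full-normalization, so that the theorem applies directly without passing through PH-equivalence, and the diagonal-absorption bookkeeping, both of which are routine. I would also note that the argument uses nothing beyond condition $(\calC)$; accordingly the corollary only pins down the shape of $U$ and leaves the Hadamard factor $K'$ an unconstrained element of $\Mat_n(\F^*)$.
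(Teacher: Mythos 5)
Your proof is correct and follows essentially the same route as the paper's: apply Theorem \ref{fgtransfotheo} with $g=f$ (rigidity making $f$ fully-normalized, a point the paper leaves implicit but you rightly verify), note that rigidity forces every standard $f$-similarity (and $f^T$-similarity) to be $M \mapsto PMQ$ with $P,Q$ diagonal, and absorb the diagonal factors into the Hadamard multiplier. The only cosmetic difference is bookkeeping: you conjugate the diagonal matrices past the permutation matrices before converting to a Hadamard product, whereas the paper writes $V(M)=L\star M$ with $L=XY^T$ first and then permutes the Hadamard factor via $P_\sigma(L\star M)P_\tau=(P_\sigma L P_\tau)\star(P_\sigma M P_\tau)$.
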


A solution to the initial problem of determining all $(f,g)$-transformations would require that we determine all $(f,f)$-transformations.
The previous theorem gives an incomplete answer to the latter problem, and a full solution would require that if $f$ is fully-normalized we determine, for all
$\alpha \in \F^*$, for which triples $(K,\sigma,\tau)$, with an $f$-adapted matrix $K \in \Mat_n(\F^*)$, a
row-$f$-adapted permutation $\sigma \in \mathfrak{S}_n$, a column-$f$-adapted permutation $\tau \in \mathfrak{S}_n$, the linear bijection
$M \mapsto K \star (P_\sigma\, M\, P_\tau)$ (or $M \mapsto K \star (P_\tau\, M^T\, P_\sigma)$) is an $(f,f)$-transformation.
We doubt that a general neat description exists beyond this point. The case when $f$ is central is completely solved in Section \ref{centralsection}, however.

\vskip 3mm
The remainder of the present section is laid out as follows:
\begin{itemize}
\item In Section \ref{injsection}, it is proved that every linear map that satisfies condition ($\calC$) is an automorphism of $\Mat_n(\F)$.
\item We prove a portion of Theorem \ref{fgtransfotheo} in Section \ref{analysissection}:
we establish the ``existence" part by examining the effect of $U$ on the linear subspaces of codimension $n$ included in $\calC(f)$,
and the effect of $U^{-1}$ on the linear subspaces of codimension $n$ included in $\calC(g)$.
\item In Section \ref{uniquenesssection}, we complete the proof of Theorem \ref{fgtransfotheo} by tackling the uniqueness statements.
\item Corollaries \ref{fgtransfocor} and \ref{fgtransfocor2} are drawn in Sections \ref{cor1section} and \ref{cor2section}, respectively.
\item Finally, in Section \ref{detperpressection}, we show how Theorem \ref{fgtransfotheo} yields Theorems \ref{detpreservers} and
\ref{perpreservers} with limited additional effort.
\end{itemize}

\subsection{Injectivity}\label{injsection}

\begin{lemma}\label{injectlemma}
Assume that $U$ satisfies condition $(\calC)$. Then, $U$ is injective.
\end{lemma}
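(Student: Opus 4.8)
The plan is to prove injectivity by showing directly that $\ker U=\{0\}$, with Theorem~\ref{DimensionInequalityTheorem} as the engine, fed by a supply of codimension-$n$ subspaces that sit inside the null cone \emph{for free}. The observation I would exploit is that, whatever the mapping $f$ may be, the subspace
$$\calV_{e_j}=\{M \in \Mat_n(\F) : C_j(M)=0\}$$
is contained in $\calC(f)$: if the $j$-th column of $M$ vanishes, then every product $\prod_{k=1}^n m_{\sigma(k),k}$ contains the zero factor $m_{\sigma(j),j}$, so that $\widetilde{f}(M)=0$. Each $\calV_{e_j}$ is a linear subspace of codimension exactly $n$, and $\bigcap_{j=1}^n \calV_{e_j}=\{0\}$.

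Next I would fix a vector $k \in \ker U$ and an index $j \in \lcro 1,n\rcro$, and show that the linear subspace $\calV_{e_j}+\F k$ is still contained in $\calC(f)$. Indeed, for $M \in \calV_{e_j}$ and $t \in \F$ one has $U(M+tk)=U(M)$ since $k \in \ker U$; as $M \in \calV_{e_j} \subseteq \calC(f)$ gives $\widetilde{f}(M)=0$, condition $(\calC)$ yields $\widetilde{g}(U(M))=0$, whence $\widetilde{g}\bigl(U(M+tk)\bigr)=\widetilde{g}(U(M))=0$, and the reverse implication in $(\calC)$ forces $\widetilde{f}(M+tk)=0$, i.e.\ $M+tk \in \calC(f)$. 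Thus $\calV_{e_j}+\F k \subseteq \calC(f)$, so by Theorem~\ref{DimensionInequalityTheorem} this linear subspace has codimension at least $n$. Since it contains $\calV_{e_j}$, which already has codimension $n$, equality of codimensions must hold, so $\calV_{e_j}+\F k=\calV_{e_j}$, that is $k \in \calV_{e_j}$.

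As this holds for every $j$, I would conclude that $k \in \bigcap_{j=1}^n \calV_{e_j}=\{0\}$, hence $\ker U=\{0\}$ and $U$ is injective. It is the rigidity furnished by Theorem~\ref{DimensionInequalityTheorem} — namely that $n$ is the \emph{minimal} codimension of an affine subspace inside a null cone — that carries the argument: a nonzero kernel vector would enlarge one of the extremal subspaces $\calV_{e_j}$ into a forbidden subspace of codimension $n-1$. There is accordingly no deep obstacle; the only point needing care is the verification that the enlarged space $\calV_{e_j}+\F k$ remains in $\calC(f)$, which is precisely where one combines the fact that $U$ annihilates $k$ with \emph{both} directions of the biconditional $(\calC)$.
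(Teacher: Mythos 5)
Your proof is correct, but it takes a genuinely different route from the paper's. Both arguments start from the same key observation: if $k \in \Ker U$, then using \emph{both} directions of condition $(\calC)$ together with $U(M+tk)=U(M)$ shows that the null cone $\calC(f)$ is stable under translation by multiples of $k$ (you phrase this as $\calV_{e_j}+\F k \subseteq \calC(f)$; the paper phrases it as $M \in \calC(f) \Rightarrow A+M \in \calC(f)$ for a putative nonzero kernel element $A$). Where you diverge is in extracting the contradiction. You invoke Theorem~\ref{DimensionInequalityTheorem}: the enlarged linear subspace $\calV_{e_j}+\F k$ lies in $\calC(f)$, hence has codimension at least $n$, and since it contains the codimension-$n$ space $\calV_{e_j}$ it must equal it, forcing $k \in \calV_{e_j}$ for every $j$ and hence $k \in \bigcap_{j=1}^n \calV_{e_j}=\{0\}$. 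This is legitimate and free of circularity: Theorem~\ref{DimensionInequalityTheorem} is established in Section~\ref{conesection}, before the lemma, and its proof does not use injectivity of any transformation. The paper instead stays elementary and self-contained: after a PH-equivalence reduction ensuring $a_{n,n}\neq 0$, it exhibits an explicit matrix $B$ (first $n-1$ columns those of $I_n-A$, last column zero) satisfying $\widetilde{f}(B)=0$, while $A+B$ is block-triangular with $\widetilde{f}(A+B)=f(\id)\,a_{n,n}\neq 0$, directly contradicting translation-stability. The trade-off is clear: your argument is shorter but leans on the codimension machinery of Section~\ref{conesection}, whereas the paper's needs only the definition of $\widetilde{f}$ and the nonvanishing of $f$, so it would survive even in settings where the dimension bound were not yet available.
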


\begin{proof}
Assume on the contrary that the kernel of $U$ contains a non-zero matrix $A=(a_{i,j})$.
Then, for all $M \in \calC(f)$, we successively find
$\widetilde{g}(U(M))=0$, $\widetilde{g}(U(A+M))=0$ and $\widetilde{f}(A+M)=0$.
Hence, $\calC(f)$ is stable under the translation $M \mapsto A+M$.

Let us find a contradiction from here. Replacing $f$ with a PH-equivalent mapping, we can assume that $a_{n,n} \neq 0$.
Then, we define $B \in \Mat_n(\F)$ as the matrix in which the first $n-1$ columns equal those of $I_n-A$, and the last one is zero.
Hence $\widetilde{f}(B)=0$ whereas $A+B=\begin{bmatrix}
I_{n-1} & [?]_{(n-1) \times 1} \\
[0]_{1 \times (n-1)} & a_{n,n}
\end{bmatrix}$. Hence, $\widetilde{f}(A+B)=f(\id)\, a_{n,n} \neq 0$.
This contradicts an earlier result, and we conclude that $U$ is injective.
\end{proof}

This yields Theorem \ref{automtheo}.

\subsection{An explicit shape for $U$}\label{analysissection}

Let $n \geq 2$ be an integer.
Let $f : \mathfrak{S}_n \rightarrow \F^*$ and
$g : \mathfrak{S}_n \rightarrow \F^*$ be fully-normalized mappings, and let
$U : \Mat_n(\F) \rightarrow \Mat_n(\F)$ be a linear mapping such that
$$\forall M \in \Mat_n(\F), \; \widetilde{g}(U(M))=0 \Leftrightarrow \widetilde{f}(M)=0.$$
We wish to prove that $U$ has one of the two shapes described in Theorem \ref{fgtransfotheo}.

\begin{claim}\label{directim}
Let $X$ be a column-$f$-adapted vector of $\F^n$.
Then, either there exists a column-$g$-adapted vector $Y$ of $\F^n$ such that
$U(\calV_X)=\calV_Y$, or there exists a row-$g$-adapted vector $Y$ of $\F^n$ such that
$U(\calV_X)=\calV^Y$.

Let $X'$ be a row-$f$-adapted vector of $\F^n$.
Then, either there exists a column-$g$-adapted vector $Z$ of $\F^n$ such that
$U(\calV^{X'})=\calV_Z$, or there exists a row-$g$-adapted vector $Z$ of $\F^n$ such that
$U(\calV^{X'})=\calV^Z$.
\end{claim}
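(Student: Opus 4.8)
The plan is to deduce both halves of the claim directly from the classification of codimension-$n$ subspaces of a null cone (Theorem \ref{DimensionEqualityTheorem}), using only two inputs: that $U$ is a linear automorphism, and that $U$ carries $\calC(f)$ into $\calC(g)$. No new computation is needed; the analytic content has already been packaged into Theorems \ref{DimensionInequalityTheorem} and \ref{DimensionEqualityTheorem}.

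First I would record that $U$ is bijective. By Lemma \ref{injectlemma}, condition $(\calC)$ forces $U$ to be injective, and since $\Mat_n(\F)$ is finite-dimensional, $U$ is an automorphism of the vector space $\Mat_n(\F)$; in particular it preserves the dimension, hence the codimension, of every linear subspace. Next, take a column-$f$-adapted vector $X \in \F^n$. Because $f$ is normalized (being fully-normalized), the inclusion lemma stated just before Theorem \ref{DimensionInequalityTheorem} gives $\calV_X \subset \calC(f)$. Condition $(\calC)$ asserts precisely that $M \in \calC(f) \Leftrightarrow U(M) \in \calC(g)$, so $U(\calV_X) \subset \calC(g)$. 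Since $\calV_X$ is a linear subspace of $\Mat_n(\F)$ of codimension $n$ and $U$ is a linear automorphism, $U(\calV_X)$ is again a linear subspace of codimension $n$. I would then apply Theorem \ref{DimensionEqualityTheorem} to the normalized mapping $g$ and the subspace $U(\calV_X)$: this produces a nonzero vector $Y \in \F^n$ for which either $U(\calV_X) = \calV_Y$ with $Y$ column-$g$-adapted, or $U(\calV_X) = \calV^Y$ with $Y$ row-$g$-adapted. That is exactly the first assertion.

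For the second assertion I would argue symmetrically. If $X'$ is row-$f$-adapted, the same inclusion lemma gives $\calV^{X'} \subset \calC(f)$, whence $U(\calV^{X'}) \subset \calC(g)$ is a linear subspace of codimension $n$; applying Theorem \ref{DimensionEqualityTheorem} to $g$ yields a nonzero $Z$ with $U(\calV^{X'}) = \calV_Z$ for a column-$g$-adapted $Z$, or $U(\calV^{X'}) = \calV^Z$ for a row-$g$-adapted $Z$.

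There is essentially no obstacle here beyond a bookkeeping point that must be handled correctly: the classifying theorem has to be invoked for the \emph{target} mapping $g$, not for $f$, so that the adaptedness of the output vectors $Y$ and $Z$ is measured with respect to $g$. This is what the statement of the claim records, and it is precisely the datum that feeds the later determination of the matrix $K$ and of the permutations $\sigma,\tau$ in Theorem \ref{fgtransfotheo}.
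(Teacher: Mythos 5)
Your proof is correct and follows exactly the paper's own argument: establish that $U$ is an automorphism via Lemma \ref{injectlemma}, note that $\calV_X$ (respectively $\calV^{X'}$) is a codimension-$n$ subspace of $\calC(f)$, transport it by $U$ into a codimension-$n$ subspace of $\calC(g)$ using condition $(\calC)$, and invoke Theorem \ref{DimensionEqualityTheorem} for the mapping $g$. The paper's proof is just a terser version of the same reasoning, including the observation that the theorem must be applied to the target mapping $g$.
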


\begin{proof}
We know that $\calV_X$ is a linear subspace of $\calC(f)$ with codimension $n$ in $\Mat_n(\F)$.
Since $U$ is an automorphism of $\Mat_n(\F)$ (see Lemma \ref{injectlemma}), we deduce from condition ($\calC$) that
$U(\calV_X)$ is a linear subspace of $\calC(g)$ with codimension $n$ in $\Mat_n(\F)$.
The first statement then follows from Theorem \ref{DimensionEqualityTheorem}.

The second statement is proved in the same manner.
\end{proof}

Applying this to $U^{-1}$, which satisfies condition ($\calC$) for the pair $(g,f)$, we obtain:

\begin{claim}\label{directimbis}
Let $X$ be a column-$g$-adapted vector of $\F^n$.
Then, either there exists a column-$f$-adapted vector $Y$ of $\F^n$ such that
$U^{-1}(\calV_X)=\calV_Y$, or there exists a row-$f$-adapted vector $Y$ of $\F^n$ such that
$U^{-1}(\calV_X)=\calV^Y$.

Let $X'$ be a row-$g$-adapted vector of $\F^n$.
Then, either there exists a column-$f$-adapted vector $Z$ of $\F^n$ such that
$U^{-1}(\calV^{X'})=\calV_Z$, or there exists a row-$f$-adapted vector $Z$ of $\F^n$ such that
$U^{-1}(\calV^{X'})=\calV^Z$.
\end{claim}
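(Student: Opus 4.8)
The plan is to deduce Claim \ref{directimbis} directly from Claim \ref{directim} by exploiting the symmetry between $f$ and $g$: rather than reprove anything, I would apply Claim \ref{directim} not to $U$ but to its inverse $U^{-1}$, with the roles of $f$ and $g$ exchanged. Since both $f$ and $g$ are assumed fully-normalized, the hypotheses of Claim \ref{directim} are symmetric in the two mappings, so the only thing to verify is that $U^{-1}$ is a legitimate object satisfying the correctly transposed form of condition $(\calC)$.

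First I would observe that $U$ is bijective. By Lemma \ref{injectlemma}, the map $U$ is injective, and since it is an endomorphism of the finite-dimensional space $\Mat_n(\F)$, it is therefore an automorphism. Thus $U^{-1}$ is a well-defined linear endomorphism of $\Mat_n(\F)$.

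Next I would check that $U^{-1}$ satisfies condition $(\calC)$ for the pair $(g,f)$, that is, $\widetilde{f}(U^{-1}(N))=0 \Leftrightarrow \widetilde{g}(N)=0$ for all $N \in \Mat_n(\F)$. Writing $N=U(M)$ with $M=U^{-1}(N)$, the chain of equivalences
$$\widetilde{f}(U^{-1}(N))=0 \Leftrightarrow \widetilde{f}(M)=0 \Leftrightarrow \widetilde{g}(U(M))=0 \Leftrightarrow \widetilde{g}(N)=0$$
follows at once from condition $(\calC)$ for $U$ and the pair $(f,g)$.

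Finally, since $f$ and $g$ are both fully-normalized and $U^{-1}$ satisfies condition $(\calC)$ for $(g,f)$, Claim \ref{directim} applies verbatim with $f$ and $g$ interchanged and $U$ replaced by $U^{-1}$. Its first assertion, applied to a column-$g$-adapted vector $X$, yields either a column-$f$-adapted vector $Y$ with $U^{-1}(\calV_X)=\calV_Y$ or a row-$f$-adapted vector $Y$ with $U^{-1}(\calV_X)=\calV^Y$; its second assertion, applied to a row-$g$-adapted vector $X'$, yields the corresponding dichotomy for $U^{-1}(\calV^{X'})$. These are precisely the two statements of Claim \ref{directimbis}. I expect essentially no obstacle here beyond the bookkeeping of the symmetry: the only genuine input is the bijectivity of $U$, which guarantees both that $U^{-1}$ exists and that condition $(\calC)$ transfers to it with the pair reversed.
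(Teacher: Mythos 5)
Your proof is correct and is exactly the paper's argument: the paper likewise obtains Claim \ref{directimbis} by applying Claim \ref{directim} to $U^{-1}$, which satisfies condition $(\calC)$ for the pair $(g,f)$. Your additional verification (bijectivity of $U$ via Lemma \ref{injectlemma} and the transfer of condition $(\calC)$ to the inverse) simply spells out what the paper leaves implicit.
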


Denote by $e_1$ the first vector of the standard basis of $\F^n$ (it is column-$g$-adapted).
Assume that $U^{-1}(\calV_{e_1})=\calV^{Y_0}$ for some row-$f$-adapted vector $Y_0$.
Set $U' : M \in \Mat_n(\F) \mapsto U(M^T)$ and note that $U'$ satisfies condition ($\calC$) for the pair $(f^T,g)$.
Note also that $(U')^{-1}(\calV_{e_1})=\calV_{Y_0}$ and that $Y_0$ is column-$f^T$-adapted.
If we prove that $U'$ has one of the two possible shapes claimed in Theorem \ref{fgtransfotheo},
then it is obvious that so does $U$.

Therefore, no generality is lost in making the following additional assumption:
\begin{center}
$U^{-1}(\calV_{e_1})=\calV_{Y_0}$ for some column-$f$-adapted vector $Y_0$.
\end{center}

Noting that the problem is unchanged by right-composing $U$
with standard $f$-similarities (the set of all standard $f$-similarities is obviously a subgroup of the group of all automorphisms of the vector space
$\Mat_n(\F)$), we will, after several such compositions, slowly reduce the situation to the one where the properties of $U$
are ever simpler, until we find a mapping of the form $M \mapsto K \star (P_\sigma MP_\tau)$ for some super-$f$-adapted matrix $K$, some
row-$f$-adapted permutation $\sigma$ and some column-$f$-adapted permutation $\tau$.

In order to move forward, we need two additional lemmas, whose proofs are easy:

\begin{lemma}
Let $X$ and $Y$ be non-zero vectors of $\F^n$. Then $\calV_X \cap \calV^Y$ has codimension $2n-1$ in $\Mat_n(\F)$.
\end{lemma}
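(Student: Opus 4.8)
The plan is to pass to orthogonal complements for the nondegenerate symmetric bilinear form $(M,N) \mapsto \tr(MN)$ on $\Mat_n(\F)$ (the same form used in Lemma \ref{Lemma1}), and then invoke the standard identities $(\calW_1 \cap \calW_2)^\bot = \calW_1^\bot + \calW_2^\bot$ and $\dim \calW^\bot = n^2 - \dim \calW$. First I would identify the two orthogonal complements explicitly. Writing out the defining conditions $MX=0$ (one linear form per row) and $Y^T M = 0$ (one linear form per column), the computations $\tr(M\,Xu^T)=u^T(MX)$ and $\tr(M\,vY^T)=(Y^TM)v$ show that
\[
\calV_X^\bot = \{Xu^T : u \in \F^n\} \quad\text{and}\quad (\calV^Y)^\bot = \{vY^T : v \in \F^n\},
\]
each of dimension $n$, since the maps $u \mapsto Xu^T$ and $v \mapsto vY^T$ are injective (as $X \neq 0$ and $Y \neq 0$); this is consistent with $\calV_X$ and $\calV^Y$ both having codimension $n$.

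Next I would write
\[
\codim(\calV_X \cap \calV^Y) = \dim\bigl((\calV_X \cap \calV^Y)^\bot\bigr) = \dim\bigl(\calV_X^\bot + (\calV^Y)^\bot\bigr),
\]
and apply the Grassmann formula, so that the whole problem reduces to computing $\dim\bigl(\calV_X^\bot \cap (\calV^Y)^\bot\bigr)$. A matrix in this intersection is simultaneously of the form $Xu^T$ and $vY^T$; if it is nonzero, it has rank $1$ with column space $\Vect(X)$ and row space spanned by $Y$, hence is a nonzero scalar multiple of $XY^T$, and conversely $XY^T$ lies in both complements. Thus the intersection is the line $\Vect(XY^T)$, which is one-dimensional precisely because $XY^T \neq 0$ (this is where both hypotheses $X\neq 0$ and $Y \neq 0$ are genuinely used). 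Therefore $\dim\bigl(\calV_X^\bot + (\calV^Y)^\bot\bigr) = n + n - 1 = 2n-1$, which gives $\codim(\calV_X \cap \calV^Y) = 2n-1$.

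I do not expect any serious obstacle here: the only steps requiring a little care are the explicit identification of the two complements and the elementary observation that a matrix which is at once a ``column-multiple of $X$'' and a ``row-multiple of $Y$'' must be a scalar multiple of $XY^T$. An equivalent route, avoiding duality, would be to use $\codim(\calV_X \cap \calV^Y) = \codim \calV_X + \codim \calV^Y - \codim(\calV_X + \calV^Y) = 2n - \dim\bigl(\calV_X^\bot \cap (\calV^Y)^\bot\bigr)$ and finish identically; I would favour the orthogonal-complement formulation since it makes the rank-$1$ intersection transparent.
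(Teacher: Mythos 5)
Your proof is correct. Note, however, that the paper offers no proof of this lemma at all: it is one of the "two additional lemmas, whose proofs are easy" stated just before Claim \ref{compatrowclaim}, so there is no argument of the author's to compare yours against; you are filling a gap deliberately left to the reader. Your duality argument is sound: the identifications $\calV_X^\bot=\{Xu^T : u \in \F^n\}$ and $(\calV^Y)^\bot=\{vY^T : v \in \F^n\}$ follow from the inclusion given by your trace computations together with the dimension count $\dim \calV_X^\bot=\codim \calV_X=n$ (nondegeneracy of the trace form), and the key observation that $\calV_X^\bot \cap (\calV^Y)^\bot=\Vect(XY^T)$ is exactly where both hypotheses $X\neq 0$, $Y\neq 0$ enter. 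This is also the style of argument the paper itself uses elsewhere (the trace-form duality appears in the proof of Lemma \ref{Lemma1}), so your proof is very much in the spirit of the text. For completeness, an even more elementary route avoiding duality: the linear map $M \mapsto (MX,\,Y^TM)$ has kernel $\calV_X \cap \calV^Y$, and its image is the hyperplane $\{(a,b^T) : Y^Ta=b^TX\}$ of $\F^n \times \Mat_{1,n}(\F)$ (the containment is the associativity identity $Y^T(MX)=(Y^TM)X$, and surjectivity onto that hyperplane reduces, after replacing $M$ by $PMQ$, to the case $X=Y=e_1$, where one prescribes the first row and column of $M$ with matching $(1,1)$ entry); rank--nullity then gives codimension $2n-1$ directly.
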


\begin{lemma}\label{ranklemma}
Let $X_1,\dots,X_p$ be non-zero vectors of $\F^n$. Denote by $r$ the rank of $(X_1,\dots,X_p)$.
Then, both $\underset{i=1}{\overset{p}{\bigcap}} \calV_{X_i}$ and
$\underset{i=1}{\overset{p}{\bigcap}} \calV^{X_i}$ have codimension $nr$ in $\Mat_n(\F)$.
\end{lemma}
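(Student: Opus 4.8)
The plan is to reduce both intersections to a condition on the subspace spanned by the $X_i$ and then apply the rank theorem. First I would set $W:=\Vect(X_1,\dots,X_p)$, so that $\dim W=r$, and observe that $M\in\bigcap_{i=1}^p\calV_{X_i}$ if and only if $MX=0$ for every $X\in W$, i.e.\ $W\subset\Ker M$. Thus the intersection $\bigcap_{i=1}^p\calV_{X_i}$ depends only on $W$, not on the particular generating family; in particular I may replace $(X_1,\dots,X_p)$ by any basis $(X_{i_1},\dots,X_{i_r})$ of $W$ without changing the intersection.

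Next I would exhibit this intersection as the kernel of a surjective linear map. Consider the evaluation map
$$\rho:\Mat_n(\F)\longrightarrow (\F^n)^r,\qquad M\longmapsto (MX_{i_1},\dots,MX_{i_r}).$$
Its kernel is exactly $\bigcap_{k=1}^r\calV_{X_{i_k}}=\bigcap_{i=1}^p\calV_{X_i}$. Since $X_{i_1},\dots,X_{i_r}$ are linearly independent, I can complete them into a basis of $\F^n$ and form the invertible matrix $P$ whose first $r$ columns are $X_{i_1},\dots,X_{i_r}$; then $\rho(M)$ records precisely the first $r$ columns of $MP$, and as $M$ ranges over $\Mat_n(\F)$ so does $MP$, which shows that $\rho$ is onto $(\F^n)^r$. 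By the rank theorem, $\dim\Ker\rho=n^2-rn$, so $\bigcap_{i=1}^p\calV_{X_i}$ has codimension $rn$, as claimed.

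Finally, for the family $\calV^{X_i}$ I would invoke the transposition automorphism $M\mapsto M^T$ of $\Mat_n(\F)$. Since $\calV^{X_i}=(\calV_{X_i})^T$ by definition and transposition is a linear bijection, we get $\bigcap_{i=1}^p\calV^{X_i}=\bigl(\bigcap_{i=1}^p\calV_{X_i}\bigr)^T$, the image of the previous space under a vector-space isomorphism; hence it has the same dimension, and therefore the same codimension $rn$. I do not expect a genuine obstacle here: the only points that deserve a word of care are the reduction to a basis of $W$, justified because the intersection sees only the span, and the surjectivity of $\rho$, which becomes immediate once one passes through the invertible matrix $P$.
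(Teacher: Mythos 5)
Your proof is correct. The paper states this lemma without proof (it is one of the ``two additional lemmas, whose proofs are easy''), and your argument --- noting that the intersection depends only on the span $W$, realizing it as the kernel of the evaluation map $M \mapsto (MX_{i_1},\dots,MX_{i_r})$, proving surjectivity via an invertible completion $P$, applying the rank theorem, and handling $\calV^{X_i}$ by transposition --- is precisely the standard argument the author intends the reader to supply.
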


\begin{claim}\label{compatrowclaim}
For every row-$f$-adapted vector $X$, there is a row-$g$-adapted vector $Y$
such that $U(\calV^X)=\calV^Y$.
\end{claim}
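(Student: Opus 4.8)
The plan is to lean on Claim~\ref{directim}, which already guarantees that $U(\calV^X)$ is either of the form $\calV_Z$ for some column-$g$-adapted vector $Z$, or of the form $\calV^Z$ for some row-$g$-adapted vector $Z$. The entire content of the present claim is thus to \emph{rule out the first alternative}. The tool for doing so is a codimension count: I would exploit the standing normalization $U^{-1}(\calV_{e_1})=\calV_{Y_0}$, i.e.\ $U(\calV_{Y_0})=\calV_{e_1}$ with $Y_0$ column-$f$-adapted, and play the unknown image $U(\calV^X)$ off against the known image $\calV_{e_1}$ of a subspace of the opposite type. The key inputs are the two intersection lemmas: the one asserting that $\calV_A\cap\calV^B$ has codimension $2n-1$ for non-zero $A,B$, and Lemma~\ref{ranklemma}, which computes the codimension of an intersection of two subspaces of the \emph{same} type.

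Concretely, I would argue by contradiction: assume $U(\calV^X)=\calV_Z$ for some column-$g$-adapted $Z$. Consider the mixed intersection $\calV_{Y_0}\cap\calV^X$, which by the first intersection lemma has codimension $2n-1$. Since $U$ is a linear bijection (Lemma~\ref{injectlemma} together with finite-dimensionality), it preserves codimension and commutes with intersection, so
$$U(\calV_{Y_0}\cap\calV^X)=U(\calV_{Y_0})\cap U(\calV^X)=\calV_{e_1}\cap\calV_Z$$
must again have codimension $2n-1$. But $\calV_{e_1}\cap\calV_Z$ is an intersection of two subspaces of the same type, so by Lemma~\ref{ranklemma} its codimension is $n\,\rk(e_1,Z)$, equal to $n$ if $Z$ is collinear with $e_1$ and to $2n$ otherwise. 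As $n\geq 2$, neither value equals $2n-1$, a contradiction. Hence $U(\calV^X)=\calV_Z$ cannot occur, and Claim~\ref{directim} forces $U(\calV^X)=\calV^Y$ for a row-$g$-adapted $Y$, as desired.

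The conceptual heart of the argument — and really the only idea involved — is the parity bookkeeping: mixed intersections $\calV_A\cap\calV^B$ carry the odd codimension $2n-1$, whereas intersections of two subspaces of the same type carry the even codimension $n$ or $2n$; pairing the unknown image against the opposite-type reference subspace $\calV_{e_1}=U(\calV_{Y_0})$ is exactly what turns this distinction into a contradiction. I expect the only point requiring care to be the degenerate sub-case where $Z$ is collinear with $e_1$, which drops the codimension to $n$ rather than $2n$; but both sub-cases are excluded uniformly since $2n-1\notin\{n,2n\}$ whenever $n\geq 2$, so no separate treatment beyond noting this is needed.
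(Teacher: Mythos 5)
Your proposal is correct and matches the paper's own argument essentially verbatim: both rule out the alternative $U(\calV^X)=\calV_Z$ by applying the bijection $U$ to the mixed intersection $\calV^X\cap\calV_{Y_0}$ of codimension $2n-1$, whose image $\calV_Z\cap\calV_{e_1}$ would have codimension $n$ or $2n$ by Lemma~\ref{ranklemma}, a contradiction since $n\geq 2$. The only cosmetic difference is that you spell out the collinear sub-case and the parity heuristic explicitly, which the paper leaves implicit.
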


\begin{proof}
Let $X \in \F^n$ be row-$f$-adapted. Assume that $U(\calV^X)=\calV_Y$ for some column-$g$-adapted vector
$Y$. Then, as $U$ is injective, the space $U(\calV^X \cap \calV_{Y_0})=\calV_Y \cap \calV_{e_1}$ has codimension either $n$ or $2n$ in $\Mat_n(\F)$,
contradicting the fact that $U$ is an automorphism of $\Mat_n(\F)$ and $\calV^X \cap \calV_{Y_0}$ has codimension
$2n-1$ in $\Mat_n(\F)$. Hence, by Claim \ref{directim}, we find that $U(\calV^X)=\calV^Y$ for some row-$g$-adapted vector
$Y$.
\end{proof}

With the same line of reasoning, Claim \ref{compatrowclaim} yields:

\begin{claim}
For every column-$f$-adapted vector $X$, there is a column-$g$-adapted vector $Y$
such that $U(\calV_X)=\calV_Y$.
\end{claim}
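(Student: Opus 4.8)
The plan is to transpose, essentially word for word, the argument already used for Claim \ref{compatrowclaim}, exchanging the roles of rows and columns. Let $X$ be a column-$f$-adapted vector. The first statement of Claim \ref{directim} already tells us that $U(\calV_X)$ is either $\calV_Y$ for some column-$g$-adapted $Y$ (the desired conclusion) or $\calV^Y$ for some row-$g$-adapted $Y$. So the whole content of the proof is to discard the second alternative, and I would do this by the same codimension count as before.

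To this end I would bring in the standing normalization $U^{-1}(\calV_{e_1})=\calV_{Y_0}$, equivalently $U(\calV_{Y_0})=\calV_{e_1}$, where $Y_0$ is column-$f$-adapted. Suppose, for contradiction, that $U(\calV_X)=\calV^Y$ with $Y$ row-$g$-adapted. Since $U$ is an automorphism of $\Mat_n(\F)$ (Lemma \ref{injectlemma}), it preserves codimension, so $U(\calV_X\cap\calV_{Y_0})=U(\calV_X)\cap U(\calV_{Y_0})=\calV^Y\cap\calV_{e_1}$ has the same codimension as $\calV_X\cap\calV_{Y_0}$. On the one hand, $\calV_X$ and $\calV_{Y_0}$ are both subspaces of the first type, so by Lemma \ref{ranklemma} their intersection has codimension $n$ or $2n$, according to whether $X$ and $Y_0$ are collinear or not. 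On the other hand, $\calV^Y\cap\calV_{e_1}$ is the intersection of a subspace of the second type with one of the first type, hence has codimension $2n-1$ by the lemma asserting that $\calV_X\cap\calV^Y$ has codimension $2n-1$. Since $n\geq 2$, neither $n$ nor $2n$ equals $2n-1$, a contradiction.

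This forces $U(\calV_X)=\calV_Y$ for some column-$g$-adapted $Y$, which is exactly the assertion. I do not expect a genuine obstacle here, since the two codimension lemmas and the automorphism property of $U$ are already established; the argument is a mechanical dualization of Claim \ref{compatrowclaim}. The single delicate point, and the only place where the hypothesis $n\geq 2$ enters, is the numerical comparison $2n-1\notin\{n,2n\}$, which is what makes the two types of subspace genuinely distinguishable by codimension and thereby separates the two alternatives of Claim \ref{directim}.
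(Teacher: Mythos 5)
Your proof is correct and follows essentially the same route as the paper: the paper proves this claim by invoking ``the same line of reasoning'' as Claim \ref{compatrowclaim}, namely exactly the codimension comparison you carry out, using the normalization $U(\calV_{Y_0})=\calV_{e_1}$, Lemma \ref{ranklemma}, the mixed-intersection lemma giving codimension $2n-1$, and the fact that $U$ is an automorphism so that $2n-1\notin\{n,2n\}$ yields the contradiction. Nothing is missing.
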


Noting that $U(\calV_{Y_0})=\calV_{e_1}$, we apply the above line of reasoning to $U^{-1}$ to obtain:

\begin{claim}
For every column-$g$-adapted vector $X$, there is a column-$f$-adapted vector $Y$
such that $U^{-1}(\calV_X)=\calV_Y$.

For every row-$g$-adapted vector $X$, there is a row-$f$-adapted vector $Y$
such that $U^{-1}(\calV^X)=\calV^Y$.
\end{claim}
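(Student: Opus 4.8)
The plan is to obtain this statement for free from the two preceding claims, applied to $U^{-1}$ in place of $U$. The first thing I would check is that $U^{-1}$ is legitimately in the same situation with the pair $(g,f)$: by Lemma~\ref{injectlemma} the map $U$ is an automorphism of $\Mat_n(\F)$, so $U^{-1}$ exists, and substituting $M=U^{-1}(N)$ into the biconditional $\widetilde{g}(U(M))=0 \Leftrightarrow \widetilde{f}(M)=0$ turns it into $\widetilde{f}(U^{-1}(N))=0 \Leftrightarrow \widetilde{g}(N)=0$. Hence $U^{-1}$ satisfies condition~$(\calC)$ for $(g,f)$, and Claim~\ref{directimbis} is exactly the analogue of Claim~\ref{directim} that this argument needs.

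The only point that requires genuine attention is the correct transposition of the anchor. For $U$ we repeatedly used the standing hypothesis $U(\calV_{Y_0})=\calV_{e_1}$, with $Y_0$ column-$f$-adapted and $e_1$ column-$g$-adapted. Read backwards, the very same identity is $U^{-1}(\calV_{e_1})=\calV_{Y_0}$; now $\calV_{e_1}$ is the anchor on the domain side of $U^{-1}$ (the $g$-side) and $\calV_{Y_0}$ is its image (on the $f$-side). So the roles of $Y_0$ and $e_1$ are simply interchanged, and the correct space to intersect against is $\calV_{e_1}$ rather than $\calV_{Y_0}$; once this pairing is fixed, every codimension count transcribes line for line from Claim~\ref{compatrowclaim} and its column-analogue.

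I would then run the two cases. For the row statement, let $X$ be row-$g$-adapted; by Claim~\ref{directimbis} either $U^{-1}(\calV^X)=\calV^Y$ (which is what we want) or $U^{-1}(\calV^X)=\calV_Z$ for a column-$f$-adapted $Z$. In the second case, intersecting $\calV^X$ with the anchor $\calV_{e_1}$ yields a subspace of codimension $2n-1$, while its image $\calV_Z \cap \calV_{Y_0}$ has codimension $n$ or $2n$ by Lemma~\ref{ranklemma}; since $U^{-1}$ preserves codimension and $2n-1 \notin \{n,2n\}$ for $n \geq 2$, this is impossible, which forces the desired conclusion. The column statement is symmetric: for column-$g$-adapted $X$, the forbidden alternative $U^{-1}(\calV_X)=\calV^Z$ would carry the codimension-$n$-or-$2n$ space $\calV_X \cap \calV_{e_1}$ onto the codimension-$(2n-1)$ space $\calV^Z \cap \calV_{Y_0}$, the same contradiction.

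Beyond this bookkeeping the argument has no real obstacle: all of its content is the observation that $U^{-1}$ inherits both condition~$(\calC)$ and a column-to-column anchor, after which the two preceding claims apply verbatim. The one place to stay alert is precisely the anchor matching discussed above, namely that intersections must be taken in the domain of the map being applied, so that one works with $\calV_{e_1}$ and not $\calV_{Y_0}$.
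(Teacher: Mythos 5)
Your proposal is correct and matches the paper's argument, which is precisely to note that $U^{-1}$ satisfies condition $(\calC)$ for the pair $(g,f)$ with the anchor $U^{-1}(\calV_{e_1})=\calV_{Y_0}$, and then to rerun the codimension argument of Claim \ref{compatrowclaim} (and its column analogue) verbatim. Your care about intersecting against the domain-side anchor $\calV_{e_1}$ rather than $\calV_{Y_0}$ is exactly the point implicit in the paper's one-line proof.
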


\begin{Def}
Two column-$f$-adapted vectors $X$ and $X'$ of $\F^n$ are called \textbf{column-$f$-connected}
whenever the union of the supports of $X$ and $X'$ is included in a column-$f$-equivalence class.
\end{Def}

\begin{claim}
Let $X,X'$ be column-$f$-adapted vectors that are column-$f$-connected.
Then, there exist column-$g$-adapted vectors $Y$ and $Y'$ that are column-$g$-connected
and such that $U(\calV_X)=\calV_Y$ and $U(\calV_{X'})=\calV_{Y'}$.
\end{claim}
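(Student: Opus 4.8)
The plan is to exploit the already-established claim that for every column-$f$-adapted vector $Z$ there is a column-$g$-adapted vector $W$ with $U(\calV_Z)=\calV_W$, and to read off from the resulting configuration of subspaces that the supports of $Y$ and $Y'$ fall in a single column-$g$-equivalence class. The crucial structural input is that column-$f$-connectedness of $X$ and $X'$ makes \emph{every} nonzero vector of $\Vect(X,X')$ column-$f$-adapted; in particular $X+X'$ is column-$f$-adapted, so $U(\calV_{X+X'})=\calV_{Y''}$ for some column-$g$-adapted $Y''$, and likewise we already have column-$g$-adapted $Y,Y'$ with $U(\calV_X)=\calV_Y$ and $U(\calV_{X'})=\calV_{Y'}$.

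First I would dispose of the degenerate case where $X$ and $X'$ are linearly dependent: then $\calV_X=\calV_{X'}$, hence $\calV_Y=\calV_{Y'}$, which forces $Y$ and $Y'$ to be proportional; proportional vectors share the same support, so $Y$ and $Y'$ are trivially column-$g$-connected. From now on I assume $X$ and $X'$ are linearly independent. Since $U$ is injective (Lemma \ref{injectlemma}), this forces $Y$ and $Y'$ to be linearly independent as well: if $Y\propto Y'$ then $\calV_Y=\calV_{Y'}$, whence $\calV_X=\calV_{X'}$ and $X\propto X'$, a contradiction.

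The heart of the argument is to locate $Y''$ inside $\Vect(Y,Y')$. From $\calV_X\cap\calV_{X'}\subseteq\calV_{X+X'}$ and the bijectivity of $U$ we get $\calV_Y\cap\calV_{Y'}=U(\calV_X\cap\calV_{X'})\subseteq\calV_{Y''}$. Now $\calV_Y\cap\calV_{Y'}$ is precisely the space of matrices whose kernel contains $\Vect(Y,Y')$, and a vector $W$ satisfies $\calV_W\supseteq\calV_Y\cap\calV_{Y'}$ if and only if $W\in\Vect(Y,Y')$: indeed, if $W\notin\Vect(Y,Y')$, pick a linear functional vanishing on $\Vect(Y,Y')$ but not at $W$ and let $M$ be the matrix whose rows are all zero except one equal to that functional; then $M\in\calV_Y\cap\calV_{Y'}$ while $M\notin\calV_W$. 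Hence $Y''=aY+bY'$ for some scalars $a,b$, and injectivity of $U$ shows $Y''$ is proportional to neither $Y$ nor $Y'$ (otherwise $\calV_{X+X'}$ would equal $\calV_X$ or $\calV_{X'}$, forcing $X+X'\propto X$ or $X+X'\propto X'$, contradicting independence); thus $a\neq 0$ and $b\neq 0$.

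Finally I would run the support argument. Write $\operatorname{supp}(Y)\subseteq C$ and $\operatorname{supp}(Y')\subseteq C'$ for column-$g$-equivalence classes $C,C'$; the goal is $C=C'$, which yields $\operatorname{supp}(Y)\cup\operatorname{supp}(Y')\subseteq C$ and hence column-$g$-connectedness. Suppose $C\neq C'$; then these classes are disjoint, so $Y$ and $Y'$ have disjoint supports and no cancellation occurs in $Y''=aY+bY'$, giving $\operatorname{supp}(Y'')=\operatorname{supp}(Y)\cup\operatorname{supp}(Y')$, a set meeting both $C$ and $C'$. This contradicts the fact that $Y''$ is column-$g$-adapted, i.e.\ that its support lies in a single class. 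Therefore $C=C'$ and $Y,Y'$ are column-$g$-connected. I expect the only delicate point to be the polar step identifying $Y''$ as a genuine combination of $Y$ and $Y'$ with both coefficients nonzero; the remainder is bookkeeping with supports, disjointness of classes, and the bijectivity of $U$.
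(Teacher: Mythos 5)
Your proof is correct and follows essentially the same route as the paper's: handle the collinear case first, then use the fact that $X+X'$ is column-$f$-adapted to get a third space $\calV_{Y''}$ containing $\calV_Y\cap\calV_{Y'}$, deduce that $Y''$ is a combination of $Y$ and $Y'$ with both coefficients nonzero, and conclude by the disjoint-support argument. The only (cosmetic) difference is that where the paper invokes the codimension count of Lemma \ref{ranklemma} to place $Y''$ in $\Vect(Y,Y')$, you give a direct argument with a one-row matrix; both are valid.
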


\begin{proof}
We already know that there are column-$g$-adapted vectors $Y$ and $Y'$
such that $U(\calV_X)=\calV_Y$ and $U(\calV_{X'})=\calV_{Y'}$.
It remains to prove that $Y$ and $Y'$ are column-$g$-connected.
This is obvious if $X$ and $X'$ are collinear: in that case indeed, $\calV_Y=\calV_{Y'}$ leads
to $Y$ being collinear with $Y'$ (use Lemma \ref{ranklemma}, for example).
Assume now that $X$ and $X'$ are not collinear, so that $\calV_X \neq \calV_{X'}$ and hence $\calV_Y\neq \calV_{Y'}$,
whence $Y$ and $Y'$ are not collinear.

Noting that $\calV_X \cap \calV_{X'} \subset \calV_{X+X'}$, we see that $\calV_Y \cap \calV_{Y'} \subset U(\calV_{X+X'})$
whence $\calV_Y \cap \calV_{Y'} \subset \calV_{Z}$ for some column-$g$-adapted vector $Z$
that is neither collinear with $Y$ nor with $Y'$.
Hence $\calV_Y \cap \calV_{Y'}=\calV_{Y} \cap \calV_{Y'} \cap \calV_{Z}$ and we deduce from Lemma \ref{ranklemma} that
$(Y,Y',Z)$ has rank $2$. Hence, $Z$ is a linear combination of $Y$ and $Y'$ with nonzero coefficients. If the respective supports of $Y$ and $Y'$
were not included in the same column-$g$-equivalence class, then the support of $Z$ would be their union, and obviously it would
not be included in a column-$g$-equivalence class. Hence, $Y$ is column-$g$-connected to $Y'$.
\end{proof}

In order to simplify the discourse in the rest of the proof, we will call a \textbf{line} a $1$-dimensional \emph{linear} subspace
of a vector space.

We see that $\calV_X$ depends only on the line $d:=\F X$, and we shall also write it as $\calV_d$.
For a vector space $W$, denote by $\Pgros(W)$ the corresponding projective space (i.e.\ the set of all lines in $W$). Denote by $\Pgros_f(\F^n)$ the set of all lines that are spanned by column-$f$-adapted vectors. In other words, if we denote by $(e_1,\dots,e_n)$ the standard basis of $\F^n$ and by
$\mathcal{O}$ the set of all column-$f$-equivalence classes,
$$\Pgros_f(\F^n)=\underset{A \in \mathcal{O}}{\bigcup} \Pgros(\Vect(e_i)_{i \in A}).$$

Part of the above results is then summed up as follows:

\begin{claim}
There is a uniquely-defined mapping $\varphi : \Pgros_f(\F^n) \rightarrow \Pgros_g(\F^n)$ such that
$U(\calV_d)=\calV_{\varphi(d)}$ for all $d \in \Pgros_f(\F^n)$.

Moreover, the map $\varphi$ is \textbf{$(f,g)$-coherent} in the following sense:
for every column-$f$-equivalence class $A$, there is a uniquely-defined column-$g$-equivalence class $B$ such that
$\varphi$ maps $\Pgros(\Vect(e_i)_{i \in A})$ into $\Pgros(\Vect(e_i)_{i \in B})$.
\end{claim}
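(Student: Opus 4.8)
The plan is to construct $\varphi$ one line at a time from the claims already proved, and then to upgrade the pairwise ``column-$g$-connected'' statement into a single global assignment of equivalence classes.

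First I would define $\varphi$. Given $d \in \Pgros_f(\F^n)$, I write $d = \F X$ with $X$ column-$f$-adapted; the claim that every column-$f$-adapted vector $X$ admits a column-$g$-adapted vector $Y$ with $U(\calV_X) = \calV_Y$ then produces such a $Y$, and I set $\varphi(d) := \F Y$. For well-definedness I would check two points: the space $\calV_X$ depends only on the line $\F X$ (so that $\calV_d$ is unambiguous), and the line $\F Y$ is recovered from $\calV_Y$, since $\calV_Y = \calV_{Y'}$ forces $(Y,Y')$ to have rank $1$ by Lemma \ref{ranklemma}, hence $Y$ and $Y'$ to be collinear. As $Y$ is column-$g$-adapted, $\varphi(d)$ lies in $\Pgros_g(\F^n)$, and the identity $U(\calV_d) = \calV_{\varphi(d)}$ pins down the value of $\varphi(d)$, which gives the uniqueness of $\varphi$.

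Next I would prove coherence. I fix a column-$f$-equivalence class $A$; a line of $\Pgros(\Vect(e_i)_{i\in A})$ is $\F X$ with the support of $X$ contained in $A$, and any two such vectors are column-$f$-connected because the union of their supports remains inside the single class $A$. I would select a reference line $\F X_0$ in $\Pgros(\Vect(e_i)_{i\in A})$ and let $B$ be the column-$g$-equivalence class containing the (nonempty) support of a generator of $\varphi(\F X_0)$. For an arbitrary $\F X$ in $\Pgros(\Vect(e_i)_{i\in A})$, the connectedness claim supplies column-$g$-connected generators of $\varphi(\F X)$ and $\varphi(\F X_0)$; since a generator of $\varphi(\F X_0)$ has support inside $B$ and the union of the two supports lies in a single column-$g$-equivalence class meeting $B$, that class must be $B$ itself, so the support of a generator of $\varphi(\F X)$ is contained in $B$, that is $\varphi(\F X) \in \Pgros(\Vect(e_i)_{i\in B})$.

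Uniqueness of $B$ would then be immediate, since any class $B'$ with $\varphi(\Pgros(\Vect(e_i)_{i\in A})) \subseteq \Pgros(\Vect(e_i)_{i\in B'})$ must contain the nonempty support of the reference generator, which forces $B' = B$. The routine part is the well-definedness bookkeeping in the construction of $\varphi$; the one step requiring genuine care is the passage from the pairwise-phrased connectedness claim to a single fixed target class $B$, and I handle this by fixing the reference line $\F X_0$ and exploiting that a nonempty support determines its equivalence class uniquely.
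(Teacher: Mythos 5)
Your proposal is correct and matches the paper's intent exactly: the paper gives no separate proof of this claim, presenting it as a summary of the preceding claims (the existence of column-$g$-adapted images, the collinearity forced by Lemma \ref{ranklemma}, and the column-connectedness claim), and your write-up is precisely the verification the paper leaves implicit. In particular, your handling of the passage from the pairwise connectedness statement to a single target class $B$ via a reference line and disjointness of equivalence classes is the right way to fill in that omitted step.
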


Applying this to $U^{-1}$, we also obtain:

\begin{claim}
There is a uniquely-defined $(g,f)$-coherent mapping $\psi : \Pgros_g(\F^n) \rightarrow \Pgros_f(\F^n)$ such that
$U^{-1}(\calV_{d'})=\calV_{\psi(d')}$ for all $d' \in \Pgros_g(\F^n)$.
\end{claim}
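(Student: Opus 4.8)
The plan is to exploit the perfect symmetry of the setup under the swap $(U,f,g) \mapsto (U^{-1},g,f)$ and to construct $\psi$ for $U^{-1}$ by the very recipe that produced $\varphi$ for $U$. The starting observation is that $U^{-1}$ is again an automorphism of $\Mat_n(\F)$ (Lemma \ref{injectlemma}) and satisfies condition $(\calC)$ for the pair $(g,f)$, as was already remarked. Crucially, the counterpart for $U^{-1}$ of the key fact ``$U(\calV_X)=\calV_Y$ for a column-$g$-adapted $Y$'' is precisely the assertion already obtained by running the reasoning on $U^{-1}$: for every column-$g$-adapted $X$ there is a column-$f$-adapted $Y$ with $U^{-1}(\calV_X)=\calV_Y$ (this was established using $U(\calV_{Y_0})=\calV_{e_1}$). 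Thus all the ingredients needed to imitate the construction of $\varphi$ are already at hand.

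Concretely, I would first define $\psi$. For $d' \in \Pgros_g(\F^n)$, write $d'=\F X$ with $X$ column-$g$-adapted, choose a column-$f$-adapted $Y$ with $U^{-1}(\calV_X)=\calV_Y$, and set $\psi(d'):=\F Y$. Since $\calV_X$ depends only on the line $\F X$, and since $\calV_Y=\calV_{Y'}$ forces $(Y,Y')$ to have rank $1$ by Lemma \ref{ranklemma} (hence $\F Y=\F Y'$), the line $\psi(d')$ is well defined and independent of the chosen representative. This yields a map $\psi : \Pgros_g(\F^n) \rightarrow \Pgros_f(\F^n)$ satisfying $U^{-1}(\calV_{d'})=\calV_{\psi(d')}$ for all $d'$, and the same rank-$1$ argument gives its uniqueness.

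For the $(g,f)$-coherence, I would re-run the column-connectedness argument verbatim with $U^{-1}$ in place of $U$ and $f$, $g$ interchanged: if $X,X'$ are column-$g$-adapted and column-$g$-connected, then the inclusion $\calV_X \cap \calV_{X'} \subset \calV_{X+X'}$, the injectivity of $U^{-1}$, and Lemma \ref{ranklemma} force $\psi(\F X)$ and $\psi(\F X')$ to be spanned by column-$f$-connected vectors. Consequently, for each column-$g$-equivalence class $B$ the whole projective subspace $\Pgros(\Vect(e_i)_{i \in B})$ is sent by $\psi$ into a single $\Pgros(\Vect(e_i)_{i \in A})$, for a unique column-$f$-equivalence class $A$, which is exactly $(g,f)$-coherence.

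I do not expect any real obstacle here, since the statement is a symmetric restatement of the claim already proved for $\varphi$ and nothing new has to be discovered. The only point demanding care is bookkeeping: one should check that the standing reduction made for $U$ (that $U^{-1}(\calV_{e_1})=\calV_{Y_0}$ with $Y_0$ column-$f$-adapted) is never invoked directly for $U^{-1}$; it enters only through the already-established column-to-column behaviour of $U^{-1}$, after which the construction of $\psi$ and the verification of its coherence proceed word for word as for $\varphi$.
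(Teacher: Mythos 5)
Your proposal is correct and takes essentially the same approach as the paper: the paper's proof of this claim is precisely the one-line instruction to apply the preceding construction of $\varphi$ to $U^{-1}$, which satisfies condition $(\calC)$ for the pair $(g,f)$ and whose anchor $U(\calV_{Y_0})=\calV_{e_1}$ is already available from the standing reduction, exactly as you note. The details you spell out (well-definedness and uniqueness via Lemma \ref{ranklemma}, coherence via re-running the connectedness argument for $U^{-1}$) are just the unwinding of that application.
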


We deduce:

\begin{claim}
The mappings $\varphi$ and $\psi$ are bijections, inverse to one another.
Moreover, denoting by $\calO_{c,f}$ the set of all column-$f$-equivalence classes, and by $\calO_{c,g}$ the set of all
column-$g$-equivalence classes, there is a unique bijection $\Sigma : \calO_{c,g} \rightarrow \calO_{c,f}$
such that, for all $O \in \calO_{c,f}$, the mapping $\psi$ maps the projective space $\Pgros(\Vect(e_i)_{i \in O})$
bijectively onto $\Pgros(\Vect(e_j)_{j \in \Sigma(O)})$.
\end{claim}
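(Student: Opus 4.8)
The plan is to build everything on the single structural fact, noted just before Lemma \ref{ranklemma}, that the assignment $X \mapsto \calV_X$ recovers the line it comes from. Indeed, if $\calV_X = \calV_{X'}$ for nonzero $X,X'$, then $(X,X')$ cannot have rank $2$: otherwise $\calV_X \cap \calV_{X'}$ would have codimension $2n$ by Lemma \ref{ranklemma}, whereas it equals $\calV_X$, of codimension $n$, and $2n \neq n$ since $n \geq 2$. Hence $X$ and $X'$ are collinear, so for lines $d,d' \in \Pgros_f(\F^n)$ one has $\calV_d = \calV_{d'}$ iff $d = d'$, and likewise in $\Pgros_g(\F^n)$. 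This injectivity is exactly what lets the defining relations $U(\calV_d) = \calV_{\varphi(d)}$ and $U^{-1}(\calV_{d'}) = \calV_{\psi(d')}$ be composed unambiguously.

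First I would show $\psi \circ \varphi = \id$ and $\varphi \circ \psi = \id$. Fix $d \in \Pgros_f(\F^n)$; applying $U^{-1}$ to $U(\calV_d) = \calV_{\varphi(d)}$ gives $\calV_d = U^{-1}(\calV_{\varphi(d)}) = \calV_{\psi(\varphi(d))}$, and the injectivity of $d \mapsto \calV_d$ forces $\psi(\varphi(d)) = d$. The symmetric computation, starting from $U^{-1}(\calV_{d'}) = \calV_{\psi(d')}$ and applying $U$, gives $\varphi(\psi(d')) = d'$ for every $d' \in \Pgros_g(\F^n)$. Thus $\varphi$ and $\psi$ are mutually inverse bijections.

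Next I would produce the bijection $\Sigma$. The $(g,f)$-coherence of $\psi$ attaches to each column-$g$-equivalence class $A$ the unique column-$f$-equivalence class $\Sigma(A)$ with $\psi\bigl(\Pgros(\Vect(e_i)_{i\in A})\bigr) \subset \Pgros(\Vect(e_j)_{j\in \Sigma(A)})$, which defines $\Sigma : \calO_{c,g} \to \calO_{c,f}$; its uniqueness is precisely the uniqueness in the coherence statement. Symmetrically, the $(f,g)$-coherence of $\varphi$ yields a map $\Sigma' : \calO_{c,f} \to \calO_{c,g}$. To see these are mutually inverse, fix a column-$g$-class $A$ and pick any line $d \in \Pgros(\Vect(e_i)_{i\in A})$, which is possible since $A \neq \emptyset$. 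Then $\psi(d) \in \Pgros(\Vect(e_j)_{j\in \Sigma(A)})$, so $d = \varphi(\psi(d)) \in \Pgros(\Vect(e_k)_{k \in \Sigma'(\Sigma(A))})$. Thus $d$ lies in the projective spaces attached to both $A$ and $\Sigma'(\Sigma(A))$; if these two classes were distinct they would be disjoint subsets of $\lcro 1,n\rcro$, so the coordinate subspaces $\Vect(e_i)_{i\in A}$ and $\Vect(e_k)_{k\in\Sigma'(\Sigma(A))}$ would meet only at $0$ and their projective spaces would be disjoint, a contradiction. Hence $\Sigma'(\Sigma(A)) = A$, and the symmetric argument gives $\Sigma(\Sigma'(O)) = O$ for each column-$f$-class $O$, so $\Sigma$ is a bijection with inverse $\Sigma'$.

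Finally, the ``bijectively onto'' clause follows formally: $\psi$ maps $\Pgros(\Vect(e_i)_{i\in A})$ into $\Pgros(\Vect(e_j)_{j\in \Sigma(A)})$, while $\varphi$ maps the latter into $\Pgros(\Vect(e_i)_{i\in A})$ because $\Sigma'(\Sigma(A)) = A$; since $\varphi$ and $\psi$ are global inverses, their restrictions to these two sets are mutually inverse, hence bijections. I do not anticipate a genuinely hard step here, as the whole argument is a formal transport of the two coherence claims through the relation $\varphi = \psi^{-1}$. The one point that needs care — and which I would state explicitly — is the disjointness argument above: it is the nonemptiness of each class together with the fact that distinct classes index coordinate subspaces meeting only at $0$ that rules out $\Sigma'(\Sigma(A)) \neq A$.
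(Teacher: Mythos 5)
Your proposal is correct, and it is precisely the deduction the paper leaves implicit: the claim appears in Section \ref{analysissection} with no written proof (just ``We deduce:''), as an immediate consequence of the two preceding claims defining $\varphi$ and $\psi$ and their coherence properties. Your verification uses exactly the ingredients the paper has set up for this purpose --- Lemma \ref{ranklemma} to get injectivity of $d \mapsto \calV_d$ (the same trick the paper invokes elsewhere, e.g.\ in the column-connectedness claim), the defining relations $U(\calV_d)=\calV_{\varphi(d)}$ and $U^{-1}(\calV_{d'})=\calV_{\psi(d')}$, and the disjointness of distinct equivalence classes --- so there is nothing to add.
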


Next, we obtain more precise information on $\psi$:

\begin{claim}\label{independenceclaim}
The mapping $\psi$ preserves linear independence for finite families of lines.
\end{claim}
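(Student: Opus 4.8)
The plan is to reduce the statement to a bookkeeping of codimensions, using Lemma \ref{ranklemma} as a dictionary between the rank of a family of vectors and the codimension of the intersection of the associated spaces $\calV_X$. The crucial structural input is that $U$, and hence $U^{-1}$, is an automorphism of the vector space $\Mat_n(\F)$ (Lemma \ref{injectlemma}, together with finite-dimensionality): as such it carries every linear subspace to a subspace of the same codimension, and it commutes with finite intersections. So the idea is to encode linear independence of a family of lines as a codimension equality, transport that equality across $U^{-1}$, and then decode it back into a rank statement on the image lines.

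Concretely, I would argue as follows. Let $d'_1,\dots,d'_p$ be linearly independent lines in $\Pgros_g(\F^n)$, and for each $i$ choose a nonzero vector $X_i$ spanning $d'_i$, so that the family $(X_1,\dots,X_p)$ has rank $p$. By Lemma \ref{ranklemma}, the subspace $\bigcap_{i=1}^p \calV_{X_i}$ then has codimension $np$ in $\Mat_n(\F)$. Applying the automorphism $U^{-1}$, using that it commutes with finite intersections together with the defining relation $U^{-1}(\calV_{d'_i})=\calV_{\psi(d'_i)}$, yields
$$\bigcap_{i=1}^p \calV_{\psi(d'_i)}=U^{-1}\Bigl(\bigcap_{i=1}^p \calV_{X_i}\Bigr),$$
and the right-hand side still has codimension $np$ since $U^{-1}$ preserves codimension. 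Picking spanning vectors $Y_i$ for $\psi(d'_i)$ and invoking Lemma \ref{ranklemma} once more, the identity $\codim \bigcap_{i=1}^p \calV_{Y_i}=np$ forces $\rk(Y_1,\dots,Y_p)=p$, which means exactly that the lines $\psi(d'_1),\dots,\psi(d'_p)$ are linearly independent.

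I expect no genuine obstacle here: the whole argument is a transport of a single codimension invariant across the automorphism $U^{-1}$. The only points deserving a word of care are that $U^{-1}$ really does preserve codimension and commute with finite intersections (both immediate from bijectivity and linearity), and that the chosen spanning vectors are nonzero so that Lemma \ref{ranklemma} genuinely applies. Finally, since $\varphi=\psi^{-1}$ enjoys the symmetric property, running the same reasoning with $\varphi$ in place of $\psi$ shows that $\psi$ reflects independence as well, although only the stated direction is needed in the sequel.
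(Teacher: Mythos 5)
Your proposal is correct and follows essentially the same route as the paper's own proof: both arguments encode the independence of the lines as the codimension $nk$ of $\bigcap_{i} \calV_{X_i}$ via Lemma~\ref{ranklemma}, transport that codimension through the automorphism $U^{-1}$ using $U^{-1}(\calV_{X_i})=\calV_{Y_i}$, and then apply Lemma~\ref{ranklemma} again to conclude that $(Y_1,\dots,Y_k)$ has full rank. The only cosmetic difference is that you spell out the (immediate) facts that $U^{-1}$ preserves codimension and commutes with finite intersections, which the paper leaves implicit.
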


\begin{proof}
Let $d_1,\dots,d_k$ be independent lines of $\F^n$ spanned by column-$g$-adapted vectors. For every $i \in \lcro 1,k\rcro$, let us choose a non-zero vector $X_i \in d_i$
and a column-$f$-adapted vector $Y_i \in \F^n$ such that $U^{-1}(\calV_{X_i})=\calV_{Y_i}$.
Hence, $\underset{i=1}{\overset{k}{\bigcap}} \calV_{X_i}$ has codimension $nk$ in $\Mat_n(\F)$.
Since $U$ is an automorphism of $\Mat_n(\F)$, it follows that
$\underset{i=1}{\overset{k}{\bigcap}} \calV_{Y_i}=U^{-1}\Bigl(\underset{i=1}{\overset{k}{\bigcap}} \calV_{X_i}\Bigr)$ has codimension $nk$ in $\Mat_n(\F)$. It follows from Lemma \ref{ranklemma} that $(Y_1,\dots,Y_k)$ has rank $k$, i.e.\ $\psi(d_1),\dots,\psi(d_k)$ are independent.
\end{proof}

Of course, the same applies to $\varphi$.
By comparing the dimensions and by using the fact that $f$ and $g$ are fully-normalized, we deduce:

\begin{claim}
For all $O \in \calO_{c,g}$,
one has $|\Sigma(O)|=|O|$. Hence, $\calO_{c,g}=\calO_{c,f}$ and there is a (unique) column-$f$-adapted permutation $\tau$ of $\lcro 1,n\rcro$ such that $\Sigma^{-1}(O)$ is the direct image of $O$ under $\tau$ for all $O \in \calO_{c,f}$.
\end{claim}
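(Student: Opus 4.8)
The plan is to exploit that both $\psi$ and its inverse $\varphi$ preserve linear independence of finite families of lines (Claim \ref{independenceclaim} and the remark that the same holds for $\varphi$), together with the elementary fact that the maximal cardinality of a linearly independent family of lines in a projective space $\Pgros(W)$ is exactly $\dim W$.

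First I would fix $O \in \calO_{c,g}$ and count dimensions. The coordinate lines $\F e_i$, $i \in O$, form a family of $|O|$ independent lines of $\Pgros(\Vect(e_i)_{i \in O})$; since $\psi$ maps this projective space bijectively onto $\Pgros(\Vect(e_j)_{j \in \Sigma(O)})$ and preserves independence, their images are $|O|$ independent lines of the target, whose ambient space has dimension $|\Sigma(O)|$. Hence $|O| \le |\Sigma(O)|$. Running the symmetric argument with $\varphi=\psi^{-1}$, which maps $\Pgros(\Vect(e_j)_{j \in \Sigma(O)})$ bijectively onto $\Pgros(\Vect(e_i)_{i \in O})$ and likewise preserves independence, yields $|\Sigma(O)| \le |O|$. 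Therefore $|\Sigma(O)|=|O|$ for every $O$.

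Next, since $\Sigma : \calO_{c,g} \to \calO_{c,f}$ is a cardinality-preserving bijection between the two partitions, the associated cardinality lists coincide, that is $c(f)=c(g)$. This is exactly where full normalization becomes essential: for a fully-normalized mapping the column-equivalence classes are the consecutive intervals $\lcro 1,p_1\rcro, \lcro p_1+1,p_1+p_2\rcro,\dots$ prescribed by the non-increasing cardinality list $(p_1,\dots,p_b)$, so the partition is recovered uniquely from its cardinality list. As $c(f)=c(g)$, the partitions $\calO_{c,f}$ and $\calO_{c,g}$ are thus literally equal.

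Finally, once the two partitions coincide, $\Sigma^{-1}$ is a cardinality-preserving permutation of $\calO_{c,f}$; in particular, for each integer $k>0$ it permutes the set of column-$f$-equivalence classes of cardinality $k$. By the remark identifying a column-$f$-adapted permutation with such a family of permutations of the classes, this data defines a unique column-$f$-adapted permutation $\tau$: on each class $O$, $\tau$ is the (unique) increasing bijection of $O$ onto $\Sigma^{-1}(O)$, which is legitimate precisely because $|\Sigma^{-1}(O)|=|O|$. This $\tau$ satisfies $\tau(O)=\Sigma^{-1}(O)$ for all $O \in \calO_{c,f}$, and its uniqueness is forced, since any column-$f$-adapted permutation is determined by the induced permutation of the classes together with the requirement of being increasing on each class. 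I expect the only genuinely delicate point to be the passage from equal cardinality lists to equal partitions, which truly relies on full normalization and would fail for a merely normalized mapping.
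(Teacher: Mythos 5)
Your proof is correct and is essentially the paper's own argument: the paper dispatches this claim in one line (``by comparing the dimensions and by using the fact that $f$ and $g$ are fully-normalized''), and your two-sided count via independence-preservation of $\psi$ and $\varphi$, followed by recovering the equality $\calO_{c,f}=\calO_{c,g}$ from the common cardinality list through full normalization and building $\tau$ as the class-wise increasing bijection onto $\Sigma^{-1}(O)$, is precisely the intended filling-in of that deduction. Nothing in your write-up deviates from, or adds a gap to, what the paper does.
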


Next, let $O \in \calO_{c,f}$. We write $O=\lcro a,b\rcro$ and $\Sigma^{-1}(O)=\lcro c,d\rcro$.
By the above, we have a basis $(y_a,\dots,y_b)$ of $\Vect(e_j)_{j \in \Sigma^{-1}(O)}$ such that
$U^{-1}(\calV_{e_i})=\calV_{y_i}$ for all $i \in \lcro a,b\rcro$.
We define $Q_O \in \GL_{|O|}(\F)$ as the matrix of the basis $(e_c,\dots,e_d)$ in the basis
$(y_a,\dots,y_b)$ of $\Vect(e_j)_{j \in \Sigma^{-1}(O).}$
Then, in writing $O_1,\dots,O_p$ the elements of $\calO_{c,g}$ in that order
(so that, whenever $i<j$, every element of $O_i$ is less than every element of $O_j$),
we set $Q:=Q_{\Sigma(O_1)} \oplus \cdots \oplus Q_{\Sigma(O_p)}$,
so that $M \mapsto MQ$ is a standard $f$-similarity.
Fixing once again $O \in \calO_{c,f}$ and writing $O=\lcro a,b\rcro$ and
$\Sigma^{-1}(O)=\lcro c,d\rcro$, we have a basis $(y_a,\dots,y_b)$ of $\Vect(e_j)_{c \leq j \leq d}$ such that
$U^{-1}(\calV_{e_i})=\calV_{y_i}$ for all $i\in \lcro a,b\rcro$,
and
$$\forall i \in \lcro a,b\rcro, \; Qy_i=e_{\tau(i)}.$$
Hence, given $i \in \lcro a,b\rcro$, we have, for all $M \in \Mat_n(\F)$,
$$U(MQ)e_i=0 \Leftrightarrow MQ \in U^{-1}(\calV_{e_i})
\Leftrightarrow MQy_i=0 \Leftrightarrow Me_{\tau(i)}=0.$$
By right-composing $U$ with the standard $f$-similarity $M \mapsto MQ$,
we are then reduced to the case when
$$\forall i \in \lcro 1,n\rcro, \; U^{-1}(\calV_{e_i})=\calV_{e_{\tau(i)}.}$$

Likewise, we find that $r(f)=r(g)$ and, after replacing $U$ with $M \mapsto U(PM)$ for a well-chosen matrix $P \in \GL_n(\F)$
such that $M \mapsto PM$ is a standard $f$-similarity, we reduce the situation further to the one where, in addition to
the above properties, we have a row-$f$-adapted permutation $\sigma$ of $\lcro 1,n\rcro$ such that
$$\forall i \in \lcro 1,n\rcro, \; U^{-1}(\calV^{e_i})=\calV^{e_{\sigma(i)}}.$$

We are now close to the conclusion, but a few additional rounds of reduction are nevertheless necessary!
Let $i \in \lcro 1,n\rcro$. For all $M \in \Mat_n(\F)$ with $C_{i}(M)=0$, we know that
$C_{\tau^{-1}(i)}(U(M))=0$. This yields a linear mapping $\varphi_i : \F^n \rightarrow \F^n$ such that
$$\forall M \in \Mat_n(\F), \; C_{\tau^{-1}(i)}(U(M))=\varphi_i(C_i(M)).$$
Since $U$ is surjective, we find that $\varphi_i$ is surjective, and hence it is an automorphism of $\F^n$.

\begin{claim}
Let $i,j$ be column-$f$-equivalent indices. Then, the mappings $\varphi_i$ and $\varphi_j$ are collinear.
\end{claim}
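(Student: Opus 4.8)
The plan is to prove that the automorphism $\theta:=\varphi_i^{-1}\varphi_j$ of $\F^n$ is a scalar multiple of the identity; once this is done, $\varphi_j=\varphi_i\circ\theta$ is collinear with $\varphi_i$. We may assume $i\neq j$. The idea is to extract information on $\varphi_i$ and $\varphi_j$ by restricting $\widetilde{f}$, and $\widetilde{g}\circ U$, to the matrices in which only the columns $i$ and $j$ are free. First I fix a pair $(p,q)$ of distinct indices of $\lcro 1,n\rcro$ and a permutation $\sigma_0\in\mathfrak{S}_n$ with $\sigma_0(i)=p$ and $\sigma_0(j)=q$. For $X=(x_a),Y=(y_a)$ in $\F^n$, let $M(X,Y)$ be the matrix whose $i$-th column is $X$, whose $j$-th column is $Y$, and whose $k$-th column is $e_{\sigma_0(k)}$ for $k\notin\{i,j\}$. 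Expanding the Schur functional and using that $f$ is column-normalized with $i\underset{C,f}{\sim}j$ (so $f(\sigma\tau_{i,j})=-f(\sigma)$), the sum collapses to $\widetilde{f}\bigl(M(X,Y)\bigr)=f(\sigma_0)\,(x_p y_q-x_q y_p)$.

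On the $g$-side, recall that $U(M)$ has $\tau^{-1}(l)$-th column $\varphi_l\bigl(C_l(M)\bigr)$ for every $l$, and that $\tau^{-1}(i)$ and $\tau^{-1}(j)$ are column-$g$-equivalent (column-$f$- and column-$g$-equivalence coincide and $\tau$ is column-$f$-adapted). I would then introduce the bilinear map $\gamma$ sending $(W,W')$ to $\widetilde{g}$ of the matrix whose $\tau^{-1}(i)$-th column is $W$, whose $\tau^{-1}(j)$-th column is $W'$, and whose other $\tau^{-1}(k)$-th column is $\varphi_k(e_{\sigma_0(k)})$. Since $g$ is column-normalized and those two slots are column-$g$-equivalent, $\gamma$ is \emph{alternating}, and with these choices $\widetilde{g}\bigl(U(M(X,Y))\bigr)=\gamma\bigl(\varphi_i(X),\varphi_j(Y)\bigr)$. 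Hence condition $(\calC)$ reads
$$\forall X,Y\in\F^n,\quad \gamma\bigl(\varphi_i(X),\varphi_j(Y)\bigr)=0\ \Longleftrightarrow\ x_p y_q-x_q y_p=0.$$
Setting $L_{p,q}:=\{X\in\F^n:\ x_p=x_q=0\}$ (of dimension $n-2$), taking $X\in L_{p,q}$, respectively $Y\in L_{p,q}$, kills the right-hand side for every $Y$, respectively every $X$; as $\varphi_i,\varphi_j$ are onto, this forces $\varphi_i(L_{p,q})$ and $\varphi_j(L_{p,q})$ into the radical $\mathrm{rad}(\gamma)$. Now $\gamma\not\equiv 0$ (otherwise the left-hand side would always vanish while the right-hand side does not), and alternating forms have even rank over any field, so $\gamma$ has rank exactly $2$; thus $\mathrm{rad}(\gamma)$ has dimension $n-2$ and equals both $\varphi_i(L_{p,q})$ and $\varphi_j(L_{p,q})$. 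In particular $\theta$ preserves $L_{p,q}$, and $\varphi_i,\varphi_j$ induce isomorphisms of $\F^n/L_{p,q}$ onto the plane $\F^n/\mathrm{rad}(\gamma)$, on which $\gamma$ descends to a nondegenerate alternating form $\overline{\gamma}$.

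On that plane the displayed equivalence becomes $\overline{\gamma}\bigl(\overline{\varphi_i}\,\overline{X},\overline{\varphi_j}\,\overline{Y}\bigr)=0\Leftrightarrow \overline{X}\parallel\overline{Y}$, and since a nondegenerate alternating form on a plane vanishes exactly on collinear pairs, taking $\overline{Y}=\overline{X}$ gives $\overline{\varphi_i}\,\overline{X}\parallel\overline{\varphi_j}\,\overline{X}$ for all $\overline{X}$. Hence every vector is an eigenvector of $\overline{\varphi_i}^{-1}\overline{\varphi_j}$, which is therefore scalar: $\theta$ acts as multiplication by some $\lambda_{p,q}$ on $\F^n/L_{p,q}$, i.e.\ $\theta(X)-\lambda_{p,q}X\in L_{p,q}$ for all $X$. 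Finally I let $(p,q)$ range over all pairs. For $n\geq 3$, fixing $r$ and choosing $p\neq r$ together with some $q\neq p,r$ shows that $e_r$ has vanishing $p$-coordinate after applying $\theta$, whence $\theta(e_r)=\mu_r e_r$; so $\theta=\mathrm{diag}(\mu_1,\dots,\mu_n)$, and $\mu_p=\mu_q=\lambda_{p,q}$ for every pair forces all $\mu_r$ equal. For $n=2$ one has $L_{1,2}=\{0\}$, so $\F^n/L_{1,2}=\F^2$ and the previous paragraph already yields that $\theta$ is scalar. Either way $\theta=\lambda\,\id$ and $\varphi_j$ is collinear with $\varphi_i$.

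The hard part will be doing this over an arbitrary field with no restriction on cardinality or characteristic, which forbids the standard shortcut that two bilinear forms with the same zero locus are proportional. The device that sidesteps it is the passage to the $2$-dimensional quotient $\F^n/\mathrm{rad}(\gamma)$, where everything reduces to the elementary fact that an endomorphism of a plane for which every vector is an eigenvector must be a scalar; the sole field-theoretic input retained is the even-rank property of alternating forms, valid over any field.
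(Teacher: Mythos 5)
Your proof is correct, and it takes a genuinely different route from the paper's. The paper disposes of this claim in a few lines by reusing the null-cone machinery already in place: $x:=e_i-e_j$ is column-$f$-adapted, so by the preceding claims (in particular Claim \ref{independenceclaim}) one gets $U(\calV_x)=\calV_y$ with $y=\lambda e_{\tau^{-1}(i)}+\mu e_{\tau^{-1}(j)}$ and $(\lambda,\mu)$ non-zero; then any $M$ whose $i$-th and $j$-th columns both equal $X$ lies in $\calV_x$, and reading $U(M)\in\calV_y$ columnwise gives $\lambda\varphi_i(X)+\mu\varphi_j(X)=0$ for all $X\in\F^n$, which is the desired collinearity. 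You instead work directly at the level of condition $(\calC)$ restricted to two-column families: you compute $\widetilde{f}$ there as $f(\sigma_0)(x_py_q-x_qy_p)$, observe that the $g$-side is an alternating bilinear form $\gamma$ composed with $(\varphi_i,\varphi_j)$, use even rank of alternating forms to identify the radical of $\gamma$ with $\varphi_i(L_{p,q})=\varphi_j(L_{p,q})$, and finish with the everything-is-an-eigenvector argument on the $2$-dimensional quotient. All the inputs you invoke are indeed available at this point of the paper: the identity $C_{\tau^{-1}(l)}(U(M))=\varphi_l(C_l(M))$, the fact that each $\varphi_l$ is an automorphism, and the relation $\tau^{-1}(i)\underset{C,g}{\sim}\tau^{-1}(j)$ (the last because a column-$f$-adapted permutation necessarily permutes the equivalence classes, and column-$f$- and column-$g$-equivalence coincide after the reduction). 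What the paper's route buys is brevity, at the cost of leaning once more on Theorem \ref{DimensionEqualityTheorem} through the image of $\calV_{e_i-e_j}$ — a vector your argument never needs, since you only use the spaces $\calV_{e_l}$. What your route buys is a self-contained, hands-on two-column computation, and your quotient-plane device cleanly sidesteps the ``two forms with the same zero set are proportional'' trap without any hypothesis on $\card\F$ or $\car(\F)$ — the same generality the paper achieves, by different means.
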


\begin{proof}
It suffices to consider the case when $i \neq j$. Set $x:=e_i-e_j$, which is column-$f$-adapted.
We know that $U(\calV_{e_i})=\calV_{e_{\tau^{-1}(i)}}$ and $U(\calV_{e_j})=\calV_{e_{\tau^{-1}(j)}}$.
Hence, by Claim \ref{independenceclaim}, we find that $U(\calV_{x})=\calV_y$ for some non-zero vector $y \in \Vect(e_{\tau^{-1}(i)},e_{\tau^{-1}(j)})$.
Let us write $y=\lambda e_{\tau^{-1}(i)}+\mu e_{\tau^{-1}(j)}$ for some non-zero pair $(\lambda,\mu)\in \F^2$.
Let $X \in \F^n$. We can choose a matrix $M \in \Mat_n(\F)$ whose $i$-th and $j$-th column both equal $X$.
Thus, $M \in \calV_x$ and hence $U(M) \in \calV_y$, which yields $\lambda \varphi_i(X)+\mu \varphi_j(X)=0$.
The claimed statement follows.
\end{proof}

Likewise, we obtain, for each $i \in \lcro 1,n\rcro$, a linear bijection $\psi_i : \F^n \rightarrow \F^n$ such that
$$\forall M \in \Mat_n(\F), \; R_{\sigma^{-1}(i)}(U(M))=\psi_i(R_i(M)).$$
Moreover, given row-$f$-equivalent indices $i$ and $j$, the mappings $\psi_i$ and $\psi_j$ are collinear.

Next, we will further reduce the situation to the one where $\varphi_i=\varphi_j$ (respectively, $\psi_i=\psi_j$)
whenever $i,j$ are column-$f$-equivalent (respectively, row-$f$-equivalent) indices.
To do so, for all $i \in \lcro 1,n\rcro$, we find the minimal index $p$ in the column-$f$-equivalence class of $i$,
and we denote by $\lambda_i$ the sole non-zero scalar such that $\varphi_i=\lambda_i \varphi_p$
(so that $\lambda_i=1$ if $i=p$). Likewise, for all $i \in \lcro 1,n\rcro$, we
find the minimal index $q$ in the row-$f$-equivalence class of $i$,
and we denote by $\mu_i$ the sole non-zero scalar such that $\psi_i=\mu_i \psi_q$
(so that $\mu_i=1$ if $i=q$).
Set $D:=\Diag(\mu_1^{-1},\dots,\mu_n^{-1})$ and $\Delta:=\Diag(\lambda_1^{-1},\dots,\lambda_n^{-1})$.
Then, by right-composing $U$ with the standard $f$-similarity $M \mapsto DM\Delta$, we preserve all the previous assumptions and results,
but now we have the additional properties:
\begin{itemize}
\item[(i)] For all column-$f$-equivalent indices $i,j$, one has $\varphi_i=\varphi_j$.
\item[(ii)] For all row-$f$-equivalent indices $i,j$, one has $\psi_i=\psi_j$.
\end{itemize}

The conclusion is near.
Let $(i,j)\in \lcro 1,n\rcro^2$. The unit matrix $E_{i,j}$ belongs to $\calV_{e_k}$ for all $k \in \lcro 1,n\rcro \setminus \{j\}$,
whence $U(E_{i,j})$ belongs to $\calV_{e_{\tau^{-1}(k)}}$ for all $k \in \lcro 1,n\rcro \setminus \{j\}$.
Likewise, $U(E_{i,j})$ belongs to $\calV^{e_{\sigma^{-1}(k)}}$ for all $k \in \lcro 1,n\rcro \setminus \{i\}$.
It follows that $U(E_{i,j})=\lambda_{i,j} E_{\sigma^{-1}(i),\tau^{-1}(j)}$ for some
scalar $\lambda_{i,j} \in \F$, which is non-zero since $U$ is injective.

The following result follows from properties (i) and (ii) in the above:

\begin{claim}\label{claimconstantlambda}
Let $i,i',j,j'$ belong to $\lcro 1,n\rcro$.
\begin{enumerate}[(i)]
\item If $i$ is row-$f$-equivalent to $i'$, then $\lambda_{i,j}=\lambda_{i',j}$.
\item If $j$ is column-$f$-equivalent to $j'$, then $\lambda_{i,j}=\lambda_{i,j'}$.
\end{enumerate}
\end{claim}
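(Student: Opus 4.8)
The plan is to read both equalities directly off the two families of maps $\varphi_j$ and $\psi_i$ introduced above, combined with the two equalities secured in the last reduction, namely that $\varphi_i=\varphi_j$ whenever $i,j$ are column-$f$-equivalent and $\psi_i=\psi_j$ whenever $i,j$ are row-$f$-equivalent. The only computational input is the identity $U(E_{i,j})=\lambda_{i,j}\,E_{\sigma^{-1}(i),\tau^{-1}(j)}$, from which I first record the value each map takes on a basis vector.

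For statement (ii) of the claim, I would feed the unit matrix $E_{i,j}$ into the defining relation $C_{\tau^{-1}(j)}(U(M))=\varphi_j(C_j(M))$. Since $C_j(E_{i,j})=e_i$ and, by the displayed identity, $C_{\tau^{-1}(j)}(U(E_{i,j}))=\lambda_{i,j}\,e_{\sigma^{-1}(i)}$, this reads $\varphi_j(e_i)=\lambda_{i,j}\,e_{\sigma^{-1}(i)}$. If $j$ is column-$f$-equivalent to $j'$ then $\varphi_j=\varphi_{j'}$, so evaluating both maps at $e_i$ gives $\lambda_{i,j}\,e_{\sigma^{-1}(i)}=\lambda_{i,j'}\,e_{\sigma^{-1}(i)}$; as $e_{\sigma^{-1}(i)}\neq 0$, I conclude $\lambda_{i,j}=\lambda_{i,j'}$.

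For statement (i) of the claim, I would run the symmetric argument on rows. Plugging $E_{i,j}$ into $R_{\sigma^{-1}(i)}(U(M))=\psi_i(R_i(M))$, and using that $R_i(E_{i,j})$ is the row with a single $1$ in position $j$ while $R_{\sigma^{-1}(i)}(U(E_{i,j}))=\lambda_{i,j}\,e_{\tau^{-1}(j)}^T$, I obtain $\psi_i(e_j^T)=\lambda_{i,j}\,e_{\tau^{-1}(j)}^T$. If $i$ is row-$f$-equivalent to $i'$ then $\psi_i=\psi_{i'}$, and evaluating both maps at $e_j^T$ forces $\lambda_{i,j}=\lambda_{i',j}$.

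There is essentially no obstacle in this argument; it is purely a matter of unwinding the definitions of $\varphi_j$ and $\psi_i$ on unit matrices. The one point that must be handled correctly is the pairing: column-$f$-equivalence is exploited through the column maps $\varphi_j$ and row-$f$-equivalence through the row maps $\psi_i$, which is exactly how the two collinearity-to-equality reductions were arranged, so the matching causes no genuine difficulty.
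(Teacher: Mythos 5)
Your proof is correct and is precisely the argument the paper intends: the paper simply asserts that the claim ``follows from properties (i) and (ii)'', and your evaluation of the relations $C_{\tau^{-1}(j)}(U(M))=\varphi_j(C_j(M))$ and $R_{\sigma^{-1}(i)}(U(M))=\psi_i(R_i(M))$ at the unit matrices $E_{i,j}$, giving $\varphi_j(e_i)=\lambda_{i,j}\,e_{\sigma^{-1}(i)}$ and $\psi_i(e_j^T)=\lambda_{i,j}\,e_{\tau^{-1}(j)}^T$, is the natural (and intended) way to make that deduction explicit. The pairing of column-$f$-equivalence with the $\varphi_j$ and row-$f$-equivalence with the $\psi_i$ is exactly right.
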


Now, we reduce the situation further to the one where $\lambda_{\sigma(1),\tau(i)}=\lambda_{\sigma(i),\tau(1)}=1$
for all $i \in \lcro 1,n\rcro$. First of all, by setting $\beta:=\lambda_{\sigma(1),\tau(1)}^{-1}$, by noting that $M \mapsto \beta M$ is a standard $f$-similarity, and by replacing $U$ with $U \mapsto U(\beta M)$, we reduce the situation to the one where
$\lambda_{\sigma(1),\tau(1)}=1$. Next, denote by
$P' \in \GL_n(\F)$ the diagonal matrix with diagonal entries $\lambda_{1,\tau(1)}^{-1},\dots,
\lambda_{n,\tau(1)}^{-1}$, and by
$Q' \in \GL_n(\F)$ the diagonal matrix with diagonal entries $\lambda_{\sigma(1),1}^{-1},\dots,
\lambda_{\sigma(1),n}^{-1}$.

Replacing $U$ with $M \mapsto U(P'MQ')$, we see that all the previous assumptions and properties are untouched, but in this new situation $\lambda_{\sigma(1),i}=\lambda_{i,\tau(1)}=1$
for all $i \in \lcro 1,n\rcro$.

Since $\sigma$ is row-$f$-adapted and $\tau$ is column-$f$-adapted, it follows from this last property and from Claim \ref{claimconstantlambda} that the matrix
$$K:=(\lambda_{\sigma(i),\tau(j)})_{1\leq i,j \leq n} \in \Mat_n(\F^*)$$
is super-$f$-adapted, and we have shown that
$$\forall M \in \Mat_n(\F), \; U(M)=K \star (P_{\sigma^{-1}} M P_{\tau}).$$
Obviously, $\sigma^{-1}$ is row-$f$-adapted, and we have the expected conclusion at last.

\subsection{Uniqueness}\label{uniquenesssection}

Here, we conclude the proof of Theorem \ref{fgtransfotheo} by tackling the uniqueness statements.
Let $f$, $g$, $U$ satisfy the assumptions of Theorem \ref{fgtransfotheo}. We have to prove that
$U$ cannot be of both the forms mentioned in that theorem, and we need to prove the uniqueness of the quadruple
$(K,\sigma,\tau,V)$ in each case.

First of all, let $P,Q$ be matrices of $\GL_n(\F)$, let $\sigma$ and $\tau$ be elements of $\mathfrak{S}_n$,
and let $K$ be a matrix of $\Mat_n(\F^*)$. Assume that
$$U : M \mapsto K \star (P_\sigma(PMQ) P_\tau) \quad \text{or} \quad U : M \mapsto K \star (P_\sigma(PM^TQ) P_\tau).$$
In the first case, for all $M \in \Mat_n(\F)$,
$$U(M) \in \calV_{e_1} \Leftrightarrow P_\sigma(PMQ) P_\tau \in \calV_{e_1}
\Leftrightarrow MQe_{\tau(1)}=0 \Leftrightarrow
M \in \calV_{Qe_{\tau(1)}},$$
and hence $U^{-1}(\calV_{e_1})=\calV_{Q e_{\tau(1)}}$. In the second case we obtain likewise
$U^{-1}(\calV_{e_1})=\calV^{Q e_{\tau(1)}}$. However, since $n \geq 2$ there do not exist non-zero vectors $z,z'$
of $\F^n$ such that $\calV_z=\calV^{z'}$, and hence only one case is possible.

Next, in order to demonstrate the uniqueness of the quadruple $(K,\sigma,\tau,V)$ in each case,
it obviously suffices to prove the following result:

\begin{lemma}
Let $f : \mathfrak{S}_n \rightarrow \F^*$ be a fully-normalized mapping.
Let $K,K'$ be super-$f$-adapted matrices of $\Mat_n(\F^*)$, $\sigma,\sigma'$ be row-$f$-adapted permutations of $\lcro 1,n\rcro$,
$\tau,\tau'$ be column-$f$-adapted permutations of $\lcro 1,n\rcro$, and $V,V'$ be standard $f$-similarities such that
$$\forall M \in \Mat_n(\F), \; K \star (P_\sigma V(M) P_\tau)=K' \star (P_{\sigma'} V'(M) P_{\tau'}).$$
Then, $(K,\sigma,\tau,V)=(K',\sigma',\tau',V')$.
\end{lemma}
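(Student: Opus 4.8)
The plan is to recover the four pieces of data in the order $\sigma,\tau$ first, then $K$ and $V$, by exploiting that the codimension-$n$ spaces $\calV_{e_i}$ and $\calV^{e_i}$ are intrinsic to $U$. Write $V(M)=PMQ$ and $V'(M)=P'MQ'$, where, by definition of a standard $f$-similarity, $P,P'$ are block-diagonal along the row partition of $f$ and $Q,Q'$ are block-diagonal along its column partition. Since $K$ has no zero entry, $U(M)\in\calV_{e_i}$ exactly when the $i$-th column of $P_\sigma V(M)P_\tau$ vanishes, and a short computation gives $U^{-1}(\calV_{e_i})=\calV_{Qe_{\tau(i)}}$ and, likewise, $U^{-1}(\calV^{e_i})=\calV^{P^{T}e_{\sigma^{-1}(i)}}$.

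First I would pin down $\sigma$ and $\tau$. Running the same computation for the primed data and using the hypothesis that the two maps agree yields $\calV_{Qe_{\tau(i)}}=\calV_{Q'e_{\tau'(i)}}$ and $\calV^{P^{T}e_{\sigma^{-1}(i)}}=\calV^{(P')^{T}e_{(\sigma')^{-1}(i)}}$ for every $i$. By Lemma~\ref{ranklemma}, $\calV_X=\calV_Y$ with $X,Y$ nonzero forces $X$ and $Y$ to be collinear; since $Qe_{\tau(i)}$ is supported inside the column class of $\tau(i)$ (as $Q$ is block-diagonal) and the primed vector inside that of $\tau'(i)$, collinearity forces these two column classes to coincide. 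Hence $\tau$ and $\tau'$ induce the same permutation of the set of column classes, and as a column-$f$-adapted permutation is determined by its induced permutation of classes (being increasing on each class), $\tau=\tau'$. The row computation shows in the same way that $\sigma^{-1}$ and $(\sigma')^{-1}$ induce the same permutation of row classes, whence $\sigma=\sigma'$.

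With $\sigma=\sigma'$ and $\tau=\tau'$, the permutation matrices cancel entrywise. Comparing $(i,j)$-entries and reindexing by $a=\sigma^{-1}(i)$, $b=\tau(j)$, the identity becomes $L\star V(M)=L'\star V'(M)$ for all $M$, where $L_{a,b}:=K_{\sigma(a),\tau^{-1}(b)}$ and $L'_{a,b}:=K'_{\sigma(a),\tau^{-1}(b)}$ have no zero entry. Setting $S:={L'}^{[-1]}\star L$ and dividing, this reads $P'MQ'=S\star(PMQ)$ for all $M$. Evaluating at $M=E_{k,l}$ and using $V(E_{k,l})=C_k(P)\,R_l(Q)$ gives $P'_{a,k}Q'_{l,b}=S_{a,b}P_{a,k}Q_{l,b}$ for all $a,b,k,l$. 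Because $S$ has no zero entry and $P,Q$ are invertible, $P$ and $P'$ share the same support, as do $Q$ and $Q'$, and fixing $a$ (resp.\ $b$) then shows $R_a(P')=c_aR_a(P)$ (resp.\ $C_b(Q')=d_bC_b(Q)$) for nonzero scalars $c_a,d_b$. Thus $P'=D_1P$ and $Q'=QD_2$ with $D_1=\Diag(c_1,\dots,c_n)$ and $D_2=\Diag(d_1,\dots,d_n)$; substituting back and using that $M\mapsto PMQ$ is onto yields $D_1ND_2=S\star N$ for all $N$, and the choice $N=E_{a,b}$ gives $S_{a,b}=c_ad_b$.

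The last step, where the super-$f$-adapted hypothesis is essential, is to prove $S=E$. Put $a_0:=\sigma^{-1}(1)$ and $b_0:=\tau(1)$. Since the first row and the first column of $K$ and of $K'$ consist of $1$'s, we get $L_{a_0,b}=L'_{a_0,b}=1$ and $L_{a,b_0}=L'_{a,b_0}=1$ for all $a,b$, hence $S_{a_0,b}=1$ and $S_{a,b_0}=1$. Combined with $S_{a,b}=c_ad_b$, these force $d_b\equiv 1/c_{a_0}$ and $c_a\equiv 1/d_{b_0}$, and since $c_{a_0}=1/d_{b_0}$ this collapses to $S_{a,b}=1$ for all $a,b$. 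Therefore $S=E$, so $L=L'$ (whence $K=K'$) and $c_ad_b=1$ for all $a,b$, so $V'(M)=D_1V(M)D_2=V(M)$, i.e.\ $V=V'$; together with $\sigma=\sigma'$ and $\tau=\tau'$ this is the claim. The main obstacle is the bookkeeping in identifying the preimages $U^{-1}(\calV_{e_i})$ and $U^{-1}(\calV^{e_i})$ correctly, and then recognising that the super-adapted normalization (ones on the first row and column) is exactly what removes the residual diagonal freedom $(D_1,D_2)$ left after the previous step.
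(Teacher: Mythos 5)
Your proof is correct and is in essence the paper's own argument: the paper likewise pins down the permutations through kernel comparisons (there phrased on the quotient identity $P_{\sigma''}W(M)P_{\tau''}=L\star M$ with $W=V\circ (V')^{-1}$, which forces $Qe_{\tau''(i)}$ to be collinear with $e_i$ and hence $\tau''=\id$ by column-adaptedness), and then finishes exactly as you do, by exposing the rank-one structure of the Hadamard discrepancy ($XY^T=L$ there, $S_{a,b}=c_ad_b$ here) and collapsing it using the constant row and column provided by super-adaptedness. The only difference is bookkeeping: you carry both quadruples through and compare them directly instead of first dividing out by $(K',\sigma',\tau',V')$.
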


\begin{proof}
Obviously,
the mapping $W:=V \circ (V')^{-1}$ is a standard $f$-similarity, and the matrix
$K'':=K'\star K^{[-1]}$ is super-$f$-adapted. We have
$$\forall M \in \Mat_n(\F), \; P_\sigma W(M) P_\tau=K'' \star (P_{\sigma'} M P_{\tau'}).$$
Next,
$$\forall M \in \Mat_n(\F), \; K'' \star (P_{\sigma'} M P_{\tau'})=P_{\sigma'}(L \star M) P_{\tau'}$$
where $L:=P_{\sigma'}^{-1} K'' P_{\tau'}^{-1}$. Hence, by setting $\sigma'':=(\sigma')^{-1}  \sigma$
and $\tau'':=\tau  (\tau')^{-1}$, we obtain the identity
\begin{equation}\label{identuniqueness}
\forall M \in \Mat_n(\F), \; P_{\sigma''} W(M) P_{\tau''}=L \star M.
\end{equation}
In order to conclude, it suffices to demonstrate that $\sigma''=\id=\tau''$, that $W=\id_{\Mat_n(\F)}$, and that all the entries of $L$ equal $1$. Indeed, all the entries of $K''$ will then be equal to $1$, leading to $K=K'$,
all the while $V=V'$ and $(\sigma,\tau)=(\sigma',\tau')$.

Now, let us write $c(f)=(n_1,\dots,n_q)$ and $r(f)=(m_1,\dots,m_p)$, so that $W$ reads
$M \mapsto PMQ$, where $P$ has the form $P_1 \oplus \cdots \oplus P_p$ for a list
$(P_1,\dots,P_p) \in \GL_{m_1}(\F) \times \cdots \times \GL_{m_p}(\F)$, and
$Q$ has the form $Q_1 \oplus \cdots \oplus Q_q$ for a list
$(Q_1,\dots,Q_q) \in \GL_{n_1}(\F) \times \cdots \times \GL_{n_q}(\F)$.

Next, let $i \in \lcro 1,n\rcro$. Judging from identity \eqref{identuniqueness}, we find, for all $M \in \Mat_n(\F)$,
$$Me_i=0 \Leftrightarrow (L \star M)e_i=0 \Leftrightarrow My=0,$$
where $y:=Q e_{\tau''(i)}$. It follows that $y$ is collinear with $e_i$. However, the shape of $Q$ shows that
$Q e_{\tau''(i)}$ is a linear combination of vectors of the form $e_j$ where $j$ is column-$f$-equivalent to $\tau''(i)$.
Hence, $i$ is column-$f$-equivalent to $\tau''(i)$, which yields $\tau''(i)=i$ since $\tau''$ is column-$f$-adapted.
Hence $\tau''$ is the identity of $\lcro 1,n\rcro$. It follows that $Qe_i$ is collinear with $e_i$ for all $i \in \lcro 1,n\rcro$.
Likewise, we obtain that $\sigma''$ is the identity of $\lcro 1,n\rcro$ and that $P^T e_i$ is collinear with $e_i$ for all $i \in \lcro 1,n\rcro$. Hence, $P$ and $Q$ are diagonal matrices (with non-zero diagonal entries). Writing the corresponding diagonal vectors as $X$ and $Y$, we find $\forall M \in \Mat_n(\F), \; W(M)=(X Y^T) \star M$, and by applying this to the vectors of the standard basis of $\Mat_n(\F)$ we find $XY^T=L$. Yet, one of the columns of $L$ has all its entries equal, whence all the entries of
$X$ are equal; likewise since one of the rows of $L$ has all its entries equal, all the entries of $Y$ are equal.
Hence $W : M \mapsto \lambda M$ for some non-zero scalar $\lambda$, and all the entries of $L$ equal $\lambda$.
Finally $\lambda=1$ since one of the entries of $L$ equals $1$. We conclude that $W=\id_{\Mat_n(\F)}$ and $L=(1)_{1 \leq i,j \leq n}$,
which completes the proof.
\end{proof}

\subsection{Proof of Corollary \ref{fgtransfocor}}\label{cor1section}

Here, we prove Corollary \ref{fgtransfocor}. To this end, we need a preliminary result on
null cones:

\begin{prop}\label{coneprop}
Let $f$ and $g$ be mappings from $\mathfrak{S}_n$ to $\F^*$.
Then, $\calC(f)=\calC(g)$ if and only if $g=\alpha f$ for some non-zero scalar $\alpha$.
\end{prop}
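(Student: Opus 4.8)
The forward implication is immediate: if $g=\alpha f$ with $\alpha\in\F^*$, then $\widetilde{g}=\alpha\widetilde{f}$, so $\widetilde{g}(M)=0\Leftrightarrow\widetilde{f}(M)=0$ and $\calC(f)=\calC(g)$. The plan for the converse is to assume $\calC(f)=\calC(g)$ and prove that the ratio $\sigma\mapsto g(\sigma)/f(\sigma)$ (which makes sense since $f$ and $g$ vanish nowhere) is constant on $\mathfrak{S}_n$; its common value $\alpha:=g(\id)/f(\id)\in\F^*$ then satisfies $g=\alpha f$. Since the transpositions generate $\mathfrak{S}_n$ and one reaches any permutation from the identity by successive right-multiplications by transpositions (for $n=1$ the claim is trivial, so assume $n\geq 2$), it suffices to show that this ratio is invariant under right-multiplication by an arbitrary transposition, i.e.
$$\frac{g(\sigma)}{f(\sigma)}=\frac{g(\sigma\tau_{i,j})}{f(\sigma\tau_{i,j})}\quad\text{for all }\sigma\in\mathfrak{S}_n\text{ and all distinct }i,j\in\lcro 1,n\rcro.$$

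To produce this identity I would use a two-parameter family of test matrices probing the pair $\{\sigma,\sigma\tau_{i,j}\}$. Fix $\sigma$ and distinct indices $i,j$, and for scalars $a,b,c,d\in\F$ let $M$ be the matrix whose $k$-th column is $e_{\sigma(k)}$ for $k\notin\{i,j\}$, whose $i$-th column is $a\,e_{\sigma(i)}+b\,e_{\sigma(j)}$, and whose $j$-th column is $c\,e_{\sigma(i)}+d\,e_{\sigma(j)}$. The key bookkeeping step is that in $\widetilde{f}(M)=\sum_\rho f(\rho)\prod_k m_{\rho(k),k}$ only the permutations $\rho=\sigma$ and $\rho=\sigma\tau_{i,j}$ give a non-zero product: indeed any contributing $\rho$ must satisfy $\rho(k)=\sigma(k)$ for $k\notin\{i,j\}$, which forces $\{\rho(i),\rho(j)\}=\{\sigma(i),\sigma(j)\}$ by injectivity. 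This yields the clean formula
$$\widetilde{f}(M)=f(\sigma)\,ad+f(\sigma\tau_{i,j})\,bc,$$
and likewise $\widetilde{g}(M)=g(\sigma)\,ad+g(\sigma\tau_{i,j})\,bc$.

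Finally, I would specialize the parameters to a single point of the $\widetilde{f}$-null locus and read off the answer. Taking $a=d=c=1$ and $b:=-f(\sigma)/f(\sigma\tau_{i,j})\in\F^*$ makes $\widetilde{f}(M)=0$; by $\calC(f)=\calC(g)$ we then also have $\widetilde{g}(M)=0$, that is $g(\sigma)+g(\sigma\tau_{i,j})\,b=0$. Substituting the value of $b$ gives exactly the desired equality of ratios, and the argument concludes as explained above. I do not expect a serious obstacle here: the reasoning is valid over an arbitrary field of any cardinality, because only one solution of $bc=-f(\sigma)/f(\sigma\tau_{i,j})$ is needed and it is exhibited explicitly. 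The only point requiring care is the combinatorial verification that exactly the two permutations $\sigma$ and $\sigma\tau_{i,j}$ survive in the expansion of $\widetilde{f}(M)$, which is where the multilinearity of the Schur functional with respect to the columns is used.
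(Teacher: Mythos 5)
Your proposal is correct and is essentially the paper's own argument: the paper uses exactly the same test matrices (columns $e_{\sigma(k)}$ off $\{i,j\}$, with the $i$-th and $j$-th columns supported on $e_{\sigma(i)},e_{\sigma(j)}$, giving $\widetilde{f}(M)=f(\sigma)\,ad+f(\sigma\tau_{i,j})\,bc$ with $a=c=d=1$ and $b=-\lambda$), the same specialization of the free parameter to kill $\widetilde{f}(M)$, and the same induction over products of transpositions. The only cosmetic difference is that the paper first normalizes $g$ so that $g(\id)=f(\id)$ and proves $g=f$, whereas you track the ratio $g/f$ directly.
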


\begin{proof}
The converse implication is obvious. Assume that $\calC(f)=\calC(g)$.
Multiplying $g$ with $\frac{f(\id)}{g(\id)}$, we lose no generality in assuming that $f(\id)=g(\id)$, in which case we aim at proving that $g=f$.
Let $\sigma \in \mathfrak{S}_n$, and let $i<j$ be elements of $\lcro 1,n\rcro$. Set $\tau:=\tau_{i,j}$.
Let $\lambda \in \F$, and consider the matrix $M=(m_{k,l}) \in \Mat_n(\F)$ defined as follows:
$$m_{k,l}:=\begin{cases}
1 & \text{if $k=\sigma(l)$} \\
-\lambda & \text{if $l=i$ and $k=\sigma(j)$} \\
1 & \text{if $l=j$ and $k=\sigma(i)$} \\
0 & \text{otherwise.}
\end{cases}$$
One checks that
$$\widetilde{f}(M)=f(\sigma)-\lambda f(\sigma \tau) \quad \text{and} \quad \widetilde{g}(M)=g(\sigma)-\lambda g(\sigma \tau).$$
Hence, for all $\lambda \in \F$, we have
$$f(\sigma)-\lambda f(\sigma \tau)=0 \Leftrightarrow g(\sigma)-\lambda g(\sigma \tau)=0.$$
Since $f(\sigma \tau) \neq 0$ and $g(\sigma \tau) \neq 0$, this yields
$$\frac{g(\sigma)}{g(\sigma \tau)}=\frac{f(\sigma)}{f(\sigma \tau)}$$
and we deduce that
$$g(\sigma)=f(\sigma) \Rightarrow g(\sigma \tau)=f(\sigma\tau).$$
Since $g(\id)=f(\id)$, we deduce by induction that $g$ and $f$ coincide on every product of transpositions, and we conclude that $g=f$.
\end{proof}

From there, we can derive Corollary \ref{fgtransfocor} from Theorem \ref{fgtransfotheo}.
Let $f$ and $g$ be mappings from $\mathfrak{S}_n$ to $\F^*$.
Let $U : \Mat_n(\F) \rightarrow \Mat_n(\F)$ be a linear mapping such that $U^{-1}(\calC(g))=\calC(f)$.
By Proposition \ref{fullnormalizedprop}, $f$ is PH-equivalent to a fully-normalized mapping $f'$, and
$g$ is PH-equivalent to a fully-normalized mapping $g'$. This yields linear automorphisms $V_1$ and $V_2$
of $\Mat_n(\F)$ such that $\widetilde{f}(V_1(M))=\widetilde{f'}(M)$ and $\widetilde{g'}(V_2(M))=\widetilde{g}(M)$ for all $M \in \Mat_n(\F)$.
Set $U':=V_2\circ U \circ V_1$. Then, $U'$ is an endomorphism of the vector space $\Mat_n(\F)$, and
$(U')^{-1}(\calC(g'))=\calC(f')$. Let us apply Theorem \ref{fgtransfotheo} to $U'$.
Assume first that
$$U' : M \mapsto K \star (P_\sigma V(M) P_\tau)$$
for some standard $f'$-similarity $V$, some permutations $\sigma$ and $\tau$, and some matrix $K \in \Mat_n(\F^*)$.
Note that we have a non-zero scalar $\alpha$ such that $\widetilde{f'}(V(M))=\alpha \widetilde{f'}(M)$ for all $M \in \Mat_n(\F)$.

The Schur functional
$$M \mapsto \widetilde{g'}\bigl(K \star (P_\sigma M P_\tau)\bigr)$$
then reads $\widetilde{h}$ for some mapping $h : \mathfrak{S}_n \rightarrow \F^*$
that is PH-equivalent to $g'$. Since $(U')^{-1}(\calC(g'))=\calC(f')$, we find that, for all $M \in \Mat_n(\F)$,
$$\widetilde{f'}(M)=0 \Leftrightarrow \widetilde{f'}(V^{-1}(M))=0
\Leftrightarrow \widetilde{g'}(U'(V^{-1}(M)))=0
\Leftrightarrow \widetilde{h}(M)=0.$$
In other words, $\calC(h)=\calC(f')$. We deduce from Proposition \ref{coneprop} that $h=\beta f'$ for some non-zero scalar $\beta$.
In particular, $h$ is PH-equivalent to $f'$ (see Remark \ref{HequivalentScalarProduct}), and by transitivity $g$ is PH-equivalent to $f$.

Moreover, for all $M \in \Mat_n(\F)$,
$$\widetilde{g'}(U'(M))=\widetilde{h}(V(M))=\beta \widetilde{f'}(V(M))=\alpha\beta \widetilde{f'}(M).$$
Coming back to the definition of $U$, we deduce that it is an $(\alpha\beta f,g)$-transformation.

Finally, if $U$ is of the type described in point (b) of Theorem \ref{fgtransfotheo}, then
$M \mapsto U(M^T)$ is of the type described in point (a) of that theorem for the pair $(f^T,g)$.
Applying the above results in that situation yields the claimed statements.

Therefore, Corollary \ref{fgtransfocor} is now established.

\subsection{Proof of Corollary \ref{fgtransfocor2}}\label{cor2section}

Let $f : \mathfrak{S}_n \rightarrow \F^*$ be a rigid mapping.

By Theorem \ref{fgtransfocor2}, there are a matrix $K \in \Mat_n(\F^*)$,
permutations $\sigma,\tau$ of $\lcro 1,n\rcro$, and a standard $f$-similarity $V$
such that $U : M \mapsto K \star (P_\sigma V(M)P_\tau)$ or
$U : M \mapsto K \star (P_\sigma V(M^T)P_\tau)$.

The rigidity of $f$ shows that $V$ reads $M \mapsto D M \Delta$ for invertible diagonal matrices $D,\Delta$ of $\Mat_n(\F)$.
Denoting by $X$ the diagonal vector of $D$ and by $Y$ the one of $\Delta$, we see that $L:=X Y^T$
has all its entries nonzero and $V: M \mapsto L \star M$.
Hence
$$\forall M \in \Mat_n(\F), \; U(M)=\bigl(K \star (P_\sigma L P_\tau)\bigr) \star (P_\sigma MP_\tau)$$
or
$$\forall M \in \Mat_n(\F), \; U(M)=\bigl(K \star (P_\sigma L P_\tau)\bigr) \star (P_\sigma M^TP_\tau).$$
Obviously, the matrix $K':=K \star (P_\sigma L P_\tau)$ has all its entries nonzero, and hence Corollary
\ref{fgtransfocor2} is proved.

\subsection{Application to the preservers of the determinant and of the permanent}\label{detperpressection}

Here, we show how Theorem \ref{fgtransfotheo} easily yields the non-trivial part in Frobenius's result (Theorem \ref{detpreservers}) and the permanent preservers (see Theorem \ref{perpreservers}) with no restriction of cardinality on the underlying field.

\subsubsection{Determinant preservers}

Here, we consider the case when $f$ is the signature mapping.
The known properties of the determinant show that $M \mapsto PMQ$ and
$M \mapsto PM^TQ$ are $(f,f)$-transformations for every pair
$(P,Q)\in \GL_n(\F)^2$ such that $\det P\det Q=1$. Conversely, let $U$ be an $(f,f)$-transformation. The only column-$f$-adapted permutation is the identity, ditto for row-$f$-adaptivity.
Moreover, the only super-$f$-adapted matrix is $E:=(1)_{1 \leq i,j \leq n}$.
Hence, there exists a standard $f$-similarity $V$ such that
$U : M \mapsto E \star V(M)=V(M)$ or  $U : M \mapsto E \star V(M^T)=V(M^T)$.
Write $V : M \mapsto PMQ$ for some matrices $P,Q$ of $\GL_n(\F)$.
Since the determinant is invariant under transposing, we obtain that $\det P \det Q=1$,
which yields the conclusion.

\subsubsection{Permanent preservers}

Here, we consider the case when $f$ is constant with value $1$, $\chi(\F) \neq 2$ and $n \geq 3$.
Let $\sigma$ and $\tau$ belong to $\mathfrak{S}_n$, and let $K \in \Mat_n(\F^*)$
be a normalized rank $1$ matrix.
Obviously, $M \mapsto P_\sigma M P_\tau$ and $M \mapsto P_\sigma M^T P_\tau$
are $(f,f)$-transformations. So is $M \mapsto K \star M$, by Lemma \ref{HadProductLemma}.
Hence, $M \mapsto K \star (P_\sigma M P_\tau)$ and $M \mapsto K \star (P_\sigma M^T P_\tau)$
are $(f,f)$-transformations.

Conversely, let $U$ be an $(f,f)$-transformation.
By Example \ref{perExample}, the mapping $f$ is rigid.
By Corollary \ref{fgtransfocor2}, there is a matrix $K \in \Mat_n(\F^*)$
together with permutations $\sigma,\tau$ of $\lcro 1,n\rcro$ such that
$$U: M \mapsto K \star (P_\sigma M P_\tau) \quad \text{or} \quad
U :M \mapsto K \star (P_\sigma M^T P_\tau).$$
Yet, we have seen earlier that $M \mapsto P_\sigma M P_\tau$ and $M \mapsto P_\sigma M^T P_\tau$ are $(f,f)$-transformations, and hence $M \mapsto K \star M$ is an $(f,f)$-transformation in any case. By Lemma \ref{HadProductLemma}, we conclude that $K$ is a normalized rank $1$ matrix.
This completes the proof of Theorem \ref{perpreservers} for an arbitrary field with characteristic not $2$.

\section{The case of central mappings}\label{centralsection}

\subsection{Preliminaries}

Let $n \geq 2$. In this section, we give an explicit description of all $(f,f)$-transformations when $f : \mathfrak{S}_n \rightarrow \F^*$
is a central mapping, i.e.\ $f(\tau \sigma \tau^{-1})=f(\sigma)$ for all $(\sigma,\tau)\in \mathfrak{S}_n^2$.
Fix such a mapping $f$.
First of all, it is obvious that $M \mapsto P_\sigma M P_\sigma^{-1}$
is an $(f,f)$-transformation for all $\sigma \in \mathfrak{S}_n$.
Next, every permutation of $\lcro 1,n\rcro$ is conjugated to its inverse, whence $f^T=f$.
It follows that $M \in \Mat_n(\F) \mapsto M^T$ is an $(f,f)$-transformation.

Next, we have seen in Proposition \ref{centralequivalencetheo} that either $f$ is H-equivalent to the signature, in which case the $(f,f)$-transformations are known, or $f$ is rigid.

In the rest of this section, we consider the case when $f$ is rigid.
By Corollary \ref{fgtransfocor2}, any $(f,f)$-transformation has one of the forms
$$M \mapsto A \star (P_\sigma MP_\tau) \quad \text{or} \quad M \mapsto A \star (P_\sigma M^T P_\tau)$$
for some matrix $A \in \Mat_n(\F^*)$ and some pair $(\sigma,\tau)\in (\mathfrak{S}_n)^2$,
and in that case we find that the mapping
$$M \mapsto A \star (M P_{\sigma \tau})$$
is an $(f,f)$-transformation because both $M \mapsto P_\sigma M P_\sigma^{-1}$ and
$M \mapsto P_\sigma M^T P_\sigma^{-1}$ are $(f,f)$-transformations.
In other words, we have, for the matrix $B:=(A P_{(\sigma \tau)^{-1}})^{[-1]}$, the identity
$$\forall M \in \Mat_n(\F), \; \widetilde{f}(MP_{\sigma\tau})=\widetilde{f}(B \star M).$$

This motivates the following definition, where we no longer discard the possibility that
$f$ be H-equivalent to the signature:

\begin{Def}
Let $f : \mathfrak{S}_n \rightarrow \F^*$ be a central mapping.

Let $\tau \in \mathfrak{S}_n$.
We say that $\tau$ if \textbf{$f$-coherent} whenever there exists a matrix $A \in \Mat_n(\F^*)$
(called \textbf{$\tau$-adapted}) such that
$$\forall M \in \Mat_n(\F), \; \widetilde{f}(MP_\tau)=\widetilde{f}(A \star M).$$
We denote by $G_f$ the set of all $f$-coherent permutations of $\mathfrak{S}_n$.
\end{Def}

For example, if $f$ is constant, then every permutation of $\mathfrak{S}_n$ is $f$-coherent.

Now, assume that $f$ is not H-equivalent to the signature, that we know the $f$-coherent permutations of $\lcro 1,n\rcro$ and, for each such permutation $\tau$, that we have an adapted matrix $A_\tau$. Then, one sees that the set of all
$(f,f)$-transformations is the set of all maps having one of the forms
$$M \mapsto (K \star (A_\tau P_\tau)^{[-1]})\star (P_{\sigma} M P_{\sigma^{-1}\tau})$$
or
$$M \mapsto (K \star (A_\tau P_\tau)^{[-1]})\star (P_{\sigma} M^T P_{\sigma^{-1}\tau}),$$
where $\tau \in G_f$, $\sigma \in \mathfrak{S}_n$, and $K \in \Mat_n(\F^*)$ is a normalized rank $1$ matrix.

Hence, it remains to understand what the set $G_f$ can be and, for each $\tau$ in $G_f$, to find an adapted matrix $A_\tau$.

Two final remarks before we begin our study: let $(A,\tau) \in \Mat_n(\F^*) \times \mathfrak{S}_n$.
Then, the identity
$$\forall M\in \Mat_n(\F), \; \widetilde{f}(MP_\tau)=\widetilde{f}(A \star M)$$
is equivalent to
$$\forall \sigma \in \mathfrak{S}_n, \; f(\sigma\tau)=f(\sigma)\prod_{j=1}^n a_{\sigma(j),j.}$$
Moreover, if this property is satisfied by the pair $(A,\tau)$ then it is also satisfied by
$(K \star A,\tau)$ for every normalized rank $1$ matrix $K$ in $\Mat_n(\F)$.

\subsection{The set of all $f$-coherent permutations is a normal subgroup}\label{normalsubgroupsection}

\begin{prop}\label{normalsubgroupprop}
Let $f : \mathfrak{S}_n \rightarrow \F^*$.
Then, $G_f$ is a normal subgroup of $\mathfrak{S}_n$.
\end{prop}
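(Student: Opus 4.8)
The plan is to exploit the reformulation recorded immediately before the statement: a permutation $\tau$ is $f$-coherent if and only if there exists $A \in \Mat_n(\F^*)$ with $f(\sigma\tau)=f(\sigma)\prod_{j=1}^n a_{\sigma(j),j}$ for all $\sigma \in \mathfrak{S}_n$. I would work inside the abelian group $\mathcal{G}$ of all maps $\mathfrak{S}_n \to \F^*$ under pointwise multiplication, and for each $A \in \Mat_n(\F^*)$ introduce the \emph{monomial function} $\rho_A : \sigma \mapsto \prod_{j=1}^n a_{\sigma(j),j}$. Writing $\delta_\tau : \sigma \mapsto f(\sigma\tau)/f(\sigma)$, the reformulation says exactly that $\tau \in G_f$ if and only if $\delta_\tau=\rho_A$ for some $A \in \Mat_n(\F^*)$. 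A first routine observation is that $\mathcal{M}:=\{\rho_A : A \in \Mat_n(\F^*)\}$ is a subgroup of $\mathcal{G}$, since $\rho_A\,\rho_B=\rho_{A\star B}$, $\rho_A^{-1}=\rho_{A^{[-1]}}$ and $\rho_E=1$. Thus $G_f=\{\tau : \delta_\tau \in \mathcal{M}\}$, and everything reduces to how $\mathcal{M}$ reacts to translating and conjugating the argument.

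The computational heart is a stability lemma: $\mathcal{M}$ is stable under right-translation $h \mapsto (\sigma \mapsto h(\sigma\gamma))$ and under conjugation $h \mapsto (\sigma \mapsto h(\gamma^{-1}\sigma\gamma))$, for every $\gamma \in \mathfrak{S}_n$. Carrying out the index substitution $k=\gamma(j)$ inside the defining product, I would check that $\rho_A(\sigma\gamma)=\rho_{A\,P_{\gamma^{-1}}}(\sigma)$ and $\rho_A(\gamma^{-1}\sigma\gamma)=\rho_{P_\gamma A P_\gamma^{-1}}(\sigma)$; in both cases the new matrix lies in $\Mat_n(\F^*)$ because right-multiplying or conjugating by a permutation matrix merely rearranges the entries and so keeps them nonzero. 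I expect this permutation-matrix bookkeeping to be the main obstacle: one has to track carefully which matrix ($A\,P_{\gamma^{-1}}$ versus $P_\gamma A P_\gamma^{-1}$) is attached to which operation, so that no entry is accidentally sent to $0$.

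With the stability lemma in hand, the subgroup axioms follow from cocycle-type identities for $\delta$. First, $\delta_{\id}=1=\rho_E$, so $\id \in G_f$. Next, the chain identity $\delta_{\tau_1\tau_2}(\sigma)=\delta_{\tau_2}(\sigma\tau_1)\,\delta_{\tau_1}(\sigma)$ (obtained by inserting $f(\sigma\tau_1)$ as an intermediate factor) exhibits $\delta_{\tau_1\tau_2}$ as the product of $\delta_{\tau_1}$ with a right-translate of $\delta_{\tau_2}$; if $\tau_1,\tau_2 \in G_f$, both factors lie in $\mathcal{M}$, hence so does their product, giving $\tau_1\tau_2 \in G_f$. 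Likewise $\delta_{\tau^{-1}}(\sigma)=1/\delta_\tau(\sigma\tau^{-1})$ displays $\delta_{\tau^{-1}}$ as the inverse of a right-translate of $\delta_\tau$, so $\tau \in G_f$ forces $\tau^{-1}\in G_f$. Hence $G_f$ is a subgroup of $\mathfrak{S}_n$.

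Finally, normality is where centrality of $f$ is indispensable. For $\gamma \in \mathfrak{S}_n$ and $\tau \in G_f$, centrality gives $f(\sigma\gamma\tau\gamma^{-1})=f(\gamma^{-1}\sigma\gamma\,\tau)$ and $f(\sigma)=f(\gamma^{-1}\sigma\gamma)$, whence $\delta_{\gamma\tau\gamma^{-1}}(\sigma)=\delta_\tau(\gamma^{-1}\sigma\gamma)$ is a conjugate of $\delta_\tau$; by the stability lemma it again lies in $\mathcal{M}$, so $\gamma\tau\gamma^{-1}\in G_f$. I would stress that this is exactly the step that fails for non-central $f$: only centrality allows one to rewrite the numerator $f(\sigma\gamma\tau\gamma^{-1})$ as a value of $\delta_\tau$ at a conjugated argument, which explains why $f$-coherence is introduced only for central $f$.
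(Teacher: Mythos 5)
Your proposal is correct and is essentially the paper's own argument, transposed through the pointwise reformulation (namely that $\tau \in G_f$ if and only if $\sigma \mapsto f(\sigma\tau)/f(\sigma)$ equals $\sigma \mapsto \prod_{j=1}^n a_{\sigma(j),j}$ for some $A \in \Mat_n(\F^*)$) which the paper itself records immediately before the proposition: your translation identity $\rho_A(\sigma\gamma)=\rho_{AP_{\gamma^{-1}}}(\sigma)$ and conjugation identity $\rho_A(\gamma^{-1}\sigma\gamma)=\rho_{P_\gamma A P_\gamma^{-1}}(\sigma)$ are exactly the paper's Hadamard-product manipulations $B \star (MP_\sigma)=\bigl((BP_\sigma^{-1})\star M\bigr)P_\sigma$ and $P_u\bigl(A \star (P_u^{-1}MP_u)\bigr)P_u^{-1}=(P_uAP_u^{-1})\star M$, with centrality entering at the same (and only the same) place, the normality step. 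The sole deviation is cosmetic: you obtain inverses directly from $\delta_{\tau^{-1}}(\sigma)=1/\delta_\tau(\sigma\tau^{-1})$, whereas the paper invokes the finite order of $\sigma$ to realize $\sigma^{-1}$ as a positive power of $\sigma$.
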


\begin{proof}
Obviously the identity of $\lcro 1,n\rcro$ is $f$-coherent and the matrix
of $\Mat_n(\F)$ with all entries equal to $1$ is adapted to it.
Next, let $\sigma,\tau$ be $f$-coherent permutations, with respective adapted matrices $A,B$.
Then, for all $M \in \Mat_n(\F)$,
\begin{align*}
\widetilde{f}(MP_{\sigma\tau})& =\widetilde{f}(B \star (MP_\sigma)) \\
& =\widetilde{f}\bigl((B P_\sigma^{-1}) \star M)P_\sigma\bigr) \\
& =\widetilde{f}\bigl(A \star (B P_\sigma^{-1}) \star M).
\end{align*}
Hence, $\sigma \tau$ is $f$-coherent and $A \star (B P_\sigma^{-1})$ is an adapted matrix.
It follows that every positive power of $\sigma$ belongs to $G_f$, and since $\sigma$ has finite order
we get that $\sigma^{-1}$ belongs to $G_f$. Hence $G_f$ is a subgroup of $\mathfrak{S}_n$.

Finally, let $u \in \mathfrak{S}_n$, and let $(\tau,A) \in G_f \times \Mat_n(\F^*)$
be such that
$$\forall M \in \Mat_n(\F), \; \widetilde{f}(MP_\tau)=\widetilde{f}(A \star M).$$
Then, for all $M \in \Mat_n(\F)$,
\begin{align*}
\widetilde{f}(MP_{u\tau u^{-1}})
& = \widetilde{f}(P_u (P_u^{-1}MP_u)P_\tau P_{u^{-1}}) \\
& =\widetilde{f}\bigl((P_u^{-1}MP_u)P_\tau\bigr) \\
& =\widetilde{f}\bigl(A \star (P_u^{-1}MP_u)\bigr) \\
& =\widetilde{f}\bigl(P_u(A \star (P_u^{-1}MP_u))P_u^{-1}\bigr) \\
& =\widetilde{f}\bigl((P_uAP_u^{-1}) \star M\bigr).
\end{align*}
Hence, $u \tau u^{-1} \in G_f$. We conclude that $G_f$ is a normal subgroup of $\mathfrak{S}_n$.
\end{proof}

The normal subgroups of $\mathfrak{S}_n$ are well-known.
Hence, we have either $G_f=\{\id\}$, or $G=\mathfrak{A}_n$, or $G_f=\mathfrak{S}_n$,
or $n=4$ and $G_f$ is the Klein group
$$K_4:=\bigl\{\id,\tau_{1,2}\tau_{3,4},\tau_{1,3}\tau_{2,4},\tau_{1,4}\tau_{2,3}\bigr\}.$$

In the remainder of the article, we characterize, for each normal subgroup
$H \vartriangleleft \mathfrak{S}_n$, the central mappings $f$ for which $H \subset G_f$, and for each
such map and each $h \in H$ we give an adapted matrix.
From those classifications, it is easy to derive the one of the maps for which $H=G_f$.
We start with a technical result.

\begin{prop}\label{centralequivalenceprop}
Let $f : \mathfrak{S}_n  \rightarrow \F^*$ be a central mapping.
Let $\alpha,\beta$ belong to $\F^*$, and define
$B:=(b_{i,j})_{1 \leq i,j \leq n}$ by $b_{i,j}:=\alpha$ if $i=j$, and $b_{i,j}:=1$ otherwise, and
$C:=(c_{i,j})_{1 \leq i,j \leq n}$ by $c_{i,j}:=\beta$ if $i=1$, and $c_{i,j}:=1$ otherwise.
Then, the mapping
$$g : \sigma \mapsto \alpha^{\nfix(\sigma)} \beta f(\sigma)$$
is central and
$$\forall M \in \Mat_n(\F), \; \widetilde{g}(M)=\widetilde{f}((B \star C)\star M).$$
Moreover, $G_g=G_f$ and, for every $f$-coherent permutation $\sigma$, with corresponding adapted matrix $A$,
the matrix $B^{[-1]}\star C^{[-1]} \star A \star (BP_\sigma^{-1}) \star (CP_\sigma^{-1})$ is adapted to $\sigma$ as a $g$-coherent permutation.
\end{prop}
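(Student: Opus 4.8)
The plan is to dispatch the four assertions in the order stated, noting that the first two are routine and that the substance lies in transporting an adapted matrix for $f$ to one for $g$. First I would check that $g$ is central: since conjugate permutations share the same cycle type, $\nfix$ is a class function, and $f$ is central by hypothesis, so $g : \sigma \mapsto \alpha^{\nfix(\sigma)}\beta\, f(\sigma)$ is constant on every conjugacy class. For the identity $\widetilde{g}(M) = \widetilde{f}((B\star C)\star M)$, I would invoke the description of the $\star$-action on functionals from Section~1.3: the functional $M \mapsto \widetilde{f}((B\star C)\star M)$ is attached to the map $\sigma \mapsto f(\sigma)\prod_{k=1}^n b_{\sigma(k),k}\,c_{\sigma(k),k}$. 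The two products evaluate at once: $b_{\sigma(k),k}=\alpha$ exactly when $k$ is a fixed point of $\sigma$, so $\prod_k b_{\sigma(k),k}=\alpha^{\nfix(\sigma)}$, while $c_{\sigma(k),k}=\beta$ only for the unique index $k=\sigma^{-1}(1)$, so $\prod_k c_{\sigma(k),k}=\beta$. Hence the attached map is precisely $g$, giving the identity.

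The main work is the last assertion, which simultaneously yields the inclusion $G_f \subseteq G_g$. Write $D:=B\star C \in \Mat_n(\F^*)$, so the previous step reads $\widetilde{g}(X)=\widetilde{f}(D\star X)$ for all $X$, and, since $D$ has nonzero entries, also $\widetilde{f}(Y)=\widetilde{g}(D^{[-1]}\star Y)$. Let $\sigma \in G_f$ with adapted matrix $A$, so that $\widetilde{f}(NP_\sigma)=\widetilde{f}(A\star N)$ for all $N$. The two elementary identities I would establish are: (i) $D\star(MP_\sigma)=\big((D P_\sigma^{-1})\star M\big)P_\sigma$, which holds because right multiplication by $P_\sigma$ merely permutes columns and so commutes with $\star$ after relabelling the first factor; and (ii) the distributivity $(X\star Y)P_\rho=(XP_\rho)\star(YP_\rho)$, again because $P_\rho$ only permutes columns. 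Granting (i) and applying $f$-coherence with $N:=(D P_\sigma^{-1})\star M$, I compute
$$\widetilde{g}(MP_\sigma)=\widetilde{f}\big(D\star(MP_\sigma)\big)=\widetilde{f}\big(((D P_\sigma^{-1})\star M)P_\sigma\big)=\widetilde{f}\big(A\star (D P_\sigma^{-1})\star M\big).$$
Converting back via $\widetilde{f}(Y)=\widetilde{g}(D^{[-1]}\star Y)$ gives $\widetilde{g}(MP_\sigma)=\widetilde{g}\big(D^{[-1]}\star A\star (D P_\sigma^{-1})\star M\big)$, so $\sigma$ is $g$-coherent with adapted matrix $A':=D^{[-1]}\star A\star (D P_\sigma^{-1})$. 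Finally, using $D^{[-1]}=B^{[-1]}\star C^{[-1]}$ together with identity (ii) applied to $D P_\sigma^{-1}=(B\star C)P_\sigma^{-1}$, I rewrite $A'$ in the claimed form $B^{[-1]}\star C^{[-1]}\star A\star (B P_\sigma^{-1})\star (C P_\sigma^{-1})$; it lies in $\Mat_n(\F^*)$ because $A,B,C$ and their $[-1]$-inverses all have nonzero entries.

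It then remains to upgrade $G_f\subseteq G_g$ to equality $G_g=G_f$. Rather than redo the computation, I would exploit the symmetry of the construction: taking parameters $\alpha^{-1},\beta^{-1}$ and starting from $g$ recovers $f$, since $(\alpha^{-1})^{\nfix(\sigma)}\beta^{-1}g(\sigma)=f(\sigma)$. The forward direction just proved, applied to $g$ in place of $f$, therefore yields $G_g\subseteq G_f$, and the equality follows. The only point demanding genuine care is the bookkeeping in identities (i) and (ii), in particular keeping track of which side $P_\sigma^{-1}$ acts on; beyond that there is no conceptual obstacle, as the argument is a chain of $\star$-algebra manipulations anchored by the H-equivalence $\widetilde{g}(X)=\widetilde{f}(D\star X)$.
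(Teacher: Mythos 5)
Your proposal is correct and follows essentially the same route as the paper: the identical chain $\widetilde{g}(MP_\sigma)=\widetilde{f}(D\star(MP_\sigma))=\widetilde{f}\bigl(((DP_\sigma^{-1})\star M)P_\sigma\bigr)=\widetilde{f}\bigl(A\star(DP_\sigma^{-1})\star M\bigr)=\widetilde{g}\bigl(D^{[-1]}\star A\star(DP_\sigma^{-1})\star M\bigr)$ with $D:=B\star C$. You merely spell out details the paper calls obvious (centrality, the two $\star$-identities, and the symmetry argument with parameters $\alpha^{-1},\beta^{-1}$ giving $G_g\subseteq G_f$), which is fine.
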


\begin{proof}
The first two statements are obvious.
Now, let $(A,\sigma) \in \Mat_n(\F^*) \times \mathfrak{S}_n$
be such that $\widetilde{f}(MP_\sigma)=\widetilde{f}(A \star M)$ for all $M \in \Mat_n(\F)$.
Set $D:=B \star C$.
Then, for all $M \in \Mat_n(\F)$,
\begin{align*}
\widetilde{g}(MP_\sigma) & = \widetilde{f}(D \star (MP_\sigma)) \\
& = \widetilde{f}\bigl((DP_\sigma^{-1}) \star M)P_\sigma\bigr) \\
& = \widetilde{f}\bigl(A \star (DP_\sigma^{-1}) \star M\bigr) \\
& = \widetilde{g}\bigl(D^{[-1]} \star A \star (DP_\sigma^{-1}) \star M\bigr),
\end{align*}
which yields the last statement.
\end{proof}

\begin{Def}
Two central mappings $f$ and $g$ from $\mathfrak{S}_n$ to $\F^*$
are called \textbf{centrally equivalent} when there exists a pair $(\alpha,\beta)\in (\F^*)^2$
such that
$$\forall \sigma \in \mathfrak{S}_n, \; g(\sigma)=\alpha^{\nfix(\sigma)} \beta f(\sigma).$$
\end{Def}

Obviously, this defines an equivalence relation on the set of all central mappings from $\mathfrak{S}_n$ to $\F^*$. As we have seen, two centrally equivalent central mappings are necessarily H-equivalent.
The converse fails for $n=2$: in that case indeed any two mappings $f$, $g$ are H-equivalent, but they are centrally equivalent if and only if $\frac{g(\tau_{1,2})}{g(\id)}\,\frac{f(\id)}{f(\tau_{1,2})}$ is a square in $\F^*$,
which might not be true over general fields.

\begin{lemma}\label{constanttypelemma}
Let $H$ be a normal subgroup of $\mathfrak{S}_n$, and let
$f : \mathfrak{S}_n \rightarrow \F^*$ be a central mapping that is constant on every class in
$\mathfrak{S}_n/H$. Then, $H \subset G_f$, and for all $\sigma \in H$ the matrix $E:=(1)_{1 \leq i,j \leq n}$
is $\sigma$-adapted.
\end{lemma}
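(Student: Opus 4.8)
The plan is to apply the characterization of $f$-coherence recalled just before the statement, namely that a permutation $\tau$ is $f$-coherent with adapted matrix $A=(a_{i,j})$ precisely when
\[
\forall \sigma \in \mathfrak{S}_n, \quad f(\sigma\tau)=f(\sigma)\prod_{j=1}^n a_{\sigma(j),j},
\]
and to feed it the candidate matrix $A=E$. Since every entry of $E$ equals $1$, the product $\prod_{j=1}^n a_{\sigma(j),j}$ is $1$ for every $\sigma$ (equivalently, $E \star M = M$ for all $M \in \Mat_n(\F)$), so the criterion collapses to the single requirement
\[
\forall \sigma \in \mathfrak{S}_n, \quad f(\sigma\tau)=f(\sigma).
\]
Thus, to prove that an element of $H$ is $f$-coherent with $E$ as an adapted matrix, it suffices to verify this right-invariance.

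Now I would fix $\tau \in H$. For every $\sigma \in \mathfrak{S}_n$, the permutations $\sigma$ and $\sigma\tau$ lie in the same left coset $\sigma H$ of $H$, because $\tau \in H$. As $f$ is assumed constant on each class of $\mathfrak{S}_n/H$, we obtain $f(\sigma\tau)=f(\sigma)$. Hence the displayed identity holds, so $E$ is $\tau$-adapted and $\tau \in G_f$. Since $\tau \in H$ was arbitrary, this yields $H \subset G_f$ and simultaneously exhibits $E$ as a $\sigma$-adapted matrix for every $\sigma \in H$, as claimed.

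There is essentially no obstacle here: the entire content is the observation that the coset-constancy hypothesis is, word for word, the translation of the $f$-coherence condition once the trivial adapted matrix $E$ is plugged in. I would only remark that neither the centrality of $f$ nor the normality of $H$ is actually needed for this direction; they are part of the ambient hypotheses, but the argument relies solely on $f$ being constant on the left cosets $\sigma H$.
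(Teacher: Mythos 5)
Your proof is correct and follows exactly the same route as the paper's (very terse) proof: the coset-constancy hypothesis gives $f(\sigma\tau)=f(\sigma)$ for all $\sigma\in\mathfrak{S}_n$ and $\tau\in H$, which is precisely the $f$-coherence criterion with adapted matrix $E$. Your side remark that centrality and normality are not needed for this implication is also accurate.
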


\begin{proof}
Let $\tau \in H$. For all $\sigma \in \mathfrak{S}_n$, the assumptions show that $f(\sigma \tau)=f(\sigma)$, whence
$\tau \in G_f$ and $E$ is $\tau$-adapted.
\end{proof}

Let us set aside the trivial case when $H=\{\id\}$ or $H=\mathfrak{S}_n$.
Characterizing the maps that satisfy the assumption of Lemma \ref{constanttypelemma} is easy:
\begin{itemize}
\item If $H=\mathfrak{A}_n$ then the central maps $f : \mathfrak{S}_n \rightarrow \F^*$ that are constant on each class in $\mathfrak{S}_n/H$ are the maps that are constant on $\mathfrak{A}_n$ and on $\mathfrak{S}_n \setminus \mathfrak{A}_n$.
\item If $H=K_4$ then the central maps $f : \mathfrak{S}_4 \rightarrow \F^*$ that are constant on each class in $\mathfrak{S}_4/H$ are the central maps that give the same value to the $4$-cycles and to the transpositions, and that
    give the same value to the double-transpositions and to the identity.
\end{itemize}

In the remainder of the section, we examine converse statements.
Here are our two main results:

\begin{prop}\label{GfSn}
Let $n \geq 3$ and $f : \mathfrak{S}_n \rightarrow \F^*$ be a central map.
Then, $G_f=\mathfrak{S}_n$ if and only if $f$ is centrally equivalent to the constant map with value $1$
or to the signature.
\end{prop}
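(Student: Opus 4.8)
The plan is to treat the two implications separately, using Proposition \ref{centralequivalenceprop} to reduce the ``if'' direction to two explicit computations, and Theorem \ref{centralequivalencetheo} to organize the ``only if'' direction.

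For sufficiency, recall from Proposition \ref{centralequivalenceprop} that $G_g=G_f$ whenever $g$ and $f$ are centrally equivalent; hence it suffices to check that $G_f=\mathfrak{S}_n$ when $f$ is the constant map $1$ and when $f=\sgn$. For the constant map every $\tau$ is $f$-coherent with adapted matrix $E$, as already noted. For $f=\sgn$ one has $f(\sigma\tau)=\sgn(\tau)\,f(\sigma)$ for all $\sigma$, so I would simply exhibit, for each fixed $\tau$, a matrix $A\in\Mat_n(\F^*)$ with $\prod_{j}a_{\sigma(j),j}=\sgn(\tau)$ for all $\sigma$: the matrix $E$ works when $\sgn(\tau)=1$, and the matrix whose first row is constant equal to $-1$ and whose remaining entries equal $1$ works when $\sgn(\tau)=-1$, since exactly one factor of each product $\prod_j a_{\sigma(j),j}$ lands in the first row. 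Thus $G_{\sgn}=\mathfrak{S}_n$, and sufficiency follows.

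For necessity, assume $G_f=\mathfrak{S}_n$ and invoke Theorem \ref{centralequivalencetheo}. If alternative (b) holds, then $f(\sigma)=\alpha\beta^{\nfix(\sigma)}\sgn(\sigma)$, which is by definition centrally equivalent to $\sgn$ (through the pair $(\beta,\alpha)$), and the proof is complete. The whole difficulty is therefore concentrated in alternative (a), where $f$ is rigid, and the goal becomes to show that $f$ is centrally equivalent to the constant map, i.e.\ that $f(\sigma)=\beta\,\alpha^{\nfix(\sigma)}$ for suitable nonzero scalars $\alpha,\beta$. Note that the constant map is itself rigid for $n\ge 3$ (Example \ref{perExample}), so this case genuinely occurs and cannot be dismissed. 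To handle it I would fix the transposition $\tau_{1,2}$ together with an adapted matrix $A$, so that $f(\sigma\tau_{1,2})=f(\sigma)\prod_{j}a_{\sigma(j),j}$ for all $\sigma$, and then extract enough rigidity of $A$ from two sources. First, since $\tau_{1,2}^2=\id$ and the identity admits only normalized rank $1$ adapted matrices (by Lemma \ref{HadProductLemma}), the formula for the adapted matrix of a product in Proposition \ref{normalsubgroupprop} forces $A\star(AP_{\tau_{1,2}}^{-1})$ to be normalized rank $1$, which is a strong proportionality constraint on the columns of $A$. Second, centrality makes the product function $\sigma\mapsto\prod_j a_{\sigma(j),j}$ invariant under conjugating $\sigma$ by any permutation stabilizing $\{1,2\}$, and, more decisively, forces this function to depend only on the cycle types of $\sigma$ and of $\sigma\tau_{1,2}$. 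Combining these constraints, I expect to pin $A$ down, up to the normalized rank $1$ ambiguity, to the diagonal-versus-off-diagonal homogeneous shape exemplified by the matrix $B$ of Proposition \ref{centralequivalenceprop}; substituting such an $A$ back into the relation and propagating along products of transpositions would then yield $f(\sigma)=\beta\alpha^{\nfix(\sigma)}$.

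The main obstacle is precisely this last step: converting the interplay between the product-function structure of $A$ and the cycle-type constancy imposed by centrality into the explicit $\nfix$-form. A subtle point lurking here is that one must show the ratio $f(\tau_{1,2})/f(\id)$ is a square in $\F^*$, since this is exactly what allows a central-equivalence normalization to absorb it and reduce $f$ to a constant; this squareness is what rigidity together with $n\ge 3$ should guarantee, whereas it can fail when $n=2$, in agreement with the remark that the statement breaks down for $n=2$.
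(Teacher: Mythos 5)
Your ``if'' direction is correct and complete, and it is essentially the paper's: central equivalence preserves $G_f$ by Proposition \ref{centralequivalenceprop}, the constant map has every permutation $f$-coherent with adapted matrix $E$ (Lemma \ref{constanttypelemma}), and your explicit matrices for $\sgn$ do satisfy $\prod_j a_{\sigma(j),j}=\sgn(\tau)$ for all $\sigma$, since each such product takes exactly one factor from each row. Your reduction of the ``only if'' direction through Theorem \ref{centralequivalencetheo} is also a legitimate organizing idea that differs from the paper (which never invokes that theorem here): in alternative (b) the map $\sigma\mapsto\alpha\beta^{\nfix(\sigma)}\sgn(\sigma)$ is centrally equivalent to $\sgn$ by inspection, so, as you say, everything is concentrated in the rigid case.

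The problem is that in the rigid case you do not give a proof; you give a program, and you acknowledge as much (``I expect to pin $A$ down\dots'', ``the main obstacle is precisely this last step''). That step is not a routine verification: it is the entire mathematical content of the proposition, and it is where the paper spends most of Section \ref{centralsection}. Concretely, the paper handles $n=3$ by Lemma \ref{GfSn3}, multiplying the relation $f(\sigma\tau_{1,2})=f(\sigma)\prod_j a_{\sigma(j),j}$ over $\sigma\in\mathfrak{A}_3$ and over $\sigma\in\mathfrak{S}_3\setminus\mathfrak{A}_3$ to get $\Pi^2=1$ for the product $\Pi$ of all entries of $A$, whence $f(\tau_{1,2})^3=\pm f(\id)\,f\bigl((1\,2\,3)\bigr)^2$, from which the central equivalence is assembled; for $n\geq 4$ it proves the stronger Proposition \ref{GfAn} (assuming only $\mathfrak{A}_n\subset G_f$) by an induction on $n$ whose base case, Lemma \ref{GfAn4}, is itself a delicate adapted-matrix computation, and then concludes in Section \ref{GfSnsection} by deriving $\frac{a_{i,2}}{a_{i,1}}=t^2\,\frac{a_{j,2}}{a_{j,1}}$ for all $i\neq j$ and chaining three indices to force $t^2=1$, where $t=\beta/\alpha$. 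Your two constraints are valid as far as they go: any matrix adapted to $\id$ is indeed normalized of rank $1$ by Lemma \ref{HadProductLemma}, and the composition formula in Proposition \ref{normalsubgroupprop} does make $A\star(AP_{\tau_{1,2}}^{-1})$ adapted to $\id$. But that matrix has columns with entries $a_{i,1}a_{i,2}$, $a_{i,2}a_{i,1}$, $a_{i,3}^2,\dots,a_{i,n}^2$, so its rank-one-ness constrains the entries of $A$ only through their squares, i.e.\ up to sign ambiguities; this is exactly where the squareness issue you flag lives, and you offer no mechanism for resolving it --- the paper resolves it not locally from one transposition but from the global coset-product identities above. Until the rigid case is actually carried out, by your route or the paper's, the proof is incomplete.
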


\begin{prop}\label{GfAn}
Let $n \geq 4$ and $f : \mathfrak{S}_n \rightarrow \F^*$ be a central map.
Then, $\mathfrak{A}_n \subset G_f$ if and only if $f$ is centrally equivalent to a map
that is constant on $\mathfrak{A}_n$ and on $\mathfrak{S}_n \setminus \mathfrak{A}_n$.
\end{prop}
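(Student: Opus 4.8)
The plan is to prove the two implications separately, the reverse one being short. First suppose $f$ is centrally equivalent to a map $g$ that is constant on $\mathfrak{A}_n$ and on $\mathfrak{S}_n \setminus \mathfrak{A}_n$. Such a $g$ is central and constant on each class of $\mathfrak{S}_n/\mathfrak{A}_n$, so Lemma \ref{constanttypelemma} (with $H=\mathfrak{A}_n$) gives $\mathfrak{A}_n \subset G_g$; since $G_g=G_f$ by Proposition \ref{centralequivalenceprop}, we obtain $\mathfrak{A}_n \subset G_f$.

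For the converse, assume $\mathfrak{A}_n \subset G_f$. I will reduce everything to the following core assertion: there is a scalar $\lambda \in \F^*$ such that
\[
\forall \sigma \in \mathfrak{S}_n,\ \forall\, c \text{ a $3$-cycle},\quad f(\sigma c)=\lambda^{\,\nfix(\sigma c)-\nfix(\sigma)}\,f(\sigma).
\]
Granting this, since the $3$-cycles generate $\mathfrak{A}_n$, a telescoping argument along a factorization $\tau=c_1\cdots c_k$ yields $f(\sigma\tau)=\lambda^{\,\nfix(\sigma\tau)-\nfix(\sigma)}f(\sigma)$ for every $\tau\in\mathfrak{A}_n$ and every $\sigma$. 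Taking $\sigma=\id$ shows that $\sigma\mapsto \lambda^{-\nfix(\sigma)}f(\sigma)$ is constant on $\mathfrak{A}_n$, and taking $\sigma$ to be a fixed odd permutation shows it is constant on $\mathfrak{S}_n\setminus\mathfrak{A}_n$; hence $g:\sigma\mapsto \lambda^{-\nfix(\sigma)}f(\sigma)$ is the desired two-valued map, centrally equivalent to $f$ (with $\alpha=\lambda^{-1}$ and $\beta=1$).

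To establish the core assertion, fix a $3$-cycle $c$ and let $A=(a_{i,j}) \in \Mat_n(\F^*)$ be $c$-adapted, so that $f(\sigma c)/f(\sigma)=\prod_{j}a_{\sigma(j),j}=:W(\sigma)$ for all $\sigma$. The key observation is that centrality forces an invariance of $W$: for every $\rho$ in the centralizer $Z(c)$ of $c$ one has $\rho\sigma\rho^{-1} c=\rho(\sigma c)\rho^{-1}$, whence $f(\rho\sigma\rho^{-1}c)=f(\sigma c)$ and $f(\rho\sigma\rho^{-1})=f(\sigma)$, so that $W(\rho\sigma\rho^{-1})=W(\sigma)$ for all $\sigma$. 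Because $Z(c)=\langle c\rangle\times\mathrm{Sym}(\lcro 1,n\rcro\setminus\mathrm{supp}(c))$ and $W$ is a product-function, this invariance is a rigid constraint: by Lemma \ref{HadProductLemma} two matrices induce the same product-function exactly when their Hadamard quotient is a normalized rank $1$ matrix, i.e.\ exactly when they share the same $2\times 2$ cross-ratios $\frac{a_{i,j}a_{k,l}}{a_{i,l}a_{k,j}}$. The invariance therefore says that the cross-ratios of $A$ are stable under relabeling indices by $Z(c)$. Running this through the transitivity of $\mathrm{Sym}(\lcro 1,n\rcro\setminus\mathrm{supp}(c))$ on the indices outside $\mathrm{supp}(c)$ shows that $W(\sigma)$ depends only on the restriction of $\sigma$ to $\mathrm{supp}(c)$; one checks that $\nfix(\sigma c)-\nfix(\sigma)$ likewise depends only on that restriction, and evaluating both sides on the six permutations supported by $\mathrm{supp}(c)$ identifies $W(\sigma)=\lambda_c^{\,\nfix(\sigma c)-\nfix(\sigma)}$ for a scalar $\lambda_c$ read off from $A$.

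The main obstacle is twofold and lies precisely in this last step. First, I must rule out a spurious ``signature of the tail'' factor: a priori the cross-ratios of $A$ on the complement of $\mathrm{supp}(c)$ could equal $-1$ rather than $1$, which would make $W$ depend on $\sigma$ through more than its restriction to $\mathrm{supp}(c)$; I expect to eliminate this by exploiting the coherence of the $3$-cycles supported in that complement, together with the identity $\prod_{m=0}^{2}W(\sigma c^m)=1$ coming from $c^3=\id$, which forces those cross-ratios to be $1$. Second, I must prove that $\lambda_c$ does not depend on $c$: since all $3$-cycles are conjugate and $f$ is central, evaluating the formula at $\sigma=\id$ already gives $\lambda_c^{3}=f(c)/f(\id)$ independently of $c$, and the residual cube-root ambiguity is removed by comparing, through a single product of two $3$-cycles, a $3$-cycle with a double transposition (here $n\geq 4$ is used, and their numbers of fixed points differ by $1$), which pins down $\lambda$ itself. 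Once $\lambda_c=\lambda$ is uniform, the core assertion and hence the proposition follow.
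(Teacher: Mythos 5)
Your reverse implication is fine, and it is the paper's own: Lemma \ref{constanttypelemma} combined with the equality $G_g=G_f$ from Proposition \ref{centralequivalenceprop}. Your reduction of the direct implication to the ``core assertion'', and the telescoping argument that converts the core assertion into central equivalence with a two-valued map, are also sound. The genuine gap is in the proof of the core assertion itself, and it is not a detail: your entire mechanism for pinning down $W(\sigma)=f(\sigma c)/f(\sigma)$ rests on the factor $\mathrm{Sym}\bigl(\lcro 1,n\rcro\setminus\mathrm{supp}(c)\bigr)$ of the centralizer being rich --- you invoke its transitivity on the complement of $\mathrm{supp}(c)$, and, to kill the ``tail signature'', the $f$-coherence of $3$-cycles supported in that complement. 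But for $n=4$ the complement is a singleton and for $n=5$ it has two elements: there is no transitivity to exploit and there are no $3$-cycles in the complement at all. So your argument says nothing precisely in the hardest cases covered by the statement ($n\geq 4$). This is not incidental: the paper's proof is an induction on $n$ whose entire difficulty is concentrated in an explicit, hands-on base case $n=4$ (Lemma \ref{GfAn4}, which exploits the $\mathfrak{S}_3$ result and carefully chosen products of $3$-cycles and double transpositions), exactly because the centralizer trick you rely on --- which the paper also uses, in its inductive step, in the form $P_uAP_u^{-1}=K\star A$ with $K$ normalized of rank $1$ --- is too weak there.

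Even for large $n$, the two steps you yourself flag as ``the main obstacle'' are plans, not proofs (``I expect to eliminate this by\dots''), and the identification step that follows them is incomplete as stated: granting that $W(\sigma)$ depends only on $\sigma|_{\mathrm{supp}(c)}$, evaluating on the six permutations supported by $\mathrm{supp}(c)$ only determines $W$ on permutations with $\sigma(\mathrm{supp}(c))=\mathrm{supp}(c)$. A general $\sigma$ restricts to an injection of $\mathrm{supp}(c)$ into $\lcro 1,n\rcro$ whose image meets the complement; up to the $\mathrm{Sym}(K)$-relabeling you quotient by, these restrictions form many more classes than six (they correspond to the $34$ partial injections of a $3$-set into itself when $n$ is large), and your evaluation reaches none of the non-total ones. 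Finally, your uniformization of $\lambda_c$ only yields $\lambda_c=\lambda_{c'}$ when $cc'$ is a double transposition; to treat all pairs --- notably $c'=c^{-1}$, for which $cc'=\id$ --- you need a connectivity argument for the corresponding graph on $3$-cycles, which is absent. In short, the strategy is attractive and its skeleton (centralizer invariance plus Lemma \ref{HadProductLemma}) is genuinely the right tool, but the proposal leaves unproved exactly the parts where the paper's induction and its $n=4$ computation do the real work.
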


The latter result fails for $n=3$, as we will see in the next section.

The rest of the section is laid out as follows:
\begin{itemize}
\item In Section \ref{n=3section}, we tackle the case $n=3$.
\item In Section \ref{GfAnsection}, we prove Proposition \ref{GfAn} by induction on $n$.
\item In Section \ref{GfSnsection}, we derive Proposition \ref{GfSn} from Proposition \ref{GfAn}.
\item In Section \ref{GfK4section}, which is logically independent from the other ones, we determine which central mappings $f$ satisfy $K_4 \subset G_f$.
\end{itemize}

\subsection{The case $n=3$}\label{n=3section}

\begin{prop}\label{GfAn3}
Let $f : \mathfrak{S}_3 \rightarrow \F^*$.
Then, $\mathfrak{A}_3 \subset G_f$.
Moreover, if we set $\alpha:=\frac{f((1\,2\,3))}{f(\id)}$, then
the matrix $A:=\begin{bmatrix}
1 & \alpha^{-1} & 1 \\
1 & 1 & 1 \\
1 & 1 & \alpha
\end{bmatrix}$ is adapted to $(1\,2\,3)$.
\end{prop}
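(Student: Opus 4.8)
The plan is to exploit that $G_f$ is a subgroup of $\mathfrak{S}_3$ (Proposition \ref{normalsubgroupprop}) together with the fact that $\mathfrak{A}_3=\langle (1\,2\,3)\rangle$: once I know that the displayed matrix $A$ is adapted to the $3$-cycle $c:=(1\,2\,3)$, I will have $c\in G_f$ by the very definition of $f$-coherence, whence $\mathfrak{A}_3=\{\id,c,c^2\}\subseteq G_f$, and the whole statement follows. So everything reduces to checking that $A$ is $c$-adapted, and the ``moreover'' part is then exactly this check. Note also that $A\in\Mat_3(\F^*)$, since its entries lie in $\{1,\alpha,\alpha^{-1}\}\subseteq\F^*$.

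For the verification I would use the scalar reformulation recorded just before Section \ref{normalsubgroupsection}: the identity $\widetilde{f}(MP_c)=\widetilde{f}(A\star M)$ holds for all $M\in\Mat_3(\F)$ if and only if
$$\forall \sigma\in\mathfrak{S}_3,\quad f(\sigma c)=f(\sigma)\prod_{j=1}^3 a_{\sigma(j),j}.$$
I would then confirm this on the six elements of $\mathfrak{S}_3$, splitting them by parity and using centrality of $f$, which makes $f$ constant on the three transpositions and gives $f(c)=f(c^2)$. If $\sigma$ is a transposition, then $\sigma c$ is again a transposition, so $f(\sigma c)=f(\sigma)$ by centrality, and reading $A$ off directly gives $\prod_j a_{\sigma(j),j}=1$ in each of the three cases (for instance $a_{2,1}a_{1,2}a_{3,3}=1\cdot\alpha^{-1}\cdot\alpha=1$ when $\sigma=\tau_{1,2}$), so both sides coincide. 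For the even permutations the diagonal products are $\alpha,1,\alpha^{-1}$ at $\id,c,c^2$ respectively, while $\sigma c$ runs through $c,c^2,\id$; feeding in $\alpha=f(c)/f(\id)$ together with $f(c)=f(c^2)$ makes each of the three equalities hold.

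There is essentially no genuine obstacle here: the proof is a finite verification, and the only points demanding care are bookkeeping ones. I must use the paper's composition convention $(\sigma c)(x)=\sigma(c(x))$ consistently (so that $P_\sigma P_c=P_{\sigma c}$, as is implicit throughout the paper) and read each factor $a_{\sigma(j),j}$ off the correct spot of $A$. The conceptual content lies not in the verification but in the shape of $A$ itself: its only non-unit entries, $a_{1,2}=\alpha^{-1}$ and $a_{3,3}=\alpha$, are placed exactly so that the diagonal products take the values $\alpha,1,\alpha^{-1}$ on $\mathfrak{A}_3$ while cancelling to $1$ on every transposition, which is precisely what centrality of $f$ requires.
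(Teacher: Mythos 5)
Your proof is correct and takes essentially the same route as the paper's: both reduce, via the scalar reformulation stated just before Section \ref{normalsubgroupsection}, to checking $f(\sigma\,(1\,2\,3))=f(\sigma)\prod_{j=1}^3 a_{\sigma(j),j}$ for the six permutations $\sigma\in\mathfrak{S}_3$, with the same case split (product $1$ on transpositions and on $(1\,2\,3)$, and $\alpha$, $\alpha^{-1}$ at $\id$, $(1\,3\,2)$) and the same use of centrality of $f$. The only cosmetic difference is that you invoke Proposition \ref{normalsubgroupprop} explicitly to pass from $(1\,2\,3)\in G_f$ to $\mathfrak{A}_3\subset G_f$, a step the paper leaves implicit.
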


\begin{proof}
For all $\sigma\in \mathfrak{S}_3$,
one checks that
$$\prod_{j=1}^3 a_{\sigma(j),j}=\begin{cases}
1 & \text{if $\sigma$ is a transposition or $\sigma=(1\,2\,3)$} \\
\alpha^{-1} & \text{if $\sigma=(1\,3\,2)$} \\
\alpha & \text{if $\sigma=\id$,}
\end{cases}$$
whereas, as $f$ is central,
$$f\bigl(\sigma\,(1\,2\,3)\bigr)=\begin{cases}
f(\sigma) & \text{if $\sigma$ is a transposition} \\
\alpha\,f(\sigma) & \text{if $\sigma=\id$} \\
f(\sigma) & \text{if $\sigma=(1\,2\,3)$} \\
\alpha^{-1}f(\sigma) & \text{if $\sigma=(1\,3\,2)$.}
\end{cases}$$
This yields the claimed statement.
\end{proof}

\begin{lemma}\label{GfSn3}
Let $f : \mathfrak{S}_3 \rightarrow \F^*$ be a central mapping.
Then, $G_f=\mathfrak{S}_3$ if and only if $f$
is centrally equivalent to the signature or to the constant map with value $1$.
\end{lemma}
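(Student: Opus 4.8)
The plan is to reduce the statement to a single coherence question about the transposition $\tau_{1,2}$. Since $f$ is central and $\mathfrak{S}_3$ has exactly three conjugacy classes, $f$ is determined by the three values $a:=f(\id)$, $t:=f(\tau_{1,2})$ and $c:=f((1\,2\,3))$. By Proposition~\ref{GfAn3} we already know $\mathfrak{A}_3\subseteq G_f$, while Proposition~\ref{normalsubgroupprop} tells us that $G_f$ is a normal subgroup of $\mathfrak{S}_3$. As the only normal subgroups of $\mathfrak{S}_3$ that contain $\mathfrak{A}_3$ are $\mathfrak{A}_3$ and $\mathfrak{S}_3$ itself, we have $G_f=\mathfrak{S}_3$ if and only if $\tau_{1,2}\in G_f$. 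Thus it suffices to prove that $\tau_{1,2}$ is $f$-coherent if and only if $f$ is centrally equivalent to $\sgn$ or to the constant map with value $1$.

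For the (easy) reverse implication, I would use Proposition~\ref{centralequivalenceprop}, which ensures that $G_g=G_f$ whenever $g$ and $f$ are centrally equivalent; it therefore suffices to verify the claim for the constant map and for the signature. For the constant map every permutation is coherent, as noted right after the definition of coherence. For the signature one has $\sgn(\sigma\tau_{1,2})=-\sgn(\sigma)$ for all $\sigma$, so the matrix $A\in\Mat_3(\F^*)$ whose first column is $(-1,-1,-1)^T$ and whose remaining entries equal $1$ is $\tau_{1,2}$-adapted: indeed $\prod_{j=1}^3 a_{\sigma(j),j}=a_{\sigma(1),1}=-1$ for every $\sigma$. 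Hence $\tau_{1,2}\in G_{\sgn}$, and both $G_1$ and $G_{\sgn}$ equal $\mathfrak{S}_3$.

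The heart of the matter is the forward implication. Assume $\tau_{1,2}\in G_f$, with adapted matrix $A=(a_{i,j})\in\Mat_3(\F^*)$, so that $f(\sigma\tau_{1,2})=f(\sigma)\prod_{j=1}^3 a_{\sigma(j),j}$ for every $\sigma\in\mathfrak{S}_3$. The key manoeuvre is to take the product of this identity over the three even permutations and, separately, over the three odd ones. The permutation matrices of $\id,(1\,2\,3),(1\,2\,3)^2$ have pairwise disjoint supports that together cover all nine positions of $\lcro 1,3\rcro^2$, so the product of the left-hand sides collapses to $\prod_{i,j}a_{i,j}$; reading off the conjugacy class of each image $\sigma\tau_{1,2}$ shows the product of the right-hand sides equals $t^3/(ac^2)$. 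The very same computation over the three transpositions again produces $\prod_{i,j}a_{i,j}$ on the left but $ac^2/t^3$ on the right. Equating the two evaluations of $\prod_{i,j}a_{i,j}$ yields $(ac^2)^2=(t^3)^2$, that is $ac^2=\pm t^3$.

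Finally I would translate this numerical alternative into central equivalence. Setting $\beta:=c^{-1}$ and $\alpha:=c/t$ (resp.\ $\alpha:=-c/t$), and using $\nfix(\id)=3$, $\nfix(\tau_{1,2})=1$ and $\nfix((1\,2\,3))=0$, one checks that the central map $\sigma\mapsto\alpha^{\nfix(\sigma)}\beta\,f(\sigma)$ is the constant map with value $1$ when $ac^2=t^3$, and equals $\sgn$ when $ac^2=-t^3$; in either case $f$ is centrally equivalent to the required map. I expect the only genuinely error-prone step to be the bookkeeping in the even/odd product argument — verifying that every entry $a_{i,j}$ occurs exactly once in each product and that the classes of the six images $\sigma\tau_{1,2}$ are identified correctly — but once this is laid out the conclusion follows at once.
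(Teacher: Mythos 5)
Your proposal is correct and follows essentially the same route as the paper: the paper likewise reduces to an adapted matrix $A$ for $\tau_{1,2}$, takes the product of the coherence identity over $\mathfrak{A}_3$ and over the odd coset (using exactly your disjoint-support observation, so that each side produces $\Pi=\prod_{i,j}a_{i,j}$), deduces $f(\tau_{1,2})^3=\pm f(\id)\,f\bigl((1\,2\,3)\bigr)^2$, and then exhibits the central equivalence to the constant map or to $\sgn$ explicitly. The only cosmetic differences are your preliminary reduction of $G_f=\mathfrak{S}_3$ to $\tau_{1,2}\in G_f$ via normality and your spelled-out converse, which the paper simply records as already known.
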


\begin{proof}
The converse statement is already known. Assume that $G_f=\mathfrak{S}_3$.
In particular, $\tau_{1,2}$ belongs to $G_f$, which yields a matrix $A \in \Mat_3(\F^*)$
such that $f(\sigma \tau_{1,2})=\underset{j=1}{\overset{3}{\prod}} a_{\sigma(j),j}\, f(\sigma)$
for all $\sigma \in \mathfrak{S}_3$. Setting $\Pi:=\underset{1 \leq i,j \leq 3}{\prod} a_{i,j}$,
we deduce that
$$\prod_{\sigma \in \mathfrak{A}_3} f(\sigma \tau_{1,2})=\Pi \prod_{\sigma \in \mathfrak{A}_3} f(\sigma)
\qquad \text{and} \quad
\prod_{\sigma \in \mathfrak{S}_3 \setminus \mathfrak{A}_3} f(\sigma \tau_{1,2})=\Pi \prod_{\sigma \in \mathfrak{S}_3 \setminus \mathfrak{A}_3} f(\sigma),$$
whence
$$\prod_{\sigma \in \mathfrak{S}_3 \setminus \mathfrak{A}_3} f(\sigma)=\Pi \prod_{\sigma \in \mathfrak{A}_3} f(\sigma) \qquad \text{and} \quad
\prod_{\sigma \in \mathfrak{A}_3} f(\sigma)=\Pi \prod_{\sigma \in \mathfrak{S}_3 \setminus \mathfrak{A}_3} f(\sigma).$$
It follows that $\Pi^2=1$, whence $\Pi=\pm 1$.
Set $a:=f(\id)$, $b:=f(\tau_{1,2})$ and $c:=f((1\,2\,3))$.
Then, the above shows that $b^3=\varepsilon ac^2$ for some $\varepsilon \in \{1,-1\}$.
Setting $\alpha:=\varepsilon\,\frac{b}{c}$, we obtain $a=c \alpha^3$ and $b=\varepsilon c \alpha$.
Therefore:
\begin{itemize}
\item if $\varepsilon=1$ then
$f(\sigma)=c \alpha^{\nfix(\sigma)}$ for all $\sigma \in \mathfrak{S}_3$;
\item if $\varepsilon=-1$ then
$f(\sigma)=c \alpha^{\nfix(\sigma)} \sgn(\sigma)$ for all $\sigma \in \mathfrak{S}_3$.
\end{itemize}
Hence, $f$ is centrally equivalent to the constant map with value $1$ or to the signature.
\end{proof}

\subsection{Maps for which $\mathfrak{A}_n \subset G_f$}\label{GfAnsection}

Here, we prove Proposition \ref{GfAn}. We start with the case $n=4$.

\begin{lemma}\label{GfAn4}
Let $f : \mathfrak{S}_4 \rightarrow \F^*$ be a central map such that
$\mathfrak{A}_4 \subset G_f$. Then, $f$ is centrally equivalent to a map that is constant on $\mathfrak{A}_4$ and on
$\mathfrak{S}_4 \setminus \mathfrak{A}_4$.
\end{lemma}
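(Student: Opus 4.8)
The plan is to parametrize a central $f$ by its five values on the conjugacy classes of $\mathfrak{S}_4$: let $a,b,c,d,e$ denote the common values of $f$ on the identity, the transpositions, the double transpositions, the $3$-cycles and the $4$-cycles, respectively. Since a central equivalence replaces $f$ by $g:\sigma\mapsto \alpha^{\nfix(\sigma)}\beta\,f(\sigma)$, and $\nfix$ takes the values $4,2,0,1,0$ on these five classes, one checks directly that $g$ is constant on $\mathfrak{A}_4$ and on $\mathfrak{S}_4\setminus\mathfrak{A}_4$ exactly when $\alpha^4 a=\alpha d=c$ and $\alpha^2 b=e$. The middle equation forces $\alpha=c/d$, and then the remaining two conditions become the numerical identities $ac^3=d^4$ and $bc^2=d^2e$. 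Thus the whole lemma reduces to proving these two identities, after which choosing $(\alpha,\beta)=(c/d,1)$ and invoking the definition of central equivalence (Proposition \ref{centralequivalenceprop}) finishes the argument.

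The engine for both identities is the coherence of a single $3$-cycle, say $\tau:=(1\,2\,3)\in\mathfrak{A}_4\subseteq G_f$: there is $A=(a_{i,j})\in\Mat_4(\F^*)$ with $f(\sigma\tau)=f(\sigma)\prod_{j=1}^4 a_{\sigma(j),j}$ for all $\sigma\in\mathfrak{S}_4$. I would multiply this relation over a set $S$ of four permutations that is \emph{sharply transitive}, meaning that for each $j$ the map $\sigma\mapsto\sigma(j)$ is a bijection of $\lcro 1,4\rcro$ as $\sigma$ runs over $S$. For such $S$ the double product collapses, $\prod_{\sigma\in S}\prod_j a_{\sigma(j),j}=\prod_{i,j}a_{i,j}=:\Pi$, so that $\prod_{\pi\in S\tau}f(\pi)=\Pi\,\prod_{\sigma\in S}f(\sigma)$. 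The natural supply of such sets are the cosets of the two regular order-$4$ subgroups $K_4=\{\id,(1\,2)(3\,4),(1\,3)(2\,4),(1\,4)(2\,3)\}$ and $C_4=\langle(1\,2\,3\,4)\rangle$, and because $f$ is central only the multiset of cycle types occurring in $S$ and in $S\tau$ enters these products.

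The first step is to pin down $\Pi$. Taking $S=K_4(1\,2)$, an odd coset whose $f$-values are $b,b,e,e$, a direct check shows that every odd coset of $K_4$ consists of two transpositions and two $4$-cycles; since $S\tau$ is again an odd coset of $K_4$ (indeed $S\tau=K_4(2\,3)$), one gets $\prod_{S\tau}f=b^2e^2=\prod_S f$, and cancelling the nonzero factor $b^2e^2$ yields $\Pi=1$. With $\Pi=1$ in hand, $\prod_{S\tau}f=\prod_S f$ becomes a pure relation among the five values for every sharply transitive $S$. Applying it to $S=K_4$ (values $a,c,c,c$, with $K_4\tau$ consisting of four $3$-cycles) gives $d^4=ac^3$; applying it to $S=C_4$ (values $a,c,e,e$, with $C_4\tau$ of type $d,d,b,e$) gives $d^2be=ace^2$. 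Eliminating $ac$ between the two relations produces $bc^2=d^2e$, which together with $ac^3=d^4$ is exactly what is needed.

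I expect the main difficulty to be bookkeeping rather than conceptual: one has to choose the sharply transitive sets so that the cycle-type multisets of $S$ and of $S\tau$ are precisely those returning the desired monomials in $a,b,c,d,e$. The genuinely delicate point is establishing $\Pi=1$ outright, not merely up to a cube root of unity (as a cruder product over all of $\mathfrak{A}_4$ would give, $\mathfrak{A}_4$ having abelianization $\Z/3$); that ambiguity would be fatal, since $\alpha=c/d$ is forced with no remaining freedom. The odd-coset computation above is what removes it, because right translation by the even permutation $\tau$ sends one odd $K_4$-coset to another of identical cycle-type profile.
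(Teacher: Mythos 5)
Your proof is correct, and it takes a genuinely different route from the paper's. The paper's proof first normalizes $f$ (so that $f(\id)=1$ and $f\bigl((1\,2\,3)\bigr)=f(\tau_{1,2}\tau_{3,4})$), then works directly with the entries of a matrix $A$ adapted to the coherent $3$-cycle $(1\,2\,3)$: it Hadamard-multiplies $A$ by normalized rank-one matrices until its upper-left $3\times 3$ block is the explicit matrix of Proposition \ref{GfAn3}, and then extracts entrywise relations ($xx'=yy'=1$, $zz'=a$, $zx'=xy'=yz'=1$) from well-chosen permutations to conclude. You never touch the individual entries of $A$: you reduce the lemma to the two identities $ac^3=d^4$ and $bc^2=d^2e$ among the five class values, and you obtain them by multiplying the coherence relation $f(\sigma\tau)=f(\sigma)\prod_j a_{\sigma(j),j}$ over sharply transitive sets of permutations (right cosets of the regular subgroups $K_4$ and $C_4$), so that only the total product $\Pi$ of the entries of $A$ survives; your odd-coset computation, which pins down $\Pi=1$ exactly rather than merely $\Pi^3=1$, is indeed the crucial step, and I have checked your cycle-type bookkeeping for $K_4(1\,2)$, $K_4(2\,3)$, $K_4\tau$ and $C_4\tau$ — it is accurate. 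Your argument is very much in the spirit of the paper's own proofs of Lemma \ref{GfSn3} and Proposition \ref{GfK4}, which use the same product-over-regular-sets collapse, and it is shorter and more self-contained than the paper's proof of this lemma (no appeal to Proposition \ref{GfAn3}, no normalization of $A$). What the paper's entrywise analysis buys in exchange is reusability: the same matrix-normalization scheme is re-run almost verbatim in the inductive step of Proposition \ref{GfAn} for $n>4$, whereas your coset trick is tailored to the base case $n=4$ — but that is all this lemma requires. (One trivial slip: the definition of central equivalence is the definition following Proposition \ref{centralequivalenceprop}, not the proposition itself; the proposition is what shows centrally equivalent maps are H-equivalent.)
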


\begin{proof}
Set $\alpha:=\frac{f((1\,2\,3))}{f(\tau_{1,2}\tau_{3,4})}\cdot$
Then, by replacing $f$ by $\sigma \mapsto f(\id)^{-1} f(\sigma)\, \alpha^{4-\nfix(\sigma)}$,
we lose no generality in assuming that
$$f(\tau_{1,2}\tau_{3,4})=f\bigl((1\,2\,3)\bigr) \quad \text{and} \quad
f(\id)=1.$$
Set $a:=f\bigl((1\,2\,3)\bigr)$, $b:=f(\tau_{1,2})$ and $c:=f\bigl((1\,2\,3\,4)\bigr)$.
We shall prove that $a=1$ and $b=c$. Since $f$ is central, this will prove that $f$ is constant with value $1$ on $\mathfrak{A}_4$,
and constant with value $b$ on $\mathfrak{S}_4$.

Set $K:=\begin{bmatrix}
1 & a^{-1} & 1 \\
1 & 1 & 1 \\
1 & 1 & a
\end{bmatrix}$.

We know that $\tau:=(1\,2\,3)$ is $f$-coherent, and we choose an adapted matrix $A=(a_{i,j})$.
Multiplying $A$ with a well-chosen normalized rank $1$ matrix, we can assume that $a_{4,4}=1$.
Given $\sigma \in \mathfrak{S}_3$, denote by $\overline{\sigma}$ its extension as a permutation of $\lcro 1,4\rcro$, and set
$\overline{f} : \sigma \in \mathfrak{S}_3 \mapsto f(\overline{\sigma})$, which is obviously central.
Since $a_{4,4}=1$, we find that the submatrix $B:=(a_{i,j})_{1 \leq i,j \leq 3}$ satisfies
$$\forall \sigma \in \mathfrak{S}_3, \; \overline{f}\bigl(\sigma (1\,2\,3)\bigr)=f(\sigma) \prod_{j=1}^3 a_{\sigma(j),j}.$$
Then, by Proposition \ref{GfAn3}, we find that $B$ equals $L \star K$ for some normalized rank $1$
matrix $L \in \Mat_3(\F^*)$. Write $L=XY^T$ where $X$ and $Y$ belong to $(\F^*)^3$, and extend $X$ and $Y$ to
vectors $\widetilde{X}$ and $\widetilde{Y}$ of $(\F^*)^4$ by taking the last entry equal to $1$. Set
$\widetilde{L}:=\widetilde{X} \widetilde{Y}^T$, which is a normalized rank $1$ matrix.
Then, replacing $A$ with $\widetilde{L}^{[-1]}\star A$, we reduce the situation to the one where
$$A=\begin{bmatrix}
1 & a^{-1} & 1 & x' \\
1 & 1 & 1 & y' \\
1 & 1 & a & z' \\
x & y & z & 1
\end{bmatrix}$$
for some non-zero scalars $x,x',y,y',z,z'$.

For every double-transposition $\sigma$, we see that $\sigma \tau$ is a $3$-cycle (it is not in the Klein subgroup, yet it has signature $1$),
whence $a\underset{j=1}{\overset{4}{\prod}} a_{\sigma(j),j}=a$, leading to $\underset{j=1}{\overset{4}{\prod}} a_{\sigma(j),j}=1$.
Taking all possible $\sigma$'s leads to $zz'=a$, $yy'=1$ and $xx'=1$.

On the other hand, for all $\sigma$ in $\bigl\{(4\,1\,3),(4\,2\,1),(4\,3\,2)\bigr\}$, one sees
that $\sigma \tau$ is a $3$-cycle, and one deduces that $zx'=1$, $xy'=1$ and $yz'=1$.
Taking the product yields $(xx')(yy')(zz')=1$, and hence $a=1$.

Finally, since the composite $\tau_{1,4} \tau$ is a $4$-cycle, we find
$\frac{c}{b}=axx'=1$, and hence $c=b$. This completes the proof.
\end{proof}

Now, we are ready to prove Proposition \ref{GfAn}.
The proof works by induction on $n$.
The case $n=4$ has already been dealt with.
Let now $n>4$, and let $f : \mathfrak{S}_n \rightarrow \F^*$ be a central map.
If $f$ is centrally equivalent to a map that is constant on $\mathfrak{A}_n$ and $\mathfrak{S}_n \setminus \mathfrak{A}_n$,
then we already know that $\mathfrak{A}_n \subset G_f$. Conversely, we assume that $\mathfrak{A}_n \subset G_f$.

As in the proof of Lemma \ref{GfAn4}, we lose no generality in assuming that $f$ maps $3$-cycles and double-transpositions to the same value.
Next, the $3$-cycle $\tau:=(1\,2\,3)$ is $f$-coherent, and we choose a corresponding matrix $A=(a_{i,j}) \in \Mat_n(\F^*)$.
Just like in the proof of Lemma \ref{GfAn4}, we can assume that $a_{n,n}=1$.
Then, we define the central mapping $\overline{f} : \sigma \in \mathfrak{S}_{n-1} \mapsto f(\overline{\sigma})$
and we see that the $3$-cycle $(1\,2\,3)$ of $\mathfrak{S}_{n-1}$ belongs to $G_{\overline{f}}$. Hence, $G_{\overline{f}}$ is a normal subgroup of $\mathfrak{S}_{n-1}$
that contains a $3$-cycle, and it follows that this subgroup includes $\mathfrak{A}_{n-1}$. By induction, $\overline{f}$
is centrally equivalent to a map that is constant on $\mathfrak{A}_{n-1}$ and on $\mathfrak{S}_{n-1} \setminus \mathfrak{A}_{n-1}$.

\begin{claim}
The mapping $\overline{f}$ is constant on $\mathfrak{A}_{n-1}$ and on $\mathfrak{S}_{n-1} \setminus \mathfrak{A}_{n-1}$.
\end{claim}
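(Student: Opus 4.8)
The plan is to extract the full strength of the claim from the inductive hypothesis by eliminating the only source of discrepancy, namely the fixed-point exponent hidden in the notion of central equivalence. By induction, $\overline{f}$ is centrally equivalent to a map $h$ that is constant on $\mathfrak{A}_{n-1}$ and on $\mathfrak{S}_{n-1}\setminus\mathfrak{A}_{n-1}$, so unwinding the definition of central equivalence yields scalars $\alpha,\beta\in\F^*$ with
$$\forall \sigma\in\mathfrak{S}_{n-1},\quad \overline{f}(\sigma)=\alpha^{\nfix(\sigma)}\,\beta\,h(\sigma).$$
Since $h$ is already constant on each parity class, the entire content of the claim reduces to showing that the factor $\alpha^{\nfix(\sigma)}$ does not actually vary within a parity class; as $\nfix$ is non-constant on $\mathfrak{A}_{n-1}$ once $n-1\geq 4$, this amounts precisely to proving $\alpha=1$.

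To pin down $\alpha$, I would compare two \emph{even} permutations of $\lcro 1,n-1\rcro$ with different numbers of fixed points. Because $n>4$, the group $\mathfrak{S}_{n-1}$ contains both a $3$-cycle $\gamma$ and a double-transposition $\delta$, and both lie in $\mathfrak{A}_{n-1}$; regarded as permutations of $\lcro 1,n-1\rcro$ they fix $n-4$ and $n-5$ points, respectively. Constancy of $h$ on $\mathfrak{A}_{n-1}$ gives $h(\gamma)=h(\delta)$, so the displayed identity produces
$$\frac{\overline{f}(\gamma)}{\overline{f}(\delta)}=\alpha^{(n-4)-(n-5)}=\alpha.$$

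The concluding observation is that this ratio equals $1$. Indeed $\overline{f}(\gamma)=f(\overline{\gamma})$ and $\overline{f}(\delta)=f(\overline{\delta})$, where $\overline{\gamma}$ is a $3$-cycle and $\overline{\delta}$ a double-transposition of $\mathfrak{S}_n$; by the normalization carried out at the start of the inductive step, $f$ assigns $3$-cycles and double-transpositions a common value, so $\overline{f}(\gamma)=\overline{f}(\delta)$ and hence $\alpha=1$. Feeding $\alpha=1$ back into the displayed formula gives $\overline{f}=\beta h$, which is constant on $\mathfrak{A}_{n-1}$ and on $\mathfrak{S}_{n-1}\setminus\mathfrak{A}_{n-1}$, as claimed. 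The only delicate point is the fixed-point bookkeeping: one must make sure the two chosen even permutations genuinely have distinct $\nfix$ values, which is exactly where the standing hypothesis $n>4$ (so $n-1\geq 4$) is used.
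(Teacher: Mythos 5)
Your proposal is correct and follows essentially the same route as the paper: unwind the central equivalence supplied by the inductive hypothesis, then compare a $3$-cycle and a double-transposition of $\mathfrak{S}_{n-1}$ (whose fixed-point counts differ by $1$) and use the standing normalization that $f$ gives $3$-cycles and double-transpositions the same value to force the base of the exponential factor to equal $1$. The paper's proof is exactly this computation, written with the scalars $a,b,c$ in place of your $\alpha,\beta,h$.
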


\begin{proof}
We have non-zero scalars $a,b,c$ such that
$\overline{f}(\sigma)=b a^{\nfix(\sigma)}$ for all $\sigma \in \mathfrak{A}_{n-1}$, and
$\overline{f}(\sigma)=c a^{\nfix(\sigma)}$ for all $\sigma \in \mathfrak{S}_{n-1} \setminus \mathfrak{A}_{n-1}$.
Since $\overline{f}\bigl((1\,2\,3)\bigr)=\overline{f}(\tau_{1,2}\tau_{3,4})$, we find $a=1$, and the
conclusion follows.
\end{proof}

It follows that $f$ is constant on the set of all elements of $\mathfrak{A}_n$ that have a fixed point,
and constant on the set of all elements of $\mathfrak{S}_n \setminus \mathfrak{A}_n$ that have a fixed point.

Set $B:=(a_{i,j})_{1 \leq i,j \leq n-1}$. With the same line of reasoning as in the proof of Lemma \ref{GfAn4},
we find that $(1\,2\,3)$ is $\overline{f}$-coherent and that $B$ is adapted to it.
Yet, by the above claim, the matrix $(1)_{1 \leq i,j \leq n-1}$ is obviously adapted to
$(1\,2\,3)$. Hence, $B$ is a normalized rank $1$ matrix. With the same proof as for Lemma \ref{GfAn4},
we deduce that no generality is lost in assuming that $B=(1)_{1 \leq i,j \leq n-1}$.

Now, as $n \geq 5$, we see that for $u:=\tau_{n-1,n}$, we have, for all $M \in \Mat_n(\F)$,
$$\widetilde{f}(M P_\tau)=\widetilde{f}(P_u^{-1}M P_\tau P_u)=
\widetilde{f}(P_u^{-1} M P_u P_\tau)=\widetilde{f}(A \star (P_u^{-1} MP_u))
=\widetilde{f}((P_u A P_u^{-1}) \star M).$$
Hence $P_u A P_u^{-1}=K \star A$ for some normalized rank $1$ matrix $K$.
In particular, the matrix obtained from $A$ by deleting the $(n-1)$-th row and column
has rank $1$. It follows that $a_{1,n}=\cdots=a_{n-2,n}$, $a_{n,1}=\cdots=a_{n,n-2}$
and $a_{1,n}=a_{n,1}^{-1}$. Setting $d:=a_{1,n}$, $X:=\begin{bmatrix}
1 & \cdots & 1 & d
\end{bmatrix}^T$, $Y:=\begin{bmatrix}
1 & \cdots & 1 & d^{-1}
\end{bmatrix}^T$ and replacing $A$ with $(XY^T) \star A$, we see that no generality is lost in further assuming that $a_{n,1}=\dots=a_{n,n-2}=1=a_{1,n}=\cdots=a_{n-2,n}$.

\begin{claim}
One has $a_{i,j}=1$ for all $i,j$ in $\lcro 1,n\rcro$.
\end{claim}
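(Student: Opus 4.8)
The plan is to exploit the defining identity of the adapted matrix $A$,
$$\forall \sigma \in \mathfrak{S}_n, \quad f(\sigma \tau)=f(\sigma)\prod_{j=1}^n a_{\sigma(j),j}, \qquad \tau:=(1\,2\,3),$$
together with the structural facts already secured. At this stage every entry of $A$ has been shown to equal $1$ except possibly $a_{n-1,n}$ and $a_{n,n-1}$: the top-left $(n-1)\times(n-1)$ block is all ones, $a_{n,n}=1$, and the preceding reductions give $a_{n,1}=\cdots=a_{n,n-2}=1$ and $a_{1,n}=\cdots=a_{n-2,n}=1$. So the claim reduces to proving $a_{n-1,n}=a_{n,n-1}=1$. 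The second ingredient I would use is the fact, established just above, that $f$ is constant on the even permutations having a fixed point and constant on the odd permutations having a fixed point.

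With $A$ in its present shape, for any $\sigma$ the product $\prod_{j=1}^n a_{\sigma(j),j}$ equals $a_{n,n-1}$ when $\sigma(n-1)=n$ and $\sigma(n)\neq n-1$, equals $a_{n-1,n}$ when $\sigma(n)=n-1$ and $\sigma(n-1)\neq n$, and equals $1$ whenever neither of these happens, since all the remaining factors involve only entries already known to be $1$. The idea is therefore to isolate $a_{n,n-1}$ by choosing a $\sigma$ with $\sigma(n-1)=n$ and $\sigma(n)\neq n-1$. Using $n\geq 5$, I would arrange in addition that $\sigma$ fixes the index $4$; as $\tau$ fixes $4$, the permutation $\sigma\tau$ also fixes $4$. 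Then both $\sigma$ and $\sigma\tau$ have a fixed point, and since $\tau$ is even they share the same parity, so $f(\sigma\tau)=f(\sigma)$ by the constancy statement; the identity then forces $a_{n,n-1}=1$. A symmetric choice, with $\sigma(n)=n-1$, $\sigma(n-1)\neq n$ and $\sigma$ again fixing $4$, isolates $a_{n-1,n}$ and gives $a_{n-1,n}=1$ in the same way.

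The only delicate point, and the closest thing to an obstacle, is to verify that such permutations genuinely exist: one must prescribe the images of $n-1$ and $n$, impose the fixed point at $4$, and still be able to complete $\sigma$ to a bijection of $\lcro 1,n\rcro$. This is exactly where the hypothesis $n\geq 5$ is used, since it leaves enough free indices for the completion. Once both entries are pinned down, the claim is immediate.
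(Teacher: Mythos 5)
Your proof is correct. It hinges on the same mechanism as the paper's: plug well-chosen permutations $\sigma$ into the identity $f(\sigma\tau)=f(\sigma)\prod_{j=1}^n a_{\sigma(j),j}$, arrange that $f(\sigma\tau)=f(\sigma)$, and note that with the reductions already made the product collapses to $a_{n,n-1}$ or $a_{n-1,n}$ alone (your case analysis of the product is accurate). The difference lies in how the equality $f(\sigma\tau)=f(\sigma)$ is obtained. The paper takes $\sigma$ to be the fixed-point-free $n$-cycles $(1\,2\,\cdots\,n)$ and $\bigl(1\,2\,\cdots\,(n-2)\,n\,(n-1)\bigr)$, observes that $\sigma\tau$ is again an $n$-cycle, and invokes centrality of $f$ directly (all $n$-cycles are conjugate); your argument instead takes $\sigma$ with a prescribed fixed point at $4$ and uses the constancy of $f$ on even (respectively odd) permutations admitting a fixed point, a fact derived earlier in the same proof from the induction hypothesis. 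Both ingredients are legitimately available at this stage, and your existence check for such $\sigma$ (which is where $n\geq 5$ enters, matching the standing hypothesis $n>4$) is sound. The paper's route is marginally more self-contained, since it bypasses the fixed-point constancy statement and uses only centrality plus explicit cycles; yours shows that the weaker derived statement already suffices, at the modest cost of an existence argument for the test permutations.
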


\begin{proof}
It only remains to prove that $a_{n,n-1}=a_{n-1,n}=1$.
Setting $c:=(1\,2\,\cdots n)$, we see that $c (1\,2\,3)=\bigl(1\,3\,2\,4\,5\,\cdots\,(n-1)\,n\bigr)$
is an $n$-cycle, whence $f\bigl(c\, (1\, 2\,3)\bigr)=f(c)$, and it ensues that $a_{n,n-1}=1$.
Likewise, with $d:=\bigl(1\,2\,\cdots (n-2)\,n \,(n-1)\bigr)$, we have
$d\, (1\,2\,3)=\bigl(1\,3\,2\,4\,5\,\cdots\,(n-2)\,n\,(n-1)\bigr)$ and we deduce that $a_{n-1,n}=1$.
\end{proof}

Now, we can conclude.
The above claim yields $f(\sigma \tau)=f(\sigma)$ for all $\sigma \in \mathfrak{S}_n$.
Since $f$ is central, we deduce that, for all $\sigma \in \mathfrak{S}_n$ and all $u \in \mathfrak{S}_n$,
$$f(\sigma (u\tau u^{-1}))=f(u^{-1} (\sigma u) \tau)=f(u^{-1} \sigma u)=f(\sigma).$$
Hence, $f(\sigma c)=f(\sigma)$ for every $3$-cycle $c \in \mathfrak{S}_n$ and every $\sigma \in \mathfrak{S}_n$.
Since $\mathfrak{A}_n$ is generated by the $3$-cycles, we conclude that $f$ is constant on
$\mathfrak{A}_n$ and on $\mathfrak{S}_n \setminus \mathfrak{A}_n$, which completes the proof of Proposition
\ref{GfAn}.

\subsection{Maps for which $G_f=\mathfrak{S}_n$}\label{GfSnsection}

Here, we derive Proposition \ref{GfSn} from Proposition \ref{GfAn}.
Let $f : \mathfrak{S}_n \rightarrow \F^*$ be a central map, with $n \geq 3$.
If $f$ is centrally equivalent to the signature or to a constant map,
then we already know that $G_f=\mathfrak{S}_n$.
Conversely, assume that $G_f=\mathfrak{S}_n$ and let us prove that $f$
is centrally equivalent to the signature or to the constant map with value $1$.
If $n=3$, this result is known by Lemma \ref{GfSn3}.
Assume now that $n \geq 4$. Then, Proposition \ref{GfAn} shows that $f$
is centrally equivalent to a map that is constant on $\mathfrak{A}_n$ and on $\mathfrak{S}_n \setminus \mathfrak{A}_n$.
Hence, no generality is lost in assuming that $f$ is constant on $\mathfrak{A}_n$ and on $\mathfrak{S}_n \setminus \mathfrak{A}_n$. Denote by $\alpha$ and $\beta$ the respective values of $f$ on $\mathfrak{A}_n$ and on $\mathfrak{S}_n \setminus \mathfrak{A}_n$.
The transposition $\tau:=\tau_{1,2}$ is $f$-coherent, and we choose an adapted matrix $A=(a_{i,j})$.
It follows that
$$\forall \sigma \in \mathfrak{A}_n, \; \beta=\alpha \prod_{j=1}^n a_{\sigma(j),j}
\quad \text{and} \quad
\forall \sigma \in \mathfrak{S}_n \setminus \mathfrak{A}_n, \; \alpha=\beta \prod_{j=1}^n a_{\sigma(j),j.}$$
Setting $t:=\frac{\beta}{\alpha}$, it follows that
$\underset{j=1}{\overset{n}{\prod}} a_{\sigma(j),j}$ equals $t$ if $\sgn(\sigma)=1$, and $t^{-1}$ otherwise.
Then,
$$\forall \sigma \in \mathfrak{S}_n, \; \frac{\underset{j=1}{\overset{n}{\prod}} a_{(\sigma \tau_{1,2})(j),j}}
{\underset{j=1}{\overset{n}{\prod}} a_{\sigma(j),j}}=t^{-2\sgn(\sigma)}$$
i.e.
$$\forall \sigma \in \mathfrak{S}_n, \; \frac{a_{\sigma(2),1}a_{\sigma(1),2}}{a_{\sigma(1),1}a_{\sigma(2),2}}
=t^{-2\sgn(\sigma)}.$$
Now, let $i,j$ be distinct elements of $\lcro 1,n\rcro$. Since $n \geq 4$, there
exist $\sigma \in \mathfrak{A}_n$ and $\sigma' \in \mathfrak{S}_n \setminus \mathfrak{A}_n$
such that $\sigma(1)=\sigma'(1)=i$ and $\sigma(2)=\sigma'(2)=j$, whence
$$\frac{a_{i,2}}{a_{i,1}}=t^2 \frac{a_{j,2}}{a_{j,1}}.$$
It follows in particular that
$$\frac{a_{1,2}}{a_{1,1}}=t^2 \frac{a_{2,2}}{a_{2,1}}=t^4 \frac{a_{3,2}}{a_{3,1}}=t^2 \frac{a_{1,2}}{a_{1,1}}$$
and we deduce that $t^2=1$. If $t=1$, then $f$ is constant with value $\alpha$,
otherwise $f=\alpha \sgn$.
We conclude that $f$ is centrally equivalent to the signature or to the constant map with value $1$.

\subsection{Maps for which $K_4 \subset G_f$}\label{GfK4section}

Here, we finish our study by characterizing the central maps $f : \mathfrak{S}_4 \rightarrow \F^*$ for which $K_4 \subset G_f$.

\begin{prop}\label{GfK4}
Let $f : \mathfrak{S}_4 \rightarrow \F^*$ be a central map, and set
$\alpha:=\frac{f(\tau_{1,2})}{f\bigl((1\,2\,3\,4)\bigr)}\cdot$
Then, $K_4 \subset G_f$ if and only if $f(\id)=\alpha^2 f(\tau_{1,2}\tau_{3,4})$.
Moreover, in that case the matrix
$$A:=\begin{bmatrix}
1 & \alpha & 1 & 1 \\
\alpha & 1 & 1 & 1 \\
1 & 1 & \alpha^{-1} & 1 \\
1 & 1 & 1 & \alpha^{-1}
\end{bmatrix}$$
is adapted to the double-transposition $\tau_{1,2}\tau_{3,4}$.
\end{prop}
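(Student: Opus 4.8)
The plan is to reduce the whole statement to a question about the single double-transposition $\tau:=\tau_{1,2}\tau_{3,4}$, then to settle sufficiency by an explicit verification and necessity by one short multiplicative identity. First I would reduce $K_4\subset G_f$ to $\tau\in G_f$. By Proposition~\ref{normalsubgroupprop}, $G_f$ is a normal subgroup of $\mathfrak{S}_4$, hence a union of conjugacy classes; since the three non-identity elements of $K_4$ form the single conjugacy class of double-transpositions, $\tau\in G_f$ forces all three of them into $G_f$, and as $\id\in G_f$ this yields $K_4\subset G_f$ (the converse inclusion being trivial). Throughout I would use the reformulation of $f$-coherence recorded at the end of the preliminaries of Section~\ref{centralsection}: a matrix $B=(b_{i,j})\in\Mat_4(\F^*)$ is $\tau$-adapted if and only if $f(\sigma\tau)=f(\sigma)\,\pi_B(\sigma)$ for all $\sigma$, where $\pi_B(\sigma):=\prod_{j=1}^4 b_{\sigma(j),j}$. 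Centrality of $f$ makes $f(\sigma)$ and $f(\sigma\tau)$ depend only on the cycle types of $\sigma$ and $\sigma\tau$.

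For sufficiency, together with the closed form of the adapted matrix, I would assume $f(\id)=\alpha^2 f(\tau)$ and check directly that the displayed $A$ is $\tau$-adapted. A short computation shows $\pi_A(\sigma)=\alpha^{E(\sigma)}$, where $E(\sigma)=[\sigma(2)=1]+[\sigma(1)=2]-[\sigma(3)=3]-[\sigma(4)=4]$ (with $[\,\cdot\,]$ the indicator of a condition), so it remains to verify $f(\sigma\tau)=\alpha^{E(\sigma)}f(\sigma)$ for all $24$ permutations. The natural organizing principle is that right multiplication by $\tau$ permutes the cosets of $K_4$: on the two $3$-cycle cosets one has $E(\sigma)=0$ and $\sigma\tau$ is again a $3$-cycle, so the equality reads $f(\sigma\tau)=f(\sigma)$; on the three odd cosets one checks that either $E=0$ with cycle type preserved, or $E=\pm1$ with a transposition sent to a $4$-cycle or vice versa, matching $\alpha^{\pm1}=\bigl(f(\tau_{1,2})/f((1\,2\,3\,4))\bigr)^{\pm1}$; and the hypothesis is used only on the coset $K_4$ itself, for $\sigma\in\{\id,\tau\}$, where $E=\mp2$ and the required identity is exactly $f(\tau)=\alpha^{-2}f(\id)$. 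This proves $\tau\in G_f$, hence $K_4\subset G_f$.

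For necessity I would suppose $\tau\in G_f$ with some adapted matrix $B$ and extract $f(\id)=\alpha^2 f(\tau)$ from a single identity valid for \emph{every} $B$. The decisive observation is that the multiset of matrix positions $\{(\sigma(j),j):1\le j\le 4\}$ accumulated over $\sigma\in\{(1\,3),(2\,4)\}$ coincides with the one accumulated over $\sigma\in\{\id,(1\,3)(2\,4)\}$ (both equal $\{(1,1),(2,2),(3,3),(4,4),(3,1),(4,2),(1,3),(2,4)\}$), so that $\pi_B((1\,3))\,\pi_B((2\,4))=\pi_B(\id)\,\pi_B((1\,3)(2\,4))$. Computing the relevant products $(1\,3)\tau=(1\,2\,3\,4)$, $(2\,4)\tau=(1\,4\,3\,2)$ (both $4$-cycles), $\id\,\tau=\tau$ and $(1\,3)(2\,4)\,\tau=(1\,4)(2\,3)$ (both double-transpositions), the coherence relation gives $\pi_B((1\,3))=\pi_B((2\,4))=f((1\,2\,3\,4))/f(\tau_{1,2})=\alpha^{-1}$, $\pi_B(\id)=f(\tau)/f(\id)$ and $\pi_B((1\,3)(2\,4))=1$. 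The identity then reads $\alpha^{-2}=f(\tau)/f(\id)$, i.e. $f(\id)=\alpha^2 f(\tau)$.

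The main obstacle is not any individual computation but locating the correct combinatorial identity for the necessity part: multiplying coherence relations over a whole $\langle\tau\rangle$-orbit, or over a complete conjugacy class, only reproduces the tautology $\pi_B(\sigma)\pi_B(\sigma\tau)=1$ or delivers the condition in a squared form $(f(\id)/f(\tau))^2=\alpha^{-4}$, which fails to determine a sign. The crux is to choose the four permutations above, whose support multisets match but whose images under right multiplication by $\tau$ realise the types $(4\text{-cycle},4\text{-cycle})$ on one side and $(\text{double-transposition},\text{double-transposition})$ on the other; this is precisely what isolates $f(\id)=\alpha^2 f(\tau)$ rather than its square. Beyond that I would simply be careful with the composition convention for $\sigma\tau$ and double-check the half-dozen cycle-type computations, since any slip there would corrupt the exponent $E(\sigma)$ in the sufficiency check and the cancellations in the necessity identity.
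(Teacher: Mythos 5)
Your proposal is correct, and both halves of your argument take a genuinely different route from the paper's, while sharing the same reduction (via Proposition \ref{normalsubgroupprop} and normality of $G_f$) of $K_4 \subset G_f$ to the single condition $\tau := \tau_{1,2}\tau_{3,4} \in G_f$. For \emph{necessity}, the paper also works with the reformulation $f(\sigma\tau)=f(\sigma)\prod_{j} b_{\sigma(j),j}$, but it needs two covering identities: it first multiplies the relations over the four $3$-cycles $(1\,2\,3)$, $(1\,3\,4)$, $(1\,4\,2)$, $(2\,4\,3)$, whose position sets partition all $16$ entries of $B$, to obtain $\prod_{i,j} b_{i,j}=1$, and then multiplies over the powers $\id,\sigma,\sigma^2,\sigma^3$ of $\sigma=(1\,2\,3\,4)$, which also cover all entries, to turn this into $\frac{f(\tau)}{f(\id)}\,\alpha^2=1$. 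Your single identity $\pi_B((1\,3))\,\pi_B((2\,4))=\pi_B(\id)\,\pi_B((1\,3)(2\,4))$ rests on the same multiset-matching principle but is more economical: four permutations instead of eight, and no intermediate total product. I checked your cycle-type computations ($(1\,3)\tau$ and $(2\,4)\tau$ are $4$-cycles, $(1\,3)(2\,4)\tau$ is a double-transposition); they are correct, and as you note they are insensitive to the composition convention since centrality makes only cycle types matter. For \emph{sufficiency}, the paper explicitly declines the ``tedious computation'' you carry out and instead adjoins a square root $\delta$ of $\alpha$ to $\F$, observes that $g:\sigma\mapsto f(\sigma)\delta^{-\nfix(\sigma)}$ takes equal values on $\id$ and the double-transpositions and equal values on transpositions and $4$-cycles, invokes Lemma \ref{constanttypelemma} and Proposition \ref{centralequivalenceprop} over the extension $\L$, and finally corrects by a normalized rank-$1$ matrix to land on the matrix $A$, whose entries lie in $\F$. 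Your coset-by-coset verification of $f(\sigma\tau)=\alpha^{E(\sigma)}f(\sigma)$ is accurate throughout (I confirmed: $E=0$ and $3$-cycle type preserved on the two even cosets outside $K_4$; on the odd cosets either $E=0$ with type preserved, or $E=\pm1$ exchanging transpositions and $4$-cycles, which is exactly the definition of $\alpha$; and the hypothesis $f(\id)=\alpha^2 f(\tau)$ enters only at $\sigma\in\{\id,\tau\}$, where $E=\mp2$). Your route is elementary and never leaves $\F$; what the paper's descent argument buys is a conceptual origin for $A$ — it is the central-equivalence transport of the all-ones matrix — at the cost of the field-extension detour.
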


Using the line of reasoning from the end of the proof of Proposition \ref{normalsubgroupprop},
it is then easy to find an adapted matrix for each double-transposition (we leave this mundane task to the reader).

Note that the condition given here is satisfied if $f$ is centrally equivalent to a (central)
map that takes the same value at $\id$ and at double-transpositions, and that takes the same
value at transpositions and at $4$-cycles (see the end of Section \ref{normalsubgroupsection}).
However, the converse is not true over general fields: as an example, take a non-zero scalar
$\alpha \in \F^*$ that is not a square in $\F$, and define $f$
as the central map that takes the value $\alpha^2$ at $\id$, the value $1$ at double-transpositions,
$3$-cycles and $4$-cycles, and the value $\alpha$ at transpositions.

\begin{proof}[Proof of Proposition \ref{GfK4}]
Set $\tau:=\tau_{1,2} \tau_{3,4}$.

Assume first that $K_4 \subset G_f$. Then, $\tau$
is $f$-coherent, and we choose an adapted matrix $B=(b_{i,j}) \in \Mat_4(\F^*)$.
Note that $\sigma \tau$ is a $3$-cycle for every $3$-cycle $\sigma \in \mathfrak{S}_4$
(because it belongs to $\mathfrak{A}_4 \setminus K_4$).
Hence, $\underset{j=1}{\overset{4}{\prod}} b_{\sigma(j),j}=1$ for every $3$-cycle $\sigma \in \mathfrak{S}_4$.
Taking the $3$-cycles $(1\,2\,3)$, $(1\,3\,4)$, $(1\,4\,2)$ and $(2\,4\,3)$, we deduce that the product
$\Pi=\underset{1 \leq i,j \leq 4}{\prod} b_{i,j}$ equals $1$.
Yet, for $\sigma:=(1\,2\,3\,4)$, we find
$$\Pi:=\underset{k=0}{\overset{3}{\prod}} \prod_{j=1}^4 b_{\sigma^k(j),j}
=\frac{f(\tau)}{f(\id)}\,\frac{f(\sigma \tau)}{f(\sigma)}\,\frac{f(\sigma^2 \tau)}{f(\sigma^2)}
\,\frac{f(\sigma^3 \tau)}{f(\sigma^3)}\cdot$$
One checks that $\sigma \tau$ and $\sigma^3 \tau$ are transpositions, whereas
$\sigma^3$ and $\sigma$ are $4$-cycles, and $\sigma^2 \tau$ and $\sigma^2$ are double-transpositions.
Hence,
$$\frac{f(\tau)}{f(\id)}\,\frac{f(\sigma \tau)^2}{f(\sigma)^2}=1,$$
which shows that $f(\id)=\alpha^2 f(\tau)$.

Conversely, assume that  $f(\id)=\alpha^2 f(\tau)$.
If we prove that $\tau \in G_f$, then every double-transposition will belong to $G_f$ because $G_f$
is a normal subgroup of $\mathfrak{S}_4$, and we will conclude that $K_4 \subset G_f$.
Hence, it suffices to prove that $\tau \in G_f$.
To do so, we will prove that
$$\forall M \in \Mat_4(\F), \; \widetilde{f}(MP_\tau)=\widetilde{f}(A \star M).$$
This can be proved by a tedious computation, but we will give a more satisfying proof.
Let us choose an extension $\mathbb{L}$ of the field $\F$ in which $\alpha$ has a square-root
$\delta$. Denote by $\widetilde{f}_\L$ the Schur functional on $\Mat_4(\L)$ associated with $f$.
Set $g : \sigma \in \mathfrak{S}_4 \mapsto f(\sigma)\, \delta^{-\nfix(\sigma)} \in \L^*$,
which is centrally equivalent to $f$ with respect to the field $\L$.
Noting that $f(\id)=\delta^4 f(\tau_{1,2}\tau_{3,4})$ and $f(\tau_{1,2})=\delta^2f\bigl((1\,2\,3\,4)\bigr)$,
we obtain that $g$ maps $\id$ and all double-transpositions to the same value in $\L^*$,
and maps all transpositions and all $4$-cycles to the same value in $\L^*$.
It then follows from Lemma \ref{constanttypelemma} that $\tau$ is $g$-coherent
and that the matrix $E$ of $\Mat_4(\L)$ with all entries equal to $1$ is adapted to $\tau$.
Denote by $B$ the matrix of $\Mat_4(\L)$ with all diagonal entries equal to $\delta$
and all off-diagonal entries equal to $1$.
Then, we see from Proposition \ref{centralequivalenceprop} that
the matrix
$$A':=B^{[-1]}\star (BP_\tau^{-1})$$
satisfies
$$\forall M \in \Mat_4(\L), \; \widetilde{f}_\L(MP_\tau)=\widetilde{f}_\L(A' \star M).$$
One computes that
$$A'=\begin{bmatrix}
\delta^{-1} & \delta & 1 & 1 \\
\delta & \delta^{-1} & 1 & 1 \\
1 & 1 & \delta^{-1} & \delta \\
1 & 1 & \delta & \delta^{-1}
\end{bmatrix}.$$
Setting
$$X:=\begin{bmatrix}
\delta & \delta & 1 & 1
\end{bmatrix}^T \quad \text{and} \quad Y:=\begin{bmatrix}
1 & 1 & \delta^{-1} & \delta^{-1}
\end{bmatrix}^T,$$
we see that $XY^T$ is a normalized rank $1$ matrix of $\Mat_4(\L)$, and we compute that
$$(XY^T) \star A'=A,$$
whence
$$\forall M \in \Mat_4(\L), \; \widetilde{f}_\L(MP_\tau)=\widetilde{f}_\L(A \star M).$$
In particular,
$$\forall M \in \Mat_4(\F), \; \widetilde{f}(MP_\tau)=\widetilde{f}(A \star M),$$
which completes the proof.
\end{proof}


\begin{thebibliography}{1}
\bibitem{Botta1}
E.P. Botta,
{Linear transformations on matrices: The invariance of a class of general matrix functions.}
Canad. J. Math.
{\bf 19} (1967), 281--290.

\bibitem{Botta2}
E.P. Botta,
{Linear transformations on matrices: The invariance of a class of general matrix functions II.}
Canad. J. Math.
{\bf 20} (1968), 739--748.

\bibitem{Botta3}
E.P. Botta,
{Linear transformations that preserve the permanent.}
Proc. Amer. Math. Soc.
{\bf 18} (1967), 566--569.

\bibitem{Dieudonne}
J. Dieudonn\'e,
{Sur une g\'en\'eralisation du groupe orthogonal \`a quatre variables.}
Arch. Math.
{\bf 1} (1948), 282--287.

\bibitem{Duffner}
M.A. Duffner,
{Linear transformations that preserve immanants.}
Linear Algebra Appl.
{\bf 197-198} (1994), 567--588.


\bibitem{DuffnerdaCruz}
M.A. Duffner, H.F. da Cruz,
{A relation between the determinant and the permanent on singular matrices.}
Linear Algebra Appl.
{\bf 438} (2013), 3654--3660.

\bibitem{Frobenius}
G. Frobenius,
{\"Uber die Darstellung der endlichen Gruppen durch lineare Substitutionen.}
Sitzungsber. Deutsch. Akad. Wiss. Berlin.
{\bf 46} (1897), 994--1015.

\bibitem{MarcusMay}
M. Marcus, F.C. May,
{The permanent function.}
Canad. J. Math.
{\bf 14} (1962), 177--189.


\end{thebibliography}
\end{document}